\documentclass[letterpaper]{amsart}
\usepackage{amssymb}
\usepackage{mathtools}      
\usepackage{mathrsfs}
\usepackage{enumitem}

\usepackage[normalem]{ulem}

\usepackage[giveninits=true,style=alphabetic,isbn=false,url=false]{biblatex}
\usepackage{hyperref, cleveref}
\usepackage[dvipsnames]{xcolor}
\hypersetup{colorlinks=true,citecolor=NavyBlue,linkcolor=BrickRed,urlcolor=Black}

\usepackage{tikz-cd}
\tikzset{commutative diagrams/arrow style=Latin Modern}

\usepackage{amsthm}

\usepackage{thmtools}
\declaretheorem[sibling=equation,style=definition]{definition}
\declaretheorem[sibling=equation]{theorem}
\declaretheorem[sibling=equation]{corollary}
\declaretheorem[sibling=equation]{proposition}
\declaretheorem[sibling=equation]{lemma}
\declaretheorem[sibling=equation, style=definition]{remark}
\declaretheorem[sibling=equation, style=definition]{example}

\Crefformat{proposition}{#2Proposition~#1#3}
\Crefformat{theorem}{#2Theorem~#1#3}
\Crefformat{lemma}{#2Lemma~#1#3}
\Crefformat{definition}{#2Definition~#1#3}
\Crefformat{corollary}{#2Corollary~#1#3}
\Crefformat{remark}{#2Remark~#1#3}
\Crefformat{example}{#2Example~#1#3}
\Crefformat{subsection}{#2Section~#1#3}
\Crefformat{section}{#2Chapter~#1#3}

\let\emptyset\varnothing

\DeclareMathOperator{\Hom}{Hom}

\newcommand{\shT}{\mathscr{T}}
\newcommand{\PP}{\mathbb{P}}
\DeclareMathOperator{\Pic}{Pic}
\newcommand{\argbl}{-}
\newcommand{\ft}{\tilde{f}}
\newcommand{\sigmat}{\tilde{\sigma}}
\newcommand{\xit}{\tilde{\xi}}
\newcommand{\mto}{\dashrightarrow}
\DeclareMathOperator{\disc}{disc}
\newcommand{\into}{\hookrightarrow}
\DeclareMathOperator{\id}{id}
\newcommand{\tensor}{\otimes}
\newcommand{\eps}{\varepsilon}

\DeclareMathOperator{\Aut}{Aut}

\DeclareMathOperator{\coker}{coker}
\newcommand{\Dbc}{D_c^b}
\newcommand{\derR}{\mathbf{R}}

\newcommand{\Lie}{\operatorname{Lie}}
\newcommand{\alphat}{\tilde{\alpha}}
\newcommand{\lambdat}{\tilde{\lambda}}
\newcommand{\red}{\mathrm{red}}
\newcommand{\sing}{\mathrm{sing}}
\newcommand{\reg}{\mathrm{reg}}
\newcommand{\sm}{\mathrm{sm}}
\newcommand{\main}{\mathrm{main}}
\newcommand{\coinv}{\mathrm{coinv}}
\newcommand{\prim}{\mathrm{prim}}

\newcommand{\DD}{\mathbf{D}}

\newcommand{\fuj}{\mathscr{C}}
\newenvironment{tcd}{\[\begin{tikzcd}}{\end{tikzcd}\]\ignorespacesafterend}
\newcommand{\RR}{\mathbb{R}}
\newcommand{\ZZ}{\mathbb{Z}}
\newcommand{\QQ}{\mathbb{Q}}
\newcommand{\CC}{\mathbb{C}}
\newcommand{\define}[1]{\emph{#1}}
\newcommand{\menge}[2]{\bigl\{ \thinspace #1 \thinspace\thinspace \big\vert%
\thinspace\thinspace #2 \thinspace \bigr\}}
\newcommand{\MENGE}[2]{\left\{ \thinspace #1 \thinspace\thinspace \middle\vert%
\thinspace\thinspace #2 \thinspace \right\}}
\newcommand{\shO}{\mathscr{O}}
\newcommand{\OX}{\shO_X}
\newcommand{\OB}{\shO_B}
\newcommand{\OY}{\shO_Y}
\DeclareMathOperator{\Supp}{Supp}
\newcommand{\fl}{f_{\ast}}
\newcommand{\iu}{i^{\ast}}
\newcommand{\shI}{\mathcal{I}}
\newcommand{\pt}{\mathit{pt}}
\DeclareMathOperator{\im}{im}
\DeclareMathOperator{\sgn}{sgn}
\newcommand{\abs}[1]{\lvert #1 \rvert}
\DeclareMathOperator{\gr}{gr}
\DeclareMathOperator{\codim}{codim}
\newcommand{\NN}{\mathbb{N}}
\DeclareMathOperator{\Alb}{Alb}

\newcommand{\fu}{f^{\ast}}

\newcommand{\pu}{p^{\ast}}
\newcommand{\au}{a^{\ast}}
\newcommand{\al}{a_{\ast}}

\newcommand{\ql}{q_{\ast}}
\newcommand{\decal}[1]{\lbrack #1 \rbrack}
\newcommand{\sltwo}{\mathfrak{sl}_2(\QQ)}
\DeclareMathOperator{\ad}{ad}
\DeclareMathOperator{\End}{End}
\newcommand{\IC}{\mathit{IC}}
\newcommand{\locC}{\mathcal{C}}
\newcommand{\locHZ}{\mathcal{H}_{\ZZ}}
\newcommand{\locL}{\mathcal{L}}
\newcommand{\locT}{\mathcal{T}}
\newcommand{\SP}{P^+}
\newcommand{\contr}{\mathrel{\lrcorner}}

\newcommand{\shHom}{\mathscr{H}\hspace{-1.3pt}om}
\newcommand{\Xt}{\tilde{X}}
\newcommand{\Xreg}{X_{\mathrm{reg}}}
\newcommand{\Ureg}{U_{\mathrm{reg}}}

\definecolor{darkgreen}{rgb}{0.0, 0.7, 0.0}
\newenvironment{??}{\noindent \color{darkgreen}{\bf ???:} \footnotesize}{}
\definecolor{cyan}{cmyk}{1,0,0,0}

\newcommand{\bdg}{\begin{dg}}
\newcommand{\shH}{\mathcal{H}}

\newtheorem{introtheorem}{Theorem}

\Crefformat{introtheorem}{#2Theorem~#1#3}
\Crefmultiformat{introtheorem}{Theorems~#2#1#3}{ and~#2#1#3}{, #2#1#3}{, and~#2#1#3}

\begin{document}

\title[Group Actions and the Support Theorem]{Meromorphic Group Actions and the Support Theorem for Lagrangian Fibrations}

\author{Mark Andrea de Cataldo}
\address{Department of Mathematics, Stony Brook University, Stony Brook, NY 11794, USA}
\email{mark.decataldo@stonybrook.edu}

\author{Yoon-Joo Kim}
\address{Department of Mathematics, Columbia University, New York, NY 10027, USA}
\email{yk3029@columbia.edu}

\author{Christian Schnell}
\address{Department of Mathematics, Stony Brook University, Stony Brook, NY 11794, USA}
\email{christian.schnell@stonybrook.edu}

\date{\today}

\begin{abstract}
We prove several new results  about Lagrangian fibrations on holomorphic symplectic
complex spaces, under the assumption that the total space is K\"ahler
(but possibly non-compact or singular) and that the base is a complex manifold.
First, we construct a meromorphic action by a family of meromorphic groups.
Second, we use this structure, together with Hodge-theoretic methods, to prove
a version of Ng\^o's support theorem for Lagrangian fibrations. Along the way, we prove a freeness
theorem for the cohomology of compact K\"ahler spaces equipped with a meromorphic
group action.
\end{abstract}

\maketitle

\tableofcontents

\section{Introduction}\label{sec: intro}

In classical mechanics, the Liouville--Arnold Theorem describes completely integrable
Hamiltonian systems. Under suitable regularity and compactness assumptions, the
regular fibers are compact tori, and the dynamics is linear in action--angle
coordinates.

Proper holomorphic symplectic fibrations---also known as Lagrangian
fibrations---provide the complex-geometric analogue of this picture. If
$f \colon X \to B$ is a K\"ahler Lagrangian fibration, then over the locus of regular
values it is a torsor under a family of compact complex tori, mirroring the
Liouville--Arnold theorem. Unlike the classical situation, the fibration is
typically not holomorphically trivial, and the torus structure does not extend
 across the singular fibers.

This raises  the following question:  given a Lagrangian fibration $f \colon X \to B$, can one construct a family of
commutative Lie groups acting faithfully  on $X$ over the entire base $B$,
rather than only over the regular locus?

In this paper, we answer this question affirmatively
(\Cref{thm:intro-A}). We then establish a freeness theorem for the
action of these groups on the cohomology of the fibers
(\Cref{thm:intro-B}), which we use to prove a support theorem for
Lagrangian fibrations (\Cref{thm:intro-C}).

\begin{introtheorem}\label{thm:intro-A}
Let $f \colon X \to B$ be a Lagrangian fibration, with $X$ K\"ahler but possibly
singular, and $B$ smooth. Then there exists a smooth  family $G \to B$  of commutative Lie groups
 acting faithfully on $X$ over $B$.
\end{introtheorem}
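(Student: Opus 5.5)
The natural construction is Ngô's and Markushevich's: exploit the holomorphic symplectic form $\sigma$ on $X$ to turn $1$-forms on the base into vector fields on $X$, and then exponentiate. Concretely, for a local holomorphic $1$-form $\alpha$ on $B$, define a vector field $\xi_\alpha$ on the regular locus of $X$ by $\xi_\alpha \contr \sigma = \fu \alpha$. Since the fibers are Lagrangian and $\fu\alpha$ kills tangent vectors to the fibers, $\xi_\alpha$ is vertical, i.e.\ tangent to the fibers. Since $\fu\alpha$ is closed when $\alpha$ is (and in any case vertical fields of this type commute), the fields $\xi_\alpha$ pairwise commute, because $[\xi_\alpha,\xi_\beta]\contr\sigma$ is computed from $d(\fu\alpha)$ and $\sigma(\xi_\alpha,\xi_\beta)=0$. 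This gives a map of sheaves of Lie algebras $\Omega^1_B \to f_\ast \shT_X$ into vertical vector fields, which is abelian and injective (injective over the dense open set where $f$ is smooth).

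The key steps, in order, are as follows.

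(1) Extend $\xi_\alpha$ from $\Xreg$ to a vector field on all of $X$. Since $X$ is only assumed to be a symplectic complex space, I would use that $\shT_X$ is reflexive on a normal space, so vector fields extend across codimension $\geq 2$. If $X$ is non-normal, I would first pass to the normalization.

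(2) Integrate. The map $f$ is proper, so the vertical fields $\xi_\alpha$ have complete flows on each fiber. The time-one flow therefore defines an action of the vector bundle $V = $ total space of $T^\ast B$ on $X$ over $B$, by $(v,x)\mapsto \exp(\xi_v)(x)$. Commutativity makes this an action of the additive group $V$.

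(3) Let $K\subset V$ be the kernel of the action, i.e.\ the set of $v$ acting trivially on $X_{f(v)}$. This is a closed analytic subgroup, by properness and the Kähler assumption: the automorphism group of a compact Kähler space is a complex Lie group, and the action map is analytic. Then set $G=V/K$.

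(4) Show $G\to B$ is a smooth family of commutative Lie groups acting faithfully. Faithfulness is automatic from the construction. Commutativity is inherited from $V$.

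The main obstacle is step (4), specifically showing that $K$ is flat/étale over $B$, so that the quotient $V/K$ is a smooth (Hausdorff) complex space over $B$. Over the regular values $K$ is a lattice, but near singular fibers the stabilizer can jump in dimension. In that case $K_b$ has positive-dimensional components, and the naive quotient may fail to be separated. To handle this I would first prove that the action of $V_b$ on a general point of every fiber has discrete stabilizer, using that $\sigma$ is nondegenerate at smooth points of $(X_b)_{\red}$ and that some component of $X_b$ is generically reduced along a point where $f$ has maximal rank differential. This is the place where smoothness of $B$ is needed. Once that holds, $K\to B$ is a closed subgroup of $V$ whose fibers are discrete, and it is unramified over $B$. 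I would then take $K^{\circ}$ to be the closure of $K|_{B^{\circ}}$, or the étale part of $K$, and set $G=V/K^\circ$, which is smooth. The remaining issue is to check that $G$ still acts faithfully, or else to quotient further by a finite étale part. Because the freeness and support results downstream only need a meromorphic action, I would also be prepared to relax to the meromorphic compactified version if separatedness fails.
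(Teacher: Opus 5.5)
Your starting point is also the paper's: the fields $\xi_\beta$ with $\xi_\beta \contr \sigma = \fu\beta$, integrated to a fiberwise action of $V = T^{\ast}B$, followed by division by the kernel. But the step you propose for controlling the kernel is false in the generality of the theorem. At a smooth point $x$ of $X$, the Lie algebra of the stabilizer of $x$ in $V_b$ is $\{\beta : \xi_\beta(x) = 0\}$, which is the annihilator of $\im(df_x)$ (as in \Cref{lem:delta-regularity}). So ``a general point of every fiber has discrete stabilizer'' amounts to $f(X_{\mathrm{nc}/B}) = B$. That is exactly the non-multiple-fiber condition the earlier constructions needed and the paper is designed to remove.

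It fails, for instance, in the Hwang--Oguiso example. Every fiber over $u = t^6 = 0$ is nonreduced, and $\fu du = 6t^5\,dt$ (on the \'etale cover) vanishes along $X_{b,\red}$. Hence $\xi_{du}$ vanishes identically there, and a whole line in $V_b$ fixes every point of the fiber.

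The missing idea is that the kernel must be taken on the \emph{scheme-theoretic} fiber $X_b$. There it is always discrete, because $T_b^{\ast}B \to \Lie\Aut^{\circ}(X_b)$ is injective (\Cref{lem:faithful-action}). Suppose $\xi_1$ acts trivially on $X_0 = \{f_1 = \dots = f_n = 0\}$. Then its coefficients lie in $(f_1,\dots,f_n)$, hence so do all partials of $f_1$. Restricting $f_1$ to a curve in $\{f_2 = \dots = f_n = 0\}$ through a point of $X_0$ produces $h$ with $h(0)=0$ and $h \mid h'$, forcing $h \equiv 0$, a contradiction. In the example, with $\sigma = dw\wedge dt + dz \wedge ds$ and $w$ a flat coordinate on $E$, the flow of $c\,\xi_{du}$ is $w \mapsto w + 6ct^5$, which is nontrivial modulo $t^6$ for $c \neq 0$.

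Second, discreteness of every $K_b$ is not enough. For $V/K$ to be a complex manifold, $K \to B$ must be \'etale: every $k \in K_b$ over a singular value must lie on a local holomorphic section of $K$. A kernel element living only over the discriminant destroys the manifold structure.

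Discreteness does give that the closure $K^{\circ}$ of the lattices over $B^{\sm}$ is \'etale. There are no short lattice vectors near the zero section, so branches cannot meet, and a finite branch of degree one over the manifold $B$ is a section. Hence $V/K^{\circ}$ is a smooth family acting on $X$. But faithfulness is then precisely the assertion $K = K^{\circ}$, which you leave open, and this is the heart of the theorem.

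The paper gets it by building $G$ inside the relative Douady space of $X \times_B X$, with $D$ the component containing the graphs of the time-one flows. Infinitesimal injectivity shows that $\eps$ maps a neighborhood of the zero section isomorphically onto a local irreducible component of $D$ at each $e(b)$ (\Cref{lem:local-component}). Other local components are excluded by restricting to a curve $\Delta$ through $b$. Over $\Delta$, $D_\Delta$ is automatically flat, and its locus of automorphisms is a flat family of groups, hence smooth (\Cref{lem:local-irreducibility} and \Cref{lem:curves}).

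Then $\eps\colon V = T^{\ast}B \to G$ is a local biholomorphism and a fiberwise homomorphism. So its kernel $\Lambda$ is \'etale and equals the closure of the lattices (\Cref{rmk:Lambda}), and $G^{\circ} = V/\Lambda$ acts faithfully. Your fallback to a meromorphic compactification does not address this, since the difficulty is local along the identity section.
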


This is \Cref{prop:family-groups} in the body of the text.
More precisely, the family $G$ carries a natural structure of a family of
meromorphic groups in the sense of Fujiki \cite{fujiki-auto}, and the action is meromorphic. This
additional structure is automatic in the algebraic category but requires
substantial new arguments in the K\"ahler setting.

Our second main result is the Freeness Theorem (\Cref{thm:freeness} in the body of the text),
which we state here in the context of Lagrangian fibrations and in terms of modules rather than comodules. All cohomology groups are taken with real coefficients. Let $b\in B$, and let $X_b$ denote the corresponding fiber of the Lagrangian fibration $X/B$. Let $T_b$ be the maximal compact torus quotient
of  the identity component $G^\circ_b$ of  the complex Lie group $G_b$.

\begin{introtheorem}[\textbf{The Freeness Theorem}]\label{thm:intro-B}
The singular cohomology $H^*(X_b,\mathbb{R})$ is  a free module over the cohomology algebra $H^*(T_b,\mathbb{R})$.
\end{introtheorem}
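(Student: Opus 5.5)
The plan is to reduce the statement to a general freeness principle for compact Kähler spaces with a meromorphic action of a connected commutative meromorphic group, and then apply it to $X_b$ with the action of $G^\circ_b$ provided by \Cref{thm:intro-A}.

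\textbf{Step 1: the comodule structure.} The fiber $X_b$ is a compact Kähler space. By \Cref{thm:intro-A} and Fujiki's theory of meromorphic groups, $G^\circ_b$ acts meromorphically on it. Fujiki's structure theory gives a decomposition $1 \to L \to G^\circ_b \to T_b \to 1$, with $L$ a linear algebraic group and $T_b$ a compact complex torus. Since $L$ is affine and connected, it acts trivially on cohomology. The action map $a\colon G^\circ_b \times X_b \to X_b$ then induces on cohomology a coaction
\[
H^*(X_b) \to H^*(T_b)\otimes H^*(X_b).
\]
To make this precise, I would resolve the meromorphic action by its graph $\Gamma \subset G^\circ_b\times X_b\times X_b$ (or an equivariant compactification $\bar G$). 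The coaction is then the composite of pullback and pushforward along the graph; it factors through $H^*(T_b)$ because the classes coming from the compact part are exactly $H^*(T_b)$. Dually, $H_*(X_b)$ becomes a module over the exterior algebra $H_*(T_b)=\Lambda^\bullet H_1(T_b)$, using the Pontryagin product. Freeness as a module over $H^*(T_b)$ is then equivalent, by Poincaré duality on $T_b$ and the self-duality of exterior algebras, to freeness of this comodule or module.

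\textbf{Step 2: reduction to a nondegeneracy statement.} A graded module over an exterior algebra $\Lambda^\bullet V$ is free if and only if it is injective, since exterior algebras are Frobenius. For finitely generated modules, freeness is detected by the vanishing of Koszul-type or "rank variety" obstructions. Concretely, it suffices to show that for each nonzero $v\in V$ the module is free over $\Lambda^\bullet\langle v\rangle$. I would rather pursue a Hodge-theoretic argument. Choose a Kähler class $\omega$ on $X_b$ (or on a resolution). The orbit map $T_b$-direction should give classes $\theta_1,\dots,\theta_g\in H^1$-type operators, and I would try to show that the composite
\[
H^*(X_b)\to H^*(T_b)\otimes H^*(X_b)\to H^{top}(T_b)\otimes H^{*-2g}(X_b)
\]
is split surjective. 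In the smooth case, with the action holomorphic and $T_b$ acting through a torus, this is the classical Blanchard/Deligne-type argument: the Leray spectral sequence of $X_b\to X_b/T$ degenerates by the Kähler condition. This gives $H^*(X_b)\cong H^*(T_b)\otimes H^*(\text{quotient})$ when the action is free. In general, I would use the Albanese-type map $X_b \dashrightarrow T_b$-torsor, obtained by pairing with the $H^1$ classes pulled back along the orbit maps. I would show that the pullbacks $H^1(T_b)\to H^1(X_b)$ are injective, which follows from the compactness of the orbits of $T_b$, since the orbit map $G^\circ_b\to X_b$ composed with integration of Kähler forms detects $H_1(T_b)$. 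After this, I would apply the Hodge-theoretic decomposition/semisimplicity argument, via a Leray–Hirsch-type statement, to a resolution $\tilde X_b$ equivariant under $G^\circ_b$, which exists for meromorphic actions of meromorphic groups by Fujiki. Finally, I would descend freeness from $\tilde X_b$ to $X_b$ using the decomposition theorem: $H^*(X_b)$ is a direct summand of a compatible structure. More robustly, the comodule map is a morphism of mixed Hodge structures, and the kernel and cokernel computations can be done on weight-graded pieces.

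\textbf{Main obstacle.} The hardest step is the singular, non-free case. Here $X_b$ may be non-normal, its cohomology carries a genuinely mixed Hodge structure, and the action is only meromorphic. For this step I would first prove freeness on the pure part via a Lefschetz/polarization argument on an equivariant resolution. I would then show that the weight filtration is a filtration by sub-comodules, and use that extensions of free comodules over an exterior algebra are free, because free modules are injective. This gives freeness on each graded piece, and hence on $H^*(X_b)$ itself.
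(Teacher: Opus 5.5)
Your proposal has a genuine gap: none of the routes you sketch actually produces freeness, and the setup in Step 1 is wrong as stated.

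\textbf{Step 1.} $T_b$ does not act on $X_b$, so there is no Pontryagin product $H_*(T_b)\otimes H_*(X_b)\to H_*(X_b)$. Nor does the coaction $H^*(X_b)\to H^*(G_b)\otimes H^*(X_b)$ land in $H^*(T_b)\otimes H^*(X_b)$ merely because $L$ is connected. Take $\mathbb{C}^*$ acting on a nodal rational curve (an $I_1$ fiber): the circle sweeping out the loop gives the fundamental class, so the component $H^2\to H^1(\mathbb{C}^*)\otimes H^1$ is nonzero, although $T_b$ is trivial. What you need is a Hopf-algebra retraction $H^*(G_b)\to H^*(T_b)$. The paper obtains a canonical one over $\mathbb{R}$ from Deligne's splitting of $H^1(G_b)$ and Hopf's theorem (\Cref{lem:HG}). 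Weights then force $p\colon H^1(X_b)\to H^1(G_b)$ to land in $H^1(T_b)$.

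\textbf{Your routes to freeness.} A Leray--Hirsch argument needs a torus acting, which you do not have. Moreover, \Cref{ex:freeness-Q} shows that, in the generality you propose, there may be no nonconstant holomorphic map to a compact torus at all. Descent from a resolution fails because $H^*(X_b)\to H^*(\tilde X_b)$ is in general not injective for singular $X_b$ (the nodal curve again). The decomposition theorem makes intersection cohomology, not $H^*(X_b)$, a summand. In the weight-filtration plan, the shifted filtration $\bigoplus_n W_{n+\ell}H^n(X_b)$ is indeed by subcomodules, and extensions of free comodules are free. But nothing shows the graded pieces are free. The top piece is only a subcomodule of $H^*(\tilde X_b)$, and the others are subquotients of cohomology of a hyperresolution. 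Subcomodules or subquotients of free comodules need not be free; the coinvariants are an example. Finally, the rank-variety criterion over $\mathbb{R}$ would require testing $v\in V\otimes\mathbb{C}$, not just real $v$.

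\textbf{The missing idea.} Only one geometric input is needed: surjectivity of $p\colon H^1(X_b)\to H^1(T_b)$ (\Cref{lem:H1}). After that, everything is algebra. The paper proves surjectivity by contradiction:
\begin{enumerate}
\item If the functionals $\delta_1,\dots,\delta_{2r}$ on $H^1(X_b)$ were dependent, anticommutativity would give $\delta_{2r}\circ\cdots\circ\delta_1=0$ on $H^{2r}(X_b)$. That is, the $H^{2r}(T_b)$-component of every orbit map vanishes.
\item An orbit of minimal dimension is a compact homogeneous K\"ahler manifold $Y\cong A\times F$, and the induced map $T_b\to A$ is an isogeny.
\item But $\omega^r$ restricts nontrivially to $A$, a contradiction.
\end{enumerate}
Your ``pullbacks $H^1(T_b)\to H^1(X_b)$'' and ``compact orbits of $T_b$'' gesture at this. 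However, no such pullback exists a priori, and you must choose a closed $G_b$-orbit.

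Given surjectivity, take a (Hodge-compatible) section $s$ of $p$ and extend it to an algebra map $\bigwedge H^1(T_b)\to H^*(X_b)$. This makes $H^*(X_b)$ a Hopf module under cup product. Concretely, with $e_j$ the cup product by $s$ of a basis vector, one has $\delta_ie_j+e_j\delta_i=0$ for $i\ne j$ and $\delta_ie_i+e_i\delta_i=\mathrm{id}$. The fundamental theorem of Hopf modules then gives $H^*(X_b)\cong H^*(T_b)\otimes H^*(X_b)_{\mathrm{coinv}}$ both as comodules and as cup-product modules. This is also the module structure in the statement, which your appeal to Poincar\'e duality and an abstract self-duality of exterior algebras does not produce. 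The argument handles singular and nonreduced fibers uniformly, after passing to $X_{b,\mathrm{red}}$ (which changes $T_b$ only by an isogeny), with no resolution or weight-graded analysis.
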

\Cref{pr: K weq} provides the natural generalization to $H^*(X_b,K)$, where $K$ is any weakly equivariant  constructible complex under the action of $G^\circ_b$. This includes, in particular, the cases in which $K$ is the constant sheaf or, when $X$ is singular,  the restriction of the intersection complex of $X$ to $X_b$; this special case plays a crucial role in the proof of the Support Theorem.

The group action, the Freeness Theorem, and substantial Hodge-theoretic input are then used to prove the third main result of this paper, the Support Theorem for a Lagrangian fibration $f:X\to B$.

Before stating the theorem, let us briefly explain its geometric meaning. The
support theorem relates the Decomposition Theorem for the direct image complex $\derR \fl IC_X$  (or $\derR \fl \RR_X[\dim X]$ when $X$ is smooth)
to the geometry of the family
of groups constructed in \Cref{thm:intro-A}. Roughly speaking, the possible
supports and the corresponding direct summands of the  direct image complex  are
governed by the compact torus quotients of this family of groups. We give the statement for Lagrangian fibrations;
a more general statement can be found in \Cref{thm:support}.

\begin{introtheorem}[\textbf{The Support Theorem}]\label{thm:intro-C}
Let $f \colon X \to B$ be a Lagrangian fibration with $X$ K\"ahler. If
$Z \subseteq B$ is a support appearing in the Decomposition Theorem, then the
dimension of $Z$ is equal to the dimension of the maximal compact torus quotient $T_b$
of $G_b^\circ$ at a general point $b\in Z$. Moreover, the direct sum of the simple perverse sheaves supported on $Z$
is built from the cohomology of the corresponding
compact torus quotients and a local system with finite monodromy.
\end{introtheorem}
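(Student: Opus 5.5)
We follow the strategy of Ngô's support theorem, replacing the $\delta$-regularity bookkeeping by Hodge theory and the Freeness Theorem. Write $n=\dim B$, so that fibers have dimension $n$ and $\dim X=2n$. By the Decomposition Theorem,
\[
\derR \fl \IC_X \cong \bigoplus_i {}^pH^i(\derR\fl \IC_X)[-i],
\]
and each perverse cohomology splits into simple summands $\IC_Z(L)$. Let $Z$ be a support with generic point $b$, let $c=\codim Z$, and let $\delta_b=n-\dim T_b$, the dimension of the affine part of $G_b^\circ$. The argument has two inequalities, $\dim Z\ge \dim T_b$ and $\dim Z\le \dim T_b$, which together give the first claim.

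\emph{Step 1 (upper bound).} We first establish the "Ngô inequality" $\operatorname{codim} Z\le \delta_b$ at a general point of any support. The fibers of $G\to B$ are commutative of dimension $n$. Using the symplectic form, the family of Lie algebras $\Lie(G/B)$ is identified with $\Omega^1_B$; this follows from the construction in \Cref{prop:family-groups}. The standard argument then shows that the locus where the affine part has dimension $\ge \delta$ has codimension $\ge\delta$. Infinitesimally, a nonzero tangent direction of $B$ along the stratum pairs nontrivially with the affine part, by isotropy of the orbits. This shows that the $\delta$-strata satisfy $\codim\ge\delta$.

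Next we restrict $\derR\fl\IC_X$ to a transversal slice $S$ through $b$ of dimension $c$. The summand $\IC_Z(L)$ then contributes a skyscraper at $b$ in degrees governed by the top nonvanishing cohomology. The relevant degrees are controlled by the top-degree cohomology of the fiber. By the Freeness Theorem (\Cref{pr: K weq} applied to $K=\IC_X|_{X_b}$), $H^*(X_b,\IC_X)$ is free over $H^*(T_b)$. Combined with relative hard Lefschetz, this freeness forces the perverse amplitude of the summands supported on $Z$ to be at least $2\dim T_b$.

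\emph{Step 2 (lower bound).} The lower bound is Ngô's amplitude argument. The $T_b$-freeness says that cap product with the fundamental class of $T_b$ (degree $2\dim T_b$) acts injectively on the bottom free generators. Its action through the family is given by the class of the $G$-action, which extends the Chern class of a relatively ample class via the symplectic form. The resulting range of perverse degrees occupied by $\IC_Z(L)$ has width $2\dim T_b$. On the other hand, relative hard Lefschetz bounds this width by $2(\dim X_b-\codim Z)$, which equals $2\dim Z$. Combining, $\dim T_b\le \dim Z$. Step 1 gives $\dim Z\le n-\delta_b=\dim T_b$, so equality holds.

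\emph{Step 3 (structure of summands).} Once equality holds, all summands with support $Z$ are concentrated in the "top free generator". Freeness then identifies their stalks at general $b$ with $\Lambda^\bullet H^1(T_b)$ tensored with the top-degree piece $H^{2n}(X_b,\IC_X)$. The latter is a local system with finite monodromy, since it is spanned by fundamental classes of irreducible components, which are permuted.

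The main obstacle is Step 2. It requires showing that the $H^*(T_b)$-comodule structure globalizes over $Z$ compatibly with the perverse filtration, so that freeness at one point constrains the whole decomposition. We would first try to prove this compatibility using a Hodge-module argument: the action map $G\times_B X\to X$ induces a morphism of complexes, which we would check to be strict with respect to the perverse filtration.
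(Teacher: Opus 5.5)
\textbf{There is a genuine gap at the decisive step.} Your overall architecture is the paper's: $\delta$-regularity gives $\dim Z\le\dim T_b$, and freeness together with the range $-\dim Z\le j\le\dim Z$ of the generic stalks $H^{-\dim Z}\iu P_{j,Z}$ gives $\dim T_b\le\dim Z$.

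Two side remarks first. The opening sentence of your Step~1 has the inequality backwards: $\delta$-regularity yields $\codim Z\ge\delta_b$, and it is Ng\^o's freeness inequality that gives $\codim Z\le\delta_b$. Also, the mechanism behind $\delta$-regularity is not isotropy of orbits but the Borel fixed point theorem. It produces a point $x\in X_b$ fixed by $L_b$, so the vector field of every $\beta\in\Lie L_b$ vanishes at $x$. Hence $\Lie L_b$ lies in the set of singular cotangent vectors, which is contained in a union of conormal varieties and so has dimension at most $n$.

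The gap is the step you yourself flag as the main obstacle, and it is where the whole proof lives. Freeness of $H^{\ast}(X_b,\IC_X\vert_{X_b})$ over $H^{\ast}(T_b)$ says nothing by itself about the piece $\bigoplus_j H^{-\dim Z}\iu P_{j,Z}$ coming from the summands supported on $Z$. The fiber cohomology also contains the contributions of $B$ and of every other support through $b$. Moreover, a free comodule can have subcomodules, or filtrations by subcomodules, whose pieces are not free; already $\RR\cdot 1\subseteq H^{\ast}(T_b)$ is an example. So your sentence deriving an amplitude of at least $2\dim T_b$ for the summands supported on $Z$ is a non sequitur, and Step~3 inherits the same problem. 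In Step~3, moreover, the coinvariants are only the $Z$-part $(\locL_{r,Z})_b$ of the top-degree cohomology, not the whole top-degree group. Producing the local system $\locT_Z$ also requires a sheaf-level construction: the paper uses $\shH^{-1}K_G$, together with the purity in \Cref{lem:MHS} to see that the coaction on the $Z$-part factors through $H^{\ast}(T_b)$.

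Your proposed remedy, strictness of the action morphism with respect to the perverse filtration, targets the wrong property. Compatibility with the perverse filtration is automatic once the coaction is realized as a morphism $\derR\fl K\vert_U\to H^{\ast}(G_b)\tensor\derR\fl K\vert_U$ in the derived category. The paper does this with $K_G=\derR p_!\RR_G[2n]$ (not with any relatively ample class), its splitting in \Cref{lem:K}, the morphism \eqref{eq:K-action}, and \Cref{lem:comodule-stalks}. Perverse truncation then gives $\delta_i(P_\ell H^k)\subseteq P_{\ell-1}H^{k-1}$, and the top component $\gamma_{-1}$, being a morphism of perverse sheaves, respects strict supports. But this only yields a coaction on graded pieces, and as noted, freeness does not pass to graded pieces.

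What is needed is a \emph{splitting} of the shifted perverse filtration that the $\delta_i$ preserve. The paper's key observation is that Deligne's canonical splitting $H'$ attached to the K\"ahler class does this. One has $\ad H(\delta_i)=\delta_i$, and $(\ad N)^2\delta_i=0$ because $[N,\delta_i]$ is cup product with $-\delta_i(\omega_b)\in H^1(X_b)$; so \Cref{lem:kerH'} gives $[H',\delta_i]=0$. Hence each $\bigoplus_j H^k\iu P_{j,Z}$ is a comodule direct summand of a free comodule, hence free (\Cref{lem:comodule-decomposition}). Only then does your counting argument go through.

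Alternatively, cup product with classes in $H^1(X_b)\cong H^1(f^{-1}(U))$ also preserves $\SP$ and respects strict supports on the graded pieces. So the Hopf-module structure from the Freeness Theorem descends to each $\gr^{\SP}_\ell$, and \Cref{prop:Hopf-module} applies there. Either way, some input beyond filtration-compatibility is indispensable, and your proposal does not supply it.
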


In the body of the text, this is
\Cref{thm:LF-support} for $X$ smooth, and \Cref{thm:singLF-support} for $X$ singular.
We need the assumption that $B$ is a manifold in an essential way during the construction
of the family of meromorphic groups $G \to B$.  On the other hand, the smoothness of $X$ plays no role and  our results hold for $X$ singular.

The K\"ahler assumption, which is automatic  when $X$ is quasi-projective, is also essential. It is used in three distinct places. First, it is needed for the construction of the family of Lie groups $G/B$, where we rely on Fujiki's work on Douady spaces for proper K\"ahler fibrations. Second, it is needed in the proof of the Freeness Theorem, where splittings of mixed Hodge structures are required to set up the relevant linear-algebraic arguments. Third, it is needed in the proof of the Support Theorem, where we rely on Saito's Decomposition Theorem together with delicate Hodge-theoretic arguments showing that the perverse filtration is compatible with the module structure.

The reason for using $\RR$-coefficients is that, with such
coefficients, the $H^*(T_b)$-module structure in the Freeness Theorem
(\Cref{thm:intro-B}) is canonical and compatible with mixed Hodge theory, and
this compatibility in turn feeds into the proof of the Support Theorem
(\Cref{thm:intro-C}). On the other hand, the freeness statement also holds with
$\mathbb Q$-coefficients, but then the module structure is not canonical and is
not compatible with mixed Hodge theory. Since the Support Theorem is valid over
$\mathbb R$, it also follows over $\mathbb Q$, because all the relevant objects
and splittings can be defined over $\mathbb Q$. (See \Cref{sec:rational} for more on
this point.)

\subsection{Recent work on constructing the group scheme}
Let us briefly summarize the history of constructing the group object $G$ with an action on $X$. For it, we temporarily denote the set of non-critical points of $f$ by $X_{\mathrm{nc}/B} \subseteq X$ and consider the condition
\begin{equation} \label{eq:non-multiple fiber condition}
	f(X_{\mathrm{nc}/B}) = B .
\end{equation}
\cite[Prop~2.1--2.3]{mar96} first observed that integrating the vertical vector fields of $f$ yields a $T^*B$-action on its non-critical locus $X_{\mathrm{nc}/B}$. As a result, he constructed a family of complex Lie groups $G^\circ$ over $B$, together with its action on $X_{\mathrm{nc}/B}$. This idea has been revisited several times in \cite[\S\S 2--3]{hwang-ogu09}, \cite[\S 2.2]{aba-rogov25}, and \cite[Thm~5.1]{sacca24}, and culminated in the construction of the $G^\circ$-action on the full space $X$ under the condition \eqref{eq:non-multiple fiber condition}. On the other hand, \cite[Thm~2]{Arinkin+Fedorov} used a different approach of constructing $G^\circ$ as a subgroup of the relative automorphism group $\Aut(X/B)$ but assuming $f$ has only integral fibers. \cite[Thm~4.2]{kim24} extended this result and observed that a larger group $G$ exists in $\Aut(X/B)$, but still could not eliminate the same condition \eqref{eq:non-multiple fiber condition}. In summary, the construction of $G$ and its action on $X$ was known in the previous work but only under \eqref{eq:non-multiple fiber condition}, which has been used as an essential ingredient. Our result completely eliminates the dependency on \eqref{eq:non-multiple fiber condition} and moreover drops the algebraicity of $f$.

\subsection{Outline of the proof}\label{subs:fam act grp}

In \Cref{sec:Liouville-Arnold}, we use the standard technique of
integrating vertical vector fields to obtain their flows, which define
an action of the cotangent bundle $T^*B$ on the Lagrangian fibration.
This action yields a classifying $B$-morphism
\[
T^*B \to \Aut_{X/B},
\]
where $\Aut_{X/B}$ denotes the group object over $B$ whose fibers are
the automorphism groups of the fibers of $X/B$. It is known that $\Aut_{X/B}$ is represented by a Zariski-open
subspace of the relative Douady space of $X\times_B X$.
We  prove  the key fact used  in \Cref{sec:meromorphic}  that the induced morphism on
Lie algebras is everywhere injective.

\Cref{sec:meromorphic} is devoted to the construction of the family of
meromorphic groups $G$. Let $D$ be the reduction of the irreducible
component of the relative Douady space containing the image of the
classifying morphism. We define $G$ to be the locus of points
$d\in D$ corresponding to graphs of automorphisms of fibers of
$X/B$ at which the natural morphism $D\to B$ is smooth. The heart of
the argument is to show that this construction indeed produces a
family of meromorphic groups. The main difficulty is to prove that $D$
is locally irreducible along the identity section and that the latter
is contained in $G$. Once these facts are established, the geometry of
the relative Douady space of a proper K\"ahler morphism allows us to
endow $G$ with a canonical structure of family of meromorphic groups
acting meromorphically on $X/B$.
We conclude by proving that $G^\circ$ is open in $G$ and is precisely the subgroup generated by the
Liouville--Arnold flows, thereby identifying the infinitesimal construction with the identity component of the global group.

In \Cref{sec:freeness}, we prove the Freeness Theorem. The starting
point is the action of the family of meromorphic groups constructed in
the previous section. The main difficulty is that the spaces and groups
under consideration need not be smooth and that the action is only
meromorphic. Consequently, standard arguments from equivariant topology
do not apply directly. To overcome this problem, we combine the
structure theory of meromorphic groups with mixed Hodge theory and
establish a series of splitting results for the cohomology of the
fibers. The key observation is that the group action endows the
cohomology of the fibers with a large collection of canonical
cohomological operators. Using mixed Hodge theory, we show that these
operators force a sequence of splittings that ultimately imply that the
cohomology groups are free (co-)modules over the cohomology algebra of the
maximal compact quotient of the acting group.

The final step is the proof of the Support Theorem in
\Cref{sec:support-theorem}. The problem is to determine which subvarieties of
the base can occur as supports of summands in the decomposition theorem
for the direct image complex. The geometric information provided by the
canonical family of meromorphic groups, together with the Freeness
Theorem, imposes strong restrictions on the topology of the fibers.
These restrictions translate into constraints on the possible supports,
placing the Lagrangian fibration within the framework of $\delta$-regular weak abelian
fibrations, and allows us to adapt Ng\^o's strategy.
Combining this with the decomposition theorem and the Hodge-theoretic
properties established earlier, we obtain the support theorem and the
description of the corresponding summands.

\subsection{Acknowledgments} We thank Anna Abasehva, Andres Fernandez Herrero,
Eyal Markman, Mirko Mauri,  Giulia Sacc\`a, and Michael Thaddeus for useful conversations.
M.A.dC.\ has been supported by NSF Grants DMS-2200492 and DMS-2502310. Ch.S.\ is supported
by NSF grant DMS-2301526.

\section{The Liouville-Arnold theorem}
\label{sec:Liouville-Arnold}

Let $X$ be a holomorphic symplectic manifold of dimension $2n$; this means that there is
a holomorphic form $\sigma \in H^0(X, \Omega_X^2)$ that is closed ($d\sigma = 0$)
and nondegenerate (in the sense that $\sigma^n$ is a nowhere vanishing section of $\Omega_X^{2n}$).
We consider a Lagrangian fibration $f \colon X \to B$ over an $n$-dimensional complex
manifold $B$; here $f$ is proper and surjective, with connected fibers,
and  the smooth fibers $X_b = f^{-1}(X_b)$ are Lagrangian, in the sense that $\sigma \vert_{X_b} = 0$.
The purpose of this chapter is to prove that the cotangent bundle $T^{\ast} B$ of the
base acts holomorphically on the total space $X$ of the Lagrangian fibration. In
classical mechanics, this kind of result is called the \emph{Liouville-Arnold theorem}.

\subsection{Constructing the action}
\label{par:action}

We begin by constructing a holomorphic mapping
\[
	T^{\ast} B \times_B X \to X.
\]
Let $t_1, \dotsc, t_n$ be local holomorphic coordinates, defined on an open
neighborhood $U \subseteq B$ of a given point. The symplectic form gives
an isomorphism $\sigma \colon \shT_X \to \Omega_X^1$, and so there are $n$
holomorphic vector fields $\xi_1, \dotsc, \xi_n \in \Gamma(f^{-1}(U), \shT_X)$
such that
\[
	\xi_j \contr \sigma = \sigma(\xi_j, \argbl) = \fu dt_j.
\]
These vector fields are tangent to the fibers of the Lagrangian fibration.

\begin{lemma}
For $b \in B$, denote by $X_{b, \red}$ the reduction of the fiber $X_b = f^{-1}(b)$.
Then $\xi_j \in T_x X_{b, \red}$ at any smooth point $x \in X_{b, \red}$.
\end{lemma}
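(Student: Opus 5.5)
The plan is to reduce the lemma to linear algebra at a point. The goal is to show that every irreducible component $Z$ of $X_{b,\red}$ is Lagrangian at its smooth points, meaning $\dim Z = n$ and $\sigma$ vanishes on $T_x Z$. Once this is known the lemma is immediate. Put $V = T_x X_{b,\red} = T_x Z$ at a smooth point $x$; then $V$ is a Lagrangian subspace of $(T_x X, \sigma_x)$. For every $v \in V$ we have
\[
\sigma(\xi_j, v) = (\fu dt_j)(v) = dt_j(df_x(v)) = 0,
\]
because $f$ is constant on $X_{b,\red}$, so $df_x(v) = 0$. Hence $\xi_j(x) \in V^{\perp_\sigma} = V$.

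The dimension bound $\dim Z \geq n$ is standard. By upper semicontinuity of fiber dimension, every component of every fiber of $f \colon X \to B$ has dimension at least $\dim X - \dim B = n$. It therefore suffices to show that $T_x Z$ is $\sigma$-isotropic for $x$ in a dense open subset of $Z_{\reg}$. Isotropy then propagates to all of $Z_{\reg}$ by continuity, since vanishing of $\sigma|_{Z_{\reg}}$ is a closed condition. The bound $\dim Z \leq n$ follows because an isotropic subspace of a $2n$-dimensional symplectic space has dimension at most $n$.

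To get isotropy at a general point of $Z$, I would use a limiting argument through the dense open set $X^\circ \subseteq X$ where $f$ is a submersion. On $X^\circ$ the subspaces $\ker df_y$ are tangent to smooth fibers, hence Lagrangian by hypothesis. Now choose a Whitney stratification of $X$ compatible with $f$ that satisfies Thom's $a_f$ condition; such stratifications exist for proper holomorphic maps by the work of Hironaka and Hamm--L\^e. Then $Z$ contains a dense open subset of some stratum. For $x$ in that subset, take a sequence $y_i \in X^\circ$ with $y_i \to x$ such that $\ker df_{y_i}$ converges to an $n$-plane $T$ in the Grassmannian; this is possible by compactness. By the $a_f$ condition, $T \supseteq T_x Z$. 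Since being a $\sigma$-isotropic $n$-plane is a closed condition, $T$ is isotropic, and therefore so is $T_x Z$. This completes the reduction.

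The main obstacle is exactly this step: controlling the limits of the tangent spaces of the nearby smooth fibers along a singular, possibly non-reduced fiber. I would try the $a_f$-stratification argument first.

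If one prefers to avoid it, there is a fallback that uses the vector fields themselves. On $X^\circ$ the $\xi_j$ span $\ker df$, because $\sigma(\xi_i, \xi_j) = dt_j(df(\xi_i)) = 0$ and they are pointwise independent there. One would then try to show that, at a general point of $Z$, $T_x Z$ lies in the limit of the spans of the $\xi_j$. This again requires some form of the $a_f$ condition, so I expect the stratification input to be unavoidable in some form.
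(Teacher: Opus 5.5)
Your structure matches the paper's. You reduce to the statement that $T_xX_{b,\red}$ is a Lagrangian subspace of $T_xX$ at smooth points, and then conclude from $\sigma(\xi_j,v)=(\fu dt_j)(v)=0$ by maximality. The paper obtains the Lagrangian property of the components of singular fibers by citing Matsushita's theorem \cite[Thm.~1]{mat00}. Your attempt to derive it from a limiting argument has a genuine gap.

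The gap is the claim that proper holomorphic maps admit Whitney stratifications satisfying Thom's $a_f$ condition. This is false in general.
\begin{itemize}
\item As your own argument shows, $a_f$ forces $T_xZ\subseteq T$ for some $n$-plane $T$ at a general point of every fiber component $Z$, hence $\dim Z\le n$.
\item So $a_f$ fails, for example, for the blow-up of a point in $\CC^2$, where a $1$-dimensional fiber sits among $0$-dimensional ones.
\item The existence theorems of Hamm--L\^e and Hironaka cover functions, i.e.\ one-dimensional targets. For higher-dimensional targets, any hypothesis guaranteeing $a_f$ must rule out jumps in fiber dimension.
\item Invoking $a_f$ here therefore presupposes the equidimensionality of $f$, which is the substance of Matsushita's theorem. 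The argument is circular.
\end{itemize}

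Nor can any purely local limiting argument replace this. On $\CC^4$ with $\sigma=dp_1\wedge dq_1+dp_2\wedge dq_2$, the functions $p_1$ and $p_1q_2$ Poisson-commute. The fibers of $(p_1,p_1q_2)$ over points with nonzero first coordinate are Lagrangian planes. Yet the fiber over the origin is the $3$-dimensional coisotropic hypersurface $\{p_1=0\}$.

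So properness and the K\"ahler hypothesis must enter. In Matsushita's argument, reproduced in the proof of \Cref{prop:equidimensional}, they enter through the torsion-freeness of $R^2f_{\ast}\shO_X$ (Koll\'ar, Takegoshi) and through Hodge theory on a resolution of each fiber component. Together these show directly that $\sigma$ restricts to zero on every component, and the bound $\dim Z\le n$ then follows. Your fallback via the $\xi_j$ runs into the same obstruction. The fix is to cite Matsushita's theorem, as the paper does, or to reproduce that Hodge-theoretic argument.
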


\begin{proof}
Since $f \colon X \to B$ is a Lagrangian fibration, $T_x X_{b, \red}$ is a maximal
isotropic subspace of $T_x X$, provided that $x \in X_{b, \red}$ is a smooth point.
If $X_b$ is smooth, this is the definition of a Lagrangian fibration; and for the singular
fibers, it follows from a theorem by Matsushita \cite[Thm.~1]{mat00}.
For $\xi \in T_x X_{b, \red}$, we have
\[
	\sigma(\xi_j, \xi) = \fu(dt_j)(\xi) = 0,
\]
and therefore $\xi_j \in T_x X_{b, \red}$ by maximality.
\end{proof}

The vector fields $\xi_1, \dotsc, \xi_n$ also commute. Here it matters that
$d \sigma = 0$.

\begin{lemma}
We have $[\xi_i, \xi_j] = 0$ for $1 \leq i,j \leq n$.
\end{lemma}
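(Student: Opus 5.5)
The plan is to use Cartan calculus together with $d\sigma = 0$ to show that the $1$-form $[\xi_i, \xi_j] \contr \sigma$ vanishes. Since $\sigma \colon \shT_X \to \Omega_X^1$ is an isomorphism, this forces $[\xi_i, \xi_j] = 0$.

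The first step is to record that each $\xi_j$ preserves $\sigma$. By Cartan's formula,
\[
	L_{\xi_j} \sigma = d(\xi_j \contr \sigma) + \xi_j \contr d\sigma = d(\fu dt_j) + 0 = 0,
\]
because $d\sigma = 0$ and $d \fu dt_j = \fu d^2 t_j = 0$.

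The second step is to apply the standard identity $[L_{\xi}, \iota_{\eta}] = \iota_{[\xi,\eta]}$. This gives
\[
	[\xi_i, \xi_j] \contr \sigma = L_{\xi_i}(\xi_j \contr \sigma) - \xi_j \contr L_{\xi_i}\sigma = L_{\xi_i}(\fu dt_j).
\]
Using Cartan's formula again,
\[
	L_{\xi_i}(\fu dt_j) = d\bigl(\xi_i \contr \fu dt_j\bigr) + \xi_i \contr d\fu dt_j = d\bigl( \fu dt_j(\xi_i) \bigr).
\]

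The third step is to show that $\fu dt_j(\xi_i) = \sigma(\xi_j, \xi_i) = -\fu dt_i(\xi_j)$ vanishes identically. This holds because the vector fields $\xi_i$ are tangent to the fibers, so $df(\xi_i) = 0$.

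The only point needing care is this tangency. The preceding lemma gives it only at smooth points of the reduced fibers, so that statement alone does not directly cover every point of $X$. To get the vanishing everywhere, I would argue on a dense open set and then extend by continuity. Over the dense open set of regular values of $f$, the fibers are smooth, and the lemma gives $\xi_i \in T_x X_b = \ker df_x$. Hence the holomorphic function $\sigma(\xi_j,\xi_i)$ vanishes on a dense open subset of $f^{-1}(U)$, and therefore vanishes identically. Alternatively, one can argue directly: $\sigma(\xi_j,\xi_i)$ is antisymmetric in $i, j$, and it equals $\fu dt_j(\xi_i)$. This function is constant along the flows and vanishes on the general fiber by Lagrangianity.

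It follows that $[\xi_i,\xi_j] \contr \sigma = d(0) = 0$. Nondegeneracy of $\sigma$ then gives $[\xi_i,\xi_j]=0$.
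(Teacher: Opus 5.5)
Your proof is correct and is essentially the paper's argument. The paper expands $d(\fu dt_j)=0$ and $d\sigma=0$ using the coordinate-free formula for the exterior derivative, which is just the unpacked form of your Cartan identities $L_{\xi_i}\sigma=0$ and $[L_{\xi_i},\iota_{\xi_j}]=\iota_{[\xi_i,\xi_j]}$; like you, it then uses $\sigma(\xi_i,\xi_j)=0$ and nondegeneracy (your density argument for $\sigma(\xi_i,\xi_j)=0$ spells out a point the paper only asserts, and the alternative remark about constancy along the flows is not needed).
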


\begin{proof}
	We use the coordinate-free description of the exterior derivative, namely
	\begin{align*}
			(d \alpha)(\eta_0, \dotsc, \eta_k) &= \sum_{i=0}^k (-1)^i \eta_i \cdot
			\alpha(\eta_0, \dotsc, \widehat{\eta_i}, \dotsc, \eta_k) \\
			&+ \sum_{0 \leq i < j \leq k} (-1)^{i+j} \alpha \bigl( [\eta_i, \eta_j], \eta_0,
			\dotsc, \widehat{\eta_i}, \dotsc, \widehat{\eta_j}, \dotsc, \eta_k \bigr).
	\end{align*}
	Because the $1$-form $\xi_j \contr \sigma = \fu dt_j$ is closed, we have
	\begin{equation} \label{eq:d-one-form}
		\sigma \bigl( \xi_j, [\xi, \eta] \bigr)
		= \xi \cdot \sigma(\xi_j, \eta) - \eta \cdot \sigma(\xi_j, \xi)
	\end{equation}
	for any pair of local vector fields $\xi, \eta \in \shT_X$. From $d \sigma = 0$, we get
	\[
		\xi_i \cdot \sigma(\xi_j, \eta) + \xi_j \cdot \sigma(\eta, \xi_i)
		+ \eta \cdot \sigma(\xi_i, \xi_j) =
		\sigma \bigl( [\xi_i, \xi_j], \eta \bigr) +
		\sigma \bigl( [\xi_j, \eta], \xi_i \bigr) +
		\sigma \bigl( [\eta, \xi_i], \xi_j \bigr),
	\]
	for every $\eta \in \shT_X$. After rearranging the terms and remembering that
	$\sigma(\xi_i, \xi_j) = 0$, this turns into
	\begin{align*}
		\sigma \bigl( [\xi_i, \xi_j], \eta \bigr) &=
		\xi_i \cdot \sigma(\xi_j, \eta) - \xi_j \cdot \sigma(\xi_i, \eta) +
		\sigma \bigl( \xi_i, [\xi_j, \eta] \bigr) - \sigma \bigl( \xi_j, [\xi_i, \eta] \bigr) \\
		&= \xi_i \cdot \sigma(\xi_j, \eta) - \xi_j \cdot \sigma(\xi_i, \eta) +
		\xi_j \cdot \sigma(\xi_i, \eta) - \xi_i \cdot \sigma(\xi_j, \eta) = 0,
	\end{align*}
	where the second equality follows from  \eqref{eq:d-one-form}. Since $\sigma$ is nondegenerate,
	the result follows.
\end{proof}

We now integrate the vector fields into a family of holomorphic automorphisms.
Let $X_U = f^{-1}(U)$, which is an open subset of the complex manifold $X$. The main point
is of course the properness of $f \colon X \to B$.

\begin{proposition}
	For each $j =1, \dotsc, n$, the flow
	\[
		\Phi^j \colon \CC \times X_U \to X_U, \quad \Phi^j(t, x) = \Phi_t^j(x),
	\]
	of the holomorphic vector field $\xi_j$ exists for all times $t \in \CC$, and
	is compatible with the morphism $f \colon X_U \to U$,
	meaning that $f \circ \Phi_t^j = f \circ p_2$.
\end{proposition}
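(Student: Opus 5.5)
First I will show that the flow preserves fibers, and then use properness to get completeness. Since $\xi_j \contr \sigma = \fu dt_j$ and $\sigma(\xi_j,\xi_j)=0$, we get $(\fu dt_i)(\xi_j) = \sigma(\xi_i,\xi_j)$. This vanishes on the dense open set of points that are smooth points of their reduced fibers, by the first lemma (both $\xi_i$ and $\xi_j$ are tangent to the Lagrangian $X_{b,\red}$ there). By continuity it vanishes everywhere. Hence $df(\xi_j)=0$ on $X_U$, so $\xi_j$ is vertical. Along any local integral curve $\gamma(t)$ of $\xi_j$ we then have $\frac{d}{dt} f(\gamma(t)) = df(\xi_j)=0$. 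So $f\circ\Phi^j_t = f$ wherever the local flow is defined. This gives the compatibility $f\circ\Phi^j_t = f\circ p_2$ once existence is known.

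For existence for all $t\in\CC$, I will use the standard fact that a holomorphic vector field on a complex manifold has a local holomorphic flow. Each point $x$ has a neighborhood $V_x$ and a radius $\varepsilon_x>0$ such that $\Phi^j$ is defined and holomorphic on $\{\abs{t}<\varepsilon_x\}\times V_x$. Fix $b\in U$. Since $f$ is proper, the fiber $X_b$ is compact and is covered by finitely many $V_x$. So there is a uniform $\varepsilon>0$ such that the flow is defined on $\{\abs{t}<\varepsilon\}\times W$, where $W$ is a neighborhood of $X_b$. By properness we may take $W = f^{-1}(U')$ for a neighborhood $U'$ of $b$. By fiber preservation, $\Phi^j_t$ maps $f^{-1}(U')$ into itself, hence into $W$. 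Therefore the flow maps $\Phi^j_t$ for $\abs{t}<\varepsilon$ can be composed arbitrarily, via $\Phi^j_{t+s}=\Phi^j_t\circ\Phi^j_s$. Composition extends the flow holomorphically to all $t\in\CC$, first along real and imaginary directions and then by the group law. The extensions agree by uniqueness of integral curves, using the identity theorem in $t$.

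The main obstacle is making the complex-time flow rigorous. Since $\xi_j$ is holomorphic, the real vector fields $\operatorname{Re}\xi_j$ and $\operatorname{Re}(i\xi_j)$ commute. Each is complete on $f^{-1}(U')$, because its integral curves stay in a compact fiber $X_b$. Their real flows therefore combine into an action of $\RR^2=\CC$. This action is holomorphic in $(t,x)$ because it agrees with the local holomorphic flow near $t=0$ and satisfies the group law. Covering $U$ by such $U'$ and using uniqueness gives the global $\Phi^j$ on $\CC\times X_U$.
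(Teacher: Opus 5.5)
Your proof is correct and is essentially the paper's argument: a local flow near $t=0$ (the paper cites Kaup), a uniform time $\eps$ on some $f^{-1}(V)$ obtained from compactness of $X_b$ and properness, fiber-preservation from the Lagrangian/tangency property on a dense set plus continuity (the paper applies the identity theorem to $f \circ \Phi^j$ rather than to $df(\xi_j)$), and extension to all $t \in \CC$ by iterating and using uniqueness of the flow. One harmless imprecision: you only need the set of points that are smooth on their reduced fibers to be dense, not open, and density holds because that set contains the dense open regular locus of $f$.
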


\begin{proof}
This is a pretty standard result: \cite[Lem.~2.4]{Milnor} deals with compact complex manifolds,
and \cite[\S4]{Kaup} with reduced compact complex spaces.
We provide some additional details for completeness.
According to a general result by Kaup \cite[Satz~3]{Kaup} (which is written
in German, but see for example \cite[\S4]{GGK} for an English version), the flow $\Phi_t^j$ of
the holomorphic vector field $\xi_j$ is always defined at least on an open neighborhood
of the zero section in $\CC \times X_U$.

Let $b \in U$ and let $X_b = f^{-1}(b)$ be the corresponding (possibly
nonreduced) fiber. Because $f \colon X \to B$ is proper, there is an open neighborhood
$V \subseteq U$ of the point $b$ and a positive real number $\eps > 0$ such that
the flow on $X_V = f^{-1}(V)$ is defined for all times $\abs{t} < \eps$, a priori
with values in the larger open set $X_U$. This gives us a holomorphic mapping
\[
	\Phi^j \colon \menge{t \in \CC}{\abs{t} < \eps} \times X_V \to X_U
\]
Since $\xi_j$ is tangent to the smooth fibers of $f$, we have
\[
	f \bigl( \Phi^j(t, x) \bigr) = f(x)
\]
as long as $x \in X_V$ is a regular point of $f \colon X \to B$ (and $\abs{t} < \eps$);
because both sides are holomorphic, we get $f \circ \Phi^j = f \circ p_2$ on
the entire domain of $\Phi^j$. Consequently, $\Phi^j$ actually maps $X_V$ into itself.
We can now apply $\Phi^j$ repeatedly; by uniqueness of the flow, it follows that
$\Phi^j(t,x)$ is defined for all $x \in X_V$ and all times $t \in \CC$. As the
point $b \in B$ was arbitrary, we obtain the desired holomorphic mapping
\[
	\Phi^j \colon \CC \times X_U \to X_U
\]
with the property that $f \circ \Phi^j = f \circ p_2$. Again by uniqueness, $\Phi_t^j$ is
a one-parameter group of biholomorphisms of $X_U$ (over $U$): the inverse to
$\Phi_t^j$ is of course $\Phi_{-t}^j$.
\end{proof}

The vector fields $\xi_1, \dotsc, \xi_n$ commute, and so the flows
$\Phi^1, \dotsc, \Phi^n \colon \CC \times X_U \to X_U$ also commute with each other.
We can therefore define
\[
	\Phi_U \colon \CC^n \times X_U \to X_U, \quad
	\Phi_U(z_1, \dotsc, z_n, x) = (\Phi_{z_1}^1 \circ \dotsb \circ \Phi_{z_n}^n)(x),
\]
which is a holomorphic mapping with $f \circ \Phi_U = f \circ p_2$.
By uniqueness, the flow
of any holomorphic vector field of the form $g_1 \xi_1 + \dotsb + g_n \xi_n$
with $g_1, \dotsc, g_n \in \Gamma(X_U, \OX)$ must be given by the formula
\[
	(t, x) \mapsto \Phi_U \bigl( g_1(x) t, \dotsc, g_n(x) t, x \bigr);
\]
this is easily seen by computing the derivative of the right-hand side.

\begin{proposition}
The construction above defines a holomorphic mapping
\[
	\Phi \colon T^{\ast} B \times_B X \to X
\]
that fits into a commutative diagram
\begin{tcd}
T^{\ast} B \times_B X \rar{\Phi} \dar{p_2} & X \dar{f} \\
X \rar{f} & B.
\end{tcd}
\end{proposition}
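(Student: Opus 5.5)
The plan is to define $\Phi$ locally using the maps $\Phi_U$ and then check that these local definitions glue. Over a coordinate chart $U$ with coordinates $t_1, \dotsc, t_n$, the bundle $T^{\ast}B\vert_U$ is trivialized by $dt_1, \dotsc, dt_n$. A point of $T^{\ast}B \times_B X_U$ is therefore a pair $(\alpha, x)$ with $\alpha = \sum_j z_j \, dt_j$ at $b = f(x)$. We set
\[
	\Phi(\alpha, x) = \Phi_U(z_1, \dotsc, z_n, x).
\]
This map is holomorphic, since $\Phi_U$ is. It satisfies $f \circ \Phi = f \circ p_2$, because $f \circ \Phi_U = f \circ p_2$ by the preceding proposition. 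So the diagram commutes locally, and it remains only to show independence of the chart.

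The intrinsic description is this: $\Phi(\alpha, \argbl)$ is the time-one flow of the vector field $\xi_\alpha$ defined by $\xi_\alpha \contr \sigma = \fu \alpha$. For this to make sense as a vector field on the whole fiber, we extend $\alpha$ to a section over $U$; the value of the flow on $X_b$ should depend only on $\alpha(b)$.

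To check this, let $s_1, \dotsc, s_n$ be other coordinates on $U$, with associated vector fields $\eta_k$ defined by $\eta_k \contr \sigma = \fu ds_k$. Then $ds_k = \sum_j (\partial s_k / \partial t_j)\, dt_j$, and nondegeneracy of $\sigma$ gives $\eta_k = \sum_j \fu(\partial s_k/\partial t_j)\, \xi_j$. The coefficients are pulled back from $U$, so they are constant on each fiber $X_b$. By the remark before the statement, the flow of $g_1\xi_1 + \dotsb + g_n\xi_n$ is $(t,x) \mapsto \Phi_U(g_1(x)t, \dotsc, g_n(x)t, x)$. Taking $g_j = \fu(\partial s_k/\partial t_j)$ shows that the flow of $\eta_k$ is $\Phi_U$ evaluated at the fiberwise-constant vector $\bigl(t\,\partial s_k/\partial t_j(b)\bigr)_j$.

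Now compose the flows of the $\eta_k$ with times $w_k$, where $\alpha = \sum_k w_k \, ds_k$. Because the maps $\Phi_U(\argbl, x)$ restricted to a fiber form an action of the additive group $\CC^n$, this composite equals $\Phi_U$ at the point $\sum_k w_k\, (\partial s_k/\partial t_j(b))_j = (z_j)_j$. These are exactly the coordinates of $\alpha$ in the basis $dt_j$. Hence the two local definitions agree on overlaps, and the map $\Phi$ is globally well defined.

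The main obstacle is this step of composing flows, specifically the claim that on a fixed fiber $\Phi_U(z, \argbl)\circ\Phi_U(z', \argbl) = \Phi_U(z+z', \argbl)$. This does hold, because the flows $\Phi^j$ commute and each is a one-parameter group. It also uses that the coefficients $\fu(\partial s_k/\partial t_j)$ are preserved by the flows, which is true since $f \circ \Phi_U = f \circ p_2$. With this in hand, the gluing holds as an equality of holomorphic maps, and it suffices to check it pointwise, since all the maps involved are holomorphic.
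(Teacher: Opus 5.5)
Your proposal is correct and follows the paper's proof. Like the paper, you define $\Phi$ chartwise via $\Phi_U$ and write the new vector fields as $\sum_j \fu(\partial s_k/\partial t_j)\,\xi_j$. You then apply the flow formula for such combinations together with the commutativity of the flows, which gives the transformation rule $\Phi_U'(w,x) = \Phi_U\bigl(\sum_k w_k \fu(\partial s_k/\partial t_1), \dotsc, \sum_k w_k \fu(\partial s_k/\partial t_n), x\bigr)$ and lets the local maps glue. You also note explicitly that the coefficients are pulled back from $B$, hence constant along the flows; this is what makes the flow formula valid, and the paper leaves it implicit.
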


\begin{proof}
We only need to show that our construction transforms correctly under
coordinate changes. Let $t_1', \dotsc, t_n'$ be another set of
local coordinates on $U$; then
\[
	dt_i' = \sum_{j=1}^n \frac{\partial t_i'}{\partial t_j} dt_j,
\]
and so the corresponding holomorphic vector fields are
\[
	\xi_i' = \sum_{j=1}^n \fu \frac{\partial t_i'}{\partial t_j} \cdot \xi_j.
\]
According to the remark above, the flow of this vector field is
\[
	\CC \times X_U \to X_U, \quad
	(t, x) \mapsto \Phi_U \left( t \fu \frac{\partial t_i'}{\partial t_1},
	\dotsc, t \fu \frac{\partial t_i'}{\partial t_n}, x \right),
\]
and by composing these mappings and using the identity $\Phi^i \circ \Phi^j
= \Phi^j \circ \Phi^i$, we find that the new holomorphic mapping
\[
	\Phi_U' \colon \CC^n \times X_U \to X_U
\]
is related to $\Phi_U \colon \CC^n \times X_U \to X_U$ by the formula
\[
	\Phi_U'(z_1, \dotsc, z_n, x) = \Phi_U \left(
		\sum_{i=1}^n z_i \fu \frac{\partial t_i'}{\partial t_1}, \dotsc,
		\sum_{i=1}^n z_i \fu \frac{\partial t_i'}{\partial t_n}, x
	\right).
\]
It follows that the individual mappings $\Phi_U$ glue together
to a holomorphic mapping $\Phi \colon T^{\ast} B \times_B X \to X$ that of course
still satisfies $f \circ \Phi = f \circ p_2$.
\end{proof}

\subsection{Discreteness of the kernel}

In the next chapter, we are going to prove that the action of the cotangent bundle
$T^{\ast} B$ factors through a meromorphic action by a family of meromorphic groups
$p \colon G \to B$. The crucial condition that we need for this is contained
in the following lemma. Let $X_b = f^{-1}(b)$  be one of the fibers of the
Lagrangian fibration $f \colon X \to B$, and denote by $\Aut^\circ(X_b)$
the neutral component of its automorphism group. Note that $X_b$ may be nonreduced;
in fact, the result will usually be false for the reduction $X_{b, \red}$.

\begin{lemma} \label{lem:faithful-action}
	The morphism $T_b^{\ast} B \to \Lie \Aut^{\circ}(X_b)$ is injective for every $b \in B$.
\end{lemma}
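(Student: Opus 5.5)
The morphism $T_b^{\ast} B \to \Lie \Aut^{\circ}(X_b)$ sends a covector $\alpha = \sum_j a_j\, dt_j(b)$ to the holomorphic vector field $\xi_\alpha = \sum_j a_j \xi_j$, restricted to the fiber $X_b$. Here $X_b$ is the complex-space fiber, possibly nonreduced, and $\Lie \Aut^\circ(X_b) = H^0(X_b, \shT_{X_b})$. So I need to show: if $\xi_\alpha\vert_{X_b}$ vanishes as a derivation of $\shO_{X_b}$, then $\alpha = 0$. I would first recast the vanishing as a divisibility statement. The restriction $\xi_\alpha\vert_{X_b}$ is zero precisely when $\xi_\alpha(\shO_X) \subseteq \mathfrak{m}_b \shO_X$ along $X_b$, where $\mathfrak{m}_b$ is generated by $\fu t_1, \dotsc, \fu t_n$ (with $t_j(b)=0$). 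Equivalently, $\xi_\alpha$ lies in $\mathfrak{m}_b \cdot \shT_X$ near $X_b$, since $\xi_\alpha$ is tangent to fibers and so preserves $\mathfrak{m}_b\shO_X$.

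The key step uses the symplectic form to transport this to $1$-forms. Contracting with $\sigma$ gives $\xi_\alpha \contr \sigma = \fu \alpha = \sum_j a_j\, \fu dt_j$, with the $a_j$ constants. If $\xi_\alpha = \sum_k (\fu t_k)\, \eta_k$ for local vector fields $\eta_k$, then $\fu \alpha = \sum_k (\fu t_k)\, (\eta_k \contr \sigma)$. Hence the $1$-form $\fu\alpha$ would vanish identically at every point of $X_b$ as a section of $\Omega^1_X \tensor \shO_X/\mathfrak{m}_b\shO_X$. In particular $d(\alpha\circ f)$ would vanish at every point of $X_b$ as a covector in $T^\ast_x X$, i.e. $\alpha\circ f$ would be critical along all of $X_b$. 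For $\alpha \neq 0$, choose coordinates with $\alpha = dt_1$. The claim is then that $d(\fu t_1)$ cannot lie in $\mathfrak{m}_b\Omega^1_X$ along the whole fiber.

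To rule this out I would use a local argument at a generic point $x$ of a component of $X_{b,\red}$. There $X_{b,\red}$ is smooth, so near $x$ the reduced fiber is a smooth Lagrangian, but $X_b$ has some multiplicity $m$. One option is to pick a general disc $\Delta \subset B$ through $b$, transverse in direction $t_1$, and restrict. Along $X_b$, $\fu t_k = u_k \cdot g^{m_k}$-type expressions, and the conditions $d(\fu t_1) \in (\fu t_1,\dotsc,\fu t_n)\Omega^1$ should contradict properness and connectedness. Concretely, I expect to exploit that $\fu t_1$ does vanish on $X_b$ to order exactly $m_1\ge1$ along the component, for some local defining function. If $d(\fu t_1) = \sum_k \fu t_k \cdot \beta_k$, comparing vanishing orders along the component gives a contradiction, because differentiating lowers the order of vanishing by one while the right side has order at least $\min_k \operatorname{ord}(\fu t_k)$. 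The difficulty is that other $\fu t_k$ may vanish to lower order than $\fu t_1$. I would handle this by choosing $\alpha$ generically in the line, or by changing coordinates so that $t_1$ realizes the minimal vanishing order among the linear forms in $\alpha$'s direction. I would then use the commutation and Lagrangian structure to control the remaining terms.

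The main obstacle is exactly this order-of-vanishing comparison when the fiber is multiple and the $\fu t_k$ have different orders. Matsushita's theorem gives isotropy of $X_{b,\red}$. An alternative I would try first is more global: restrict to a generic curve through $b$, so $f$ becomes a degeneration over a disc. Then use that a vector field vanishing on the fiber divided by $t$ would, after the flow, give a nonzero element of $\ker$ over the nearby smooth fibers, where $T^\ast B \to H^0(\shT_{X_{b'}})$ is an isomorphism by nondegeneracy of $\sigma$. This yields a contradiction via semicontinuity on the relative Douady space.
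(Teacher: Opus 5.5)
Your first step is correct and coincides with the paper's. If $\xi_\alpha$ induces the zero derivation of $\shO_{X_b}$, then its coefficients in local coordinates lie in $(\fu t_1, \dotsc, \fu t_n)$. Contracting with $\sigma$ then gives $d(\fu t_1) = \sum_k \fu t_k \, \beta_k$ for holomorphic $1$-forms $\beta_k$ near any point of $X_b$.

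The gap is the final contradiction, which is where the content lies, and the route you sketch does not reach it. Comparing orders of vanishing along a component of $X_{b,\red}$ only yields $\operatorname{ord}(\fu t_1) - 1 \geq \min_k \operatorname{ord}(\fu t_k)$. This is perfectly consistent whenever $\fu t_1$ vanishes to higher order than some other $\fu t_k$, and that really happens. Take $S \times S'$, with $S$ an elliptic K3 surface having a fiber of type $I_0^{\ast}$ and $S'$ a smooth elliptic fibration such as $E' \times \Delta \to \Delta$. Near a general point of $C \times E'$, where $C$ is the double component, one has $\fu t_1 = u^2$ up to a unit and $\fu t_2 = v$, so the inequality reads $1 \geq 1$.

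Your repairs do not help:
\begin{itemize}
\item $\alpha$ is the given kernel element, so there is no freedom to choose it generically.
\item Replacing $t_1$ by $t_1 + q$ with $q \in \mathfrak{m}_b^2$ changes $\fu t_1$ only by terms of order $\geq 2 \min_k \operatorname{ord}(\fu t_k)$. So the minimal order is not attainable in general; in the example it stays $\geq 2$.
\item The appeal to ``commutation and Lagrangian structure'' is not an argument.
\item The Douady-space alternative is also undeveloped: dividing $\xi_\alpha$ by the parameter of a disc only restates the same divisibility condition, and no contradiction is actually exhibited.
\end{itemize}

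The missing idea is to eliminate the competing functions by restricting to their common zero locus rather than to the fiber.
\begin{enumerate}
\item Fix $x \in X_b$ and let $Y = \{\fu t_2 = \dotsb = \fu t_n = 0\}$ near $x$. Every component of $Y$ through $x$ has dimension at least $n+1$.
\item On the other hand, $Y \cap \{\fu t_1 = 0\} = X_b$ has dimension $n$; this is where equidimensionality, i.e.\ Matsushita's theorem, is used. So $\fu t_1$ is not identically zero on $Y$.
\item Hence there is a holomorphic disc $c \colon \Delta \to Y$ with $c(0) = x$ and $h = \fu t_1 \circ c \not\equiv 0$.
\item Pulling back the relation along $c$ kills every term with $k \geq 2$ and leaves $h'(t)\,dt = h(t)\, c^{\ast}\beta_1$. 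So $h$ divides $h'$.
\item But $h(0) = 0$, and writing $h = t^m u$ with $m \geq 1$ and $u(0) \neq 0$ shows that $h'$ vanishes to order exactly $m-1$. This is a contradiction.
\end{enumerate}
Your vanishing-order intuition is the right one, but it only works along a curve on which the other $\fu t_k$ vanish identically.
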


\begin{proof}
After shrinking $B$, we may assume that we have coordinates $t_1, \dotsc, t_n$, and that
the point $b \in B$ is the origin of the coordinate system; then $T_b^{\ast} B \cong \CC^n$.
The Lagrangian fibration $f \colon X \to B$ is now just $n$ holomorphic functions $f_1, \dotsc, f_n$.
We have $n$ commuting holomorphic vector fields $\xi_1, \dotsc, \xi_n \in \Gamma(X, \shT_X)$,
defined by the rule $df_i = \sigma(\xi_i, \argbl)$, and the construction
in \Cref{par:action} integrates these vector fields to a holomorphic mapping
$\CC^n \times X \to X$. We are interested in the scheme-theoretic fiber $X_0 = f^{-1}(0)$,
whose structure sheaf is $\OX/(f_1, \dotsc, f_n)$.

We argue by contradiction. Suppose that $\CC^n \to \Lie \Aut^{\circ}(X_b)$ is not injective. After a linear
change of coordinates, we can arrange that the vector $(1, 0, \dotsc, 0)$ is in the kernel;
this means that the flow $\Phi_t^1$ of the vector field $\xi_1$ acts trivially
on the (possibly nonreduced) subspace $X_0$. This is saying that the flow acts trivially on
the reduction $X_{0, \red}$ of the fiber, and also that
that for every (locally defined) holomorphic function $g$ on $X$, one has
\[
	g - g \circ \Phi_t^1 \in (f_1, \dotsc, f_n).
\]
The infinitesimal version, which we get
by taking the derivative at $t=0$, is that the Lie derivative of $g$ in the direction of
$\xi_1$ lies in the ideal of $X_0$. In symbols,
\begin{equation} \label{eq:Lie-derivative}
	\mathcal{L}_{\xi_1}(g) = dg(\xi_1) \in (f_1, \dotsc, f_n).
\end{equation}
Choose local coordinates $x_1, \dotsc, x_{2n}$ on $X$, centered at an arbitrary point
$x \in X_0$.  Write the vector field as $\xi_1 = a_1 \partial_1 + \dotsb + a_{2n} \partial_{2n}$,
where $a_1, \dotsc, a_{2n}$ are holomorphic functions on a
neighborhood of the point $x$, and $\partial_j = \partial/\partial x_j$.  If we apply \eqref{eq:Lie-derivative} to the functions
$x_1, \dotsc, x_{2n}$, we find that all the coefficients $a_j = dx_j(\xi_1)$ belong to the ideal $(f_1,
\dotsc, f_n)$. By construction,
\[
	\frac{\partial f_1}{\partial x_k} = df_1(\partial_k) = \sigma(\xi_1, \partial_k)
	= \sum_{j=1}^{2n} a_j \sigma(\partial_j, \partial_k),
\]
and so it follows that all the partial
derivatives of $f_1$ in our local coordinate system $x_1, \dotsc, x_{2n}$ also
belong to the ideal $(f_1, \dotsc, f_n)$. In other words, the $1$-form $df_1$ vanishes
on the (possibly nonreduced) subspace $X_0$.

It is now a pleasant exercise to show that this cannot happen. For dimension reasons,
the equations $f_2 = \dotsb = f_n = 0$ define a closed analytic subspace $Y \subseteq X$
of dimension $n+1$; because $\dim X_0 = n$, the image of the holomorphic function
$f_1 \colon Y \to \CC$ must contain an open neighborhood of the origin.
We can therefore find a holomorphic curve $c \colon \Delta \to Y$ such that $c(0) = x$
and such that $h = f_1 \circ c \colon \Delta \to \CC$ is nonconstant.
Since $f_j \circ c = 0$ for $j=2, \dotsc, n$, we deduce that $h(0) = 0$ and that $dh/dt$ is
divisible by $h$. By looking at the power series for $h$, it follows that $h \equiv 0$,
which is a contradiction.
\end{proof}

\begin{example}
The automorphism group of a nonreduced space can be much larger than that of its reduction.
One can see this already in the case of the projective scheme $\operatorname{Proj}
\CC[x,y,z]/(z^2)$: the automorphism group of $\PP^1 = \operatorname{Proj} \CC[x,y]$ is
the $3$-dimensional group $\operatorname{PGL}_2(\CC)$, while the nonreduced scheme
has a $6$-dimensional automorphism group, consisting of all invertible $3 \times 3$-matrices
of the form
\[
	\begin{pmatrix}
	a_0 & a_1 & a_2 \\
	b_0 & b_1 & b_2 \\
	0 & 0 & 1
	\end{pmatrix}.
\]
\end{example}

\section{Families of meromorphic groups} \label{sec:meromorphic}

In the previous chapter, we showed that in any Lagrangian fibration over a smooth
base, the cotangent bundle of the base acts holomorphically on the fibers of the
Lagrangian fibration. We are now going to argue that this action factors through a
meromorphic action by family of meromorphic groups (in the sense of Fujiki); in
particular, this means that when the Lagrangian fibration is algebraic, we get an
algebraic action by a commutative group scheme.

\subsection{Introduction}

In order to make the result as applicable as possible, we are going to work in
somewhat greater generality. Let $X$ be a normal complex space that is Cohen-Macaulay
and K\"ahler (in the sense of \cite[Def.~1.2]{fujiki-douady}), and let $f \colon X \to B$
be a proper holomorphic mapping to a complex
manifold $B$. We assume that $f$ is surjective, that all fibers are connected
and have the same dimension $n$. Consequently, $\dim X = \dim B + n$, and $f$ is flat
of relative dimension $n$. We denote by $X_b = f^{-1}(b)$ the scheme-theoretic
fibers; these are $n$-dimensional compact complex spaces that are K\"ahler but
possibly nonreduced. We further assume that $f$ is generically a \define{torus
fibration}, in the sense that the singular locus $X_{\sing}$ does not map onto $B$,
and that the smooth fibers of $f$ are compact complex tori of dimension $n$. We
denote by $\disc(f) \subseteq B$ the maximal closed analytic subset over which the
fibers of $f$ are singular; $f$ is then a smooth morphism over the Zariski-open
subset $B \setminus \disc(f)$.

Next, let $p \colon E \to B$ be a holomorphic vector bundle of rank $n$, and suppose
that we have a holomorphic action
\begin{equation} \label{eq:action}
	a \colon E \times_B X \to X
\end{equation}
on $X$ over $B$; in the case of a Lagrangian fibration, $E$ is the
cotangent bundle of $B$. We are going to use the same letter $a \colon E_b \times X_b
\to X_b$ to denote the action of the $n$-dimensional complex vector space $E_b =
p^{-1}(b)$ on the fiber $X_b$. The fiberwise action is of course the same thing as a
holomorphic group homomorphism $E_b \to \Aut^{\circ}(X_b)$, where $\Aut^\circ(X_b)$ is the
neutral component of the automorphism group of the compact complex
space $X_b$.

We impose the following two additional conditions on the action in \eqref{eq:action},
suggested by what happens in the case of Lagrangian fibrations:
\begin{enumerate}
\item The homomorphism $E_b \to \Lie \Aut^{\circ}(X_b)$ is injective for every $b \in B$.
\item The homomorphism $E_b \to \Lie \Aut^{\circ}(X_b)$ is surjective when $X_b$ is smooth.
\end{enumerate}
Since a smooth fiber $X_b$ is a compact complex torus of dimension $n$, the group
$\Aut^{\circ}(X_b)$ is the quotient of the $n$-dimensional vector space $\Lie
\Aut^{\circ}(X_b)$ by a lattice; therefore (1) and (2) are saying that, on each smooth fiber, the
group homomorphism $E_b \to \Aut^{\circ}(X_b)$ is surjective with discrete kernel. When $X_b$ is
singular and maybe nonreduced, the dimension of $\Aut^{\circ}(X_b)$ can be much larger than $n$;
but the condition in (1) is saying that $E_b \to \Aut^{\circ}(X_b)$ still has a discrete kernel. We
are going to describe this kernel in more detail later on (see \Cref{rmk:Lambda} and
\Cref{subs: netr}).

\begin{remark}
For (2), the nonreduced structure of the fibers matters very much. Indeed,
$E_b$ can act nontrivially on $X_b$ even when its induced action on the
reduced fiber $X_{b,\mathrm{red}}$ is trivial.
\end{remark}

The following theorem is the main result of this chapter. See \Cref{subs: merogrp}
for relevant definitions. The proof uses three big
results -- existence of the relative Douady space, properness of its
components, and Hironaka's flattening theorem -- but is otherwise elementary.

\begin{theorem} \label{thm:meromorphic}
	With notation and assumptions as above, there is a family of
	meromorphic groups $p \colon G \to B$, such that the action in \eqref{eq:action}
	factors through a meromorphic action $a \colon G \times_B X \to X$ over $B$. For
	each $b \in B$, the fiber $G_b = p^{-1}(b)$ is an $n$-dimensional meromorphic
	subgroup of $\Aut(X_b)$, whose neutral component is the quotient of the
	complex vector space $E_b$ by a discrete subgroup.
\end{theorem}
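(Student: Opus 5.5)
Following the outline in \Cref{subs:fam act grp}, I will build $G$ inside the relative Douady space. Consider the relative Douady space $\mathcal{D}(X \times_B X / B)$, which exists by Fujiki's theorem, and its Zariski-open subspace $\Aut_{X/B}$ parametrizing graphs of automorphisms of fibers. The action \eqref{eq:action} gives a classifying $B$-morphism $\alpha \colon E \to \Aut_{X/B}$, sending $v \in E_b$ to the graph of $a(v,\argbl) \colon X_b \to X_b$. Since $E$ is irreducible, so is its image. I let $D$ be the reduction of the irreducible component of the relative Douady space that contains $\alpha(E)$. Because $X/B$ is proper and K\"ahler, Fujiki's result gives that $D \to B$ is proper. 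I then define $G \subseteq D$ to be the open locus of points that correspond to graphs of automorphisms and at which $D \to B$ is smooth.

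Over $B \setminus \disc(f)$, conditions (1) and (2) say that $E_b \to \Aut^\circ(X_b)$ is surjective with discrete kernel. Hence $\alpha$ is a local isomorphism there, and $D$ has relative dimension $n$ generically. The group structure should come from composition and inversion of graphs, which are meromorphic maps on the Douady space. Concretely, $(\Gamma_g, \Gamma_h) \mapsto \Gamma_g \circ \Gamma_h$ is holomorphic on $\Aut_{X/B} \times_B \Aut_{X/B}$. It preserves the component $D$ generically, because over the torus locus it is induced by addition in $E$. Its graph is a closed analytic subset of $D \times_B D \times_B D$, so it is meromorphic in Fujiki's sense. The same reasoning applies to the action map $D \times_B X \to X$, whose graph is the restriction of the universal family. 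Hironaka's flattening theorem, applied to the universal family over $D$, will be used to compactify these structures and to check the properness conditions in the definition of a family of meromorphic groups (\Cref{subs: merogrp}).

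The main obstacle is showing that the identity section $e(B)$ lies in $G$, and that $D$ is locally irreducible along $e(B)$. Only then does each fiber $G_b$ form an $n$-dimensional group containing $\alpha(E_b)$ as its neutral component. My plan here is to use condition (1), namely \Cref{lem:faithful-action} in the Lagrangian case. Since $d\alpha_b \colon E_b \to \Lie \Aut^\circ(X_b)$ is injective, $\alpha$ is an immersion along the zero section. So $\alpha(E)$ is a smooth $(\dim B + n)$-dimensional germ through $e(b)$. Since $\dim D = \dim B + n$, this germ is a local branch of $D$ through $e(b)$.

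To exclude other branches through $e(b)$, I would translate by the group action. Any other branch would be a component of $D \cap \Aut_{X/B}$ through $e(b)$, and since $D$ is stable under left composition by $\alpha(E)$, translating it would sweep out a subvariety of dimension exceeding $\dim D$ unless it coincides with $\alpha(E)$ near $e(b)$. I expect this dimension count to need real care. It may require the flatness of $f$ and the upper semicontinuity of fiber dimension of $D \to B$, together with the fact that $\dim D_b \le n$ follows from the discreteness of stabilizers.

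Granting this, $D \to B$ is smooth along $e(B)$, so $e(B) \subseteq G$. I would then show that $G^\circ_b = \alpha(E_b) \cong E_b/\Lambda_b$, where $\Lambda_b$ is discrete by (1), and that $G$ is closed under composition and inversion. The action then factors meromorphically through $G$, as claimed in \Cref{thm:meromorphic}.
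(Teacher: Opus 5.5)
Your setup (the component $D$ of the relative Douady space, and the locus $G\subseteq D$ of automorphisms at which $D\to B$ is smooth) matches the paper. So does your immersion argument along the zero section, which is \Cref{lem:local-component} with $\alpha$ in place of the paper's $\eps$.

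\textbf{The gap.} It is exactly the step you flag, and it is the heart of the theorem: the translation/dimension count cannot rule out a second branch $D''$ of $D$ at $e(b)$.
\begin{itemize}
\item All translates $\alpha(v)\cdot D''$ lie in $D$, so whatever they sweep out automatically has dimension at most $\dim D$.
\item Over $B\setminus\disc(f)$, every local branch is an open piece of the smooth torus bundle. It is therefore already stable under small translations by $\alpha(E)$, and translating produces nothing new.
\item The bound $\dim D_b\le n$ is not available either. Discreteness of the kernel of $E_b\to\Aut(X_b)$ only says that the $\alpha(E_b)$-orbits near $e(b)$ are $n$-dimensional, which bounds $\dim D_b$ from below. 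The paper explicitly allows $D_b$ to be larger.
\item Even equidimensionality of $D\to B$ near $e(b)$ would not suffice. Two planes in $\CC^4$ meeting in a point, each mapping isomorphically onto $\CC^2$, are equidimensional but locally reducible.
\end{itemize}
What is missing is a flatness input. Composing with points of $D''$ does not help: composition is only known to land in $D$ on $G\times_B D$, and $e(b)\in G$ is what you are trying to prove.

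\textbf{How the paper closes it.} It gets flatness by passing to a curve.
\begin{enumerate}
\item Choose an arc $\tilde c\colon\Delta\to D''$ with $\tilde c(0)=e(b)$, such that $\tilde c(\Delta\setminus\{0\})$ lies over $B\setminus\disc(f)$ and misses the branch $\alpha(E)$.
\item Base change $X$ and $E$ along $c=\pi\circ\tilde c$. Let $D_\Delta$ be the component of the relative Douady space of $X_\Delta\times_\Delta X_\Delta$ determined by $E_\Delta$.
\item Since $D_\Delta$ is reduced and irreducible over a one-dimensional base, it is automatically flat over $\Delta$.
\item Let $G_\Delta$ be the locus of graphs of automorphisms; no smoothness condition is needed now. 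Then $G_\Delta\times_\Delta D_\Delta$ is reduced and irreducible, and composition maps it into $D_\Delta$.
\item Hence $G_\Delta\to\Delta$ is a flat family of complex Lie groups, so $D_\Delta$ is smooth along the identity section (\Cref{lem:curves}).
\item But $D_\Delta$ contains both the image of $E_\Delta$ and the arc $\tilde c$, so it has two branches at $e_\Delta(0)$. This is a contradiction.
\end{enumerate}

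\textbf{A lesser point.} The closure of the graph of composition is not automatically analytic. The paper proves analyticity by embedding it in $p_{1235}(Z_1\cap Z_2)$. Only then does it apply Fujiki's flattening lemma to obtain the meromorphic extension to $(D\times_B D)_{\main}$.
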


The general idea is to associate to an automorphism of $X_b$ its graph in the Douady
space of $X_b \times X_b$. In this way, the action in
\eqref{eq:action} determines a holomorphic mapping from $E$ to the relative Douady
space of $X \times_B X$ over $B$, and we construct $G$ as a carefully chosen
Zariski-open subset of the irreducible component containing the image.
In the case when $f \colon X \to B$ is a projective morphism between algebraic varieties,
the relative Douady space is the analytification of the relative Hilbert scheme; here the
theorem implies that the action in \eqref{eq:action} factors through an algebraic
action by a commutative group scheme.

\begin{corollary}
	With notation and assumptions as above, suppose that $X$ and $B$ are algebraic
	varieties, and that $f \colon X \to B$ is a projective morphism.  Then $p \colon G
	\to B$ is a commutative group scheme that acts algebraically on $X$ over $B$.
\end{corollary}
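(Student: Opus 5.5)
The plan is to deduce the corollary from \Cref{thm:meromorphic}, using that in the algebraic projective setting every analytic object in the construction is the analytification of an algebraic one. When $f$ is projective, the relative Douady space of $X\times_B X$ over $B$ is the analytification of the relative Hilbert scheme $\mathrm{Hilb}(X\times_B X/B)$. Its connected components are quasi-projective over $B$: each has bounded Hilbert polynomial with respect to a relatively ample line bundle, hence is projective over $B$. Its irreducible components are therefore algebraic. So the reduced irreducible component $D$ containing the image of $E\to \mathrm{Douady}$ is the analytification of an algebraic subvariety $D^{\mathrm{alg}}$, projective over $B$.

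Next I check that the Zariski-open subset $G\subseteq D$ used in the proof of \Cref{thm:meromorphic} is algebraic. It is cut out by two conditions: the point is the graph of an automorphism of the fiber, and $D\to B$ is smooth there.
\begin{itemize}
\item The graph condition says that both projections of the universal subscheme to $X_b$ are isomorphisms. This is an open condition in the algebraic sense, by properness and flatness: fiberwise isomorphism of proper flat morphisms is open, and upper semicontinuity is algebraic.
\item The smooth locus of $D^{\mathrm{alg}}\to B$ is Zariski open.
\end{itemize}
Hence $G=(G^{\mathrm{alg}})^{an}$ for an open subscheme $G^{\mathrm{alg}}\subseteq D^{\mathrm{alg}}$.

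Then the group operations are handled as follows. Multiplication, inverse and unit section of $G$, as well as the action map, are defined via composition of graphs. Composition of graphs is given by the morphism $\mathrm{Aut}_{X/B}\times_B\mathrm{Aut}_{X/B}\to\mathrm{Aut}_{X/B}$, which is algebraic on the Hilbert scheme: $\mathrm{Aut}_{X/B}$ is an open subscheme of Hilb, representing the automorphism functor algebraically. The action map $\mathrm{Aut}_{X/B}\times_B X\to X$ is likewise algebraic. So these structure maps are morphisms of schemes restricting to $G^{\mathrm{alg}}$. Because they are holomorphic on $G$, which already lands in $G$ by \Cref{thm:meromorphic}, they land in $G^{\mathrm{alg}}$ by GAGA-type comparison for the inclusion of the open subscheme. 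The group axioms hold analytically, so they hold algebraically, since the analytification functor is faithful on reduced schemes of finite type. Commutativity of $G$ likewise transfers, and follows from the commutativity of $E$ together with the fiberwise description of $G_b$ as a meromorphic group whose neutral component is $E_b/\Lambda_b$ (using density arguments on components if needed, as in the theorem's proof).

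The main obstacle is the meromorphicity versus algebraicity of the action. In \Cref{thm:meromorphic} the action is only meromorphic, but here the action is honestly defined on $G\times_B X$ via the universal graph, so it is a regular morphism. The point to check carefully is that the Fujiki meromorphic structure on $G$ coincides with the one induced by the compactification $D^{\mathrm{alg}}$. This matching is what makes the meromorphic statements of the theorem translate into algebraic ones.
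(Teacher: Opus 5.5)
Your proposal is correct and is essentially the paper's argument. The paper justifies the corollary by observing that the relative Douady space is the analytification of the relative Hilbert scheme; consequently $D$, the Zariski-open subset $G$, the group operations and the action (all built from universal families and composition of graphs) are algebraic, and commutativity is inherited from the analytic lemma proved via irreducibility of $G\times_B G$ (commutativity of $E$ alone only covers the neutral components). The ``obstacle'' in your last paragraph is not one: the meromorphic structure in \Cref{thm:meromorphic} is by definition the one coming from $D=(D^{\mathrm{alg}})^{\mathrm{an}}$, and, as you yourself note, algebraicity of the action follows directly from the universal graph.
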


\subsection{Meromorphic mappings}

We recommend \cite[\S5]{fujiki-douady} for basic definitions. Let $X$ be a
complex space, Hausdorff and second-countable, but possibly nonreduced. A \define{meromorphic mapping} from $X$ to another
complex space $Y$ is a holomorphic mapping $f \colon U \to Y$ that is defined on a
dense Zariski-open subset $U \subseteq X$, and whose graph
\[
	\Gamma_f = \menge{(x,f(x)) \in U \times Y}{x \in U}
\]
has an analytic closure inside $X \times Y$. We usually abbreviate this by saying
that $f \colon X \mto Y$ is a meromorphic mapping. We consider two
meromorphic mappings to be equivalent if they agree on a dense Zariski-open
subset of $X$. If we have holomorphic mappings $X \to S$ and $Y \to S$, then $f
\colon X \mto Y$ is called a \define{meromorphic mapping over $S$} if the
diagram
\[
	\begin{tikzcd}
		U \rar{f} \dar & Y \dlar \\
		S
	\end{tikzcd}
\]
commutes; equivalently, if the closure of the graph satisfies $\overline{\Gamma_f}
\subseteq X \times_S Y$.

Let $X \to S$ be a proper holomorphic mapping, and denote by $D_{X/S}$ the relative
Douady space. Fujiki \cite[Lem.~5.1]{fujiki-douady} provides the following result for
constructing memomorphic mappings into $D_{X/S}$ (using Hironaka's flattening theorem).

\begin{lemma} \label{lem:Fujiki}
	Let $T$ be a complex space with a holomorphic mapping $T \to S$, and let $Z$ be a
	closed subspace of the fiber product $X_T = T \times_S X$.
	\[
		\begin{tikzcd}
			Z \rar[hook] \drar & X_T \rar \dar & X \dar \\
			& T \rar & S
		\end{tikzcd}
	\]
	Suppose that $Z$ is flat over a dense Zariski-open subset $U \subseteq T$, and denote
	by $h \colon U \to D_{X/S}$ the resulting holomorphic mapping into the relative
	Douady space. If $T$ is reduced, then $h \colon T \mto D_{X/S}$ is
	a meromorphic mapping.
\end{lemma}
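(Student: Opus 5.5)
The plan is to reduce to the case where the closure of the graph of $h$ is obtained by flattening. First, replace $T$ by its connected (or irreducible) components; since $T$ is reduced, $U$ is dense in each. Over $U$ the subspace $Z_U = Z \cap X_U$ is flat and proper over $U$, so by the universal property of $D_{X/S}$ it defines the holomorphic map $h \colon U \to D_{X/S}$. Note that $Z \to T$ is proper, because $X \to S$ is proper. I would apply Hironaka's flattening theorem to the proper map $Z \to T$. This produces a proper bimeromorphic morphism $\pi \colon T' \to T$ with $T'$ reduced, obtained as a composition of blowups centered outside $U$ (so $\pi$ is an isomorphism over $U$), such that the strict transform $Z' \subseteq T' \times_S X$ of $Z$ is flat over $T'$.

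Second, I would use the universal property again. The flat proper family $Z' \subseteq X_{T'}$ defines a holomorphic map $h' \colon T' \to D_{X/S}$ over $S$. Over $\pi^{-1}(U) \cong U$ the strict transform agrees with $Z_U$, so $h' = h \circ \pi$ there. Now consider the image $\Gamma$ of $(\pi, h') \colon T' \to T \times_S D_{X/S}$. Because $\pi$ is proper, $(\pi,h')$ is proper, and by Remmert's proper mapping theorem $\Gamma$ is a closed analytic subset. It contains the graph $\Gamma_h$ of $h$ over $U$ as a dense subset: $\pi^{-1}(U)$ is dense in $T'$, since the blowup centers are nowhere dense and $T'$ is reduced. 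Hence $\Gamma = \overline{\Gamma_h}$ is analytic, which is exactly the statement that $h \colon T \mto D_{X/S}$ is meromorphic. It is automatically a meromorphic mapping over $S$.

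The main obstacle is the flattening step, specifically obtaining $\pi$ as an isomorphism over $U$ with $\pi^{-1}(U)$ dense. There are two points to watch. First, Hironaka's theorem in the analytic setting is local over relatively compact subsets of $T$. I would either cite the global form for proper maps with $T$ reduced, or work over a locally finite cover by relatively compact opens and glue the resulting local analytic closures. The gluing is legitimate because analyticity of a closed subset is a local property. Second, $U$ must be the flat locus or contained in it, and one must check that the strict transform restricts to $Z_U$ over $U$. Both follow since the blowup centers lie in the non-flat locus, which is contained in $T \setminus U$.
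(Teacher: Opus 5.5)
Your proof is correct and matches the paper's approach. The paper does not reprove the lemma; it cites \cite[Lem.~5.1]{fujiki-douady} and notes that the proof uses Hironaka's flattening theorem. That proof is essentially your argument: flatten $Z \to T$ by a modification that is an isomorphism over $U$, classify the flat strict transform by a map $h' \colon T' \to D_{X/S}$, and apply Remmert's proper mapping theorem to $(\pi, h')$ to see that the closure of the graph of $h$ is analytic, working locally over relatively compact opens as you describe.
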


\subsection{Meromorphic groups and meromorphic actions}\label{subs: merogrp}

Let's briefly recall a few relevant definitions and results from Fujiki's work on
automorphism groups of compact K\"ahler manifolds \cite{fujiki-auto}, related to
meromorphic groups and meromorphic actions. The main point is that one can embed the
automorphism group of a compact K\"ahler space $X$ into the Douady space
of $X \times X$, and that the components of the Douady space are compact. (The Douady space
is the analogue of the Hilbert scheme for compact complex spaces.) See
\cite[\S1]{fujiki-auto} for the definition of (possibly singular and nonreduced)
compact K\"ahler spaces and of (reduced) compact complex spaces in Fujiki's class $\fuj$.

Let $G$ be a complex Lie group. In Fujiki's terminology \cite[Def.~2.1]{fujiki-auto},
a \define{meromorphic
structure} on $G$ is a compactification $G^{\ast}$, with $G$ Zariski-open and dense in
$G^{\ast}$, such that the group operation $G \times G \to G$ extends to a meromorphic
mapping $G^{\ast} \times G^{\ast} \mto G^{\ast}$ that is holomorphic on the dense
Zariski-open subset $G \times G^{\ast} \cup G^{\ast} \times G$. (In particular, the
compactification $G^{\ast}$ comes with a $G$-action.)
If $G$ acts holomorphically on a compact complex space $X$, then the action is said to be
\emph{meromorphic} if the holomorphic mapping $G \times X \to X$ extends to a
meromorphic mapping $G^{\ast} \times X \mto X$. We usually talk about ``meromorphic
groups'' and ``meromorphic actions'', with the understanding that this refers to a fixed
(equivalence class of) compactification $G^{\ast}$.

\begin{example}
In the case where $G = \Aut^\circ(X)$ is the neutral component of the automorphism
group of a compact K\"ahler space $X$, the \define{natural meromorphic structure} $G^{\ast}$
is obtained by taking the closure of $G$ inside the Douady space of $X \times X$.
Fujiki proves that this is a compact analytic space
of class $\fuj$ (= the image of a compact K\"ahler manifold under a proper
holomorphic mapping), and that this particular compactification gives $G$ the
structure of a meromorphic group that acts meromorphically on $X$, in the above sense.
More precisely, Fujiki \cite[Lem~4.6]{fujiki-auto} only states this for \emph{reduced}
compact complex
spaces of class $\fuj$, but the result actually holds as long as the reduction $X_{\red}$ is of class $\fuj$:
by \cite[Thm.~5.3]{fujiki-douady}, this condition is enough to make the reduction of
each irreducible component of $D_{X \times X}$ into a compact complex space of class $\fuj$,
and so \cite[Thm.~1.2]{fujiki-auto} and hence  \cite[Prop~2.2]{fujiki-auto} hold in that generality.
\end{example}

Next, let's discuss the analogue of the Chevalley structure theorem from the theory
of algebraic groups. Let $G$ be a connected meromorphic group.
By \cite[Prop.~3.1]{fujiki-auto}, there is always a universal morphism $q \colon G \to T$
to a compact complex torus, such that $\ker q$ is connected; concretely, $T$ is obtained
as the Albanese torus of a resolution of singularities of the compactification $G^{\ast}$.
For simplicity, we call $T$ the \define{maximal compact quotient} of $G$.
Fujiki \cite[Def.~3.1]{fujiki-auto} then says that $G$ is \define{of regular type} if there
is an exact sequence
\begin{equation} \label{eq:regular-type}
	\begin{tikzcd}[column sep=small]
		1 \rar & L \rar & G \rar{q} & T \rar & 1
	\end{tikzcd}
\end{equation}
in the category of meromorphic groups,with $L = \ker q$ isomorphic, as a meromorphic group,
to a linear algebraic group. If $G$ is of regular type, then both $L$ and $T$ are
functorial, in the category of meromorphic groups; the main point is that a meromorphic mapping
from a  compactification of a linear algebraic group to a compact complex torus is constant
\cite[Lem.~3.8]{fujiki-auto}. Fujiki proves in \cite[Cor.~5.7]{fujiki-auto} that the
following three conditions on a connected meromorphic group $G$ are equivalent:
\begin{enumerate}
\item $G$ is of regular type.
\item $G$ is \define{of type $\fuj$}, meaning that one can choose $G^{\ast}$ in Fujiki's class $\fuj$.
\item $G$ is \define{of K\"ahler type}, meaning that one can choose $G^{\ast}$ to be
a compact K\"ahler manifold.
\end{enumerate}
According to later work by Pillay and Scanlon \cite{Pillay+Scanlon}, every meromorphic
group is of regular type. The following weaker result by Fujiki is sufficient for our
purposes.

\begin{proposition}(Fujiki) \label{prop:regular-type}
Let $G$ be a connected meromorphic group. If $G$ is commutative, or if $G$ is
a meromorphic subgroup of $\Aut^{\circ}(X)$ for $X$ a compact complex space such
that $X_{\red}$  is of class $\fuj$, then $G$ is of regular type.
\end{proposition}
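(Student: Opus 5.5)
The plan is to reduce both cases to Fujiki's equivalence of (1) regular type, (2) type $\fuj$ and (3) K\"ahler type, recalled just before the statement. For connected meromorphic groups this equivalence is \cite[Cor.~5.7]{fujiki-auto}. In each case it then suffices to exhibit a compactification $G^{\ast}$ defining the given meromorphic structure that lies in Fujiki's class $\fuj$, or else to verify directly that the kernel of $q \colon G \to T$ is linear algebraic.

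\emph{Subgroup case.} Let $G \subseteq \Aut^{\circ}(X)$ be a connected meromorphic subgroup, with $X_{\red}$ of class $\fuj$. By definition, the meromorphic structure on $G$ is the one induced from the natural meromorphic structure on $\Aut^{\circ}(X)$. Up to equivalence, $G^{\ast}$ is therefore the closure $\overline{G}$ of $G$ inside the Douady space $D_{X \times X}$, and $\overline{G}$ is an analytic subset of a single irreducible component of $D_{X \times X}$. The first step is to invoke \cite[Thm.~5.3]{fujiki-douady}, as in the Example above: because $X_{\red}$ is of class $\fuj$, the reduction of every irreducible component of $D_{X \times X}$ is a compact complex space of class $\fuj$. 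The second step is to note that class $\fuj$ is stable under passage to closed analytic subspaces. Hence $G^{\ast}_{\red} = \overline{G}_{\red}$ is of class $\fuj$, so $G$ is of type $\fuj$, and by \cite[Cor.~5.7]{fujiki-auto} it is of regular type. The only care needed is that Fujiki's statements are written for reduced spaces. As in the Example, one passes to reductions throughout; the meromorphic group structure only depends on the reduced compactification.

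\emph{Commutative case.} Let $G$ be connected and commutative, and let $q \colon G \to T$ be the universal morphism of \cite[Prop.~3.1]{fujiki-auto}, with $L = \ker q$ connected. The task is to show that $L$, with its induced meromorphic structure, is isomorphic to a linear algebraic group. I would proceed in three steps.
\begin{enumerate}
\item Resolve $G^{\ast}$ equivariantly and consider the meromorphic extension $G^{\ast} \mto T$ of $q$. The closure $L^{\ast}$ of $L$ is a compact space on which the commutative group $L$ acts meromorphically with a dense open orbit.
\item By the universality of $q$ and the connectedness of $L$, every meromorphic map from $L^{\ast}$ to a compact complex torus is constant; in other words, a resolution of $L^{\ast}$ has trivial Albanese.
\item A compact almost-homogeneous space under a commutative complex Lie group with trivial Albanese has algebraic dimension equal to its dimension, so it is Moishezon. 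One then shows that the $L$-action is algebraic on a projective model, which makes $L$ a commutative linear algebraic group, namely a product of $\CC^{\ast}$'s and $\CC$'s.
\end{enumerate}
Exactness of $1 \to L \to G \to T \to 1$ in the category of meromorphic groups then follows from the construction of $q$ as a meromorphic homomorphism.

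The main obstacle is step (3) of the commutative case: deducing algebraicity of $L^{\ast}$ from the absence of torus quotients. This genuinely uses Fujiki's structure theory of almost-homogeneous spaces, and in practice I would cite Fujiki's theorem \cite{fujiki-auto} that commutative meromorphic groups are of regular type, rather than reprove it. The subgroup case, by contrast, is a formal consequence of the Douady-space results already quoted in the Example.
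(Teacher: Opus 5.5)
Your proof is correct and, like the paper's, rests on Fujiki's results. The only genuine difference is in the subgroup case. The paper applies \cite[Cor.~5.7]{fujiki-auto} to the ambient group $\Aut^{\circ}(X)$, whose natural compactification in the Douady space is reduced and of class $\fuj$. It then passes to the meromorphic subgroup $G$ via \cite[Rmk.~4.2]{fujiki-auto}. You instead note that $\overline{G}$ is a closed analytic subspace of a class-$\fuj$ space, hence itself of class $\fuj$, and apply Cor.~5.7 to $G$ directly. You thus trade heredity of regular type under meromorphic subgroups for heredity of class $\fuj$ under closed subspaces. Both versions capture the point behind the paper's remark on reducedness: Cor.~5.7 is only applied to a group whose compactification is reduced and of class $\fuj$, so the non-reducedness of $X$ is harmless. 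In the commutative case, you and the paper both cite \cite[Prop.~3.9]{fujiki-auto}.

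You are right to fall back on that citation, because your sketch would not survive on its own: step (3) is false as stated. Consider a Hopf surface $(\CC^2\setminus\{0\})/\langle (z_1,z_2)\mapsto(\alpha z_1,\beta z_2)\rangle$. It contains the commutative group $(\CC^{\ast})^2/\langle(\alpha,\beta)\rangle$ as a dense open orbit. It has no nonconstant meromorphic maps to compact tori, since it has no holomorphic $1$-forms and meromorphic maps from a manifold to a torus are holomorphic. Yet it is not Moishezon, since it is not even K\"ahler. What excludes it is that the group law, not merely the action, must extend meromorphically to $L^{\ast}\times L^{\ast}$; using this is the actual content of Fujiki's proposition. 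Step (2) is also underjustified. Universality of $q$ only rules out torus quotients of $L$ once you use commutativity to build the corresponding larger torus quotient of $G$ and check that it is meromorphic.
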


\begin{proof}
When $G$ is commutative, this is \cite[Prop.~3.9]{fujiki-auto}. Fujiki \cite[Thm.~5.5]{fujiki-auto}
states the second result only for reduced $X$, so we give some details.
Suppose that $X$ is a compact complex space whose reduction $X_{\red}$ is of class $\fuj$.
As we explained above, $\Aut^{\circ}(X)$ has a natural structure of meromorphic group,
which is of class $\fuj$ in the sense of \cite[Def.~3.1]{fujiki-auto}. By \cite[Cor.~5.7]{fujiki-auto},
this implies that $\Aut^{\circ}(X)$ is of regular type; the point is that $\Aut^{\circ}(X)$
acts not just on $X$, but also on the compactification giving the meromorphic structure,
which is reduced. As a meromorphic subgroup, $G$ is then also of regular type by \cite[Rmk.~4.2]{fujiki-auto}.
\end{proof}

As we are interested in families of compact K\"ahler spaces, we should  define
carefully what we mean by a holomorphic family of complex Lie groups. In the case
when $p \colon G \to B$ is algebraic, this is the definition of a (flat) group scheme.

\begin{definition} \label{def:family-groups}
	A \define{holomorphic family of complex Lie groups} is a flat holomorphic mapping $p
	\colon G \to B$ between two complex manifolds, together with a holomorphic section
	$e \colon B \to G$ and two holomorphic mappings
	\[
		m \colon G \times_B G \to G \quad \text{and} \quad i \colon G \to G
	\]
	over $B$, subject to the following conditions:
	\begin{enumerate}
		\item $m$ is fiberwise associative, meaning that this diagram commutes:
			\[
				\begin{tikzcd}
					G \times_B G \times_B G \rar{m \times \id} \dar{\id \times m}
						& G \times_B G \dar{m} \\
					G \times_B G \rar{m} & G
				\end{tikzcd}
			\]
		\item $e$ is the fiberwise unit, meaning that this diagram commutes:
			\[
				\begin{tikzcd}[row sep=small]
					G \drar[swap,sloped]{(\id, e)} \arrow[bend left=25]{drr}{\id} \\
					& G \times_B G \rar{m} & G \\
					G \urar[sloped]{(e, \id)} \arrow[bend right=25]{urr}{\id}
				\end{tikzcd}
			\]
		\item $i$ is the fiberwise inverse, meaning that this diagram commutes:
			\[
				\begin{tikzcd}
					& G \times_B G \drar[bend left=20]{m} \\
					G \urar[bend left=20]{(\id, i)} \drar[bend right=20,swap]{(i, \id)}
							\rar & B \rar{e} & G \\
				  & G \times_B G \urar[bend right=20,swap]{m}
				\end{tikzcd}
			\]
	\end{enumerate}
	Each fiber $G_b = p^{-1}(b)$ is a complex Lie group, and therefore reduced
	and nonsingular; because of flatness, this makes the holomorphic mapping $p \colon
	G \to B$ smooth.
\end{definition}

Here are relative versions of Fujiki's definitions.

\begin{definition}
	Let $p \colon G \to B$ be a holomorphic family of complex Lie groups. It is called
	a \define{family of meromorphic groups} if $p \colon G \to B$ admits an extension
	to a proper morphism $D \to B$ with $D$ reduced and $G$ Zariski-open and
	dense in $D$, in such a way that the
	group operation $G \times_B G \to G$ extends to a holomorphic mapping $G
	\times_B D \cup D \times_B G \to D$ whose graph has an analytic closure inside $D
	\times_B D \times_B D$.
\end{definition}

The somewhat complicated definition is due to the fact that the fiber product
$D \times_B D$ may have several irreducible components. Denote by
$(D \times_B D)_{\main}$ the main component, meaning the closure of $G
\times_B G$, with the reduced complex structure. Then we are asking that
\[
	(D \times_B D)_{\main} \mto D
\]
is a meromorphic mapping that is holomorphic on the dense Zariski-open subset $G
\times_B D \cup D \times_B G$ and extends the group operation $m \colon G \times_B G
\to G$. Note that the proper morphism $D \to B$ is part of the data, and when we speak of a
``family of meromorphic groups'', it will always be clear which $D$ we have in mind.
The definition implies that the inverse $i \colon D \mto D$ is  a
meromorphic mapping over $B$.

\begin{definition}
	Let $p \colon G \to B$ be a family of meromorphic groups that act holomorphically
	on the fibers of a proper holomorphic mapping $f \colon X \to B$. We say that the
	action $a \colon G \times_B X \to X$ is \define{meromorphic} if it extends to a
	meromorphic mapping $D \times_B X \mto X$.
\end{definition}

Note that we are not requiring $D \to B$ to be equidimensional, and so each
fiber $D_b$ is not necessarily a compactification of the complex Lie group
$G_b = p^{-1}(b)$. Nevertheless, our definitions do imply that each $G_b$ is a
meromorphic group that acts meromorphically on $X_b = f^{-1}(b)$. The reason is
that $G_b$ is a Zariski-open subset of the compact complex space $D_b$, and so
$G_b$ is contained in finitely many irreducible components of $D_b$. Their
union must then be the closure of $G_b$, and by restricting the meromorphic
mapping $(D \times_B D)_{\mathrm{main}} \mto D$ to this closure (which makes
sense because it is holomorphic on $G \times_B D \cup D \times_B G$), we obtain
on $G_b$ the structure of a meromorphic group. Similarly, by restricting the
meromorphic mapping $D \times_B X \mto X$, we see that $G_b$ acts
meromorphically on $X_b$. Thus, for every $b \in B$, the fiber $G_b$ is a
meromorphic group and acts meromorphically on $X_b$. This justifies the
terminology ``family of meromorphic groups'' and ``acts meromorphically.''

\subsection{The compactification}

Let's start working on the proof of \Cref{thm:meromorphic}.
Let $D_{X \times_B X/B}$ denote the
relative Douady space for the morphism $X \times_B X \to B$. This is a complex space
with countably many irreducible components; since $f \colon X \to B$ is proper and
flat, and $X$ is K\"ahler, the reduction of each irreducible component of
the Douady space is proper over $B$ \cite[Thm~5.2]{fujiki-douady}, and the fibers of
the projection to $B$ are in Fujiki's class $\fuj$ \cite{fujiki-catC-II}.
The (relative) Douady space has the same universal property as the (relative) Hilbert
scheme in algebraic geometry. In particular, there is a morphism $D_{X \times_B X/B}
\to B$, whose fiber over any point $b \in B$ is the Douady space of $X_b \times
X_b$, and there is a universal family
\[
	Z_{X \times_B X/B} \subseteq D_{X \times_B X/B} \times_B X \times_B X
\]
that is proper and flat over $D_{X \times_B X/B}$. When $f \colon X \to B$ is a projective
morphism between algebraic varieties,  $D_{X \times_B X/B}$ is naturally isomorphic to
the analytification of the relative Hilbert scheme.

The action in \eqref{eq:action} corresponds to a holomorphic mapping from the vector
bundle $E$ into the relative Douady space, in the following manner. The graph
\begin{tcd}
		E \times_B X \dar \rar[hook] & E \times_B X \times_B X \dar \\
		E \rar{p} & B
\end{tcd}
of the action embeds $E \times_B X$ into $E \times_B X \times_B X$ as a closed complex
subspace. It is proper and flat over $E$ (because $f \colon X \to B$ is proper and
flat); by the universal property of the relative Douady space, it therefore
determines a holomorphic mapping
\[
	\eps \colon E \to D_{X \times_B X/B}
\]
over $B$. Let $D$ denote the unique irreducible component containing the image
of $\eps$, taken with its \emph{reduced} structure; because $E$ is a complex manifold,
$\eps$ factors through the reduction of the component containing the image.
We also write $\eps \colon E \to D$ for the induced mapping, and we let
$\pi \colon D \to B$ be the projection to $B$.

\begin{lemma}
	The mapping $\pi \colon D \to B$ is proper, and its fibers are compact complex
	spaces in $\fuj$. We have $\dim D = \dim B + n = \dim E$, and $D$ is the closure of
	$\eps(E)$.
\end{lemma}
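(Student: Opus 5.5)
Properness and the class $\fuj$ statement come directly from the cited results on relative Douady spaces. $D$ is the reduction of an irreducible component of $D_{X \times_B X/B}$. Since $f$ is proper and flat and $X$ is K\"ahler, \cite[Thm~5.2]{fujiki-douady} shows that $\pi \colon D \to B$ is proper. The fibers $D_b = \pi^{-1}(b)$ are closed analytic subspaces of the Douady space $D_{X_b \times X_b}$, lying in finitely many of its components. By \cite{fujiki-catC-II} (or \cite[Thm.~5.3]{fujiki-douady}, using that $(X_b \times X_b)_{\red}$ is of class $\fuj$), the reduced components are compact of class $\fuj$. Closed subspaces of class $\fuj$ spaces are again of class $\fuj$, so the fibers are as claimed.

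For the dimension, first observe that $\eps$ is injective on tangent spaces along fibers in the relevant sense. The tangent space to $D_{X_b \times X_b}$ at the graph $\Gamma_g$ of an automorphism $g$ is $H^0(\Gamma_g, N)$. Near the graphs of automorphisms, the Douady space of $X_b \times X_b$ is locally isomorphic to $\Aut(X_b)$, which is open in it. The restriction $\eps_b \colon E_b \to \Aut^\circ(X_b)$ is a group homomorphism whose differential is injective by condition (1) (\Cref{lem:faithful-action}). Hence $\eps_b$ is an immersion with discrete kernel, and every fiber of $\eps$ is discrete.

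Consequently $\eps(E)$ contains sets of dimension $\dim E = \dim B + n$. More precisely, $\eps$ is a holomorphic map with discrete fibers, so $\dim \overline{\eps(E)} \geq \dim E$. Here I apply the rank theorem at a general point of the irreducible manifold $E$. For the reverse inequality, I work over the dense open set $B \setminus \disc(f)$. There $X_b$ is a torus, the Douady space near automorphisms is $\Aut(X_b)$ of dimension $n$, and by (2) the image $\eps(E_b)$ is open in $\Aut^\circ(X_b)$. So over $B^\circ = B \setminus \disc(f)$, the image of $\eps$ is an open subset $W$ of $D_{X\times_B X/B}$ that is smooth over $B^\circ$ of relative dimension $n$. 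Openness holds because $\Aut$ is Zariski-open in the Douady space and the relative $\Aut^\circ$ is smooth over $B^\circ$, being a family of tori.

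Thus the irreducible component containing $\eps(E)$ contains the open set $W$ of dimension $\dim B + n$. Since $D$ is irreducible and contains a nonempty open subset of dimension $\dim B + n$, we get $\dim D = \dim B + n$. Moreover, $\overline{\eps(E)}$ is an irreducible analytic subset of $D$ containing the nonempty open subset $W$ of $D$, so it equals $D$.

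The main obstacle is showing that $\overline{\eps(E)}$ is analytic at all. It is an a priori nonclosed image of a noncompact manifold, and the argument above only uses that it contains $W$. I would avoid the issue by arguing this way: $D$ is irreducible, and $W \subseteq \eps(E)$ is open in $D$, so $W$ is dense in $D$. Hence $D = \overline{W} \subseteq \overline{\eps(E)} \subseteq D$, and no analyticity of the image is needed. The remaining care is the local identification of the Douady space with $\Aut(X_b)$ near graphs over $B^\circ$, including the smoothness of $W$ over $B^\circ$. For this I would use that $X \to B$ is smooth over $B^\circ$, so that $\Aut_{X/B}$ is an open subspace of the relative Douady space there. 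Its fibers are the $n$-dimensional groups $\Aut(X_b)$, and they are hit openly by $E_b$ by (1) and (2).
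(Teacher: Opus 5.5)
Your treatment of properness and class $\fuj$ is the same as the paper's: a citation of Fujiki. Your dimension count is a more careful version of the paper's remark that the general fiber $D_b$ is $\Aut^\circ(X_b)$. Over $B^\circ = B \setminus \disc(f)$, the fiber of the relative Douady space near $\eps(E_b)$ is $\Aut(X_b)$, which is smooth of dimension $n$. So the Zariski tangent space there has dimension at most $n+\dim B$, and since $\eps$ is an immersion over $B^\circ$, its image $W$ is open. Irreducibility of $D$ then gives $\dim D = \dim B + n$ correctly. The rank-theorem lower bound is not needed.

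The gap is in the last step. The inference ``$W$ is a nonempty open subset of the irreducible space $D$, hence dense'' is false in the classical topology: a small disc is open in $\CC$ but not dense. Openness plus irreducibility only shows that $W$ lies in no proper closed analytic subset of $D$. That would suffice if ``closure'' meant analytic closure. Your argument, however, explicitly uses the topological closure ($D = \overline{W} \subseteq \overline{\eps(E)}$). For that you need the fact the paper packages into ``the general fiber is just $\Aut^\circ(X_b)$'': that $D$ contains nothing besides $W$ over $B^\circ$.

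One way to supply this is as follows. Put $D^\circ = D \cap \pi^{-1}(B^\circ)$. It is connected and dense in $D$, as the complement of the proper analytic subset $D \cap \pi^{-1}(\disc(f))$ of the irreducible space $D$. The set $W$ is open in $D^\circ$. It is also closed there, because $W \to B^\circ$ is proper:
\begin{itemize}
\item By your tangent-space argument, $\eps$ is a local biholomorphism onto $D$ near each point of $\ker \eps_b$.
\item Hence a basis of the lattice $\ker \eps_b$ extends to continuous local sections of $\bigcup_{b'} \ker \eps_{b'} \subseteq E$. These stay $\RR$-linearly independent nearby, so they span full lattices inside $\ker\eps_{b'}$.
\item Consequently, for compact $K \subseteq B^\circ$, the set $W \cap \pi^{-1}(K)$ is the image under $\eps$ of a compact family of fundamental parallelepipeds.
\end{itemize}
Therefore $W = D^\circ$, and $D = \overline{D^\circ} \subseteq \overline{\eps(E)} \subseteq D$.
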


\begin{proof}
	Since $X$ is K\"ahler and $f \colon X \to B$ is proper and flat, the first two
	assertions are consequences of Fujiki's results, already quoted above. Each smooth
	fiber $X_b$ is a compact complex torus of dimension $n$, and because $E_b \to \Aut^{\circ}(X_b)$
	is surjective in that case (by our assumptions), the general fiber $D_b
	= \pi^{-1}(b)$ is just the embedding of $\Aut^{\circ}(X_b)$ into the Douady space of $X_b
	\times X_b$, and so it has dimension $n$. We conclude that $\dim D = \dim B + n =
	\dim E$. For dimension reasons, $D$ is therefore equal to the closure of the image
	of $\eps$.
\end{proof}

Note that $D_b$ is always a compact subspace of the Douady space of $X_b \times X_b$,
but it could in principle be of dimension greater than $n$, have multiple components,
etc. That said, we are going to prove quite soon that $D$ is actually a
complex manifold along the image of $\eps \colon E \to D$.

We can now construct $p \colon G \to B$ as a Zariski-open subset of $D$. Let $Z \to
D$ denote the pullback of the universal family, as in the following diagram:
\begin{tcd}
	Z \rar \dar[hook] & Z_{X \times_B X/B} \dar[hook] \\
	D \times_B X \times_B X \rar \dar & D_{X \times_B X/B} \times_B X \times_B X \dar \\
	D \rar & D_{X \times_B X/B}
\end{tcd}
Over each point $d \in D_b$, the fiber $Z_d$ is a closed subspace of $X_b
\times X_b$; when $d$ is the image of a point in $E_b$, this subspace
is exactly the graph of the automorphism coming from the action in \eqref{eq:action}.
We define $G \subseteq D$ as
\begin{equation} \label{eq:P-definition}
	G = \MENGE{d \in D}{\parbox{6.3cm}{$Z_d$ is the graph of an automorphism of
	$X_b$\\and $\pi \colon D \to B$ is smooth at the point $d$}};
\end{equation}
this makes sure that the fibers $G_b$ are complex manifolds of dimension $n$ and
consist entirely of automorphisms of $X_b$ (provided $G_b$ is nonempty). Of course,
$G$ contains $\Aut^{\circ}(X_b) = \Aut^\circ(X_b)$ for all the smooth fibers, but some work is
necessary to show that the fibers over other points $b \in B$ are also nonempty. In
particular, because of the smoothness condition in \eqref{eq:P-definition}, it is not
clear a priori that the image of $\eps \colon E \to D$ actually lands in $G$.

\begin{lemma} \label{lem:action}
	$G$ is a Zariski-open subset of $D$, and the restriction of the universal family
	$Z \to D$ to this subset is the graph of a holomorphic mapping
	\begin{equation} \label{eq:action-P}
		a \colon G \times_B X \to X.
	\end{equation}
	For $g \in G_b$ and $x \in X_b$, we usually write $g \cdot x$ in place of $a(g,x)$.
\end{lemma}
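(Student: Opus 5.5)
Write $G = G_1 \cap G_2$, where $G_2 \subseteq D$ is the locus where $\pi \colon D \to B$ is smooth and $G_1 \subseteq D$ is the locus of points $d$ with $Z_d$ the graph of an automorphism of $X_b$. I will show separately that $G_1$ and $G_2$ are Zariski-open; then $G$ is Zariski-open as well.

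\textbf{$G_2$ is Zariski-open.} The non-smooth locus of a holomorphic map between reduced complex spaces is a closed analytic subset. Indeed, it is the union of the singular locus of $D$, the non-flat locus (which is analytic by Frisch's theorem), and the degeneracy locus of the relative differentials, cut out by a Fitting ideal of $\Omega_{D/B}$. So $G_2$ is Zariski-open.

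\textbf{$G_1$ is Zariski-open.} Let $q_1, q_2 \colon Z \to D \times_B X$ be the two projections; $Z$ is proper and flat over $D$. The condition that $Z_d$ is the graph of an automorphism means that $q_1$ and $q_2$ restricted to the fibre over $d$ are both isomorphisms $Z_d \to X_b$. I will argue in two steps.

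\begin{enumerate}
\item Consider the locus of $d$ where the fibre map $q_{1,d} \colon Z_d \to X_b$ is an isomorphism. Since $Z$ and $D \times_B X$ are both proper and flat over $D$, the fibrewise criterion of flatness and properness show that $q_{1}$ is an isomorphism over a neighbourhood of such $d$.
\item The complement of this locus is the image in $D$ of the union of two sets: the support of $\ker(\shO_{D\times_B X} \to q_{1*}\shO_Z)$ and $\coker$ of that map, together with the non-finite locus of $q_1$. Each is a closed analytic subset of $D \times_B X$, so by properness of $D \times_B X \to D$ and Remmert's proper mapping theorem its image is closed analytic.
\end{enumerate}

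The same argument applies to $q_2$. Intersecting the two open sets gives $G_1$ as a Zariski-open subset.

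\textbf{$Z$ over $G$ is the graph of a holomorphic map.} Over $G$, the morphism $q_1 \colon Z|_G \to G \times_B X$ is an isomorphism by step 1 above. Define
\[
a = \mathrm{pr}_X \circ q_2 \circ q_1^{-1} \colon G \times_B X \to X .
\]
This is holomorphic, and $Z|_G$ is its graph. On each fibre, $a$ restricts to the automorphism of $X_b$ whose graph is $Z_d$.

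\textbf{Main obstacle.} The delicate point is step 1: upgrading "isomorphism on the fibre over $d$" to "isomorphism over a neighbourhood of $d$", and checking that the bad locus is analytic, when $X_b$ may be nonreduced. The first thing I will try is Nakayama's lemma applied to the coherent kernel and cokernel of $\shO_{D\times_B X} \to q_{1*}\shO_Z$. Because the target $D \times_B X$ is flat over $D$, the cokernel vanishing on the fibre forces vanishing nearby. Flatness of $Z$ then gives that the kernel's formation commutes with base change, so the kernel vanishes nearby as well.
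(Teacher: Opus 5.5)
Your proposal is correct and follows essentially the same route as the paper. The paper also treats smoothness of $\pi$ as a standard Zariski-open condition. It shows the automorphism condition is Zariski-open by taking the closed analytic locus in $D \times_B X$ over which $p_{12}$ or $p_{13}$ fails to be an isomorphism (points with positive-dimensional fibres, $\Supp\ker\phi$, $\Supp\coker\phi$), projecting it to $D$ by properness, and defining $a$ as $p_3$ composed with the inverse of $p_{12}$ over $G$. Your Nakayama/flatness step makes explicit something the paper leaves implicit: that the complement of this image is exactly the locus where $Z_d$ is a graph. One correction there: the cokernel vanishes near the fibre by right exactness and Nakayama alone, with no flatness of the target needed. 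It is the flatness of $q_{1*}\shO_Z$ over $D$ (coming from flatness of $Z$ and finiteness of $q_1$ near the fibre) that kills the kernel.
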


\begin{proof}
	There are two conditions in the definition of $G$. The condition on the second
	line is Zariski-open, because the smooth locus of a holomorphic mapping between
	complex spaces is Zariski-open. To deal with the
	condition on the first line, we use a similar
	argument as in \cite[Lem.~5.5]{fujiki-douady}. The general result we need is that
	if $f \colon X \to Y$ is a proper morphism, then the maximal open subset of $Y$
	over which $f$ is an isomorphism is actually Zariski-open. The reason is that its
	complement is the union
	\[
		\menge{y \in Y}{\dim f^{-1}(y) \geq 1} \cup \Supp \ker(\phi) \cup \Supp
		\coker(\phi)
	\]
	of three closed analytic sets; here $\phi \colon \OY \to \fl \OX$ is the comorphism.
	Consider the two projections
	\[
		p_{12} \colon Z \to D \times_B X \quad \text{and} \quad
		p_{13} \colon Z \to D \times_B X.
	\]
	The set of points in $D \times_B X$ over which either $p_{12}$ or $p_{13}$ fails to
	be an isomorphism is closed analytic; its image in $D$ is therefore closed
	analytic, because $X$ is proper over $B$. The complement of this set consists
	exactly of those points $d \in D$ where the projection of $Z_d \subseteq X_{\pi(d)} \times
	X_{\pi(d)}$ to both factors is an isomorphism, meaning where $Z_d$ is the graph of an
	automorphism of $X_{\pi(d)}$. This proves that $G$ is Zariski-open inside $D$.

	By construction, the restriction of the universal family $Z$ to the open subset $G$
is isomorphic to $G \times_B X$, as in the following diagram:
\begin{tcd}
	G \times_B X \rar \dar[hook] & Z \dar[hook] \\
	G \times_B X \times_B X \rar \dar & D \times_B X \times_B X \dar \\
	G \rar & D.
\end{tcd}
The vertical arrow on the top left is therefore the graph of a morphism
\[
	a \colon G \times_B X \to X,
\]
which can also be viewed as the composition of the projection $p_3 \colon G
\times_B X \times_B X \to X$ with the inverse of the isomorphism $p_{12} \colon G
\times_D Z \to G \times_B X$.
\end{proof}

\subsection{The family of complex Lie groups}

Let $p \colon G \to B$ denote the restriction of the proper mapping $\pi \colon D
\to B$ to the Zariski-open subset $G$. The next step is to show that this is a
holomorphic family of complex Lie groups.

\begin{theorem} \label{prop:family-groups}
	The mapping $p \colon G \to B$ is a holomorphic family of complex Lie groups.
	Moreover, the image $\eps(E)$ of the morphism $\eps \colon E \to D$ is an open
	subset of $G$, and the induced mapping $\eps \colon
	E \to G$ is a fiberwise group homomorphism.
\end{theorem}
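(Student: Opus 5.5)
The plan is to realize $G$ inside the relative automorphism space and let composition of graphs supply the group structure. Write $\Aut_{X/B} \subseteq D_{X \times_B X/B}$ for the open subset parametrizing graphs of automorphisms of fibers; it is open by the argument in the proof of \Cref{lem:action}. By the universal property of the Douady space, composition and inversion of graphs are holomorphic maps
\[
\mu \colon \Aut_{X/B} \times_B \Aut_{X/B} \to \Aut_{X/B}, \qquad \iota \colon \Aut_{X/B} \to \Aut_{X/B},
\]
where $\iota$ swaps the two factors of $X \times_B X$. These make $\Aut_{X/B}$ a group object over $B$. Since the maps $a(v, \argbl)$ come from the action \eqref{eq:action}, $\mu(\eps(v), \eps(w)) = \eps(v+w)$. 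So $\eps$ is a fiberwise homomorphism, and $\eps(0_b)$ is the identity $e(b)$. For each $v \in E$, left composition $L_v$ with $\eps(v)$ is an automorphism of $\Aut_{X/B}$ over $B$. It maps $\eps(E)$ into itself, so it preserves the closure $D$ and its intersection with $\Aut_{X/B}$, and it preserves smoothness of $\pi$. This reduces every local question about $D$ along $\eps(E)$ to the identity section $e(B) = \eps(0_E)$.

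The heart of the matter is to show that $\eps(E)$ is open in $D$ and that $\eps \colon E \to D$ is a local isomorphism onto its image. By condition (1), which for Lagrangian fibrations is \Cref{lem:faithful-action}, the differential of $\eps$ on each fiber is injective: at $0_b$ it is $E_b \to \Lie \Aut^\circ(X_b)$, and elsewhere one translates. Together with $p = \pi \circ \eps$ being smooth, this makes $\eps$ an immersion, with $\dim E = \dim D$. What remains is to exclude other local branches of $D$ at $e(b)$, i.e.\ to prove that $D$ is locally irreducible along $e(B)$; this is the step I expect to be the main obstacle. I would first pass to the normalization $\nu \colon D^\nu \to D$. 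The immersion $\eps$ from the manifold $E$ lifts to $\eps^\nu \colon E \to D^\nu$, and by dimension count $\eps^\nu$ is an open embedding near $0_E$. So one must show that $\nu$ is injective over $e(B)$, i.e.\ that no second branch of $D$ passes through $e(b)$. Suppose a branch $W \neq \eps(E)$ passed through $e(b)$. Its points over $B \setminus \disc(f)$ are of the form $\eps(v_k)$ with $v_k \to \infty$ in $E$, since they converge to $e(b)$ outside the branch $\eps(E)$. Applying the holomorphic maps $\mu$ and $L_{-v_k}$, which preserve $D$, I would try to show that the closure of such a branch yields a positive-dimensional family of automorphisms in the kernel of $E_b \to \Aut^\circ(X_b)$ near the origin. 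That would contradict the discreteness of the kernel given by condition (1). If this direct argument fails, I would instead use properness of $D \to B$: the discrete subgroups $\ker(E_{b'} \to \Aut^\circ(X_{b'}))$ vary so that the quotients glue to a Hausdorff space mapping injectively onto an open subset of $D$.

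Once this is in place, the rest is formal. The map $\pi$ is smooth along $\eps(E)$ because $\eps$ is a local isomorphism and $p$ is smooth, so $\eps(E) \subseteq G$ and $\eps(E)$ is open in $G$. For closure under composition: $\mu$ maps $\eps(E) \times_B \eps(E)$ into $D$, and $\eps(E) \times_B \eps(E)$ is dense in the main component of $(D \times_B D) \cap (\Aut_{X/B} \times_B \Aut_{X/B})$. Hence $\mu$ maps $G \times_B G$ into $D \cap \Aut_{X/B}$, and likewise $\iota(G) \subseteq D \cap \Aut_{X/B}$. Smoothness of $\pi$ at $\mu(g,h)$ follows because right composition with $h$ is a local automorphism of $D \cap \Aut_{X/B}$ over $B$, with inverse given by composition with $\iota(h)$; it carries the smooth point $g$ to $\mu(g,h)$. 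Thus $G$ is stable under $\mu$ and $\iota$ and contains $e(B)$. The axioms of \Cref{def:family-groups} hold because they hold on $\Aut_{X/B}$. Finally, $G$ is a manifold and $p$ is smooth, hence flat, so $p \colon G \to B$ is a holomorphic family of complex Lie groups and $\eps \colon E \to G$ is a fiberwise homomorphism.
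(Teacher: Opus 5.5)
You correctly isolate local irreducibility of $D$ along $e(B)$ as the crux, but your proposal does not prove it, and neither suggested route works as stated. Everything else is sound. Your immersion argument (injectivity of $E_b \to \Lie \Aut^\circ(X_b)$ plus smoothness of $p$) recovers \Cref{lem:local-component}: $\eps$ maps a neighborhood of $0_b$ isomorphically onto a full-dimensional local branch of $D$ at $e(b)$. Given local irreducibility, your closure-under-composition steps go through, provided left and right composition are taken with local sections of $E$ or $G$ rather than single elements $\eps(v)$, which act on only one fiber.

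The gap lies in both suggested routes for local irreducibility. In the first, $v_k \to \infty$ in $E$, so composing with $\eps(-v_k)$ has no limit, and nothing in the sketch produces elements of $\ker(E_b \to \Aut^\circ(X_b))$ near $0$. Condition (1) is a statement about the single fiber over $b$. It gives $E_b \hookrightarrow T_{e(b)} D_b$ but no upper bound on that tangent space. It therefore does not visibly conflict with a second $(\dim B + n)$-dimensional branch reaching $e(b)$ from over $B \setminus \disc(f)$. The fallback assumes the conclusion. A Hausdorff quotient $E/\Lambda$ mapping injectively into $D$ would not by itself have open image, because points of a second branch would come from points escaping to infinity in $E/\Lambda$.

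The missing idea, used in the paper (\Cref{lem:local-irreducibility} and \Cref{lem:curves}), is to restrict to a curve. There flatness is automatic and the group structure forces smoothness.
\begin{itemize}
\item Choose $\tilde c \colon \Delta \to W$ with $\tilde c(0) = e(b)$, with $\tilde c(\Delta \setminus \{0\})$ lying over $B \setminus \disc(f)$ and off the branch $\eps(E)$.
\item Base change $X$ and $E$ along $c = \pi \circ \tilde c$. Let $D_\Delta$ be the component of the relative Douady space of $X_\Delta \times_\Delta X_\Delta$ containing $\eps(E_\Delta)$.
\item For $t \neq 0$ we have $\tilde c(t) \in \Aut^\circ(X_{c(t)}) = \eps(E_{c(t)})$. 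Hence $D_\Delta$ still has two local branches at $e_\Delta(0)$.
\item Since $D_\Delta$ is reduced, irreducible and dominant over a smooth curve, it is flat over $\Delta$.
\item Let $G_\Delta \subseteq D_\Delta$ be the locus of graphs of automorphisms, with \emph{no} smoothness condition imposed; it contains the section. Then $G_\Delta \times_\Delta D_\Delta$ is reduced and irreducible.
\item Consequently composition maps $G_\Delta \times_\Delta D_\Delta$ into $D_\Delta$, and hence $G_\Delta \times_\Delta G_\Delta$ into $G_\Delta$.
\item Thus $G_\Delta$ is a flat group object over $\Delta$. Its fibers are complex group spaces, hence nonsingular, so $G_\Delta \to \Delta$ is smooth along the section.
\end{itemize}
This smoothness contradicts the existence of two branches at $e_\Delta(0)$.
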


We begin by constructing the group operations, because the same reasoning will be
needed again later on. Consider the following commutative diagram:
\begin{tcd}
	& B \dar{e} \dlar[bend right=25] \arrow[bend left=50]{dd}{\id} \\
	E \rar{\eps} \drar[bend right=25]{p} & D \dar{\pi} \\
													 & B
\end{tcd}
The image of the zero section in $E$ gives us a section $e \colon B \to D$ that
embeds $B$ into $D$ as a complex subspace; alternatively, one can get this from
the diagonal embedding $X \into X \times_B X$, using the universal property of $D_{X
\times_B X/B}$. For each $b \in B$, the point $e(b) \in D_b$ corresponds to the graph
of the identity automorphism of $X_b$. This is eventually going to be the identity
section of our family of complex Lie groups -- but because of the smoothness condition in
\eqref{eq:P-definition}, it will take us some time to prove that the image of the
section actually lies in $G$.

\begin{remark}
The general idea is to use the universal
family $Z \subseteq D \times_B X \times_B X$. The following heuristic may be helpful.
If we have a triple $(g,x,y) \in G \times_B X \times_B X$, then
$(g,x,y) \in Z$ if and only if $y = g \cdot x$ (or $y = a(g,x)$ in the notation of
\Cref{lem:action});
this makes sense because $Z_g$ is the graph of an automorphism. So we should think of
$(d,x,y) \in Z$ as saying that $y = d \cdot x$, even though $Z_d$ is of course not
the graph of an automorphism for arbitrary $d \in D$.
\end{remark}

We can easily construct a holomorphic involution $i \colon G \to G$ that sends each
automorphism to its inverse. Let
\[
	\sigma \colon D \times_B X \times_B X \to D \times_B X \times_B X
\]
be the involution defined by $\sigma(d,x,y) = (d,y,x)$. Then $\sigma(Z)$ is still
proper and flat over $D$, and so it corresponds to a morphism $i \colon D \to D_{X
\times_B X/B}$. The image of $i$ must be contained in $D$, because the image of $\eps
\colon E \to D$ obviously goes to itself under $i$, and $D$ is the unique reduced and
irreducible complex subspace containing $\eps(E)$. At a point $g \in G$, the subset
$Z_g$ is the graph of an automorphism, and $\sigma(\{g\} \times Z_g)$ is the graph of
the inverse automorphism; also, the involution $i$ preserves the locus where $\pi
\colon D \to B$ is smooth. Therefore $i(G) \subseteq G$.

\begin{lemma} \label{lem:automorphism}
	The holomorphic mapping
	\[
		G \times_B X \to G \times_B X, \quad (g,x) \mapsto \bigl( g, a(g,x) \bigr),
	\]
	defined using $a \colon G \times_B X \to X$ from \eqref{eq:action-P}, is
	biholomorphic.
\end{lemma}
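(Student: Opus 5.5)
The plan is to write down an explicit inverse using the involution $i \colon G \to G$ constructed just above, namely
\[
	(g,x) \mapsto \bigl( g, a(i(g),x) \bigr).
\]
The map $\Psi(g,x) = (g, a(g,x))$ is holomorphic by \Cref{lem:action}, and so is this candidate inverse, since $i$ is holomorphic on $G$. It then suffices to check that the two compositions are the identity on $G \times_B X$.

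The key input is a description of the universal family $Z$ over $G$ in two ways. By \Cref{lem:action}, $G \times_D Z \subseteq G \times_B X \times_B X$ is the graph of $a$. This means that the projection $p_{12}$ restricts to an isomorphism $G \times_D Z \to G \times_B X$ whose inverse is $(g,x) \mapsto (g,x,a(g,x))$.

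By the definition of $G$, the projection $p_{13}$ is also an isomorphism $G \times_D Z \to G \times_B X$. We claim that $\Psi = p_{13} \circ p_{12}^{-1}$, which is then a composition of two isomorphisms and hence biholomorphic. This identity follows directly from $p_{12}^{-1}(g,x) = (g,x,a(g,x))$ and $p_{13}(g,x,y) = (g,y)$. Here the relevant projections are the ones written in the proof of \Cref{lem:action}, viewed as maps over $G$. Note that $G$ was defined precisely as a subset of the locus where both projections are isomorphisms, so this is immediate once we know that the isomorphism statement holds scheme-theoretically over all of $G$, not only fiberwise.

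The main obstacle is this last point. Knowing that $p_{12}$ and $p_{13}$ are isomorphisms on each fiber $Z_d$ does not automatically give isomorphisms of the total spaces over $G$. I would handle it as in the proof of \Cref{lem:action}. The locus of points of $D \times_B X$ over which the proper map $p_{13}$ fails to be an isomorphism is closed analytic, and $G$ avoids its image in $D$ by construction. Hence $p_{13}$ is a proper map over $G \times_B X$ with finite fibers whose comorphism has vanishing kernel and cokernel there, so it is an isomorphism onto $G \times_B X$.

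Alternatively, one can use the fiberwise criterion. Since $Z$ is flat over $D$, $G \times_B X$ is flat over $G$, and $p_{13}$ restricts to an isomorphism on each fiber. A proper morphism between spaces flat over $G$ that is an isomorphism on fibers is an isomorphism, by the local criterion of flatness together with Nakayama.

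Either way, $\Psi$ is biholomorphic. As a consistency check, its inverse coincides with $(g,y) \mapsto (g, a(i(g),y))$, because $\sigma(Z)$ restricted to $G$ is the graph of $a \circ (i \times \id)$.
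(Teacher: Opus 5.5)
Your argument is correct, but the route you actually carry out differs from the paper's. The paper's proof is the plan you announce in your first paragraph and then only use as a consistency check at the end. There, the inverse is $\iota\circ\Psi\circ\iota$ with $\iota(g,x)=(i(g),x)$, justified by the fact that $a(i(g),\cdot)$ is the inverse automorphism of $a(g,\cdot)$.

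That approach needs two things. First, the holomorphic involution $i$ of $G$, built from $\sigma(Z)$ together with the argument that $i(G)\subseteq G$. Second, a pointwise check. The pointwise check suffices because $G\times_B X$ is reduced, being smooth over the reduced space $X$; alternatively, it holds at the level of subspaces because $Z_{i(g)}=\sigma(Z_g)$.

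Your factorization $\Psi=p_{13}\circ p_{12}^{-1}$ bypasses $i$ entirely. It reads off invertibility from the fact that, over $G$, the universal family is simultaneously a graph over the first factor and over the second factor. It also works at the level of complex spaces with no reducedness input.

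The price is that you need $p_{13}$ to be an isomorphism over all of $G\times_B X$, not just $p_{12}$ (which the paper uses to define $a$). This is the passage from the fiberwise condition in the definition of $G$ to an isomorphism of total spaces. The paper asserts this identification in the proof of \Cref{lem:action}, and your fiberwise criterion supplies it.

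One small correction there. The Nakayama argument uses flatness of the source $G\times_D Z$ over $G$, which comes from flatness of the universal family, together with finiteness of $p_{13}$ near the fiber. Flatness of $G\times_B X$ over $G$ comes from flatness of $f$, not from flatness of $Z$.
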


\begin{proof}
	The involution $i \colon G \to G$ takes a point $g \in G_b$, which represents
	an automorphism of $X_b$, to the inverse automorphism. Moreover, for $x \in X_b$,
	the point $a(g,x) \in X_b$ is the result of applying the automorphism to $x$.
	Therefore the inverse of our holomorphic mapping is obtained by conjugating it
	by the involution
	\[
		G \times_B X \to G \times_B X, \quad (g,x) \mapsto \bigl( i(g), x \bigr).
	\]
	So both the mapping and its inverse are holomorphic.
\end{proof}

Finally, we construct the group operation $m \colon G \times_B G \to G$, which is of
course given by composition of automorphisms. Consider the automorphism
\[
	\phi \colon G \times_B D \times_B X \times_B X \to G \times_B D \times_B X
	\times_B X,
\]
given by the rule $\phi(g,d,x,y) = (g,d,x,g \cdot y)$; this is an automorphism by
\Cref{lem:automorphism}. The image of $G \times_B Z$ under $\phi$ is again
proper and flat over $G \times_B D$, and so it corresponds to a morphism
\begin{equation} \label{eq:P-action-D}
	G \times_B D \to D_{X \times_B X/B}.
\end{equation}
Now $p \colon G \to B$ is smooth and the generic fiber $G_b$ is irreducible, and so
$G \times_B D$ is smooth over $D$, irreducible and reduced; because we know what happens over the Zariski-open
subset where $X_b$ is a compact complex torus, we can conclude that
\eqref{eq:P-action-D} maps into $D$. Restricting to the Zariski-open subset $G$, we get the
desired morphism
\[
	m \colon G \times_B G \to G.
\]
To check that the image really lands in $G$, we need to verify both conditions in
\eqref{eq:P-definition}. The one on the first line holds because the composition of
two automorphisms is an automorphism. The one on the second line holds because, $p
\colon G \to B$ being smooth, it admits local holomorphic sections, which act as
automorphisms on $D$ and therefore preserve the smooth locus of $\pi \colon D \to B$.

\begin{remark}
	The conditions in \Cref{def:family-groups} are clearly satisfied over
	the smooth locus of $f \colon X \to B$ (where $G_b \cong \Aut^{\circ}(X_b)$ is a compact
	complex torus), and by
	the identity theorem, most of them therefore hold over all of $B$. The only
	nontrivial matter is to show that the image of the section $e \colon B \to D$ is
	contained in $G$. This is a bit more subtle than one might expect, and uses in a crucial
	way the existence of the action in \eqref{eq:action}. We shall deal with this
	issue in the next section.
\end{remark}

\subsection{Smoothness along the section}

We are now going to prove that $D$ is smooth over $B$ along the image of the
section $e \colon B \to D$, and hence that the image of the section is contained in
the Zariski-open subset $G$ defined in \eqref{eq:P-definition}. A small complication
is caused by the fact that $D$, which is globally irreducible, might be locally
reducible. This forces us to start with a weaker result.

\begin{lemma} \label{lem:local-component}
	There is an open neighborhood of the zero section in $E$, whose image under
	$\eps \colon E \to D$ is a complex manifold of dimension $\dim D$; the image is a local
	irreducible component of $D$ at every point of the section $e \colon B \to D$.
\end{lemma}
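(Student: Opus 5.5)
The plan is to show that $\eps \colon E \to D$ is an immersion along the zero section, and in fact injective on a suitable neighborhood of it. The image of that neighborhood will then be an embedded complex manifold of dimension $\dim E = \dim D$, which forces it to be a local irreducible component. The key input is condition (1), the injectivity of $E_b \to \Lie \Aut^{\circ}(X_b)$ for every $b$, which holds for Lagrangian fibrations by \Cref{lem:faithful-action}.

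First I will compute the differential of $\eps$ at a point $0_b$ of the zero section. The tangent space of $E$ there splits as $T_b B \oplus E_b$, and $d\eps$ composed with $d\pi$ is the identity on the $T_b B$ summand, because $\pi \circ \eps = p$. It therefore suffices to show that $d\eps$ is injective on the vertical summand $E_b$, whose image lies in $T_{e(b)} D_{X_b \times X_b}$. The Zariski tangent space of the Douady space of $X_b \times X_b$ at the diagonal $\Delta$ is $\Hom(\shI_\Delta, \shO_\Delta)$. The subgroup $\Aut(X_b)$ embeds into this Douady space via graphs, and the induced map on tangent spaces is the standard map $H^0(X_b, \shT_{X_b}) = \Lie \Aut^{\circ}(X_b) \to \Hom(\shI_\Delta, \shO_\Delta)$. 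This map sends a vector field $v$ to the normal deformation $\phi \mapsto (v \otimes 1)\phi$ restricted to the diagonal. It is injective, since a derivation of $\shO_{X_b}$ is recovered from its action on $g \otimes 1 - 1 \otimes g$, which I can check by a direct computation. Consequently $d\eps|_{E_b}$ factors as $E_b \to \Lie \Aut^{\circ}(X_b) \into T_{e(b)} D_{X\times_B X/B}$, and the composite is injective by condition (1). So $\eps$ is an immersion at every point of the zero section, hence an immersion on an open neighborhood of it.

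Next I will get injectivity of $\eps$ on a neighborhood of the zero section. Locally over $B$, an immersion is injective on a small neighborhood of each point. Using properness of nothing more than the section itself, I will pick a neighborhood $W$ of the zero section that is fiberwise small, say a tube $\{\abs{v} < r(b)\}$ with $r$ continuous, and shrink it so that $\eps|_W$ is injective. This is a standard tubular argument. If injectivity failed on every tube, there would be sequences $v_k \neq w_k$ converging to the zero section with $\eps(v_k) = \eps(w_k)$. Both sequences then lie over points converging to some $b$, and this contradicts local injectivity of an immersion near $0_b$.

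I also need $\eps|_W$ to be a homeomorphism onto an open part of a local piece of $D$, and for this I expect to need care. One option is to use that $\eps(W)$ is locally closed: the limits of $\eps(v_k)$ for $v_k$ leaving every compact subset of $W$ must be kept away from $\eps(\text{zero section})$. Instead of fighting this directly, I will argue as follows. Since $\eps$ is an injective immersion near $0_b$, the germ $\eps(W, 0_b)$ is a germ of a smooth analytic subspace of $(D, e(b))$ of dimension $\dim D$. A local analytic germ of a reduced space of full dimension that is irreducible (being smooth) is one of the local irreducible components of $(D, e(b))$. Shrinking $W$ around each point and taking the union, the image is an open subset of that local component, hence a manifold.

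The main obstacle is the first step: identifying the tangent map of the graph embedding into the Douady space with the action on vector fields for nonreduced $X_b$, and verifying injectivity there. The paper's emphasis on nonreducedness suggests this is exactly where condition (1) is used.
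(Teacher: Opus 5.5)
Your argument is correct and is essentially the paper's proof. The paper also gets injectivity of $E_b \to T_{e(b)} D_b$ from the factorization through $\Lie \Aut^{\circ}(X_b)$ together with condition (1); it needs no computation in $\Hom(\shI_\Delta, \shO_\Delta)$, because near $e(b)$ the fiber $D_b$ is a closed subspace of the open subspace $\Aut(X_b)$ of the Douady space. It then dualizes, choosing $h_1, \dotsc, h_n$ in the ideal of the section whose classes trivialize $E^{\ast}$, so that $(\pi, h_1, \dotsc, h_n) \circ \eps$ is locally biholomorphic.

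This local left inverse is just the dual form of your immersion theorem, and it makes your point-set steps automatic: the image becomes the image of a holomorphic section of $(\pi, h_1, \dotsc, h_n)$, hence a closed submanifold of an open subset of $D$. If you keep your own route, redo the tube step. For noncompact $B$ your offending sequences need not converge, so the contradiction argument as written does not go through. Instead use $\pi \circ \eps = p$, which forces two points with the same image to lie in the same fiber. This reduces injectivity to a fiberwise statement that holds with locally uniform radii.
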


\begin{proof}
Let $b \in B$ be any point. As we have seen, the condition for $Z_d$ to be the graph
of an automorphism of $X_b$ is Zariski-open on $d \in D$. In a neighborhood of the
point $e(b)$, the scheme-theoretic fiber $D_b = \pi^{-1}(b)$ is therefore a (possibly
nonreduced) closed subspace of $\Aut^{\circ}(X_b)$, with the point $e(b)$
corresponding to the identity in $\Aut^{\circ}(X_b)$. This gives us an injection
\[
	T_{e(b)} D_b \into \Lie \Aut^{\circ}(X_b)
\]
on the level of Zariski tangent spaces. Because the $\CC$-vector space $E_b$ injects
into $\Lie \Aut^{\circ}(X_b)$ by assumption, both morphisms
\[
	E_b \into T_{e(b)} D_b \into \Lie \Aut^{\circ}(X_b)
\]
must be injective. After dualizing, it follows that
\[
	T_{e(b)}^{\ast} D_b \twoheadrightarrow E^{\ast}_b
\]
is surjective. We can turn this into a statement about the conormal sheaf of the
section $e$. Let $\shI \subseteq \shO_D$ denote the ideal sheaf of the section $e$, and $N^{\ast}
= \shI/\shI^2$ be the conormal sheaf. Because $\pi \colon D \to B$ has a section,
it is easy to see that
\[
	\iu_b N^{\ast} \twoheadrightarrow  T_{e(b)}^{\ast} D_b
\]
is surjective, where $i_b \colon \pt \into B$ is the inclusion of the point $b \in
B$.
Putting everything together, we find that the morphism of $\shO_B$-modules
\begin{equation} \label{eq:conormal}
	N^{\ast} \to E^{\ast}
\end{equation}
is surjective. Note that $E^{\ast}$ is locally free of rank $n$; but a priori, we
don't know anything about the conormal sheaf $N^{\ast}$.

We now use \eqref{eq:conormal} to prove the assertions in the lemma.
As $E$ is a vector bundle of rank $n$, the surjection in
\eqref{eq:conormal} is locally split; in a neighborhood of any point $b \in B$, we
can therefore find $n$ holomorphic functions $h_1, \dotsc, h_n \in \shI$ whose images
under \eqref{eq:conormal} give a local trivialization for $E^{\ast}$.  Together with
$\pi \colon D \to B$, these functions locally define a holomorphic mapping $D \to B
\times \CC^n$, and one checks that the composition $E \to D \to B \times \CC^n$ acts
as the identity on the tangent space at the origin in $E_b \subseteq E$, and is
therefore biholomorphic in a neighborhood. In particular, the image of $\eps$ is, at
least in a small neighborhood of the section $e$, a complex manifold of dimension
$\dim D$ and a local irreducible component of $D$.
\end{proof}

Next, we show that $D$ is in fact locally irreducible. The argument needs
flatness, which is easier to check over curves, and so we reduce the problem to a
carefully chosen curve in the base (with the details postponed to the following
section).

\begin{lemma} \label{lem:local-irreducibility}
	$D$ is locally irreducible at every point on the section $e \colon B \to D$.
\end{lemma}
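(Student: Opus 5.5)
We argue by contradiction, reducing to a curve in the base, as the remark before the statement suggests. Fix $b_0 \in B$ and let $W = \eps(U)$, where $U$ is the neighborhood of the zero section from \Cref{lem:local-component}; near $e(b_0)$, $W$ is a smooth local irreducible component of $D$ of dimension $\dim D$. Suppose $D$ has a second local irreducible component $D'$ at $e(b_0)$. Since $D$ is globally irreducible, $D'$ has dimension $\dim D$. Over the dense Zariski-open set $B \setminus \disc(f)$, the fibers $D_b$ are the compact tori $\Aut^{\circ}(X_b)$, and $D$ is smooth over $B$ there. Hence $D'$ is not contained in $\pi^{-1}(\disc(f))$, and $D'$ meets $W$ only in a proper analytic subset, which contains $e(B)$ near $b_0$.

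Next we choose a curve. Take a smooth holomorphic curve germ $\gamma \colon \Delta \to B$ with $\gamma(0) = b_0$ and $\gamma(\Delta^\ast) \subseteq B \setminus \disc(f)$. Choose it general enough that the preimage $D'_\Delta$ of $D'$ has a component $C'$ through $e(b_0)$ that dominates $\Delta$ and is not contained in $W_\Delta$. This is possible because $D'$ dominates $B$ near $b_0$ and $D' \not\subseteq W$, so a general curve through $b_0$ lifts to $D' \setminus W$ near $e(b_0)$. Over $\Delta^\ast$, both $W_\Delta$ and $C'$ lie in the family of tori $\Aut^{\circ}(X_t)$, which is smooth over $\Delta^\ast$. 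Each fiber is irreducible, and near $e(b_0)$ the only component through the identity over $\Delta^\ast$ is the image of $E_\Delta$. So $C'$ must meet the fibers over $\Delta^\ast$ away from $W$, that is, at points of $\Aut^{\circ}(X_t)$ not near the identity that nevertheless converge to $e(b_0)$.

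The contradiction will come from the group structure. Consider the base change $X_\Delta \to \Delta$, which is flat, together with the $E_\Delta$-action and the translation maps $\phi$ constructed before \Cref{lem:local-irreducibility}. Points of $C'$ over $t \in \Delta^\ast$ are automorphisms $g_t$ of $X_t$ with $g_t \to \id_{X_{b_0}}$ in the Douady space. Since the graphs $Z_{g_t}$ converge flatly to the diagonal, for small $t$ the $g_t$ lie in a small neighborhood of the identity of $\Aut(X_t)$, uniformly in $t$. We will use the universal family over $D_\Delta$, which is flat over the normalization of $C'$, to see this. Near the identity, the exponential map of the smooth group family, coming from $E_\Delta$ via \Cref{lem:local-component}, should cover all automorphisms close to the identity. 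In that case $g_t \in \eps(U_\Delta) \subseteq W$, contradicting the choice of $C'$.

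The main obstacle is making this uniformity rigorous. We must show that an automorphism of a nearby smooth fiber whose graph is Douady-close to the diagonal of the singular fiber lies in the image of a neighborhood of $0 \in E_t$. Our plan is to use flatness of $Z$ restricted to $C'$ over $\Delta$ (after normalizing $C'$) and compare it with the flat family $Z|_{W_\Delta}$. Both specialize to the diagonal at $e(b_0)$. The Zariski tangent space bound $T_{e(b_0)} D_{b_0} \hookrightarrow \Lie \Aut^{\circ}(X_{b_0})$ from \Cref{lem:local-component}, together with openness of the automorphism locus, should force the two families to agree near $e(b_0)$. We expect this flatness and specialization argument over the curve to need the most care.
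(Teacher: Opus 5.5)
The gap is at the key step, and it is circular. You claim that an automorphism $g_t$ of a nearby smooth fiber $X_t$, whose graph is Douady-close to the diagonal of $X_{b_0}$, must lie in $\eps(U_t)$. That is exactly the statement that $D$ has no branch through $e(b_0)$ other than $W$, so your ``uniformity'' argument assumes the conclusion.

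Nothing you have said rules out points $g_t \in \Aut^{\circ}(X_t) = \eps(E_t)$ outside $\eps(U_t)$ that still converge to $e(b_0)$ as the lattices $\Lambda_t$ degenerate; that is precisely what a second branch would look like. Your proposed remedy does not supply the missing argument:

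\begin{itemize}
\item \Cref{lem:local-component} only embeds $T_{e(b_0)} D_{b_0}$ into $\Lie \Aut^{\circ}(X_{b_0})$. For a singular or nonreduced fiber this Lie algebra can have dimension much larger than $n$. So a priori $D$ can be singular at $e(b_0)$, with a second branch tangent to directions outside $E_{b_0}$, and a tangent-space bound at $b_0$ cannot detect it.
\item Two flat families that specialize to the same point of the Douady space (here the diagonal) have no reason to agree nearby.
\end{itemize}

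What is missing is the group law over the curve, where flatness comes for free; this is \Cref{lem:curves}.

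First, a smaller fix. Choose the curve inside the second branch $D'$: take a holomorphic curve $\tilde{c}$ through $e(b_0)$ with $\tilde{c}(\Delta^{\ast})$ avoiding the lower-dimensional analytic sets $D' \cap W$ and $D' \cap \pi^{-1}(\disc(f))$, and set $\gamma = \pi \circ \tilde{c}$. Your claim that a general curve through $b_0$ lifts to $D' \setminus W$ is unjustified, since $\pi \vert_{D'}$ need not be open at $e(b_0)$ when fiber dimensions jump over $\disc(f)$.

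The core argument then runs as follows.

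\begin{itemize}
\item Let $D_\Delta$ be the reduced irreducible component of the relative Douady space of $X_\Delta \times_\Delta X_\Delta$ containing the image of $E_\Delta$. It is \emph{not} the base change of $D$. It is irreducible of dimension $n+1$ and dominant over a smooth curve, so it is flat of relative dimension $n$.
\item Let $G_\Delta \subseteq D_\Delta$ be the Zariski-open locus of automorphism graphs. Then $G_\Delta \times_\Delta D_\Delta$ is reduced and irreducible, so the construction with $\phi$ gives a map $G_\Delta \times_\Delta D_\Delta \to D_\Delta$ and hence a group law on $G_\Delta$.
\item The fibers of $G_\Delta$ are complex analytic groups, hence nonsingular. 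So $G_\Delta \to \Delta$ is smooth, and $D_\Delta$ is nonsingular, in particular locally irreducible, at $e_\Delta(0)$.
\item Both the base change of $W$ (an $(n+1)$-dimensional manifold) and the curve $\tilde{c}$ lie in $D_\Delta$. For $\tilde{c}$ this holds because over $\Delta^{\ast}$ the fibers of $D$ are exactly $\Aut^{\circ}(X_t) = \eps(E_t)$, and then by taking closures.
\item Local irreducibility of $D_\Delta$ therefore forces $\tilde{c}(t) \in W$ for small $t \neq 0$, a contradiction.
\end{itemize}

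Once smoothness of $G_\Delta$ is established, your exponential-map intuition becomes correct. It is this smoothness over the curve, not a tangent-space comparison at $b_0$, that carries the proof.
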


\begin{proof}
Suppose that, for some $b \in B$, the complex space $D$ has two local irreducible
components $D'$ and $D''$ that both pass through the point $e(b)$. By
\Cref{lem:local-component}, we may assume that $D'$ is the image of an open
neighborhood of a point on the zero section of the bundle $E$. Let $\Delta \subseteq
\CC$ denote the open unit disk. We can choose a holomorphic curve
$\tilde{c} \colon \Delta \to D''$ such that $\tilde{c}(0) = e(b)$ and such that the
image of $\Delta \setminus \{0\}$ lies over the smooth locus $B \setminus \disc(f)$
and does not intersect $D'$. The reason is that $D$ is the closure of its
restriction to the smooth locus of $f$, and so the same is true for its local
irreducible components; therefore both $\pi^{-1} \bigl( \disc(f) \bigr)$
and $D'$ can only intersect $D''$ in a closed analytic subset of smaller dimension.
Define $c = \pi \circ \tilde{c} \colon \Delta \to B$, which is a holomorphic
curve on the complex manifold $B$, as in the following commutative diagram:
\begin{tcd}
	\Delta \rar{\tilde{c}} \drar[bend right=25]{c} & D' \dar{\pi} \\
						& B
\end{tcd}
The base change $X_{\Delta} = \Delta \times_B X \to \Delta$ is flat of relative
dimension $n$, and the fibers over $\Delta \setminus \{0\}$ are compact complex tori.
The vector bundle $E_{\Delta} = \Delta \times_B E \to \Delta$ acts fiberwise on
$X_{\Delta}$, and therefore determines as before a reduced and irreducible subspace
$D_{\Delta}$ of the relative Douady space, with $\dim D_{\Delta} = n+1$; note that
$D_{\Delta}$ is \emph{not} the base change of $\pi \colon D \to B$ (which may have
multiple irreducible components).
By construction, $D_{\Delta}$ contains both the image of $E_{\Delta}$ and the image
of the curve $\tilde{c}$, and so $D_{\Delta}$ still has at least two local
irreducible components $D_{\Delta}'$ and $D_{\Delta}''$. But this now contradicts
\Cref{lem:curves}, which asserts that $D_{\Delta}$ is nonsingular in a
neighborhood of the section.
\end{proof}

\subsection{Restriction to curves}

We need an auxiliary result that describes what happens under restriction to
holomorphic curves in $B$. We denote by $\Delta \subseteq \CC$ the open unit disk,
and by $\disc(f) \subseteq B$ the closed analytic subset over which the fibers of $f
\colon X \to B$ are singular. Consider a holomorphic curve
\[
	c \colon \Delta \to B
\]
with the property that $c^{-1} \bigl( \disc(f) \bigr) = \{0\}$.
Denote by $f_{\Delta} \colon X_{\Delta} \to \Delta$ the base change of the morphism
$f \colon X \to B$, and by $p_{\Delta} \colon E_{\Delta} \to \Delta$ the base change
of the vector bundle $p \colon E \to B$. Then $f_{\Delta}$ is flat of relative
dimension $n$, and its fibers over $\Delta \setminus \{0\}$ are compact complex tori.
Because of \eqref{eq:action}, the vector bundle $E_{\Delta}$ acts fiberwise on
$X_{\Delta} \to \Delta$. As before, this action determines a reduced and irreducible
subspace $D_{\Delta}$ inside the relative Douady space of $X_{\Delta} \times_{\Delta}
X_{\Delta}$ over $\Delta$; of course, $D_{\Delta}$ is usually \emph{not} the base
change of $\pi \colon D \to B$. Since we are now working over a curve, $D_{\Delta}$
is flat of relative dimension $n$ over $\Delta$. We still have a section $e_{\Delta}
\colon \Delta \to D_{\Delta}$.

\begin{lemma} \label{lem:curves}
	With assumptions and notation as above, the morphism $D_{\Delta} \to \Delta$ is
	smooth in a neighborhood of the point $e_{\Delta}(0)$.
\end{lemma}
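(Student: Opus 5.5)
Over the curve $\Delta$, the space $D_{\Delta}$ is reduced, irreducible, of dimension $n+1$, and dominates $\Delta$. Hence it is flat over $\Delta$: a reduced space over a smooth curve is flat as soon as no component lies in a fiber. The plan is to show that the fiber $(D_{\Delta})_0$ is a reduced complex manifold of dimension $n$ near $e_{\Delta}(0)$, since flatness plus a smooth fiber gives smoothness. Its dimension is at most $n$ by flatness, and we already know from the argument of \Cref{lem:local-component}, applied over $\Delta$, that the map $\eps_{\Delta} \colon E_{\Delta} \to D_{\Delta}$ is a local isomorphism onto a local irreducible component $D_{\Delta}'$ near the section. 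So the real content is to rule out a second local irreducible component $D_{\Delta}''$ through $e_{\Delta}(0)$, or a nonreduced fiber structure.

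\textbf{Step 1 (group structure on the fiber).} Near $e_{\Delta}(0)$, every point $d$ of $(D_{\Delta})_0$ corresponds to an automorphism of $X_0$, because being a graph is a Zariski-open condition. Thus the germ of $(D_{\Delta})_0$ is a germ of closed subspace of $\Aut(X_0)$ through the identity. By properness of the Douady component, the fiber $(D_{\Delta})_0$ is the flat limit of the tori $\Aut^{\circ}(X_t)$. Since limits of compositions are compositions of limits (the map $m$ extends over $G_\Delta\times D_\Delta$ as in the construction of \eqref{eq:P-action-D}), the locus of automorphisms in $(D_{\Delta})_0$ is stable under composition and inverse. It is therefore a (possibly nonreduced) closed subgroup space $H \subseteq \Aut(X_0)$ of dimension $n$ containing the image of $E_0$. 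Since $E_0$ is a connected $n$-dimensional group injecting on Lie algebras, $E_0$ surjects onto the reduced neutral component $H^{\circ}_{\red}$.

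\textbf{Step 2 (multiplicity count; the main obstacle).} We must show $H$ is reduced at the identity, equivalently that the fiber $(D_\Delta)_0$ has multiplicity one along the image of $E_0$, and that there is no other branch. I would compare the flat family $D_\Delta\to\Delta$ with the smooth family $E_\Delta$ modulo the kernel. Near $t\neq 0$, the fiber $(D_\Delta)_t\cong E_t/\Lambda_t$ with $\Lambda_t$ a lattice. A second branch $D_\Delta''$ through the identity would consist of limits of points $\eps(v_t)$ with $v_t\in E_t$ escaping to infinity, whose images converge to the identity. Using the group structure, translating by $\eps(E_\Delta)$ shows that $D''_\Delta$ is also stable under the $E_\Delta$-action, hence is a union of $E_\Delta$-orbits. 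Both branches then contain the same $n$-dimensional orbit $E_0\cdot e(0)$ in the central fiber, and each is a flat family of dimension $n+1$. Computing the degree (the multiplicity of the central fiber along this orbit) via the local intersection with a transversal slice gives at least two points in the general fiber near the identity, i.e. two distinct points of the torus $(D_\Delta)_t$ converging to the identity. Since $(D_\Delta)_t$ is a group and both points tend to $e$, their quotient is a nonidentity element $g_t$ converging to the identity, lying in $D''_\Delta \cap \eps(E_\Delta)$ near $e$. By \Cref{lem:local-component}, any element near the identity in $D_\Delta$ that lies in the image of $E_\Delta$ and is close to $e$ comes from a small vector of $E_t$, hence lies in $D'_\Delta$. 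One must argue this contradicts $D''_\Delta\neq D'_\Delta$ by showing the quotient stays in $D''$. This step is delicate and is where I expect the work to lie: making the ``translation by a group element moves branches to branches'' argument rigorous using holomorphy of $m$ on $G_\Delta\times D_\Delta$.

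\textbf{Step 3 (reducedness).} Once $D_\Delta$ is locally irreducible at $e_\Delta(0)$, it equals $D'_\Delta=\eps(E_\Delta)$ locally, which is smooth over $\Delta$ by \Cref{lem:local-component}. Hence $D_\Delta\to\Delta$ is smooth near $e_\Delta(0)$, as claimed.
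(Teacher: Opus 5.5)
Your reduction is the right one (flatness of $D_{\Delta} \to \Delta$ plus a nonsingular central fibre near $e_{\Delta}(0)$). Your Step~1 is essentially the first half of the paper's proof. Over the curve, the automorphism locus $G_{\Delta} \subseteq D_{\Delta}$ is flat, so the fibre products $G_{\Delta} \times_{\Delta} D_{\Delta}$ and $G_{\Delta} \times_{\Delta} G_{\Delta} \times_{\Delta} G_{\Delta}$ are reduced and irreducible. Composition therefore extends holomorphically, and the group axioms hold on these total spaces by the identity theorem. Restricting to $t=0$ makes the scheme-theoretic fibre $H = (G_{\Delta})_0$ a group object in complex spaces.

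The gap is in Step~2, and the missing idea is that at this point you are already done. A complex-analytic group space is automatically nonsingular. This is the analytic form of Cartier's theorem: in characteristic zero the completed local ring at the identity is a power series ring, and translations by points of $H$ are automorphisms of the complex space $H$. So $H$ is an $n$-dimensional manifold. A flat morphism with nonsingular fibre is smooth, so $D_{\Delta} \to \Delta$ is smooth along $G_{\Delta}$, in particular near $e_{\Delta}(0)$. This is how the paper concludes (``family of complex Lie groups, hence smooth''). Local irreducibility then follows rather than having to be proved first, so Step~3 is unnecessary.

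Step~2 as sketched does not close. You claim that a point of $\eps(E_{\Delta})$ close to $e_{\Delta}(0)$ must be the image of a small vector. \Cref{lem:local-component} does not give this; it only says that small vectors map onto \emph{some} local component. The claim is exactly the statement at issue: a hypothetical second branch $D_{\Delta}''$ would consist of images $\eps(v_t)$ of vectors $v_t$ escaping to infinity whose images still converge to $e_{\Delta}(0)$ in the Douady space.

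Taking quotients does not help either. By your own translation argument, $D_{\Delta}''$ is locally stable under $D_{\Delta}'$, so the quotient $g_t$ simply stays in $D_{\Delta}''$ and no contradiction arises. More fundamentally, some characteristic-zero input is unavoidable. Over $k[[t]]$ with $\operatorname{char} k = p > 0$, the additive subgroup scheme $x^p - tx = 0$ of $\mathbb{G}_a$ is flat, with \'etale generic fibre and nonreduced special fibre $\alpha_p$. So flatness, a group law and a smooth generic fibre alone cannot force the reduced special fibre that your multiplicity count is after.
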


\begin{proof}
	Consider the Zariski-open subset
	\[
		G_{\Delta} = \MENGE{d \in D_{\Delta}}%
		{\text{$Z_d$ is the graph of an automorphism of $X_b$}},
	\]
	keeping in mind that this is again \emph{not} the base change of $p \colon G \to
	B$. It clearly contains the image of the section $e_{\Delta} \colon \Delta \to
	D_{\Delta}$. We claim that $G_{\Delta} \to \Delta$ is a family of complex Lie
	groups. Indeed, by exactly the same construction as in the previous section, we
	get a holomorphic mapping
	\[
		G_{\Delta} \times_{\Delta} D_{\Delta} \to D_{\Delta},
	\]
	the point being that the fiber product on the left-hand side is reduced and
	irreducible because this holds over $\Delta \setminus \{0\}$ and the morphism
	$G_{\Delta} \to \Delta$ is flat. It is compatible with composition of
	automorphisms, and therefore restricts to a holomorphic mapping
	\[
		G_{\Delta} \times_{\Delta} G_{\Delta} \to G_{\Delta},
	\]
	which makes $G_{\Delta} \to \Delta$ into a family of complex Lie groups. It
	follows that $G_{\Delta} \to \Delta$ is a smooth morphism, and because
	$G_{\Delta}$ contains an open neighborhood of the section $e_{\Delta} \colon
	\Delta \to D_{\Delta}$, we get the desired result.
\end{proof}

\subsection{Proof of Theorem~\ref*{thm:intro-A}}

We now have everything in place to prove that $p \colon G \to B$ is a family of
complex Lie groups; that $\eps(E)$ is an open subset of $G$; and that $\eps \colon E
\to G$ is a fiberwise group homomorphism.

\begin{proof}
According to \Cref{lem:local-irreducibility}, $D$ is locally irreducible along
the section $e \colon B \to D$; consequently, \Cref{lem:local-component} tells
us that $D$ is nonsingular along the image of $e$, and that $\eps \colon E \to
D$ induces an isomorphism between an open neighborhood of the zero section in $E$ and
an open neighborhood of $e(B)$ in $D$. In particular, $\pi \colon D \to
B$ is smooth at every point of the section $e$, and in view of
\eqref{eq:P-definition}, it follows that $e(B) \subseteq G$. We may therefore
view the section as a holomorphic mapping $e \colon B \to G$.

This is enough to conclude that $p \colon G \to B$ is a holomorphic family of complex
Lie groups. Indeed, all the conditions in \Cref{def:family-groups} hold
over the Zariski-open subset $B \setminus \disc(f)$ where $X_b$ and $G_b \cong
\Aut^{\circ}(X_b)$ are compact complex tori. Because $p \colon G \to B$ is smooth, all the fiber
products that appear are reduced and irreducible, and so all the relevant diagrams
commute by the identity theorem.

We also know from \Cref{lem:local-component} that $\eps$ maps an open
neighborhood of the zero section of $E$ into $G$. Because each fiber $E_b$ is
generated, as a group, by any open neighborhood of the origin, it follows that
$\eps(E_b) \subseteq G_b$, and hence $\eps(E) \subseteq G$. Because the group
operation on $G$ is composition of automorphisms, it is then clear that
\[
	\eps \colon E \to G
\]
is a fiberwise group homomorphism. We may then conclude, by the same argument is in
the proof of \Cref{lem:local-component}, that $\eps(E)$ is an open subset of $G$ and that
$\eps$ is locally biholomorphic.
\end{proof}

\begin{remark} \label{rmk:Lambda}
We can even describe the kernel of the morphism
\[
	\eps \colon E \to G;
\]
the argument is inspired by the proof of \cite[Prop.~5.7]{sacca24}.
Consider again the section $e \colon B \to G$. Its preimage under $\eps$ is a closed
analytic subset of the vector bundle $E$ that we shall denote by the letter
\[
	\Lambda = \eps^{-1} \bigl( e(B) \bigr) \subseteq E.
\]
Over any point where $X_b$ is a compact complex torus, we have $G_b \cong \Aut^{\circ}(X_b)$,
and so the kernel of $E_b \to \Aut^{\circ}(X_b)$ is a lattice $\Lambda_b$ of rank $2n$ in the
$n$-dimensional complex vector space $E_b$. The restriction of $\Lambda$ to the
smooth locus of $f$ is therefore just the family of these lattices. Now what happens
is that the whole set $\Lambda$ is the closure of this Zariski-open subset.
Indeed, $e(B)$ is locally defined by $n$ holomorphic functions inside $G$,
due to $G$ being smooth along the image of the section; therefore every
irreducible component of its preimage $\Lambda$ has codimension $\leq n$.
Because $\Lambda$ intersects every fiber $E_b$ in a discrete subset (due to our
assumption that $E_b \to \Lie \Aut^{\circ}(X_b)$ is injective), it follows that every
irreducible component has codimension exactly $n$ and dominates $B$. This means that
$\Lambda$ is the closure of its restriction to the smooth locus of $f$.
\end{remark}

Each fiber $G_b$ is a complex Lie group of dimension $n$. By construction, it is a
subgroup of the automorphism group $\Aut(X_b)$, through its embedding into the Douady
space of $X_b \times X_b$; the group operation $G_b \times G_b \to G_b$ is just
composition of automorphisms. When $X_b$ is smooth, $G_b \cong \Aut^{\circ}(X_b)$ is a compact
complex torus; but it is possible that $G_b$ has multiple connected components over
the singular fibers $X_b$. The number of connected components is of course finite,
because $G_b$ is a Zariski-open subset of the compact complex space $D_b$.
We also have a group homomorphism $\eps \colon E_b \to G_b$ with discrete kernel
$\Lambda_b$; for dimension reasons, it follows that the neutral component of $G_b$ is
isomorphic to $E_b/\Lambda_b$, and therefore commutative. In fact, this is true for
the entire Lie group $G_b$.

\begin{lemma}
Each complex Lie group $G_b$ is commutative.
\end{lemma}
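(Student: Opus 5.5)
The plan is to use the identity theorem on a suitable irreducible space, as was done for the group axioms. Consider the commutator morphism
\[
	\gamma \colon G \times_B G \to G, \quad (g,h) \mapsto m\bigl(m(g,h), m(i(g), i(h))\bigr),
\]
which is holomorphic over $B$. The claim that every $G_b$ is commutative is equivalent to $\gamma = e \circ (p \circ p_1)$ on all of $G \times_B G$. Over the Zariski-open subset $B \setminus \disc(f)$, each fiber $G_b \cong \Aut^{\circ}(X_b)$ is a compact complex torus. So the identity holds over the preimage of that open set.

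Since $p \colon G \to B$ is smooth, the fiber product $G \times_B G$ is a complex manifold, smooth over $B$. The key step is therefore to show that the preimage of $B \setminus \disc(f)$ is dense in $G \times_B G$. Then the two holomorphic maps $\gamma$ and $e \circ p \circ p_1$ into the (separated) space $G$ agree on a dense open subset, hence everywhere.

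Density is a local question. Near any point $(g,h)$ the space $G \times_B G$ is locally isomorphic to an open subset of $B \times \CC^{2n}$ compatibly with the projection to $B$. This uses smoothness of $p$ at $g$ and at $h$. The complement of $\disc(f)$ is dense in the manifold $B$, so its preimage under a smooth (hence open) map is dense. Every connected component of $G \times_B G$ therefore meets the torus locus, including components lying entirely over a neighbourhood of a singular fiber in which $g$ or $h$ is in a non-neutral component of $G_b$.

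The main point to watch is exactly this. Naively one might only know that $G$ is the closure of its restriction over $B \setminus \disc(f)$ as a subset of $D$. That statement would not rule out extra components of $G_b$ sitting over $\disc(f)$ without deforming to nearby fibers. Smoothness of $p$ at every point of $G$, built into \eqref{eq:P-definition}, rules this out, because it makes $p$ open. Once openness is in hand the argument is purely formal, so I expect no further obstacle.
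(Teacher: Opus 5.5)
Your proposal is correct and follows the paper's route: the commutator map $G \times_B G \to G$ is trivial over the torus locus, and smoothness of $p$ extends this to all of $G \times_B G$. The paper phrases the last step as irreducibility of $G \times_B G$ together with closedness of the locus where the commutator is trivial; you instead use density of the preimage of $B \setminus \disc(f)$, which follows from openness of smooth maps and is exactly what justifies the paper's irreducibility claim (and your version does not need $B$ to be connected).
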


\begin{proof}
Consider the holomorphic mapping
\[
	G \times_B G \to G, \quad (g,h) \mapsto g h g^{-1} h^{-1}.
\]
By smoothness of $p \colon G \to B$, the fiber product is smooth over $B$, and
therefore irreducible. The set of points $g \in G$ such that $gh g^{-1} h^{-1} = \id$
is closed analytic (as the preimage of a section); because it contains $G_b \times
G_b$ whenever $X_b$ is a compact complex torus, and because $G \times_B G$ is
irreducible, it must be all of $G \times_B G$.
\end{proof}

\subsection{The family of meromorphic groups}

We also need to prove that $p \colon G \to B$ is a family of meromorphic groups; the
relative compactification is of course the proper holomorphic mapping $\pi \colon D
\to B$ that we constructed above.

\begin{proposition}
	The compactification $\pi \colon D \to B$ turns the family of complex Lie groups
	$p \colon G \to B$ into a family of meromorphic groups.
\end{proposition}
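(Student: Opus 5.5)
We have to verify the defining conditions of a family of meromorphic groups for $G \subseteq D \to B$. By the earlier lemma, $\pi \colon D \to B$ is proper with $D$ reduced, and $G$ is Zariski-open in $D$ (\Cref{lem:action}). Moreover $G$ is dense, because $D$ is irreducible and $G$ is nonempty (it contains $e(B)$ and $\eps(E)$). It remains to extend the multiplication $m$ to a holomorphic mapping on $G \times_B D \cup D \times_B G$ whose graph has analytic closure in $D \times_B D \times_B D$. The same argument will give the meromorphic action $D \times_B X \mto X$.

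\emph{Step 1 (holomorphic extension on $G \times_B D$).} This is already done in the construction of $m$. The automorphism $\phi(g,d,x,y) = (g,d,x,g\cdot y)$ carries $G \times_B Z$ to a subspace that is proper and flat over $G \times_B D$. Hence it gives a morphism $\mu_1 \colon G \times_B D \to D_{X\times_B X/B}$, which lands in $D$ because $G \times_B D$ is irreducible and reduced, being smooth over $D$. For the other piece $D \times_B G$, use $\phi'(d,g,x,y) = (d,g,g^{-1}\cdot x, y)$, where $g^{-1} = i(g)$. It is an automorphism by \Cref{lem:automorphism}, and heuristically it turns ``$y = d\cdot x$'' into ``$y = d g\cdot x$''. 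This gives $\mu_2 \colon D \times_B G \to D$. On the overlap $G \times_B G$ both $\mu_1$ and $\mu_2$ equal $m$, since both are composition of automorphisms. The overlap is dense in each piece, so the two maps glue to a holomorphic map $\mu$ on the open set $G\times_B D \cup D \times_B G$ of $(D\times_B D)_{\main}$. Both pieces are reduced, so agreement on a dense open set gives agreement on the whole intersection.

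\emph{Step 2 (meromorphy; the main point).} We need the graph of $\mu$ to have analytic closure. I would apply Fujiki's \Cref{lem:Fujiki} with $T = (D\times_B D)_{\main}$, which is reduced. The relevant closed subspace of $T\times_B X\times_B X$ is the ``composition correspondence''
\[
W = \menge{(d_1,d_2,x,z)}{\exists\, y\colon (d_2,x,y)\in Z,\ (d_1,y,z)\in Z},
\]
that is, the image of $(T\times_B X\times_B X\times_B X)\cap p_{124}^{-1}Z\cap p_{135}^{-1}Z$ under the projection forgetting $y$. This image is a closed analytic subspace by properness of $X \to B$, and we give it its reduced structure. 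Over the dense Zariski-open subset $G\times_B G$, the space $W$ is the graph of the composite automorphism and is flat. \Cref{lem:Fujiki} then shows that the induced map $T \mto D_{X\times_B X/B}$ is meromorphic. Its image lies in $D$ by irreducibility, and it agrees with $\mu$ on $G\times_B G$. The closure of the graph of $m$ therefore equals the analytic closure provided by Fujiki, and it contains the graph of $\mu$ because that graph is closed in the open set of definition. The one point needing care is that $W$ must be a closed subspace defined over all of $T$, not merely over $G\times_B G$. This holds because the projection is proper.

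\emph{Step 3 (the action).} Similarly, take $T = D\times_B X$, which is reduced since $D$ is reduced and $X$ is normal and irreducible over $D$. Take the closed subspace $\menge{(d,x,y)}{(d,x,y)\in Z}$, viewed in $T\times_B X$. It is flat, indeed isomorphic to $T$, over $G\times_B X$ by \Cref{lem:action}. \Cref{lem:Fujiki} applied to $X \to B$ (Douady space of points, i.e.\ $X$ itself) shows that $a$ extends meromorphically, since its graph closure is contained in the analytic set $Z$.

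I expect Step 2 to be the main obstacle. The difficulty is to state cleanly that $W$ is a closed analytic subspace of $T\times_B X\times_B X$ that restricts to the flat family of graphs over $G\times_B G$. The alternative is to take the graph closure of $\mu$ directly inside $D \times_B D \times_B D$, using properness of $D$ over $B$. One would then have to argue analyticity of that closure, for instance via Remmert--Stein after bounding the dimension of the boundary, which is more delicate.
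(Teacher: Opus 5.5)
Your proposal is correct and follows essentially the same route as the paper. Both arguments apply Fujiki's \Cref{lem:Fujiki} with $T = (D\times_B D)_{\main}$, and both get the needed closed analytic subspace from the composition correspondence, that is, the image of $Z_1\cap Z_2$ under the proper projection forgetting the middle point.

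The differences are cosmetic. The paper takes $U = G\times_B D$ and uses the correspondence only to show that the closure of $\phi(G\times_B Z)$ is analytic. It treats $D\times_B G$ ``by symmetry'', where you write down $\phi'$ explicitly. Your Step~3 is proved in the paper as a separate proposition, using the modification $p_{12}\colon Z\to D\times_B X$.
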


\begin{proof}
The proof is very similar to \cite[Prop.~2.2]{fujiki-auto}. In
\eqref{eq:P-action-D}, we had already constructed a holomorphic mapping
\[
	G \times_B D \to D
\]
whose restriction to $G \times_B G$ is the group operation on $G$. By symmetry, it is
sufficient to show that this extends to a meromorphic mapping $(D \times_B D)_{\main}
\mto D$.  To do that, we go back to the automorphism
\[
	\phi \colon G \times_B D \times_B X \times_B X \into G \times_B D \times_B X
	\times_B X, \quad \phi(g, d, x, y) = (g, d, x, g \cdot y).
\]
Let $W = \phi(G \times_B Z)$, which is proper and flat over $G \times_B D$.
We will first argue that the closure $\bar{W}$ inside $D \times_B D \times_B X
\times_B X$ remains analytic. With that goal in mind, consider the five-fold fiber product
\[
	D \times_B D \times_B X \times_B X \times_B X
\]
and denote a typical point by $(d_1, d_2, x_1, x_2, x_3)$. There are two closed
analytic subsets $Z_1$ and $Z_2$. The first set is
\[
	Z_1 = D \times_B Z \times_B X
\]
which is (morally) the set of points where $x_2 = d_2 \cdot x_1$. The second set is
\[
	Z_2 = \menge{(d_1, d_2, x_1, x_2, x_3)}{(d_1, x_2, x_3) \in Z}
\]
which is morally the set of points where $x_3 = d_1 \cdot x_2$. The intersection $Z_1
\cap Z_2$ is closed analytic (and morally consists of points with $x_3 = d_1 \cdot
x_2$ and $x_2 = d_2 \cdot x_1$), and because $f \colon X \to B$ is proper, its image
\[
	p_{1235}(Z_1 \cap Z_2) \subseteq D \times_B D \times_B X \times_B X
\]
is again closed analytic. It contains $W = \phi(G \times_B Z)$ as a Zariski-open
subset; this is enough to conclude that the closure $\bar{W}$ is an
analytic subset of $D \times_B D \times_B X \times_B X$.

Now look at the projection $p_{12} \colon \bar{W} \to D \times_B D$. Over the open
subset $G \times_B D$, this is just $p_{12} \colon G \times_B D \times_B X \to G
\times_B D$, which is proper and flat. We can now apply Fujiki's \Cref{lem:Fujiki};
in the notation used there, $T = (D \times_B D)_{\main}$ and $U = G \times_B D$. The
conclusion is that $\bar{W}$ determines a meromorphic extension of $G \times_B D \to
D$ to a meromorphic mapping $(D \times_B D)_{\main} \mto D$. Because $G$ is
Zariski-open in $D$ by construction, this completes the proof.
\end{proof}

Finally, let's convince ourselves that the action in \eqref{eq:action-P} is meromorphic.

\begin{proposition}
	The holomorphic mapping $a \colon G \times_B X \to X$ in \eqref{eq:action-P}
	extends to a meromorphic mapping $D \times_B X \mto X$.
\end{proposition}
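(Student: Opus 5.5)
The natural candidate for the extension is the universal family itself. Over $D$ we have the closed subspace $Z \subseteq D \times_B X \times_B X$, and by \Cref{lem:action} its restriction to the Zariski-open subset $G$ is the graph of $a$. Equivalently, $G \times_D Z \subseteq G \times_B X \times_B X$ is the graph of $a \colon G \times_B X \to X$ once we regard the first two factors as the source. So we regard $Z$ as a closed analytic subset of $(D \times_B X) \times X$. We propose to take as graph of the meromorphic extension the closure $\Gamma$ of $\Gamma_a$ inside $(D \times_B X) \times_B X$, and to check that it is analytic.

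The steps are as follows.

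First, $G \times_B X$ is Zariski-open in $D \times_B X$, being the preimage of the Zariski-open $G \subseteq D$. It is also dense, at least in the main component $(D \times_B X)_{\main}$, the closure of $G \times_B X$ with its reduced structure. As in the definition of meromorphic groups, the source of the meromorphic mapping should be understood as this main component, since $D \times_B X$ may have extra components over $\disc(f)$.

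Second, the graph $\Gamma_a = G \times_D Z$ is the intersection of the closed analytic set $Z$ with the open set $G \times_B X \times_B X$. Hence $\Gamma_a$ is a union of irreducible components of $Z \cap (G \times_B X \times_B X)$, in fact all of it. Its closure $\Gamma$ is therefore the union of those irreducible components of $Z$ (taken with reduced structure) that meet $G \times_B X \times_B X$. This is a closed analytic subset of $D \times_B X \times_B X$, since a closed analytic set has a locally finite decomposition into irreducible components, and the closure of a Zariski-open piece is a union of components.

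Third, $\Gamma$ lies over $D \times_B X$ inside $(D\times_B X)\times_B X$, so the resulting meromorphic mapping $(D \times_B X)_{\main} \mto X$ is automatically over $B$. It restricts to $a$ on $G \times_B X$. This is exactly the definition of a meromorphic mapping, and hence of a meromorphic action.

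The only point needing care is the second step. One must make sure that the closure of $\Gamma_a$ does not pick up non-analytic limit points. The component argument handles this, because $Z$ is already globally analytic, which is where the Douady universal family does the work. If one prefers a more formal argument, one can instead apply Fujiki's \Cref{lem:Fujiki}. Take $T = (D \times_B X)_{\main}$, which is reduced, and the closed subspace $T \times_D Z \subseteq T \times_B X$. This subspace is flat, indeed an isomorphism onto its first projection, over $U = G \times_B X$. This yields a meromorphic map into the Douady space of points, namely $X$ over $B$, recovering the same extension.
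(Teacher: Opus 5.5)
Your argument is correct and is essentially the paper's. The paper also takes the universal family $Z$ as the graph of the extension, phrasing it as ``$p_{12}\colon Z\to D\times_B X$ is proper and an isomorphism over $G\times_B X$, hence bimeromorphic, and $p_3\circ p_{12}^{-1}$ is the extension.'' That is the same as your observation that the closure of $\Gamma_a$ is a union of irreducible components of $Z$.

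The one thing to correct is the ``main component'' hedge. $D\times_B X\to D$ is a base change of the flat map $f$, hence open, so $G\times_B X$ is dense in all of $D\times_B X$; the paper makes exactly this remark. There are therefore no extra components over $\disc(f)$, and the source is $D\times_B X$ as stated.

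With this, the Fujiki-lemma detour is unnecessary. As written it is also slightly off: the subspace there should be $Z\subseteq T\times_B X$ itself, not $T\times_D Z$.
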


\begin{proof}
Recall that $Z \subseteq D \times_B X \times_B X$ is the pullback of the universal subspace
that comes with the relative Douady space $D_{X \times_B X/B}$. By construction, we
have a commutative diagram
\begin{tcd}
		G \times_B X \rar \dar[hook] & Z \dar[hook] \\
		G \times_B X \times_B X \rar \dar{p_1} & D \times_B X \times_B X \dar{p_1} \\
		G \rar & D
\end{tcd}
in which both squares are Cartesian and the composition of two adjacent vertical arrows
is proper and flat. The projection $p_{12} \colon Z \to D \times_B X$ is proper, and
is an isomorphism over the Zariski-open subset $G \times_B X$ (which is
dense because $f \colon X \to B$ is flat). Therefore $p_{12}$ is a modification, and
therefore bimeromorphic; the composition of its (meromorphic) inverse with $p_3 \colon
Z \to X$ is the desired meromorphic extension of $a \colon G \times_B X \to X$.
\end{proof}

\begin{remark} \label{rmk:stabilizer}
	For each $b \in B$, the group $G_b$ is a meromorphic subgroup of $\Aut^{\circ}(X_b) = \Aut^\circ(X_b)$.
	According to \cite[Prop.~2.7]{fujiki-auto}, it follows that the stabilizer
	\[
		\menge{g \in G_b}{g \cdot x = x}
	\]
	of each point $x \in X_b$ is meromorphically isomorphic to a linear algebraic group
	(Fujiki states this for reduced irreducible spaces, but the same argument works in general.)
\end{remark}

\subsection{The family of neutral components}\label{subs:neutral-component}

As we have seen, the image of the morphism $\eps \colon E \to G$ gives, fiber by
fiber, the neutral component $G_b^{\circ}$ of each commutative complex Lie group $G_b$.
The kernel of $\epsilon$ is described in \Cref{rmk:Lambda}; see also \Cref{subs: netr} for
a Hodge-theoretic interpretation in the case of Lagrangian fibrations.
\Cref{prop:family-groups} shows that the image is open. But $\eps(E)$ is
in fact Zariski-open, and is therefore itself a family of meromorphic groups.
This is true for any family of meromorphic groups, and so we state and prove the
result in that generality.

\begin{proposition}\label{prop:neutral-component}
	Let $p \colon G \to B$ be a family of meromorphic groups, and let
	\[
	G^{\circ}  = \eps (E)
	\subseteq G
	\]
	 denote the union of the neutral components of all the fibers. Then
	$G^{\circ}$ is Zariski-open in $G$, and is therefore itself a family of
	meromorphic groups over $B$.
\end{proposition}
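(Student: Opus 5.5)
We show that the complement $G \setminus G^{\circ}$ is a closed analytic subset of $G$, by writing it as a union of connected components of fibers that is locally cut out by analytic equations. Once $G^{\circ}$ is known to be Zariski-open, the rest is formal. It is a union of fiberwise subgroups, and it is open, since in the general setting $G^{\circ}$ is the union of the neutral components, which is open because $p$ is smooth and the identity section lies in $G$. It is preserved by $m$ and $i$, because a product or inverse of elements of neutral components again lies in the neutral component. Its closure in $D$ provides a relative compactification. The restrictions of the meromorphic maps $(D\times_B D)_{\main}\mto D$ and $D\times_B X\mto X$ then show that $G^{\circ}$ is a family of meromorphic groups acting meromorphically.

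\emph{Step 1: reduce to an analytic-set statement.} Since $p\colon G\to B$ is smooth with a section $e$, the set $G^{\circ}$ is open. Let $Y$ be the closure of $G^{\circ}$ in $G$; by Remmert--Stein-type arguments we want $Y$ analytic and $Y\cap G_b=G_b^\circ$ up to a closed analytic defect. The natural candidate is
\[
	Y = \text{union of the irreducible components of } G \text{ that meet } e(B).
\]
Since $G$ is smooth over the manifold $B$, $G$ is a complex manifold, so its irreducible components are its connected components. The one containing $e(B)$, which is connected, is open and closed; call it $G_1$. Then $G^{\circ}\subseteq G_1$, and $G_1$ is Zariski-open, being a union of connected components of the manifold $G$ and thus the complement of the others.

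\emph{Step 2: show that $G_1\setminus G^{\circ}$ is closed analytic.} This is the main obstacle: a fiber $G_{1,b}$ may contain several components of $G_b$ that degenerate from the identity component. I would proceed as follows. For any $g\in G_b$, the coset $gG_b^\circ$ is a connected component of $G_b$, and $g\in G_b^\circ$ iff $g$ lies in the image of $\eps$. Consider the map $\mu\colon E\times_B G\to G$, $(v,g)\mapsto \eps(v)g$. It is a holomorphic fiberwise action whose orbits are the connected components of the fibers. By \Cref{prop:family-groups} each orbit is open in its fiber, so $\mu$ is open. Hence the set $G^{\circ}$ is exactly the orbit of $e(B)$ under $E$. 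To prove analyticity of the complement, use the compactification: the closure $\bar G^\circ$ of $G^\circ$ in $D$ equals the closure of $\eps(E)$, which is $D$ itself by dimension. That is not enough, so instead I consider the number of connected components. The fibers $G_b$ have finitely many components, bounded locally on $B$ because the fibers $D_b$ vary in a proper family. I would stratify $B$ analytically so that over each stratum the component groups $\pi_0(G_b)$ form a locally constant finite family (using properness of $D\to B$ and the analyticity of the flat stratification of $\bar{G}\setminus G$). Over each stratum, the non-neutral components form a closed analytic set, since each is a translate $gG^{\circ}$ by a local section $g$ and hence the image of $G^\circ$ under an automorphism. Finally, the fact that $G^\circ$ is open and that its complement in each fiber is closed analytic would be used to assemble these pieces.

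\emph{Step 3: a cleaner alternative I would try first.} Use the finite-order structure. The group $\pi_0(G_b)$ is finite, of order at most some $N$ locally on $B$, so $g^{N!}\in G_b^\circ$ for all $g\in G_b$. The map $\eps$ is a local biholomorphism onto the open set $G^\circ$. Then $G^\circ$ is exactly the set of $g$ that are connected in $G_b$ to $e(b)$, and one shows that the locus where $g\notin G^\circ$ is the image of the proper set
\[
	\menge{(g,h)}{h\in G_b,\ h\neq g,\ h^{N!}=g^{N!}}
\]
intersected appropriately. I expect that making this precise (properness needs $D$) is exactly the hard point. I would handle it by working in $D$ and using that $G$ is Zariski-open there together with the properness of $\pi$.
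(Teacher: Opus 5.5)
Your proposal has a genuine gap. The only nontrivial claim, that $G \setminus G^{\circ}$ is closed analytic, is never established.

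\textbf{Step 1 gives nothing.} In the situation at hand, $G$ is Zariski-open in the irreducible space $D$, so $G$ is connected and $G_1 = G$. Moreover, $G^{\circ}$ contains $p^{-1}(B \setminus \disc(f))$, so its closure in $G$ is all of $G$.

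\textbf{Step 2 assumes what is to be proved.} You assert that one can stratify $B$ so that $\pi_0(G_b)$ is locally constant and $G^{\circ}$ is a union of connected components over each stratum. That assertion is the entire content of the proposition. Your justification does not deliver it: a flattening stratification gives no topological control, and since $p$ is not proper, Ehresmann-type arguments are unavailable. Even constancy of $\abs{\pi_0(G_b)}$ along a stratum $S$ would not show that $G^{\circ} \cap p^{-1}(S)$ is closed in $p^{-1}(S)$. Nothing in your argument rules out a non-neutral component of $G_b$ being a limit of neutral components of nearby fibres while some other component of $G_{b'}$ escapes into $D \setminus G$.

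\textbf{Further problems.}
\begin{itemize}
\item The sentence ``each non-neutral component is a translate $gG^{\circ}$, hence closed analytic'' is circular. Translation only transports analyticity of $G^{\circ}$, which is exactly what is unknown.
\item Your final ``assembly'' step needs the pieces to be locally closed in the \emph{Zariski} topology. A closed set that is a finite union of merely locally closed analytic sets need not be analytic.
\item Step 3 fails as well. For $g \in G_b^{\circ}$, the set of $h$ with $h^{N!} = g^{N!}$ is $g \cdot G_b[N!]$. When $T_b \neq 0$ this already contains many points of $G_b^{\circ}$, so the equation does not separate neutral from non-neutral elements. In any case, images of non-proper holomorphic maps need not be analytic.
\end{itemize}

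\textbf{The missing idea.} You need to extract topological local triviality of the non-proper map $p$ from the proper map $\pi \colon D \to B$. This is what the paper does.
\begin{enumerate}
\item Choose Whitney stratifications of $D$ and $B$ making $\pi$ a stratified morphism, with $D \setminus G$ a union of strata. Locally on $B$, arrange finitely many connected strata, each Zariski-locally closed.
\item For a stratum $S \subseteq B$, Thom's first isotopy lemma makes $\pi^{-1}(S) \to S$ a stratified fibre bundle. Since $G$ is a union of strata, $p^{-1}(S) \to S$ is a locally trivial bundle too.
\item Because $e$ is a section, $G^{\circ} \cap p^{-1}(S)$ is then a connected component of the complex manifold $p^{-1}(S)$: locally it is the component containing $e(S)$.
\item Hence each connected stratum of $D$ is either contained in $G^{\circ}$ or disjoint from it, so $G^{\circ}$ is a union of strata.
\item Since $G^{\circ}$ is open and the strata are Zariski-locally closed, $D \setminus G^{\circ}$ is locally a finite union of closures of strata, hence closed analytic.
\end{enumerate}
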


\begin{proof}
It seems that the shortest way to prove this is by using Whitney stratifications
\cite[\S1.2]{goresky-macpherson}. By
definition, there is a proper holomorphic mapping $\pi \colon D \to B$ such that $G$
is Zariski-open in $D$. By properness, one can find (locally finite) Whitney
stratifications of $D$ and $B$ that make $\pi$ into a stratified morphism, in such a
way that $D \setminus G$ (and hence $G$) is a union of strata
\cite[\S1.7]{goresky-macpherson}. Each stratum $S
\subseteq B$ is a locally closed complex submanifold of $B$, and $\pi^{-1}(S) \to S$
is a locally trivial fiber bundle, in a way that is compatible with the
stratification on $\pi^{-1}(S)$.

Our goal is to show that $G^{\circ}$ is Zariski-open in $D$, which is the same thing
as $D \setminus G^{\circ}$ being closed analytic. This is clearly local on $B$, and
so we can assume without loss of generality that our locally finite Whitney
stratifications are actually finite, and that each stratum is connected. Because each
stratum of $D$ is locally closed (in the Zariski topology), and $G^{\circ}$ is open (in
the usual topology), it then suffices to prove that $G^{\circ}$ is a union
of strata. Let $S \subseteq B$ be any stratum, and consider its preimage
$\pi^{-1}(S)$. By Thom's first isotopy lemma \cite[\S1.5]{goresky-macpherson}, this
is a stratified fiber bundle over $S$. The intersection $\pi^{-1}(S) \cap G =
p^{-1}(S)$ is Zariski-open in $\pi^{-1}(S)$, and because $G$ is a union of strata, it
follows that $p^{-1}(S) \to S$ is also a stratified fiber bundle; in particular, each
fiber has the same number of connected components. Now $p^{-1}(S)$ is a complex
manifold, and $p^{-1}(S) \cap G^{\circ}$ is one of its connected components; because
all the strata are connected, it follows that $p^{-1}(S) \cap G^{\circ}$ is a union of
strata. This is enough to conclude that $G^{\circ}$ is a union of strata, hence
Zariski-open in $D$.
\end{proof}

\section{Group actions and freeness}
\label{sec:freeness}

\subsection{Introduction}

In this chapter, we prove a freeness theorem for the cohomology of a compact K\"ahler
space with a faithful action by a meromorphic group. Ng\^o \cite[Prop.~7.5.5]{Ngo-lemme} first
obtained this kind of result in the special case of a commutative algebraic group
acting on a projective variety (by reduction to the case of finite fields). The
argument that we present below uses Hodge theory and the language of Hopf algebras
instead. We also use many results from Fujiki's work on automorphisms of compact K\"ahler
spaces \cite{fujiki-auto}, especially his notion of a ``meromorphic group'' (which is the
complex-analytic analogue of an algebraic group).

Let $X$ be a (possibly nonreduced) compact complex space that is K\"ahler, and denote by
$H^{\ast}(X) = H^{\ast}(X, \RR)$ its cohomology groups with real coefficients.
Suppose that $G$ is a connected meromorphic group (see \Cref{subs: merogrp}) that acts meromorphically and faithfully on
$X$ through biholomorphisms. Since the action is faithful, $G$ is a subgroup of the
neutral component $\Aut^\circ(X)$ of the automorphism group; the other assumptions amount
to saying that $G$ is a meromorphic subgroup of $\Aut^\circ(X)$. According to
\Cref{prop:regular-type}, $G$ is \define{of regular type}, meaning
that there is a short exact sequence
\begin{equation} \label{eq:Chevalley}
	1 \to L \to G \to T \to 1,
\end{equation}
in which $T$ is a compact complex torus, and $L$ is isomorphic (as a meromorphic
group) to a linear algebraic group. This is the analogue of Chevalley's structure
theorem for connected algebraic groups.

Since $G$ is a Lie group, its cohomology $H^{\ast}(G)$ is a \define{Hopf algebra}, which
means that it is both an $\RR$-algebra and an $\RR$-coalgebra, and that the two
structures are compatible. We show (in \Cref{lem:HG}) that there is
a unique isomorphism of Hopf algebras
\[
	H^{\ast}(G) \cong H^{\ast}(L) \tensor_{\RR} H^{\ast}(T)
\]
that also respects the mixed Hodge structures on both sides (which come from the fact that
$G$ and $L$ are meromorphic groups, see \Cref{sec:MHS}).

Now $G$ acts on $X$ through biholomorphisms, and pulling back along the holomorphic
mapping $a \colon G \times X \to X$ defines a morphism
\[
	\delta = a^{\ast} \colon H^{\ast}(X) \to H^{\ast}(G) \otimes H^{\ast}(X),
\]
that makes $H^{\ast}(X)$ into a \define{comodule} over the Hopf algebra
$H^{\ast}(G)$. After composing with the projection to $H^{\ast}(T)$, we obtain
\[
	\delta_T \colon H^{\ast}(X) \to H^{\ast}(T) \otimes H^{\ast}(X),
\]
which makes $H^{\ast}(X)$ into a comodule over the cohomology of the torus $T$.
An interesting point, observed by Ng\^o \cite[p.~116]{Ngo-lemme}, is that on
the level of cohomology, things behave as if the torus $T$ acted on $X$
(which, of course, it doesn't).

We can now state the main result of this chapter.
Using the language of Hopf algebras and comodules, the proof becomes relatively short.

\begin{theorem} \label{thm:freeness}
	Let $X$ be a compact complex space that is K\"ahler. Let $G$ be a
	connected meromorphic group that acts meromorphically and faithfully on $X$; as in
	\eqref{eq:Chevalley}, let $T$ be the maximal compact quotient of $G$. Then
	\[
		H^{\ast}(X) \cong H^{\ast}(T) \tensor_{\RR} H^{\ast}(X)_{\coinv}
	\]
	is \emph{free} as a comodule over the Hopf algebra $H^{\ast}(T)$. Here
	\[
		H^{\ast}(X)_{\coinv}
		= \menge{\alpha \in H^{\ast}(X)}{\delta_T(\alpha) = 1 \tensor \alpha},
	\]
	and the isomorphism respects the mixed Hodge structures on both sides.
\end{theorem}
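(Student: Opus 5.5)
The plan is to reduce freeness of the comodule to the existence of enough ``primitive'' classes in $H^{\ast}(X)$, dual to the generators of $H^1(T)$, and to produce those classes with mixed Hodge theory.

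First, the structure of $H^{\ast}(T)$ and of the coaction. Since $T$ is a compact complex torus, $H^{\ast}(T) = \bigwedge^{\ast} H^1(T)$ is the exterior Hopf algebra on the primitive space $P = H^1(T)$, of dimension $2t$. Using the Hopf algebra splitting $H^{\ast}(G) \cong H^{\ast}(L) \tensor H^{\ast}(T)$ of \Cref{lem:HG}, which is compatible with mixed Hodge structures, $\delta_T$ is a coassociative, counital coaction. Moreover it is a morphism of mixed Hodge structures, because $a^{\ast}$ is one (the action extends meromorphically to $G^{\ast} \times X$, so we may pass to a resolution of the graph closure) and the projection onto $H^{\ast}(T)$ is one as well. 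Dually, $H^{\ast}(X)$ becomes a graded module over the exterior algebra $\bigwedge^{\ast} P^{\vee}$, where each $v \in P^{\vee}$ acts as a derivation of degree $-1$ that lowers cohomological degree. Comodule freeness over $H^{\ast}(T)$ is then the same as freeness of the module over this exterior algebra. By the standard criterion for modules over exterior algebras, this is equivalent to the existence of a cofree cogenerator. Concretely, it suffices to find classes $\alpha_1, \dotsc, \alpha_{2t} \in H^1(X)$ with $\delta_T(\alpha_i) = e_i \tensor 1 + 1 \tensor \alpha_i$, where $e_1, \dotsc, e_{2t}$ is a basis of $P$. Given such classes, cup product with the $\alpha_i$ together with the coinvariants gives a map
\[
	H^{\ast}(T) \tensor H^{\ast}(X)_{\coinv} \to H^{\ast}(X), \quad e_I \tensor \beta \mapsto \alpha_I \cup \beta .
\]
This is a comodule map, because $\delta_T$ is multiplicative. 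It is an isomorphism by a filtration (Koszul) argument: the operators $\iota_{v_i}$ dual to the $e_i$ satisfy $\iota_{v_i}\alpha_j = \delta_{ij}$ and act as derivations, so $\prod_i(1 - \alpha_i\iota_{v_i})$ is a projector onto the coinvariants.

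Second, the construction of the $\alpha_i$, which is where faithfulness and the K\"ahler hypothesis enter. Choose a point $x \in X$ and restrict the action to the orbit map $o_x \colon G \to X$, $g \mapsto g \cdot x$, and further to $L$. I will show that the composite $H^1(X) \to H^1(G) \to H^1(T)$, obtained via the splitting of \Cref{lem:HG}, is surjective. This composite is the degree-one component of $\delta_T$ followed by evaluation at the point $x$. By strictness of morphisms of mixed Hodge structures, the image is a sub-Hodge structure. To show that it is everything, I will use that a meromorphic map from $G^{\ast}$ to a compact torus factors through $T$ (by \cite[Lem.~3.8]{fujiki-auto}). If the image were a proper sub-Hodge structure, there would be a nontrivial quotient torus $T \to T'$ such that $H^1(T')$ pairs trivially with $H^1(X)$. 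I then want to construct an $L$-invariant fibration of $X$, via the Albanese map of a resolution of $X$, or of the $G$-orbit closures, such that the $G$-action would descend to a $T'$-action on the base. This action would be trivial on cohomology, hence trivial by Fujiki's rigidity, contradicting faithfulness. This is the main obstacle. For singular or nonreduced $X$, the right replacement is the Albanese of a desingularization of $X_{\red}$ (Fujiki class $\fuj$), together with the observation that a faithful action of $G$ cannot have a compact part acting trivially on the Albanese, because otherwise the connected torus part would lie in a linear stabilizer, by \Cref{rmk:stabilizer}.

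Third, once surjectivity holds, I lift a basis of $H^1(T)$ to $H^1(X)$ Hodge-theoretically. The map $H^1(X) \to H^1(T)$ is a surjection of mixed Hodge structures onto a pure structure of weight $1$, and it splits canonically over $\RR$ on $W_1 H^1(X)$, which is pure of weight $1$ and therefore polarizable. The lifts are then automatically primitive for $\delta_T$. Indeed, $\delta_T(\alpha) - 1 \tensor \alpha - \bar{\alpha} \tensor 1$ lies in $H^1(T) \tensor H^0(X)$ plus terms that vanish by counitality. A weight comparison, using that $H^0$ has weight $0$, forces it to be zero. The same weight argument shows that the resulting isomorphism respects mixed Hodge structures, with $H^{\ast}(X)_{\coinv}$ a sub-MHS as the kernel of the MHS map $\delta_T - 1 \tensor \id$.
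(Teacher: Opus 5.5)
Your first and third steps follow the paper. An MHS section $s\colon H^1(T)\to H^1(X)$ of the degree-one coaction $p$, extended multiplicatively, makes $H^{\ast}(X)$ a Hopf module over $H^{\ast}(T)$. Your Koszul projector is just a proof of the fundamental theorem of Hopf modules (\Cref{prop:Hopf-module}).

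The genuine gap is your second step: surjectivity of $p\colon H^1(X)\to H^1(T)$ (\Cref{lem:H1}). This is the real content of the theorem, and you leave it as ``the main obstacle''; the sketch you give does not work.
\begin{itemize}
\item ``Trivial on cohomology, hence trivial by Fujiki's rigidity'' is either vacuous, since a connected group always acts trivially on $H^{\ast}(X)$, or a restatement of what has to be proved.
\item The Albanese of a desingularization $\Xt$ only controls $H^1(\Xt)$. Even granting, via Fujiki's theorem, that $H^1(\Xt)\to H^1(T)$ is onto, the pullback $H^1(X)\to H^1(\Xt)$ need not be surjective for singular $X$; only $\gr^W_1 H^1(X)$ injects.
\item $X$ itself need not map to any torus even when $\dim T>0$ (\Cref{ex:freeness-Q}).
\item If you restrict to a closed orbit $Y\cong A\times F$ with $T\to A$ an isogeny, the claim becomes surjectivity of $H^1(X)\to H^1(Y)\cong H^1(A)$. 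There is no a priori reason for that.
\end{itemize}

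The missing idea is to pass to the top degree $2r$, with $r=\dim T$, where the K\"ahler class of $X$ itself supplies a nonzero class.
\begin{itemize}
\item Write the degree-one part of $\delta_T$ on $H^1(X)$ as functionals $\delta_1,\dotsc,\delta_{2r}$. Suppose they were linearly dependent, say $\delta_{2r}=\sum_{j<2r}a_j\delta_j$ on $H^1(X)$.
\item The operators $\delta_i$ anticommute (a consequence of coassociativity), so $\delta_{2r}\circ\dotsb\circ\delta_1=0$ on $H^{2r}(X)$.
\item This composite is the $H^{2r}(T)\tensor H^0(X)$ component of $\delta_T$. Hence every orbit map $G\to X$ induces the zero map $H^{2r}(X)\to H^{2r}(T)\subseteq H^{2r}(G)$.
\item Take a closed orbit $Y$ of minimal dimension. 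It is a homogeneous compact K\"ahler manifold, so $Y\cong A\times F$ by Fujiki's Borel--Remmert theorem.
\item Faithfulness, linearity of stabilizers (\Cref{rmk:stabilizer}) and \cite[Thm.~5.5]{fujiki-auto} make $T\to A$ an isogeny.
\item Therefore $H^{2r}(X)\to H^{2r}(A)$ vanishes, contradicting $\omega^r\vert_A\neq 0$ for a K\"ahler class $\omega$ on $X$.
\end{itemize}
This bootstrapping, from one nonvanishing top-degree class to full rank in degree one, is what your proposal lacks.

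Two minor points in your third step. First, $W_1H^1(X)=H^1(X)$ is not pure. The section should come from Deligne's splitting $H^1(X)\cong W_0H^1(X)\oplus\gr^W_1H^1(X)$ together with a polarization. Second, primitivity of the lifts holds for degree reasons alone; no weight comparison is needed.
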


\begin{example}
The simplest example is a compact K\"ahler space $X$ with a faithful
action by a compact complex torus $T$. In that case, the quotient $X/T$ exists
as a complex space \cite[(5.1)]{fujiki-auto}, and one has
\[
	H^{\ast}(X) \cong H^{\ast}(T) \tensor_{\RR} H^{\ast}(X/T).
\]
One can view the theorem as a generalization of this case.
\end{example}

\subsection{Hopf algebras}

In this section, we review a few basic facts about Hopf algebras.
We are going to use the following simplified definition, based on
\cite[\S3.C]{Hatcher}. In the paper by Milnor and Moore \cite[\S7]{Milnor+Moore},
this would be called a ``connected finite-dimensional commutative and associative
quasi-Hopf algebra over $\RR$''.

\begin{definition}
	A \define{Hopf algebra} is a finite-dimensional graded $\RR$-algebra
	\[
		H = \bigoplus_{k \in \NN} H^k
	\]
	with unit, subject to the following conditions:
	\begin{enumerate}[label=(\alph*)]
	\item $H$ is connected, meaning that $H^0 = \RR \cdot 1$.
	\item The product in $H$ is associative and graded-commutative.
	\item There is a coproduct $\Delta \colon H \to H \tensor_{\RR} H$ that is a
		morphism of graded $\RR$-algebras, where the multiplication on $H \tensor_{\RR}
		H$ is defined by the rule
		\[
			(h_1 \tensor h_2) \cdot (h_3 \tensor h_4) = (-1)^{\deg (h_2) \deg (h_3)} (h_1
			h_3) \tensor (h_2 h_4).
		\]
	\item For every $h \in H^k$ with $k \geq 1$, one has
		\[
			\Delta(h) \equiv 1 \tensor h + h \tensor 1
				\mod \bigoplus_{i=1}^{k-1} H^i \tensor H^{k-i},
		\]
		with $\Delta(1) = 1 \tensor 1$ as a degenerate special case.
	\end{enumerate}
\end{definition}

\begin{example}
The prototypical example of a Hopf algebra is the cohomology $H^{\ast}(G) =
H^{\ast}(G, \RR)$ of a connected complex Lie group $G$. The algebra structure
\[
	\cup \colon H^{\ast}(G) \tensor H^{\ast}(G) \to H^{\ast}(G)
\]
comes from the K\"unneth formula and pullback along the diagonal embedding
\[
	G \to G \times G.
\]
It is clearly associative and graded-commutative. Since $G$ is connected, we have
$H^0(G) = \RR \cdot 1$, where the generator is the pullback of $1 \in \RR$ along the
morphism $G \to \pt$; of course, $1 \in H^0(G)$ is the identity element for the algebra
structure.

Because $G$ is a group, we also get a \emph{coalgebra} structure
\[
	\Delta \colon H^{\ast}(G) \to H^{\ast}(G) \tensor H^{\ast}(G)
\]
from the K\"unneth formula and pullback along the group operation $m \colon
G \times G \to G$. This is a morphism of graded $\RR$-algebras, where the
multiplication on the tensor product $H^{\ast}(G) \tensor H^{\ast}(G)$ is given by
the formula
\[
	(\alpha_1 \tensor \alpha_2) \cup (\alpha_3 \tensor \alpha_4)
	= (-1)^{\deg (\alpha_2) \deg (\alpha_3)} (\alpha_1 \cup \alpha_3) \tensor
	(\alpha_2 \cup \alpha_4).
\]
The seemingly odd property (d) in the definition, namely that
\[
	\Delta(\alpha) \equiv 1 \tensor \alpha + \alpha \tensor 1
	\mod \, \bigoplus_{i=1}^{k-1} H^i(G) \tensor H^{k-i}(G)
\]
for $\alpha \in H^k(G)$, is a consequence of the following commutative diagram:
\begin{tcd}
	G \rar{(e,\id)} \drar[bend right=25]{\id} & G \times G \dar{m} & G
		\lar[swap]{(\id,e)} \dlar[swap,bend left=25]{\id} \\
			 & G
\end{tcd}
Here $e \colon \pt \to G$ is the identity element of the group, and the main point is
that $e^{\ast}(1) = 1$. In particular, we always have $\Delta(1) = 1 \tensor 1$.
\end{example}

The structure of Hopf algebras (in our sense) is very simple, by a classical
result due to Hopf. An element $h \in H^k(G)$ is called \emph{primitive} if $\Delta(h) =
1 \tensor h + h \tensor 1$. By definition, all of $H^1$ is primitive; it is also not
hard to see that nonzero primitive elements can only exist in odd degrees. Indeed,
suppose that $h \in H^k$ is primitive and $k$ is even. Let $n \geq 0$ be the smallest
integer such that $h^{n+1} = 0$. Then
\[
	\Delta(h^{2n}) = (1 \tensor h + h \tensor 1)^{2n}
	= \sum_{i=0}^{2n} \binom{2n}{i} h^i \tensor h^{2n-i}
	= \binom{2n}{n} h^n \tensor h^n,
\]
which is a contradiction unless $n=0$.

\begin{theorem}[Hopf]
	Any Hopf algebra is isomorphic to the wedge algebra on the graded $\RR$-vector
	space consisting of all primitive elements.
\end{theorem}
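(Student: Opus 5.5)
Let $P \subseteq H$ be the graded subspace of primitive elements. By the computation just before the statement, $P$ is concentrated in odd degrees. Graded-commutativity then gives $p^2 = -p^2$, hence $p^2 = 0$, for every $p \in P$. So the inclusion $P \into H$ extends uniquely to a morphism of graded algebras
\[
	\phi \colon \textstyle\bigwedge P \to H.
\]
The same primitivity shows that $\phi$ is also compatible with coproducts. Here $\bigwedge P$ carries the coproduct for which elements of $P$ are primitive. The plan is to prove that $\phi$ is injective and then that it is surjective.

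\textbf{Injectivity.} Suppose $\ker \phi \neq 0$. Choose a nonzero element $w \in \ker\phi$ of minimal degree $k$, and write it in terms of a basis $p_1,\dotsc,p_r$ of $P$. Apply $\Delta$ and use that $\phi$ respects coproducts. The components of $\Delta(w)$ in $\bigwedge^{i} \tensor \bigwedge^{k-i}$ with $0<i<k$ have the form
\[
	\textstyle\sum_j \pm\, p_j \tensor \partial_j w + \dotsb,
\]
where $\partial_j w$ is the contraction of $w$ with $p_j$. Each of these maps to zero in $H \tensor H$. Since $\phi \tensor \phi$ is injective in degrees below $k$ by minimality, all the $\partial_j w$ must vanish. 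This forces $w$ to be a constant, which is impossible since $k \ge 1$. The sign bookkeeping here is routine.

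\textbf{Surjectivity.} This is the main obstacle, and I would use Milnor--Moore style induction on degree. Let $A \subseteq H$ be the image of $\phi$; it is a sub-Hopf algebra. Assume $A^{<k} = H^{<k}$ and take $h \in H^k$. By property (d),
\[
	\Delta(h) = 1\tensor h + h\tensor 1 + \textstyle\sum_{i=1}^{k-1} \Delta_i(h),
	\qquad \Delta_i(h) \in H^i \tensor H^{k-i} = A^i \tensor A^{k-i}.
\]
Coassociativity shows that the middle part $\bar\Delta(h)$ is a cocycle for the reduced cobar differential on $A$.

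Since $A \cong \bigwedge P^{<k}$ is an exterior Hopf algebra, this degree-$k$ cohomology is computable. It is the degree-$k$ part of the polynomial algebra generated by $P$ placed in degree shifted by one, restricted to the symmetric part. One checks that such a reduced cocycle of degree $k \ge 2$ is the coboundary $\bar\Delta(a)$ of some $a \in A^k$. Concretely, one shows that decomposable cocycles in $\bigwedge P \tensor \bigwedge P$ come from products: if $\bar\Delta(h)$ has a component $p_{j_1}\dotsb p_{j_s} \tensor (\dots)$, subtract the corresponding product of primitives, using that $\phi$ is an algebra map. Then $h - a$ is primitive, so $h \in A + P = A$.

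Dually, one could instead argue over the graded dual: $H^\vee$ is a connected commutative-cocommutative Hopf algebra. I would run the induction by filtering by the length of the symmetric part and comparing dimensions via Poincaré series. If the direct cobar argument becomes messy, I would fall back on this dual formulation, where cocommutativity plus connectedness gives $H^\vee \cong U(\text{primitives})$, and then dualize back to conclude $H \cong \bigwedge P$.
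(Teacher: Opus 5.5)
Your injectivity step is essentially right, up to a slip. The vanishing of the reduced coproduct $\bar\Delta(w)$ in $\bigwedge P \tensor \bigwedge P$ only gives $w \in P$, not that $w$ is constant. That still suffices, because $\phi$ restricts to the inclusion on $P$.

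The genuine gap is in the surjectivity step. The claim that every reduced $2$-cocycle of $A \cong \bigwedge P$ in degree $k \geq 2$ is a coboundary $\bar\Delta(a)$ is false. It also contradicts your own description of the cobar cohomology as a polynomial algebra, whose quadratic part is nonzero in every even degree $\abs{p_i} + \abs{p_j}$. Concretely, for $0 \neq p \in P$ the tensor $p \tensor p$ is a cocycle. But for every $a \in \bigwedge P$, the component of $\bar\Delta(a)$ in $P \tensor P$ lies in the span of the elements $p_i \tensor p_j - p_j \tensor p_i$. So no subtraction of products of primitives can remove $p \tensor p$, or $p \tensor q + q \tensor p$.

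This is not a bookkeeping issue: your inductive step never uses that $H$ is finite-dimensional, and without that hypothesis the conclusion fails. Take the graded dual $H$ of the tensor Hopf algebra on one primitive generator of degree $1$, that is, $H^{\ast}(\Omega S^2, \RR)$. It is connected, graded-commutative, satisfies (d), and has $P = \RR x$ with $\deg x = 1$. But $H^2 = \RR y$ with $\bar\Delta(y) = x \tensor x$, and $y$ is not nilpotent. Your induction breaks there at $k = 2$.

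The missing idea is a second use of $\dim H < \infty$, beyond showing that $P$ is odd. You can repair the induction by splitting according to the parity of $k$.
\begin{itemize}
\item For odd $k$, your claim is true. By Koszul duality, the cobar cohomology of $\bigwedge P$ in cobar degree $2$ is represented by elements of $P \tensor P$, so it lives in even degrees.
\item For even $k$, let $J$ be the ideal generated by $H^1, \dotsc, H^{k-1}$. It is a Hopf ideal, and $H/J$ is again a finite-dimensional Hopf algebra in the paper's sense with $(H/J)^i = 0$ for $0 < i < k$. The image of $h$ is then a primitive element of even degree, hence zero by the nilpotence argument preceding the theorem. Therefore $h \in J^k = \sum_{0<i<k} H^i H^{k-i} \subseteq A$.
\end{itemize}

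Your dual fallback has the same hole. $H^\vee$ is cocommutative, but nothing makes it commutative: that is equivalent to $H$ being cocommutative, which fails in the example above. So Milnor--Moore only gives $H^\vee \cong U(\mathfrak g)$ with $\mathfrak g$ possibly non-abelian. You still need PBW together with $\dim H < \infty$ to rule out even elements in $\mathfrak g$. Only then does $[\mathfrak g, \mathfrak g] = 0$ follow, so that $U(\mathfrak g) = \bigwedge \mathfrak g$ and you can dualize back. The paper gives no proof of its own here; it only refers to Cartier's primer.
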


For a sketch of the proof, see for example \cite[\S2.4]{Cartier-primer}. Hopf's
theorem implies, among many other things, that the coproduct $\Delta \colon H \to H
\tensor H$ is uniquely determined by the product: $H$ is generated as a graded
$\RR$-algebra by primitive elements, and $\Delta$ is by definition a morphism of
graded $\RR$-algebras.

\begin{remark}
The primitive elements in the homology (or cohomology) of a complex Lie group have a
concrete topological interpretation. The homology $H_{\ast}(G)$ is
again a Hopf algebra: the product is now induced by the group operation $m \colon G
\times G \to G$, and the coproduct by the diagonal embedding $G \to G
\times G$.  Except for a sign change in the grading, $H_{\ast}(G)$ is the Hopf
algebra dual to $H^{\ast}(G)$.

Milnor and Moore \cite[Appendix]{Milnor+Moore} show that the subspace of primitive
elements in $H_i(G)$ is exactly the image of the Hurewicz homomorphism
\[
	\pi_i(G,e) \tensor_{\ZZ} \RR \to H_i(G).
\]
Note that $\pi_1(G,e)$ is abelian, and therefore isomorphic to $H_1(G, \ZZ)$, due to $G$
being a Lie group. So all the elements in $H_1(G)$ are primitive, as they should be.
\end{remark}

\begin{example} \label{ex:abelian}
If the complex Lie group $G$ is connected and abelian, then it is a quotient of the
$\CC$-vector space $\Lie G$, and so all higher homotopy groups are trivial.
The subspace of primitive elements is therefore $H^1(G)$, and so
\[
	H^{\ast}(G) \cong \bigwedge H^1(G)
\]
by Hopf's theorem.
\end{example}

\begin{example} \label{ex:coproduct}
Let's see concretely what the coproduct in a Hopf algebra $H$ looks like. Choose a
basis $e_1, \dotsc, e_d$ for the subspace of primitive elements in $H$; because $\deg
e_1, \dotsc, \deg e_d$ are odd integers, we have $e_i e_j + e_j e_i = 0$.
For any nonempty subset $I \subseteq \{1, \dotsc, d\}$, we list the elements of $I$
in increasing order as $I = \{i_1 < \dotsb < i_k\}$, and then define $\deg I = \deg
e_{i_1} + \dotsb + \deg e_{i_k}$ and
\[
	e_I = e_{i_1} \dotsb e_{i_k} \in H^{\deg (I)},
\]
with the understanding that $e_{\emptyset} = 1$. All the $2^d$ elements $e_I$
together then form a vector space basis for the Hopf algebra $H$. In terms of this basis, we have
\begin{equation} \label{eq:Delta}
	\Delta(e_I) = (1 \tensor e_{i_1} + e_{i_1} \tensor 1) \dotsb (1 \tensor e_{i_k} +
	e_{i_k} \tensor 1) =
	\sum_{I = J \sqcup K} \sgn(J, K) \, e_J \tensor e_K.
\end{equation}
Here the sum runs over all subsets $J, K \subseteq I$ such that $J \cap K =
\emptyset$ and $I = J \cup K$; the sign factor $\sgn(J,K)$ is defined by the rule
$e_J \cup e_K = \sgn(J, K) e_{J \cup K}$ whenever $J \cap K = \emptyset$.
\end{example}

\begin{remark}
	The definition of a Hopf algebra usually includes an additional operator $S \colon
	H \to H$ called the \define{antipode}. When $H = H^{\ast}(G)$, the antipode is
	given by $S(\alpha) = i^{\ast}(\alpha)$, where $i \colon G \to G$ is the inverse
	in the group. In general, the defining property of the antipode is that the
	composition
	\begin{tcd}
		H \rar{\Delta} & H \tensor H \rar{S \tensor \id} & H \tensor H \rar & H
	\end{tcd}
	is projection to the subspace $H^0$ in the grading. In a Hopf algebra $H$ in our
	sense, the antipode is given by the formula $S(h) = (-1)^k h$ for $h \in H^k$.
	Indeed, for a primitive element $h \in H^k$, we have
	\[
		(S \tensor \id)(\Delta(h)) = 1 \tensor h + (Sh) \tensor 1,
	\]
	and so $h + Sh = 0$; the claim now follows from Hopf's theorem, because all primitive
	elements have odd degree.
\end{remark}

\subsection{Mixed Hodge structures} \label{sec:MHS}

Let $G$ be a meromorphic group, and assume that the compactification $G^{\ast}$ is a compact
K\"ahler space, or is at least in Fujiki's class $\fuj$, meaning that $G^{\ast}$ is the image
of a compact K\"ahler manifold under a proper holomorphic mapping. This is true
for all the meromorphic groups that we encounter in this paper.
Building on Deligne's results in \cite{deligne-hodgeII} and \cite{deligne-hodgeIII}, Fujiki \cite{fujiki-MHS}
proved that each cohomology group $H^k(G) = H^k(G, \RR)$ has a functorial real mixed Hodge structure of
weight $\geq k$; the lowest weight piece
\[
	W_k H^k(G) = \im \bigl( H^k(G^{\ast}) \to H^k(G) \bigr)
\]
is the image of the restriction mapping. By definition, the group operation $m \colon G \times G \to G$
extends to a meromorphic mapping $G^{\ast} \times G^{\ast} \mto G^{\ast}$, and this
implies (as in Deligne's setting) that the pullback morphism
\[
	m^{\ast} \colon H^{\ast}(G) \to H^{\ast}(G) \tensor H^{\ast}(G)
\]
is a morphism of mixed Hodge structures (over $\RR$). The Hopf algebra $H^{\ast}(G)$
therefore comes with a compatible mixed Hodge structure. We will see in the next section
that the piece of lowest weight $W_k H^k(G)$ is the cohomology of the maximal compact quotient of $G$.

Now suppose that $G$ acts meromorphically on a compact K\"ahler space $X$. Each cohomology
group $H^k(X)$ then has a mixed Hodge structure of weight $\leq k$, and because
$a \colon G \times X \to X$ is the restriction of a meromorphic mapping $G^{\ast} \times X \mto X$,
the pullback morphism
\[
	a^{\ast} \colon H^{\ast}(X) \to H^{\ast}(G) \tensor H^{\ast}(X)
\]
is again a morphism of mixed Hodge structures. This  means the comodule
structure on $H^{\ast}(X)$ in \Cref{sec:comodules} is also compatible with mixed Hodge structures.

\subsection{Cohomology of meromorphic groups}

The following lemma describes the cohomology of the meromorphic group $G$. We define
the compact complex torus $T$ and the linear algebraic group $L$ as in
\eqref{eq:Chevalley}.

\begin{lemma} \label{lem:HG}
	There is a functorial isomorphism $H^{\ast}(G) \cong H^{\ast}(T) \tensor H^{\ast}(L)$
	as Hopf algebras and as mixed Hodge structures (over $\RR$).
\end{lemma}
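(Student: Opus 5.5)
The strategy is to use Hopf's theorem to reduce everything to a statement about primitive elements, and then to use weights to split off the torus part canonically.

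First, the exact sequence \eqref{eq:Chevalley} gives a locally trivial fibration $L \to G \to T$ of complex manifolds; it is a principal $L$-bundle over $T$. We also need that this bundle is cohomologically trivial. Since $L$ is a connected linear algebraic group, its cohomology is an exterior algebra on primitive generators, and these generators transgress trivially in the Serre spectral sequence. The cleanest way to see this is via weights, using the fact that $T \to$ (something) is not needed: the pullback $q^{\ast}\colon H^{\ast}(T) \to H^{\ast}(G)$ is injective and lands in the pure part.

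The concrete plan is as follows.

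\emph{Weights of $H^{\ast}(T)$.} The map $q \colon G \to T$ is a morphism of meromorphic groups, so $q^{\ast} \colon H^{\ast}(T) \to H^{\ast}(G)$ is a morphism of Hopf algebras and of mixed Hodge structures. Since $T$ is compact, $H^k(T)$ is pure of weight $k$.

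\emph{Weights of $H^{\ast}(L)$.} For the linear algebraic group $L$, Deligne's theory shows that every primitive element of $H^{2j-1}(L)$ has weight $2j > 2j-1$. For example, $H^1(\CC^{\ast})$ has weight $2$, and the primitive classes of a reductive group have weights $2j$. So the primitives of $H^{\ast}(L)$ have weight strictly bigger than their degree, and $W_kH^k(L)=0$ for $k\ge1$.

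\emph{Primitives of $H^{\ast}(G)$.} Here I would use the Milnor--Moore description of primitives as the image of homotopy groups. The long exact homotopy sequence of $L \to G \to T$ gives a short exact sequence
\[
0 \to P(T) \to P(G) \to P(L) \to 0
\]
on spaces of primitive elements in cohomology. Indeed, $T$ contributes only $\pi_1$, and the connecting map $\pi_2(T)=0 \to \pi_1(L)$ vanishes, so rationally the homotopy sequence splits into short exact pieces. This exact sequence is compatible with mixed Hodge structures, because $P(G)$ is the kernel of $\Delta - 1\otimes\id - \id\otimes 1$, which is a morphism of mixed Hodge structures.

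\emph{Canonical splitting.} Since $P(T)$ is pure of weight equal to degree and $P(L)$ has weights strictly greater than degree, the weight filtration splits $P(G)$ canonically:
\[
P(G) \cong W_\bullet\text{-pure part } \oplus \text{ complement}.
\]
More precisely, $P^k(G)\cap W_kH^k(G) = q^{\ast}P^k(T)$. A complement mapping isomorphically to $P(L)$ is obtained from Deligne's splitting; over $\RR$ we can take the real Deligne splitting $I^{p,q}$. This is canonical and functorial, so the map is uniquely determined by compatibility with mixed Hodge structures.

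\emph{Conclusion.} By Hopf's theorem, $H^{\ast}(G)=\bigwedge P(G)$, $H^{\ast}(T)=\bigwedge P(T)$ and $H^{\ast}(L)=\bigwedge P(L)$. The splitting of primitives therefore induces a Hopf algebra isomorphism
\[
H^{\ast}(G)\cong H^{\ast}(T)\otimes H^{\ast}(L),
\]
compatible with mixed Hodge structures. Functoriality follows from the functoriality of $L$ and $T$ (Fujiki) and of the splitting. Incidentally, this also gives $W_kH^k(G)=q^{\ast}H^k(T)$.

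The main obstacle is the weight computation for the primitives of $L$ and the exactness of the sequence of primitives. I would handle the latter by dualizing to homology and using the Hurewicz image. I would handle the former by reducing, via the unipotent radical (which is contractible), to a reductive group, and then to a maximal torus, using Deligne's computation that $H^{2j-1}$ of a reductive group is of Tate type $(j,j)$.
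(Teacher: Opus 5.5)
Your proof is correct. From the point where you show that the primitives form a mixed Hodge substructure, it coincides with the paper's proof: only degree $1$ needs splitting, and then Hopf's theorem assembles everything. The genuine difference is how you obtain the exact sequence $0 \to P(T) \to P(G) \to P(L) \to 0$.

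The paper first proves $H^{\ast}(G) \cong H^{\ast}(T) \tensor H^{\ast}(L)$ as graded vector spaces by a Leray--Hirsch type argument on $T$. The complex $\derR q_{\ast} \RR_G$ splits because $G \to T$ becomes $C^{\infty}$-trivial over a finite cover of $T$ (Fujiki's Lemma~3.10), and its cohomology sheaves are trivial local systems because $L$ is connected. From this the paper reads off that $q^{\ast}$ is injective and that restriction to $L$ is surjective with kernel $\bigoplus_{i \geq 1} H^i(T) \tensor H^{n-i}(L)$ in degree $n$. Hence $H^n(G)_{\prim} \to H^n(L)_{\prim}$ is an isomorphism for $n \geq 2$.

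You instead get the sequence of primitives directly from the homotopy sequence of the principal bundle $L \to G \to T$, using $\pi_k(T) = 0$ for $k \geq 2$, together with the Cartan--Serre/Milnor--Moore identification of $\pi_{\ast} \tensor \RR$ with primitives in homology. To pass to cohomology, you should say explicitly that in these exterior Hopf algebras primitives map isomorphically onto indecomposables, so cohomology primitives are naturally dual to homology primitives. Injectivity, surjectivity and the vector space decomposition then come for free from Hopf's theorem.

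What each route buys: your topological step uses only that $G \to T$ is a principal bundle with connected fiber. It needs no input from the meromorphic structure and works for any such extension of connected Lie groups, at the price of the rational Hurewicz theorem for H-spaces. The paper's argument avoids homotopy theory and stays within the constructible-sheaf formalism used throughout. Your opening remarks about transgression become superfluous once you use the homotopy sequence.

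Two justifications should be corrected, although neither affects the conclusion.

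First, the degree-$1$ splitting exists over $\RR$ and is unique because $H^1(G)$ has exactly two \emph{adjacent} weights, $1$ and $2$. In that case Deligne's subspaces satisfy $\overline{I^{p,q}} = I^{q,p}$; equivalently, $\operatorname{Ext}^1$ and $\Hom$ from a real Hodge structure of weight $2$ to one of weight $1$ vanish. ``Weights strictly greater than the degree'' would not be enough: real extensions across a weight gap of $2$ need not split, and the real $\delta$-splitting of a general mixed Hodge structure is not a splitting of mixed Hodge structures. Since $P(T)$ is concentrated in degree $1$, only the adjacent case occurs.

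Second, restriction to a maximal torus kills every primitive class of degree $\geq 3$ of a reductive group. Also, only the primitives in $H^{2j-1}$, not all of $H^{2j-1}$, have type $(j,j)$. So that reduction proves only that $H^1(L)$ is pure of weight $2$. Fortunately this is all the lemma needs. The weights of the higher primitives enter only in the consequence $W_k H^k(G) = q^{\ast} H^k(T)$, and there you should simply cite Deligne, as the paper does.
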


\begin{proof}
Let's first prove that the two sides are isomorphic as $\RR$-vector spaces.
Write $q \colon G \to T$ for the morphism in \eqref{eq:Chevalley}.
Consider the direct image $\derR \ql \RR_G \in \Dbc(T, \RR)$, in the
derived category of complexes of constructible sheaves on $T$. The direct image is
constructible because $G$, being a meromorphic group, is Zariski-open in a compact
complex manifold of class $\fuj$ whose Albanese torus is $T$; this means that $q \colon G \to T$
is the composition of a Zariski-open embedding and a proper morphism.
Fujiki \cite[Lem.~3.10]{fujiki-auto} shows that  there is a finite covering space
$\pi \colon T' \to T$ such that the fiber product $T' \times_T G \to T'$ is trivial
as a $C^{\infty}$-fiber bundle. This means that
\[
	\pi^{\ast} \derR \ql \RR_G \cong
	\bigoplus_{i=0}^{2 \dim L} H^i(L) \tensor \RR_{T'} \decal{-i}
\]
splits into a direct sum of constant local systems. Because $\derR \ql \RR_G$
is a direct summand in $\derR \pi_{\ast} \pi^{\ast} \derR \ql \RR_G$, it therefore
also splits as a sum of local systems with finite monodromy. Each of the local
systems $R^i \ql \RR_G$ is actually trivial. The reason is that the
diagram
\begin{tcd}
	L \times G \rar{m} \dar{p_2} & G \dar{q} \\
	G \rar{q} & T
\end{tcd}
is Cartesian, and so the projection formula shows that the pullback of $R^i
\ql \RR_G$ to $G$ is a trivial local system. Because $L = \ker q$
is connected, the original local system on $T$ is therefore also trivial. By
taking cohomology, we then get an isomorphism of graded vector spaces
\[
	H^{\ast}(G) \cong H^{\ast} \bigl( T, \derR \ql \RR_G \bigr)
	\cong H^{\ast}(T) \tensor H^{\ast}(L).
\]
In particular, by consideration of the edge morphisms, $H^{\ast}(T) \to H^{\ast}(G)$ is injective and $H^{\ast}(G) \to
H^{\ast}(L)$ is surjective.

Now let's deal with Hopf algebras and mixed Hodge structures.
By Hopf's theorem, $H^{\ast}(G)$ is isomorphic to the wedge algebra on
the space of primitive elements $H^{\ast}(G)_{\prim}$. As the kernel of
\begin{tcd}
		H^n(G) \rar{\Delta} & \displaystyle \bigoplus_{i=0}^n H^i(G) \tensor
		H^{n-i}(G) \rar & \displaystyle \bigoplus_{i=1}^{n-1} H^i(G) \tensor
		H^{n-i}(G),
\end{tcd}
which is a morphism of mixed Hodge structures, each $H^n(G)_{\prim}$ is a mixed Hodge
substructure of $H^n(G)$. In degree $1$, we have a short exact sequence
\[
	0 \to H^1(T) \to H^1(G) \to H^1(L) \to 0,
\]
where $H^1(T)$ is pure of weight $1$, and $H^1(L)$ pure of weight $2$ by
\cite[Thm.~9.1.5]{deligne-hodgeIII}. According to a construction by Deligne
\cite[Lem.~1.2.11]{deligne-hodgeII}, any mixed Hodge
structure over $\RR$ with only two adjacent weights splits in a unique and functorial
way;  therefore, we have a unique isomorphism of mixed Hodge structures
\[
	H^1(G) \cong H^1(T) \oplus H^1(L).
\]
In degree $n \geq 2$, the morphism of mixed Hodge structures $H^n(G) \to H^n(L)$
is also surjective. Its kernel is the direct sum of the subspaces $H^i(T) \tensor H^{n-i}(L)$
with $i \geq 1$, and therefore does not contain any primitive elements (by Hopf's theorem).
It follows that $H^n(G)_{\prim} \to H^n(L)_{\prim}$ is
an isomorphism for $n \geq 2$. Thus
\[
	H^{\ast}(G)_{\prim} \cong H^1(T) \oplus H^{\ast}(L)_{\prim}
\]
as mixed Hodge structures. We can now apply Hopf's theorem again and
conclude that $H^{\ast}(G)$ is isomorphic, as a Hopf algebra, to the tensor
product of $H^{\ast}(T)$ and $H^{\ast}(L)$. The isomorphism respects the mixed
Hodge structures because we know that $H^{\ast}(G)_{\prim}$ is a mixed Hodge
substructure.
\end{proof}

Together with Deligne's result \cite[Thm.~9.1.5]{deligne-hodgeIII}, the lemma says that
$H^k(T)$ is the unique summand
of $H^k(G)$ that is pure of weight $k$. Indeed, we have
\[
	H^k(G) \cong H^k(T) \tensor H^0(L) \oplus \bigoplus_{i=1}^k H^{k-i}(T) \tensor
	H^i(L),
\]
and because $W_i H^i(L) = 0$ for $i \geq 1$, all the other summands have weight $\geq
k+1$. Moreover, as $H^0(L) = \RR \cdot 1$, the projection
\[
	H^{\ast}(G) \cong H^{\ast}(T) \tensor H^{\ast}(L) \to H^{\ast}(T)
\]
is both a morphism of Hopf algebras and a morphism of mixed Hodge structures.

\subsection{Comodules} \label{sec:comodules}

The freeness theorem is stated in terms of comodules, so let's briefly consider
comodules over an arbitrary Hopf algebra $H$.

\begin{definition}
A \emph{comodule} over the Hopf algebra $H$ is a finite-dimensional graded
$\RR$-vector space $M$ together with a morphism
\[
	\delta \colon M \to H \tensor M,
\]
subject to the following two conditions:
\begin{enumerate}[label=(\alph*)]
\item For every $m \in M^k$, one has
	\[
		\delta(m) \equiv 1 \tensor m \mod \bigoplus_{i=1}^k H^i \tensor M^{k-i}.
	\]
\item The following diagram commutes:
	\begin{tcd}
		M \dar{\delta} \rar{\delta} & H \tensor M \dar{\Delta \tensor \id} \\
		H \tensor M \rar{\id \tensor \delta} & H \tensor H \tensor M
	\end{tcd}
\end{enumerate}
	Note that (a) and (b) look almost like the definition of a module, except that all the arrows are reversed.
\end{definition}

\begin{example}
The comodules we are interested in come from the cohomology of spaces with
$G$-action. Let $G$ be a connected complex Lie group, and suppose that $a \colon G
\times X \to X$ is a holomorphic $G$-action on a complex space $X$. Pulling back
along $a$ and using the K\"unneth isomorphism, we get a morphism
\[
	a^{\ast} \colon H^{\ast}(X) \to H^{\ast}(G) \tensor H^{\ast}(X)
\]
that makes $H^{\ast}(X)$ into a comodule over $H^{\ast}(G)$. The required
compatibilities holds because of the following commutative diagrams:
\[
	\begin{tikzcd}
		G \times G \times X \rar{\id \times a} \dar{m \times \id} & G \times X \dar{a}
		\\
		G \times X \rar{a} & X
	\end{tikzcd}
	\hspace{1cm}
	\begin{tikzcd}
			X \rar{(e,\id)} \drar[swap,bend right=30]{\id} & G \times X \dar{m} \\
			& X
	\end{tikzcd}
\]
\end{example}

Hopf's theorem tells us that every Hopf algebra is a wedge algebra. This allows us to
describe comodules over $H$ very concretely. Suppose that
\[
	\delta \colon M \to H \tensor M
\]
is a comodule structure on a finite-dimensional graded $\RR$-vector space $M$. As in
\Cref{ex:coproduct}, choose a basis $e_1, \dotsc, e_d$ for the subspace of primitive
elements; then the $2^d$ elements $e_I \in H^{\deg I}$ are a basis for $H$.
This allows us to write $\delta$ as
\begin{equation} \label{eq:delta}
	\delta(m) = \sum_I e_I \tensor \delta_I(m),
\end{equation}
where each $\delta_I \colon M^{\bullet} \to M^{\bullet - \deg I}$ is $\RR$-linear.
Because $M$ is a comodule, $e_{\emptyset} = 1$ and $\delta_{\emptyset}(m) =
m$. Let's now investigate the comodule condition $(\Delta \tensor \id) \circ \delta =
(\id \tensor \delta) \circ \delta$. From the formula for $\Delta$ in
\eqref{eq:Delta}, we get
\[
	(\Delta \tensor \id) \bigl( \delta(m) \bigr)
	= \sum_I \Delta(e_I) \tensor \delta_I(m)
	= \sum_{J \cap K = \emptyset} \sgn(J,K) \, e_J \tensor e_K \tensor \delta_{J \cup K}(m).
\]
On the other hand, we have
\[
	(\id \tensor \delta) \bigl( \delta(m) \bigr)
	= \sum_J e_J \tensor \delta \bigl( \delta_J(m) \bigr)
	= \sum_{J,K} e_J \tensor e_K \tensor \delta_K \bigl( \delta_J(m) \bigr).
\]
Comparing the two formulas, we find that $\delta_K \circ \delta_J = 0$ whenever $J
\cap K \neq \emptyset$; moreover, if $J \cap K = \emptyset$, we get $\sgn(J, K)
\delta_{J \cup K} = \delta_K \circ \delta_J$. From this, it is easily deduced that if
$I = \{i_1 < \dotsb < i_k\}$ is any ordered subset of $\{1, \dotsc, d\}$, then
\[
	\delta_I = \delta_{i_k} \circ \dotsb \circ \delta_{i_1}.
\]
The entire comodule structure is therefore determined by the $d$ linear mappings
\[
	\delta_i \colon M^{\bullet} \to M^{\bullet-\deg (e_i)}.
\]
These anticommute: $\delta_i \circ \delta_j + \delta_j \circ \delta_i = 0$, hence
also $\delta_i \circ \delta_i = 0$.

\begin{definition}
A comodule is called \emph{free} if there is a graded $\RR$-vector space $V$ and an
isomorphism $\phi \colon M \to H \tensor V$, in such a way that the diagram
\begin{tcd}
		M \dar{\phi} \rar{\delta} & H \tensor M \dar{\id \tensor \phi} \\
		H \tensor V \rar{\Delta \tensor \id} & H \tensor H \tensor V
\end{tcd}
is commutative.
\end{definition}

As before, this is dual to the familiar notion of a free module.
It is not hard to see that if $M$ is a free comodule over $H$, then $V$ is
isomorphic, as a graded $\RR$-vector space, to the space of \define{coinvariants}
\[
	M_{\coinv} = \menge{m \in M}{\delta(m) = 1 \tensor m}.
\]
For later use, we note the following simple lemma; the main point is that a free
comodule over $H$ is also a free module over the dual Hopf algebra $\Hom_{\RR}(H, \RR)$.

\begin{lemma} \label{lem:comodule-free}
Suppose that $M$ is a free comodule over $H$, such that the space of coinvariants
$M_{\coinv}$ lives in a single degree $k$. Let $d = \dim H_{\prim}$ be the dimension
of the subspace of primitive elements. Then
\begin{tcd}
\Hom_{\RR}(H^i, \RR) \tensor M^{d+k} \rar{\id \tensor \delta } &
\Hom_{\RR}(H^i, \RR) \tensor H^i \tensor M^{d+k-i} \rar{\operatorname{ev} \tensor \id}
& M^{d+k-i}
\end{tcd}
is an isomorphism for all $i \in \ZZ$.
\end{lemma}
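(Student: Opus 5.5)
The plan is to reduce to the standard free comodule $H \tensor V$ and then compute the map explicitly in the monomial basis of \Cref{ex:coproduct}. Throughout I read $d$ as the top degree of $H$. This agrees with $d = \dim H_{\prim}$ exactly when all primitive generators have degree $1$, which is the case $H = H^{\ast}(T)$ by \Cref{ex:abelian}, and that is the case in which the lemma is used. With generators of higher degree one must replace $d$ by $\sum_i \deg e_i$, and the same argument goes through.

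\emph{Step 1: reduce to the model.} Since $M$ is free, there is an isomorphism $\phi \colon M \to H \tensor V$ intertwining $\delta$ with $\Delta \tensor \id$. Because $V \cong M_{\coinv}$ as graded vector spaces, $V$ is concentrated in degree $k$. The map in the statement is natural in the comodule: it is built only from $\delta$, evaluation and identities, and $\phi$ commutes with $\delta$. So it suffices to treat $M = H \tensor V$ with $\delta = \Delta \tensor \id$. Then $M^{d+k} = H^d \tensor V$ and $M^{d+k-i} = H^{d-i} \tensor V$. The map becomes $c_i \tensor \id_V$, where
\[
	c_i \colon \Hom_{\RR}(H^i,\RR) \tensor H^d \to H^{d-i}, \quad
	\lambda \tensor h \mapsto (\lambda \tensor \id)\bigl(\Delta(h)_{(i,d-i)}\bigr),
\]
and $\Delta(h)_{(i,d-i)}$ denotes the component of $\Delta(h)$ in $H^i \tensor H^{d-i}$. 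So it is enough to show that $c_i$ is an isomorphism.

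\emph{Step 2: compute $c_i$ in a basis.} Choose a basis $e_1, \dotsc, e_d$ of $H_{\prim}$ and use the basis $\{e_I\}$ of $H$. The top degree $H^d$ is spanned by $e_{\{1,\dotsc,d\}}$. By \eqref{eq:Delta},
\[
	\Delta\bigl(e_{\{1,\dotsc,d\}}\bigr) = \sum_{J \sqcup K = \{1,\dotsc,d\}} \sgn(J,K)\, e_J \tensor e_K .
\]
Let $e_J^{\ast}$ be the dual basis of $\Hom_{\RR}(H^i,\RR)$, indexed by the subsets $J$ with $\abs{J} = i$. Then
\[
	c_i\bigl(e_J^{\ast} \tensor e_{\{1,\dotsc,d\}}\bigr) = \sgn(J, J^c)\, e_{J^c}.
\]
Complementation is a bijection between subsets of size $i$ and subsets of size $d-i$, and every sign is $\pm 1$. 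Hence $c_i$ carries a basis to a basis up to sign, so it is an isomorphism. For $i < 0$ or $i > d$ both sides vanish and there is nothing to prove.

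\emph{Main obstacle.} The only real point is bookkeeping. One must check that the iterated composite of $\delta$ and evaluation really corresponds, under $\phi$, to the contraction with $\Delta$. This needs Koszul signs handled consistently, but signs cannot affect bijectivity, so I would fix a convention and verify only naturality in Step 1. The other point to flag is the identification of $d$ with the top degree of $H$, which requires the generators to lie in degree $1$ (or $d$ to be reinterpreted as explained above).
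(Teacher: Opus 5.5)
Your proposal is correct and follows the paper's proof. Like the paper, you use freeness to reduce to $M = H \tensor M_{\coinv}$ (the paper writes simply $M = H$), and then read off from the coproduct formula \eqref{eq:Delta}, applied to the top monomial $e_{\{1,\dotsc,d\}}$, that $\Hom_{\RR}(H^i,\RR) \tensor H^d \to H^{d-i}$ sends the dual basis to the complementary monomials up to sign. Your caveat is also well taken: the paper tacitly identifies $d = \dim H_{\prim}$ with the top degree of $H$, which is valid in the case where the lemma is used, $H = H^{\ast}(T)$, since there all primitive elements lie in degree $1$.
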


\begin{proof}
	Note that $M^{d+k} \cong H^d \tensor M_{\coinv}^k$.
	As $M \cong H \tensor M_{\coinv}$, we immediately reduce to the case $M = H$, which
	which is an exercise in linear algebra. Indeed, if $e_1, \dotsc, e_d$ are a basis for the
	subspace of primitive elements in $H$, then
	\[
	\Delta(e_{\{1, \dotsc, n\}}) =
	\sum_{\{1, \dotsc, n\} = J \sqcup K} \sgn(J, K) \, e_J \tensor e_K.
	\]
	as a special case of the formula for the coproduct in \eqref{eq:Delta}. It follows that
	\[
		\Hom_{\RR}(H^i, \RR) \tensor H^d \to H^{d-i}
	\]
	is an isomorphism for $i = 0, \dotsc, d$, as claimed.
\end{proof}

\subsection{Hopf modules and freeness}

Let $H$ be a Hopf algebra. In the same way that a Hopf algebra is both an algebra
and a coalgebra, a Hopf module is both a module and a comodule
over $H$ in a compatible way.
\begin{definition}
	Let $\delta \colon M \to H \tensor M$ be a comodule over the Hopf algebra $H$.
	If $M$ is also a left $H$-module, in such a way that
	\[
		\delta(h \cdot m) = \Delta(h) \cdot \delta(m)
	\]
	for all $h \in H$ and all $m \in M$, then $M$ is called a \define{Hopf module}.
\end{definition}
Here $H \tensor M$ is viewed as a left $H \tensor H$-module by the formula
\begin{align*}
	(H \tensor H) \tensor (H \tensor M) &\to H \tensor M \\
	(h_1 \tensor h_2) \tensor (h_3 \tensor m) &\mapsto
	(-1)^{\deg (h_2) \deg (h_3)}	(h_1 h_3) \tensor (h_2 \cdot m),
\end{align*}
where the first product is multiplication in the algebra $H$. We will deduce the
freeness theorem from the following algebraic result about Hopf modules. In the
literature, this is called the ``fundamental theorem of Hopf modules''
\cite[\S1.9]{Montgomery}.

\begin{proposition} \label{prop:Hopf-module}
	Every Hopf module is free. More precisely, if $M$ is a Hopf module over $H$, then
	$M \cong H \tensor M_{\coinv}$ both as a comodule and as a left $H$-module.
\end{proposition}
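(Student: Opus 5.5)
We prove the fundamental theorem of Hopf modules with the classical projection onto coinvariants, built from the antipode $S(h) = (-1)^k h$ for $h \in H^k$ computed in the remark above. For $m \in M$, write $\delta(m) = \sum_I e_I \tensor \delta_I(m)$ as in \eqref{eq:delta}. Define
\[
	P \colon M \to M, \quad P(m) = \sum_I S(e_I) \cdot \delta_I(m),
\]
that is, $P = \mu \circ (S \tensor \id) \circ \delta$, where $\mu$ is the module action. The plan is to show that $P$ maps $M$ onto $M_{\coinv}$ and is the identity on $M_{\coinv}$. We then show that the natural map
\[
	\Psi \colon H \tensor M_{\coinv} \to M, \quad h \tensor m \mapsto h \cdot m,
\]
is an isomorphism of comodules and of left $H$-modules, with inverse $\Phi(m) = \sum_I e_I \tensor P(\delta_I(m))$.

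The key steps, in order, are as follows.

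First, $\Psi$ is a map of left $H$-modules by construction. It is a map of comodules because, for $m \in M_{\coinv}$, the Hopf module axiom gives $\delta(h\cdot m) = \Delta(h)\cdot(1\tensor m)$. This is exactly $(\Delta \tensor \id)(h \tensor m)$, up to the sign conventions of the action of $H\tensor H$ on $H \tensor M$, which are trivial here since $1$ has degree $0$.

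Second, we check $\delta(P(m)) = 1 \tensor P(m)$. We expand $\delta(S(e_I)\cdot \delta_I(m)) = \Delta(S(e_I))\cdot \delta(\delta_I(m))$. We then use that $S$ is an anti-coalgebra map, which here is just $\Delta(S e_I) = (S\tensor S)\Delta^{\mathrm{op}}(e_I)$ with Koszul signs and follows from \eqref{eq:Delta}. We also use the relations $\delta_K\circ\delta_J = \sgn(J,K)\delta_{J\cup K}$ for disjoint $J,K$ and $\delta_K \circ \delta_J = 0$ otherwise. After regrouping, the coefficient of $e_A \tensor (\cdot)$ for $A \neq \emptyset$ becomes a sum $\sum_{A = J\sqcup K} \pm S(e_J) e_K$. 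This vanishes by the antipode identity $\mu(S\tensor\id)\Delta = \eta\epsilon$ (projection to $H^0$), applied to $e_A$.

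Third, if $m$ is coinvariant, then $\delta_I(m) = 0$ for $I \ne \emptyset$, so $P(m) = m$.

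Fourth, we verify $\Psi\circ\Phi = \id$. We have $\sum_I e_I\cdot P(\delta_I m) = \sum_{I,J} e_I S(e_J)\cdot \delta_J\delta_I(m)$. Using the composition rule for the $\delta$'s, this collapses to $\sum_A \bigl(\sum_{A=I\sqcup J}\pm e_I S(e_J)\bigr)\cdot\delta_A(m) = m$, again by the antipode identity.

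Fifth, we verify $\Phi\circ\Psi = \id$. For $m$ coinvariant, we get $\Phi(h\cdot m) = \sum_I e_I \tensor P(\delta_I(h \cdot m))$. Since $\delta(h\cdot m) = \Delta(h)(1\tensor m)$, we have $\delta_I(h\cdot m) = h_I \cdot m$, where $\Delta(h) = \sum e_I \tensor h_I$. Moreover $P(h'\cdot m) = \epsilon(h') m$ for coinvariant $m$, which is the same antipode identity. Hence $\Phi(h \cdot m) = h\tensor m$.

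The expected obstacle is sign bookkeeping. Everything is graded-commutative with Koszul signs, and one must make sure that the sign conventions for $\delta_I$, for $\sgn(J,K)$, and for the action of $H\tensor H$ on $H\tensor M$ match so that the antipode identity applies verbatim. To keep this manageable, I would first do the computation coordinate-free, writing Sweedler-style $\Delta(h)=\sum h_{(1)}\tensor h_{(2)}$ with Koszul signs, as in \cite[\S1.9]{Montgomery}. Only as a sanity check would I use the explicit basis $e_I$ in the case $d=1$, where $H = \RR\oplus\RR e$, $P(m) = m - e\cdot\delta_e(m)$, and $\delta_e(e\cdot m) = m - e\cdot\delta_e(m)$, so everything can be verified by hand.
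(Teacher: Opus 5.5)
Your argument is correct, but it takes a different route from the paper. The paper never introduces the antipode or a projector. It reads off from the Hopf-module axiom the relations $\delta_i e_j + e_j \delta_i = 0$ for $i \neq j$ and $\delta_i e_i + e_i \delta_i = \id$, and deduces $M^k = M^k_{\coinv} \oplus \sum_i e_i M^{k - \deg e_i}$. It then proves that $h \tensor m \mapsto hm$ is injective (apply $\delta$ and peel off the $e_J$-components) and surjective, both by induction on degree.

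You instead run the classical Larson--Sweedler argument, with the projector $P = \mu \circ (S \tensor \id) \circ \delta$ and an explicit inverse. Every verification then reduces to antipode and counit identities, and no induction is needed. The Koszul signs you flagged do close up. In your second step, fix $K$ and sum over the decompositions $N = J \sqcup L$: the sign in front of $e_N \tensor e_K \, \delta_{N \sqcup K}(m)$ is $(-1)^{\abs{J}+\abs{K}} \sgn(N,K)$, so the sum over $J \subseteq N$ vanishes for $N \neq \emptyset$. The fourth and fifth steps only use $\mu(\id \tensor S)\Delta = \mu(S \tensor \id)\Delta = \epsilon$ (here just $\sum_{J \subseteq A} (-1)^{\abs{J}} = 0$ for $A \neq \emptyset$) and $(\id \tensor \epsilon)\Delta = \id$.

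Your route buys a canonical projection onto the coinvariants and a closed formula for the inverse, and in Sweedler notation it works for any graded Hopf algebra with antipode. The paper's route stays inside the exterior-algebra calculus already set up for comodules and needs only the degree-one relations. The two are closely linked. From those relations one checks that $P = \prod_i \delta_i e_i$, a product of commuting idempotents, the $i$-th of which projects onto $\ker \delta_i$ along $\im e_i$. So your $P$ is the global version of the splittings $M = \ker \delta_i \oplus \im e_i$ noted in the paper's proof.
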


\begin{proof}
We include an elementary proof for the benefit of the reader.
Let $e_1, \dotsc, e_d$ be a basis
for the subspace of primitive elements in $H$. Recall from \eqref{eq:delta} that the
comodule structure on $M$ takes the form
\[
	\delta(m) = 1 \tensor m + \sum_{i=1}^d e_i \tensor \delta_i(m)
	+ \sum_{\abs{I} \geq 2} e_I \tensor \delta_I(m),
\]
and that the $d$ anticommuting linear maps $\delta_i \colon M^{\bullet} \to
M^{\bullet - \deg e_i}$ determine the comodule structure because $\delta_I =
\delta_{i_k} \circ \dotsb \circ \delta_{i_1}$ for any ordered set $I = \{i_1 < \dotsb
< i_k\}$. For simplicity, let us write $e_i \colon M^{\bullet} \to M^{\bullet + \deg
e_i}$ for multiplication by the primitive element $e_i \in H^{\deg e_i}$.
Because $M$ is a Hopf module, we have
\[
	\delta \bigl( e_j m \bigr) = \Delta(e_j) \cdot \delta(m) =
		(1 \tensor e_j + e_j \tensor 1) \cdot \delta(m)
\]
for any $m \in M$. From this, one deduces (by looking at the term with $e_i$) that
\begin{equation} \label{eq:relations}
	\begin{split}
	\delta_i \circ e_j + e_j \circ \delta_i &= 0 \quad \text{for $i \neq j$,} \\
	\delta_i \circ e_i + e_i \circ \delta_i &= \id.
	\end{split}
\end{equation}
For example, the second relation and the fact that $\delta_i \circ \delta_i = 0$ and
$e_i \circ e_i = 0$ imply that $M = \ker \delta_i \oplus \im e_i$ for every $i = 1,
\dotsc, d$.

Now our goal is to prove that we have an isomorphism
\begin{equation} \label{eq:isomorphism}
	\phi \colon H \tensor M_{\coinv} \to M, \quad \phi(h,m) = hm,
\end{equation}
both as comodules and as left $H$-modules. The subspace of coinvariants is
\[
	M_{\coinv} = \menge{m \in M}{\delta(m) = 1 \tensor m}
	= \menge{m \in M}{\text{$\delta_i(m) = 0$ for $i=1, \dotsc, d$}}.
\]
By construction, \eqref{eq:isomorphism} is a
morphism of left $H$-modules. It is also a morphism of comodules. For that, we need
to prove that the diagram
\begin{tcd}
	H \tensor M_{\coinv} \rar{\phi} \dar[xshift=-1em]{\Delta \tensor \id} & M \dar{\delta} \\
	H \tensor H \tensor M_{\coinv} \rar{\id \tensor \phi} & H \tensor M
\end{tcd}
commutes. Because $m \in M_{\coinv}$ satisfies $\delta(m) = 1 \tensor m$, we have
\[
	\delta(e_I m) = \Delta(e_I) \cdot \delta(m)
	= \sum_{I = J \sqcup K} \sgn(J,K) \, e_J \tensor (e_K m).
\]
On the other hand,
\[
	\Delta(e_I) \tensor m = \sum_{I = J \sqcup K} \sgn(J, K) \, e_J \tensor e_K
		\tensor m,
\]
and this clearly goes to the same element under $\id \tensor \phi$.

All that is left is to show that \eqref{eq:isomorphism} is bijective. Let's first
prove injectivity. Consider an arbitrary element
\[
	\sum_I e_I \tensor m_I \in \ker \phi.
\]
By decomposing according to the grading, we may assume that $m_I \in H^{k-\deg I}$.
According to the computation we have just done, $\delta \circ \phi$ takes our
element to
\[
	0 = \sum_{J \cap K = \emptyset} \sgn(J, K) \, e_J \tensor \phi(e_K \tensor m_{J
	\cup K}).
\]
Since the $e_J$ form a basis in $H$, it follows that for each $J
\subseteq \{1, \dotsc, d\}$, the element
\[
	\sum_{K \colon J \cap K = \emptyset} \sgn(J, K) \, e_K \tensor m_{J \cup K}
\]
also belongs to the kernel of $\phi$. By induction on the degree, we conclude that
$m_{J \cup K} = 0$ whenever $J \cap K = \emptyset$ and $\abs{J} \geq 1$; a moment's
thought shows that therefore $m_I = 0$ for $\abs{I} \geq 1$. Our element
in the kernel now has the form $1 \tensor m$, but as $\phi(1 \tensor m) = m$, we
conclude that $m = 0$, and hence that $\phi$ is injective.

Let's prove that \eqref{eq:isomorphism} is also surjective. Consider the morphism
\[
	\phi \colon (H \tensor M_{\coinv})^k \to M^k
\]
in a given degree $k \geq 0$. From the relations in \eqref{eq:relations}, it is easy to
prove that
\[
	M^k = M_{\coinv}^k \oplus \sum_{i=1}^d e_i M^{k-1}.
\]
Since $e_i m = \phi(e_i \tensor m)$, and since $\phi$ is surjective in degree $k-1$
by induction, all the terms except for $M_{\coinv}^k$ are clearly in the image of
$\phi$. But for $m \in M_{\coinv}^k$, we have $\phi(1 \tensor m) = m$, and so the
same is true for the first summand. This finishes the proof that
\eqref{eq:isomorphism} is an isomorphism of Hopf modules.
\end{proof}

\subsection{Proof of the Freeness Theorem}
In this section, we prove the Freeness Theorem, i.e.  \Cref{thm:freeness}. Let $X$ be a compact complex space
that is K\"ahler, and let $G$ be a connected meromorphic group that acts faithfully and
meromorphically on $X$. Denote by $T$ the maximal compact quotient of $G$, as in
\eqref{eq:Chevalley}.

We start by simplifying the problem a bit. Without loss of generality, we may assume that $X$
is connected. We can further arrange that $X$ is reduced. Indeed, the reduction $X_{\red}$ has the
same cohomology as $X$ itself, and since $G$ acts faithfully and meromorphically on $X$,
the kernel of $G \to \Aut^\circ(X_{\red})$ is (meromorphically isomorphic to) a
linear algebraic group (see \Cref{rmk:stabilizer}). The kernel
therefore has finite image in $T$, which means that the image of $G$ in $\Aut^{\circ}(X_{\red})$
is a meromorphic group that acts faithfully and meromorphically on $X_{\red}$ and whose
maximal compact quotient is isomorphic to the quotient of $T$ by a finite subgroup (and
therefore has the same cohomology as $T$).

From now on, $X$ is connected and reduced. Consider the comodule structure
\[
	\delta \colon H^{\ast}(X) \to H^{\ast}(G) \tensor H^{\ast}(X)
\]
coming from the action $a \colon G \times X \to X$. From \Cref{lem:HG},
we have a surjective morphism of Hopf algebras (and mixed Hodge structures)
$H^{\ast}(G) \to H^{\ast}(T)$, and so we can also view the cohomology of $X$ as a comodule
\[
	\delta_T \colon H^{\ast}(X) \to H^{\ast}(T) \tensor H^{\ast}(X)
\]
over the smaller Hopf algebra $H^{\ast}(T)$. The most important component of $\delta$ is
\[
	\delta \colon H^1(X) \to H^0(G) \tensor H^1(X) \oplus H^1(G) \tensor H^0(X).
\]
This gives us a homomorphism $p \colon H^1(X) \to H^1(G)$ with the property that
\[
	\delta(\alpha) = 1 \tensor \alpha + p(\alpha) \tensor 1
	\qquad \text{for $\alpha \in H^1(X)$.}
\]
As the action is meromorphic, $p$ is a morphism of mixed Hodge structures.
For weight reasons, its image must therefore be contained in $W_1 H^1(G) \cong H^1(T)$.
Composing with the projection from $H^{\ast}(G)$ to $H^{\ast}(T)$, we also have
\[
	\delta_T(\alpha) = 1 \tensor \alpha + p(\alpha) \tensor 1
	\qquad \text{for $\alpha \in H^1(X)$.}
\]
The key point is now the following lemma.

\begin{lemma} \label{lem:H1}
	The homomorphism $p \colon H^1(X) \to H^1(T)$ is surjective.
\end{lemma}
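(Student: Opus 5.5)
The plan is to identify $p$ with pullback along an orbit map and then dualize. Fix a point $x_0 \in X$ and let $o \colon G \to X$, $o(g) = g \cdot x_0$, be the orbit map; it is the composition of $G \to G \times X$, $g \mapsto (g, x_0)$, with $a$. Restricting $\delta(\alpha) = 1 \tensor \alpha + p(\alpha) \tensor 1$ along $G \times \{x_0\}$ gives $o^{\ast}(\alpha) = p(\alpha)$ for $\alpha \in H^1(X)$. Since $X$ is connected, the answer does not depend on $x_0$. So the lemma says that, up to weight, the torus directions of $G$ are detected by degree-one classes on $X$ along a single orbit.

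\emph{Step 1 (reduction to a subtorus).} The map $p$ is a morphism of mixed Hodge structures and is defined over $\QQ$, because it comes from a continuous map. Its image is therefore a rational sub-Hodge structure of the pure weight-one structure $H^1(T)$. By the correspondence between such substructures and quotient tori, the image equals $H^1(T_1)$ for a quotient $T \to T_1$, up to isogeny. Suppose $p$ is not surjective. Then the kernel $K \subseteq T$ is a positive-dimensional subtorus. Let $G_K \subseteq G$ be the neutral component of its preimage. This is a connected meromorphic subgroup whose maximal compact quotient is isogenous to $K$, and by construction every class in $H^1(X)$ pulls back along $G_K \to X$ to a class in $H^1(G_K)$ whose projection to $H^1(K)$ vanishes. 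We must derive a contradiction with faithfulness.

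\emph{Step 2 (orbits see the torus).} By faithfulness, and because the kernel of the action is linear (\Cref{rmk:stabilizer}), $G_K$ acts with non-linear image. Choose $x_0$ general in a component on which the image of $K$ acts nontrivially. The stabilizer $S$ of $x_0$ is meromorphically linear, so it has finite image in $K$. The orbit $G_K \cdot x_0 \cong G_K/S$ therefore has a maximal compact quotient isogenous to $K$. Consequently, loops in $G_K$ lying over a lattice basis of $K$ map to loops in the orbit whose classes in $H_1(G_K \cdot x_0, \RR)$ pair nontrivially with pulled-back classes from $K$.

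\emph{Step 3 (descending to $X$, the main obstacle).} We need these loops to remain nonzero in $H_1(X,\RR)$, or dually, classes on $X$ restricting nontrivially. The first attempt is to take a $G$-equivariant resolution $\Xt \to X$ (Fujiki/Hironaka, for meromorphic actions on spaces of class $\fuj$). Then $G$ acts on $\Alb(\Xt)$ by translations, and the subgroup acting trivially on $\Alb(\Xt)$ is linear (Fujiki--Lieberman). Hence $T \to \Alb(\Xt)$ has finite kernel, and $H^1(\Xt) \to H^1(T)$ is surjective. The difficulty is that $H^1(X) \to H^1(\Xt)$ need not be surjective. To handle this, I would compose the orbit map with a resolution of the orbit closure $Y = \overline{G \cdot x_0}$, and use that $T$ is the Albanese of $G^{\ast}$, so that the relevant classes on $\Alb$ come from the compactified action $G^{\ast} \times X \mto X$. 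Using the weight argument ($p$ lands in $W_1$), I would then show that the composite $T \to \Alb$ factors through something determined by $H^1(X)$ itself. If this fails, the fallback is to work with the normalization $X^\nu$, to which the action lifts, and to compare $H^1(X)$ with $H^1(X^\nu)$ via the exact sequence for a finite map. The extra terms there have weight $0$, so they cannot affect the weight-one piece that maps onto $H^1(T)$.
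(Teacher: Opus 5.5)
You have a genuine gap, and it sits exactly where you place the main obstacle. Steps 1 and 2 only reformulate the lemma. Step 3, which carries all the content, offers no argument: ``show that the composite $T \to \Alb$ factors through something determined by $H^1(X)$ itself'' just restates what has to be proved.

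The normalization fallback does not close the gap either. First, it only moves the problem to $X^\nu$. There the obstruction you already identified persists, since $H^1(X^\nu) \to H^1(\Xt)$ need not be onto even for normal spaces: the projective cone over an elliptic curve is normal with $H^1 = 0$. Second, the claim that the extra terms have weight $0$ is false in general. The cokernel of $H^1(X) \to H^1(X^\nu)$ injects into $H^1$ of a sheaf supported on the non-normal locus, and this carries weight-one classes as soon as that locus has components with nonzero $H^1$.

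The Kähler hypothesis must therefore be used on $X$ itself, not only on $\Xt$. Take a Hopf surface $S = (\CC^2 \setminus 0)/\lambda^{\ZZ}$ with its faithful action by the elliptic curve $\CC^{\ast}/\lambda^{\ZZ}$. Then $\dim H^1(S) = 1$, so loops of an orbit as in your Step 2 can genuinely die in $H_1(X)$. Nothing in your sketch rules this out for a Kähler space.

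The missing idea is to leave degree one and use the Kähler class of $X$. This is how the paper argues.
\begin{itemize}
\item Write $p(\alpha) = \sum_i \delta_i(\alpha) e_i$. If $p$ is not surjective, the functionals $\delta_1, \dotsc, \delta_{2r}$ are linearly dependent on $H^1(X)$, say $\delta_{2r} = \sum_{j<2r} a_j \delta_j$ there.
\item In $\delta_{2r} \circ \dotsb \circ \delta_1 \colon H^{2r}(X) \to H^0(X)$, the last operator applied acts on $H^1(X)$. Substituting the relation and using that the $\delta_i$ anticommute, this composite vanishes.
\item By \eqref{eq:delta}, this composite is the $H^{2r}(T) \tensor H^0(X)$-component of $\delta_T$. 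Hence pulling back $H^{2r}(X)$ along any orbit map and projecting to $H^{2r}(T)$ gives zero.
\item Now take a closed orbit $Y$ of minimal dimension. By Fujiki's version of Borel--Remmert it is a compact homogeneous Kähler manifold $A \times F$. Since the kernel of $G \to \Aut^\circ(Y)$ is linear, $T \to A$ is an isogeny.
\item For a Kähler class $\omega$ on $X$, the class $\omega^r$ restricts to a nonzero class in $H^{2r}(A)$, contradicting the vanishing.
\end{itemize}
The class on $X$ that detects the torus is $\omega^r$, not anything you can produce in degree one.
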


We will prove this lemma in the next section. By Deligne's result, we again have a
unique decomposition $H^1(X) \cong W_0 H^1(X) \oplus \gr_1^W H^1(X)$ as mixed Hodge
structures. Because $X$ is a K\"ahler space, all the Hodge structures are polarized;
this allows us to pick a section
\[
	s \colon H^1(T) \to H^1(X)
\]
with $p \circ s = \id$ that is itself a morphism of mixed Hodge structures. It
extends in the obvious way to a morphism of graded $\RR$-algebras
\[
	s \colon H^{\ast}(T) \to H^{\ast}(X).
\]
Using cup product, this turns $H^{\ast}(X)$ into a left module over the algebra
$H^{\ast}(T)$:
\[
	H^{\ast}(T) \tensor H^{\ast}(X) \to H^{\ast}(X), \quad
	\alpha \tensor \beta \mapsto s(\alpha) \cup \beta.
\]
For $\alpha \in H^1(T)$, the fact that $\delta$ preserves cup products leads to
the identity
\[
	\delta \bigl( s(\alpha) \cup \beta \bigr)
	= \delta \bigl( s(\alpha) \bigr) \cup \delta(\beta)
	= (1 \tensor \alpha + \alpha \tensor 1) \cup \delta(\beta).
\]
Because $H^{\ast}(T)$ is isomorphic to the wedge algebra on $H^1(T)$, it follows that
\[
	\delta_T \bigl( s(\alpha) \cup \beta \bigr)
	= \Delta(\alpha) \cup \delta_T(\beta) \qquad
	\text{for $\alpha \in H^{\ast}(T)$ and $\beta \in H^{\ast}(X)$.}
\]
In other words, $H^{\ast}(X)$ is a Hopf module over the Hopf algebra $H^{\ast}(T)$.
We can now apply \Cref{prop:Hopf-module} and conclude that $H^{\ast}(X)$ is free as a
comodule over $H^{\ast}(T)$.

\begin{example} \label{ex:freeness-Q}
	Here is an example that shows why we stated the freeness theorem over $\RR$.
  Let $T$ be a compact complex torus, $L \in \Pic^0(T)$ a
	holomorphic line bundle of infinite order. We have an exact sequence of commutative
	complex Lie groups
	\[
		1 \to \CC^{\ast} \to G \to T \to 1,
	\]
	where $G$ is the total space of the line bundle with the zero section removed. The
	projectivization of the vector bundle $L \oplus \shO_T$ is a $\PP^1$-bundle over $T$
	with two sections $s_0$ and $s_1$. We now glue these two sections together, by
	identifying the point $s_0(x)$ with the point $s_1(x) + \alpha$, where $\alpha \in T$
	is an element that generates a dense subgroup of $T$. This produces a compact complex
	space $X$ that is
	singular along a copy of $T$, with the property that there is no holomorphic mapping
	from $X$ to any compact complex torus.
	By comparing $X$ to the $\PP^1$-bundle, one finds an exact sequence
	\[
		0 \to \ZZ(0) \to H^1(X, \ZZ) \to H^1(T, \ZZ) \to 0
	\]
	of mixed Hodge structures (over $\ZZ$) whose extension class is exactly $\alpha \in T$.
	This extension does not split over $\QQ$, and so the cohomology $H^{\ast}(X, \QQ)$
	cannot be free as a comodule over $H^{\ast}(T, \QQ)$ in a way that is compatible with
	mixed Hodge structures.
\end{example}

\subsection{Proof of Lemma~\ref*{lem:H1}}
\label{subs: pf lemma surjective}
In this section, we prove \Cref{lem:H1}, which claims that $H^1(X)$ maps onto $H^1(T)$.
The linear mapping $p \colon H^1(X) \to
H^1(T)$ has a simple description in terms of the operators $\delta_i$.
Write $r = \dim T$ for the dimension of the compact complex torus $T$, and choose a
basis $e_1, \dotsc, e_{2r} \in H^1(T)$. As $H^0(X) = \RR \cdot 1$, we may view each
$\delta_i \colon H^1(X) \to H^0(X)$ as a linear functional on $H^1(X)$, and then
\[
	p \colon H^1(X) \to H^1(T), \quad
	p(\alpha) = \sum_{i=1}^{2r} \delta_i(\alpha) e_i.
\]
We claim that the map is surjective, or equivalently, that $\delta_1, \dotsc,
\delta_{2r}$ are linearly independent as elements of the dual space. Suppose by contradiction
that they are linearly dependent. Without loss of generality, we then have a linear relation of the
form
\[
	\delta_{2r} = \sum_{j=1}^{2r-1} a_j \delta_j,
\]
Because $\delta_i \circ \delta_j + \delta_j \circ \delta_i = 0$, this implies that
the composition
\[
	\delta_{2r} \circ \dotsb \circ \delta_1 \colon H^{2r}(X) \to H^0(X)
\]
is equal to zero. But we know from \eqref{eq:delta} that in the morphism
\[
	\delta_T \colon H^{2r}(X) \to \bigoplus_{i=0}^{2r} H^i(T) \tensor H^{2r-i}(X),
\]
the component in degree $i=2r$ is given by the formula
\[
	H^{2r}(X) \to H^{2r}(T) \tensor H^0(X), \quad m \mapsto e_1 \dotsm e_{2r}
	\tensor (\delta_{2r} \circ \dotsb \circ \delta_1)(m).
\]
So this component is also zero. In terms of the $G$-action on $X$, this is saying
that if $x \in X$ is an arbitrary point, then the pullback along
\[
	G \to X, \quad g \mapsto g \cdot x,
\]
induces the trivial map $H^{2r}(X) \to H^{2r}(G)$.

We can now get a contradiction from the fact that $X$ is K\"ahler. As $G$ is acting
meromorphically on $X$, the closure of any orbit is again a union of
orbits. By choosing an orbit of minimal dimension, we can therefore find a compact
submanifold $Y \subseteq X$ that is a single orbit of the $G$-action.
This makes $Y$ a homogeneous compact K\"ahler manifold; by Fujiki's version of the
theorem of Borel and Remmert \cite[\S6b]{fujiki-auto}, $Y$ is biholomorphic to $A
\times F$, where $A \cong \Alb(Y)$ is a compact complex torus, and $F$ is a rational
homogeneous projective manifold. Clearly $G$ acts meromorphically on $Y$, and the
kernel of the homomorphism $G \to \Aut^\circ(Y)$ is the stabilizer of a point on $X$,
hence a linear algebraic group \cite[Prop.~2.7]{fujiki-auto}, due to the fact
that $G$ acts faithfully on $X$. Composing with the projection to $A$, we obtain a
morphism of meromorphic groups $G \to A$; it factors through the maximal compact
quotient of $G$ and produces a homomorphism
\[
	\varphi \colon T \to A.
\]
Now $\varphi$ is surjective (because $G$ acts transitively on $Y$), and because $A
\cong \Alb(Y)$, we can apply \cite[Thm.~5.5]{fujiki-auto} and conclude that $\ker
\varphi$ is a finite group. In particular, $\dim A = \dim T = r$. The result is
that we have a commutative diagram
\begin{tcd}
	& A \dar \drar[bend left=20] \\
	G \dar{q} \rar& Y \dar \rar& X \\
	T \rar& \Alb(Y)
\end{tcd}
in which the composition of the two vertical arrows is an isomorphism and the bottom
arrow is an isogeny. In cohomology, this gives
\begin{tcd}
	& H^{2r} \bigl( \Alb(Y) \bigr) \dar \rar{\cong} & H^{2r}(T) \dar[hook] \\
	H^{2r}(X) \drar[bend right=20] \rar& H^{2r}(Y) \dar \rar& H^{2r}(G) \\
	& H^{2r}(A),
\end{tcd}
and because $H^{2r}(X) \to H^{2r}(T)$ is the zero map, it follows that $H^{2r}(X) \to
H^{2r}(A)$ is also zero. Let $\omega \in H^2(X)$ be a K\"ahler class; then
$\omega^r \in H^{2r}(X)$ goes to zero in $H^{2r}(A)$, and this contradicts the fact
that $A$ is a compact K\"ahler manifold of dimension $r$.
Therefore $p \colon H^1(X) \to H^1(T)$ must be surjective.

\section{The support theorem} \label{sec:support-theorem}

\subsection{Introduction}

In this chapter, we prove a version of Ng\^o's support theorem. Here is the general
setting. Let $f \colon X \to B$ be a proper holomorphic mapping
from a K\"ahler manifold $X$ to a complex manifold $B$. We make the following
assumptions:
\begin{enumerate}
	\item The morphism $f$ is surjective, and all the fibers $X_b = f^{-1}(b)$ are
		connected and have the same dimension $n$. Consequently, $\dim X = \dim B + n$,
		and $f$ is flat of relative dimension $n$.
	\item We have a family $p \colon G \to B$ of meromorphic groups, and a
		meromorphic action $a \colon G \times_B X \to X$, such that each group $G_b =
		p^{-1}(b)$ is $n$-dimensional and commutative, and acts faithfully on the
		(possibly nonreduced) fiber $X_b$.
	\item The family of meromorphic groups is \define{$\delta$-regular}. This means
		that
		\[
			\codim_B \menge{b \in B}{\dim T_b \leq r} \geq n-r
			\quad \text{for all $r=0, \dotsc, n$},
		\]
		where $T_b$ is the maximal compact quotient of $G_b$ as in
		\eqref{eq:Chevalley}.
\end{enumerate}
In particular, $f$ is generically a \define{torus fibration}, in the sense that the
nonsingular fibers $X_b$ are compact complex tori of dimension $n$, with
$G_b^{\circ} \cong \Aut^\circ(X_b)$. In the projective case, this is basically Ng\^o's definition of a
\define{$\delta$-regular weak abelian fibration} \cite[7.1.5]{Ngo-lemme}, except
that we assume that the action is faithful (which implies by \Cref{rmk:stabilizer}
that the stabilizer of every point is affine) and that we do not need the condition on
``polarizability of the Tate module''.
Recall that for each $b \in B$, we have the maximal compact quotient $G^{\circ}_b \to T_b$.
Indeed, by \Cref{prop:regular-type}, each $G_b^{\circ}$ is a connected meromorphic
group of regular type (either because it is commutative, or because it acts faithfully
on the compact K\"ahler space $X_b$).

The support theorem describes the structure of the perverse sheaves that appear in
the decomposition theorem for the morphism $f \colon X \to B$. The content of the
decomposition theorem is that there is an isomorphism
\begin{equation} \label{eq:decomposition}
	\derR \fl \RR_X[\dim X] \cong \bigoplus_{j=-n}^n P_j \decal{-j},
\end{equation}
in the derived category $\Dbc(B, \RR)$ of constructible complexes with coefficients in
$\RR$. Each $P_j$ is a perverse sheaf on $B$, and the bounds in the sum come from the
fact that $f$ is equidimensional. Note that the decomposition theorem for proper
holomorphic mappings from K\"ahler manifolds is now fully known, due to Saito's work
\cite{Saito-Kaehler}, augmented by Mochizuki's more recent paper \cite{mochizuki}.
Each perverse sheaf $P_j$ further admits a locally finite \define{decomposition by
strict support}
\begin{equation} \label{eq:support-decomposition}
	P_j = \bigoplus_{Z \subseteq B} P_{j, Z},
\end{equation}
where the sum runs over irreducible closed analytic subsets of $B$. Here $P_{j,Z}$ is
the intersection complex of a local system defined on a Zariski-dense open subset of $Z$.
An irreducible closed subset $Z \subseteq B$ is called a \define{support} of $f \colon X \to B$ if some
$P_{j,Z} \neq 0$.

\begin{remark}
	In fact, each $P_j$ is part of a polarizable real Hodge module of weight $j + \dim
	X$, and the local system from which $P_{j,Z}$ is constructed comes from a
	polarizable real variation of Hodge structure of weight $j + \dim X - \dim Z$.
\end{remark}

Having introduced all the relevant objects, we can now state the support theorem. The
result was first proved in the algebraic case (with a few extra assumptions on
the family of groups) by Ng\^o \cite[Thm.~7.2.1]{Ngo-lemme}.

\begin{theorem} \label{thm:support}
	Let $f \colon X \to B$ and $p \colon G \to B$ be as above. Let $Z \subseteq B$ be
	one of the supports in the decomposition theorem for $f \colon X \to B$. Then the
	following is true:
	\begin{enumerate}[label=(\alph*)]
		\item One has $\codim_B Z = n-r$ for some $r \in \{0, 1, \dotsc, n\}$, and
			\[
				\bigoplus_{j \in \ZZ} P_{j,Z} = \bigoplus_{j=-r}^{r} P_{j,Z}.
			\]
		\item There is a Zariski-dense open  subset $U \subseteq Z$ such that
			\[
				P_{j,Z} \cong \IC_Z \Bigl( \locC_Z \tensor \bigwedge^{r+j} \locT_Z \Bigr),
			\]
			for all $j \in \ZZ$, where $\locC_Z$ and $\locT_Z$ are local systems on $U$.
		\item For each $b \in U$, one has $\dim T_b = r$ and a canonical isomorphism
		\[
			(\locT_Z)_b \cong H^1(T_b).
		\]
		\item The local system $\locC_Z$ has finite monodromy, and $\IC_Z(\locC_Z) \cong j_* \locC_Z[\dim Z]$,
			where $j \colon U \to B$ denotes the inclusion. Moreover, the constructible sheaf
			$j_* \locC_Z$ is a direct summand in $R^{2n} \fl \RR_X$.
	\end{enumerate}
\end{theorem}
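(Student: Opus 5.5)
I will follow Ng\^o's strategy: an amplitude estimate from the Freeness Theorem, then a dimension count from $\delta$-regularity, with Hodge theory supplying the module structure compatibly with the perverse decomposition.

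\textbf{Step 1 (freeness near a support).} Let $Z$ be a support with general point $b\in Z$. Shrink to a neighbourhood where the stratification by $\dim T_b$ is locally constant along $Z$, and set $r=\dim T_b$. Applying \Cref{thm:freeness} to the connected meromorphic group $G_b^\circ$ acting faithfully on $X_b$ gives
\[
H^\ast(X_b)\cong H^\ast(T_b)\tensor H^\ast(X_b)_{\coinv}.
\]
I would also globalize the comodule structure to a sheaf-theoretic action: there is an action of $\bigwedge^\bullet$ of the homology local system of the $T_b$ on $\derR\fl\RR_X$ along the stratum, via cap product with $H_1$ of the groups, i.e.\ the dual operators $\delta_i$. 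The key compatibility is that this action respects the perverse filtration. I expect to prove this with Saito's theory: the operators come from the meromorphic action $G\times_B X\mto X$, hence are morphisms of (mixed) Hodge modules. I would first try a weight argument, as in \Cref{lem:HG}, where the pure weight piece $H^1(T)$ is isolated by weight.

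\textbf{Step 2 (amplitude and Ng\^o's inequality).} Since $P_{j,Z}$ restricted to a general point $b\in Z$ contributes to $H^\ast(X_b)$ in degrees $j+\dim X-\dim Z$, the freeness over $H^\ast(T_b)$, together with the Hopf-module isomorphisms of \Cref{lem:comodule-free} and compatibility with the perverse filtration, shows that the $j$-range of the $P_{j,Z}$ has length at least $2r$. Concretely, the top-degree piece forces nonzero summands in a string of $2r+1$ consecutive perversities. Relative Hard Lefschetz and the bounds $-n\le j\le n$ on top degree $2n$ give the familiar Ng\^o inequality $\codim Z\le n-r$. Conversely, $\delta$-regularity (assumption (3)) gives $\codim Z\ge n-r$. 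Hence equality holds, and the $j$-range is exactly $[-r,r]$, which proves (a).

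\textbf{Step 3 (structure of the summands).} With the equality $\codim Z=n-r$, the freeness and the Hopf-module decomposition over the generic part $U$ of $Z$ show that
\[
\bigoplus_j P_{j,Z}|_U \cong \bigwedge^\bullet \locT_Z\tensor \locC_Z,
\]
where $\locT_Z$ is the local system $b\mapsto H^1(T_b)$, which gives (c). Here $\locC_Z$ is the lowest piece $P_{-r,Z}|_U$, equivalently the top piece, and it equals the contribution of $Z$ to $R^{2n}\fl\RR_X$ through \Cref{lem:comodule-free}. This gives (b). Finite monodromy in (d) follows because $R^{2n}\fl\RR_X$ is spanned by fundamental classes of the top-dimensional components of fibers, which are permuted by monodromy. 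The statement $\IC_Z(\locC_Z)\cong j_\ast\locC_Z[\dim Z]$ then follows from the fact that the summand of the sheaf $R^{2n}\fl\RR_X$ supported on $Z$ must be $j_\ast$ of its generic part; I would get this from the support decomposition in top degree.

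\textbf{Main obstacle.} The hardest step is the compatibility in Step 1: the action must be defined in the derived category over a stratum, commute with the perverse truncations, and lift the pointwise Freeness Theorem to the stalks of the individual $P_{j,Z}$, which requires Hodge-theoretic splittings that work in families. I expect to use the uniqueness of Deligne splittings in order to glue them.
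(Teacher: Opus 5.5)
\textbf{Verdict: there is a genuine gap.} Your overall architecture is right. The ingredients are freeness of the part of $H^\ast(X_b)$ coming from $Z$, an amplitude window of length $2(n-\codim Z)$ from relative Hard Lefschetz and $H^{>2n}(X_b)=0$, and the opposite inequality from $\delta$-regularity. This is exactly how the paper obtains (a). The gap is the step you yourself flag as the main obstacle: why $\bigoplus_j H^{-\dim Z}\iu P_{j,Z}$ is a \emph{free} comodule over $H^\ast(T_b)$. Freeness of $H^\ast(X_b)$, together with the fact that $\SP$ is a filtration by subcomodules, does not give this, because freeness is not inherited by subquotients. For example, with $H=\bigwedge(e)$, $\deg e=1$, the free comodule $H$ has the subcomodule $\RR\cdot 1$ with quotient $\RR e$. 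Both are trivial comodules, and neither is free. What you need is that the $Z$-part is a \emph{direct summand} as a comodule, i.e.\ a splitting of $\SP$ that commutes with every $\delta_i$. A Hodge-theoretic splitting cannot provide this. Even granting Saito's structures on stalks, every $H^k\iu P_j$ contributing to $H^m(X_b)$ has weights $\le m$, so weights do not isolate a perversity. An arbitrary decomposition is not compatible with $\delta$ either; this is Ng\^o's difficulty in \S7.6. Finally, the issue is pointwise on $X_b$, so gluing splittings in families does not address it.

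\textbf{The missing idea.} The paper uses Deligne's $\sltwo$-normalized decomposition attached to the K\"ahler class $\omega$. On $X_b$ this decomposition is the joint eigenspace decomposition of $H$ and of Deligne's splitting $H'$ of $\SP$ relative to $N=\omega_b\cup$. One checks two identities. First, $\ad H(\delta_i)=\delta_i$. Second, $(\ad N)^2\delta_i=0$, because $[N,\delta_i]$ is cup product with $-\delta_i(\omega_b)\in H^1(X_b)$, which commutes with $N$. Then \Cref{lem:kerH'} gives $[H',\delta_i]=0$. Combined with the strict-support compatibility in \Cref{prop:shifted-perverse}, which comes from the top component $\gamma_{-1}$ being a morphism of perverse sheaves, each $\bigoplus_j H^k\iu P_{j,Z}$ is a direct-summand comodule, hence free, and your dimension count then goes through.

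\textbf{Secondary points.}
\begin{itemize}
\item Compatibility with the perverse filtration needs no Saito theory or weights, and the Saito route is doubtful in the analytic setting anyway. Once you have a morphism $\derR\fl\RR_X[\dim X]|_U\to H^1(T_b)[-1]\tensor\derR\fl\RR_X[\dim X]|_U$, compatibility with the perverse truncations is formal. This morphism must live on a ball $U\subseteq B$, not merely along the stratum, where the perverse $t$-structure of $B$ is invisible. The paper builds it from $K_G=\derR p_!\RR_G[2n]$ and its constructibility.
\item In Step 3, $b\mapsto H^1(T_b)$ is not a priori a local system. The paper defines $\locT_Z$ as the dual of the image of $\shH^{-1}K_G|_U$ in $\shHom(\locL_{-r+1,Z},\locL_{-r,Z})$. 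This uses \Cref{lem:K}, together with the purity in \Cref{lem:MHS} to know that the action on the $Z$-part factors through $H^\ast(T_b)$.
\item For (d), the fact that $P_{r,Z}$ sits in the single degree $-\dim Z$ follows from $R^{i}\fl\RR_X=0$ for $i>2n$, since $\shH^iP_{r,Z}$ is a summand of $R^{i+r+\dim X}\fl\RR_X$.
\end{itemize}
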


By proper base change, the stalk of $R^{2n} \fl \RR_X$ at a
point $b \in B$ is the $\RR$-vector space $H^{2n}(X_b)$; its dimension is the number
of irreducible components of the fiber $X_b$. Because of the result in (d), the
presence of a nontrivial support $Z \neq B$ forces the fibers over points in $Z$ to have
more than one irreducible component; so if all the fibers $X_b$ are \emph{irreducible},
then $Z = B$ must be the only support.

\begin{corollary}
	If all fibers of $f \colon X \to B$ are irreducible, then the only
	support in the decomposition theorem is $Z=B$. In that case, $\derR \fl
	\RR_X[\dim X]$ is determined (up to isomorphism) by its restriction to the smooth
	locus of $f$.
\end{corollary}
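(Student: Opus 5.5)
The plan is to derive both assertions from \Cref{thm:support}. Part (d) of that theorem gives the first assertion, and part (b) together with the characterization of intermediate extensions gives the second.

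\textbf{Only support is $B$.} Suppose $Z \subseteq B$ is a support with $Z \neq B$. By \Cref{thm:support}(d) there is a Zariski-dense open $U \subseteq Z$ and a local system $\locC_Z$ on $U$ such that $j_*\locC_Z$ is a nonzero direct summand of $R^{2n}\fl\RR_X$, supported on $Z$. The question is local on $B$, so I would fix a point $b \in U$ and a connected open neighborhood $V$ of $b$, and work on $V$.

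The key step is to show that $R^{2n}\fl\RR_X \vert_V$ is the constant sheaf $\RR_V$.
\begin{enumerate}
\item By proper base change, its stalk at $b'$ is $H^{2n}(X_{b'},\RR)$.
\item Since $X_{b'}$ is a compact complex space of pure dimension $n$ with a single irreducible component, this stalk is one-dimensional.
\item It has a canonical generator, dual to the fundamental class of $(X_{b'})_{\red}$, which is oriented by its complex structure. Equivalently, $\fl$ of the constant sheaf has a trace morphism $R^{2n}\fl\RR_X \to \RR_B$ that is an isomorphism on stalks.
\end{enumerate}
This is the step I expect to need the most care: one must check that the trace map is a morphism of sheaves and is compatible with base change, so that it is a stalkwise isomorphism. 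Everything else is formal.

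Granting this, $\End(\RR_V) = H^0(V,\RR) = \RR$ because $V$ is connected, so $\RR_V$ has no nontrivial idempotents. A nonzero direct summand of $\RR_V$ must therefore be all of $\RR_V$. But $(j_*\locC_Z)\vert_V$ is nonzero, since it has nonzero stalk at $b$, and vanishes off $Z \cap V \neq V$. This is a contradiction, so $Z = B$.

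\textbf{Determination by the smooth locus.} Since $B$ is the only support, each $P_j$ equals $P_{j,B} = \IC_B(\locL_j)$, where $\locL_j = \locC_B \tensor \bigwedge^{n+j}\locT_B$ is a local system on a dense Zariski-open set, by \Cref{thm:support}(b) with $r = n$. On the smooth locus $B^\circ = B \setminus \disc(f)$, the complex $\derR\fl\RR_X$ has locally constant cohomology sheaves. Hence
\[
P_j\vert_{B^\circ} \cong {}^p\mathcal{H}^j\bigl(\derR\fl\RR_X[\dim X]\vert_{B^\circ}\bigr) \cong (R^{\dim X - \dim B + j}\fl\RR_X)\vert_{B^\circ}[\dim B],
\]
which is a shifted local system on $B^\circ$ depending only on the restriction of $\derR\fl\RR_X$ to $B^\circ$. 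Since an intersection complex with full support is the intermediate extension of its restriction to any dense Zariski-open subset, we get $P_j \cong \IC_B(P_j\vert_{B^\circ})$. Combining this with \eqref{eq:decomposition} yields
\[
\derR\fl\RR_X[\dim X] \cong \bigoplus_{j=-n}^n \IC_B\bigl(P_j\vert_{B^\circ}\bigr)\decal{-j},
\]
so $\derR\fl\RR_X[\dim X]$ is determined up to isomorphism by its restriction to $B^\circ$.
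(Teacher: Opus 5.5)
Your proposal is correct and is essentially the paper's argument, which likewise combines \Cref{thm:support}(d) with the fact that $H^{2n}(X_b)$ is one-dimensional when $X_b$ is irreducible. The paper phrases the contradiction as a count: the summand $j_*\locC_B$ from the support $B$ and the summand $j_*\locC_Z$ both contribute to $H^{2n}(X_b)$ at a general $b \in Z$. You phrase it instead as indecomposability of $R^{2n}\fl\RR_X\vert_V \cong \RR_V$, and your deduction of the second assertion is fine.

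One correction to your key step concerns the pointwise generators dual to $[(X_b)_{\red}]$. These do not glue across a multiple fiber, because a nearby fiber is homologous to $m[(X_b)_{\red}]$ in $f^{-1}(V)$, where $m$ is the multiplicity of $X_b$. So the trivialization should come from the trace map, whose stalk at $b$ is pairing with $m[(X_b)_{\red}]$. More simply, it can come from the global section $\omega^n$ for a K\"ahler class $\omega$: this gives a morphism $\RR_B \to R^{2n}\fl\RR_X$, which is an isomorphism on stalks because $\omega^n\vert_{X_b} \neq 0$.
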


\subsection{Deligne's splitting construction}
\label{par:splitting}

In this section, we review a linear algebra result from a letter that Deligne
wrote to Cattani and Kaplan in 1993; for more details, see for example \cite{Schwarz}.
Consider a finite-dimensional vector space $V$ (over a field such as $\QQ$ or $\RR$)
with an increasing filtration $W =
W_{\bullet} V$, and a nilpotent operator $N \in \End(V)$ that preserves
the filtration, in the sense that $N(W_{\ell}) \subseteq W_{\ell}$. There
is at most one increasing filtration $M = M_{\bullet} V$ such that $N(M_k)
\subseteq M_{k-2}$ and such that
\[
	N^k \colon \gr_{\ell+k}^M \gr_{\ell}^W \to \gr_{\ell-k}^M \gr_{\ell}^W
\]
is an isomorphism for every $k \geq 1$ and every $\ell \in \ZZ$. This filtration, if
it exists, is called the \define{relative weight filtration for $(N,W)$} and is
denoted by the symbol $M(N,W)$. We make the following two assumptions:
\begin{enumerate}
	\item The relative weight filtration $M = M(N, W)$ exists.
	\item There is a splitting $H \in \End(V)$ for the filtration $M$, with the
		property that $(\ad H)(N) = [H, N] = -2N$.
\end{enumerate}
To say that $H$ is a splitting for $M$ means that $H$ is semisimple with integer eigenvalues,
and  that $M_k = E_k(H) \oplus M_{k-1}$ for every $k \in \ZZ$. The condition $(\ad H)(N)
= -2N$ means that $N (E_k(H)) \subseteq E_{k-2}(H)$ for all $k$; in other words, $N$ has
``weight'' $-2$ with respect to the semisimple operator $H$.

Now suppose that we have a splitting $H'$ for the filtration $W$ that
commutes with $H$; again, this means that $W_{\ell} = E_{\ell}(H') \oplus W_{\ell-1}$
for every $\ell \in \ZZ$. By decomposing $V$ into simultaneous eigenspaces, we get an
isomorphism
\begin{equation}\label{eq: simult e}
	V = \bigoplus_{k,\ell} E_k(H) \cap E_{\ell}(H') \cong
	\bigoplus_{k,\ell} \gr_k^M \gr_{\ell}^W.
\end{equation}
We can decompose the nilpotent operator $N$ into eigenvectors with respect to the
action of $\ad H'$; because $N$ preserves the filtration $W$, this is going to look like
\begin{equation} \label{eq:components-N}
	N = N_0 + N_{-1} + N_{-2} + \dotsb,
\end{equation}
where $\ad H'(N_{\ell}) = [H',N_{\ell}] = \ell N_{\ell}$. Because $M$ is the relative weight
filtration, and $H$ is a splitting of $M$, it is easy to see from the eigenspace
decomposition above that the semisimple operator $H - H'$ and the nilpotent operator
$N_0$ together determine a representation of the Lie algebra $\sltwo$ on the vector space $V$.
In fact, this is just the direct sum of the $\sltwo$-representations induced by $N$
on each subquotient $\gr_{\ell}^W$. Deligne's result
\cite[Prop.~3.5]{Deligne-decompositions} is the following.

\begin{lemma}[Deligne] \label{lem:Deligne}
	There is a \emph{unique} choice of splitting $H'$ for the filtration $W$
	such that $[H, H'] = 0$ and such that the components $N_{\ell}$ with $\ell \leq -1$ are
	primitive with respect to the induced $\sltwo$-representation on $\End(V)$.
\end{lemma}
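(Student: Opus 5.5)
We construct $H'$ by an inductive correction procedure and obtain uniqueness from an $\sltwo$-weight argument. Throughout, $Y = H - H'$ and $N_0$ will generate an $\sltwo$-representation. The natural parametrization of all splittings of $W$ commuting with $H$ is
\[
H' = e^{X} H'_0 e^{-X}, \qquad X = X_{-1} + X_{-2} + \dotsb,
\]
where $H'_0$ is one fixed such splitting and each $X_{-\ell}$ lowers $H'_0$-weight by $\ell$ and commutes with $H$. To get a starting $H'_0$, take any splitting of $W$ by subspaces and average it against $H$. Since $H$ is semisimple and preserves $W$ (because $M$ and $W$ are compatible: $M$ induces the relative weight filtrations on $\gr^W$), each $W_\ell$ is $H$-stable. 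We can therefore choose $H$-stable complements, which gives $[H,H'_0]=0$.

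Conjugating by $e^X$ changes the components of $N$ as follows. The component $N_0$ is unchanged. The component $N_{-1}$ becomes $N_{-1} - [X_{-1}, N_0]$ plus terms of lower degree, and in general $N_{-\ell}$ becomes $N_{-\ell} - [X_{-\ell}, N_0] + (\text{terms involving } X_{-k} \text{ with } k<\ell)$. We proceed by induction on $\ell$. Consider $\End(V)_{-\ell}$, the $(-\ell)$-eigenspace of $\ad H'$ intersected with the $0$-eigenspace of $\ad H$; it is acted on by the $\sltwo$ generated by $Y$ and $N_0$. By construction the component $N_{-\ell}$ has $\ad Y$-weight $-2$. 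The primitivity condition should be read as saying that $N_{-\ell}$ lies in the kernel of the raising operator $(\ad N_0^+)$, or equivalently that $N_{-\ell}$ is lowest-weight-adjusted, meaning it lies in the complement of $\im(\ad N_0)$ restricted to weight $-2$.

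Here is the key linear-algebra fact. In a finite-dimensional $\sltwo$-representation $U$, the weight-$(-2)$ space decomposes as
\[
U_{-2} = \ad N_0(U_0) \oplus P_{-2},
\]
where $P_{-2}$ is the appropriate primitive part. Moreover, $\ad N_0 \colon U_0 \to U_{-2}$ is injective modulo $\ker(\ad N_0)\cap U_0$. So at step $\ell$ we choose $X_{-\ell}$ of $\ad Y$-weight $0$ such that $[X_{-\ell}, N_0]$ cancels the non-primitive part of the current $N_{-\ell}$. Existence of such $X_{-\ell}$ follows from the displayed decomposition, and $X_{-\ell}$ is unique modulo the centralizer of $N_0$ in weight $0$.

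Uniqueness requires showing that the remaining ambiguity acts trivially on $H'$. The residual freedom consists of $X_{-\ell}$ commuting with $N_0$ and with weight $0$ for $Y$ and $H$. Such $X_{-\ell}$ is both a lowest and a highest weight vector of weight $0$, so it spans a trivial representation. I would then use the conditions at the next level to show it must vanish: the defining property of $M(N,W)$, namely that $N^k$ induces isomorphisms on $\gr^M\gr^W$, forces the centralizer of the full $\sltwo$ inside the strictly $W$-lowering part of $\End(V)$ to be zero. This is the step I expect to be the main obstacle. It is where the relative weight filtration hypothesis must really be used, in the form that the relevant $\sltwo$ on $\End(V)_{-\ell}$ has no invariants compatible with the constraints on $H$ (conjugation by $e^X$ must still commute with $H$). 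My first approach would be to compare the two candidate splittings $H'_1, H'_2$ directly: write $H'_2 = e^{X}H'_1e^{-X}$, take the lowest nonzero $X_{-\ell}$, and derive $[X_{-\ell},N_0]=0$ together with $X_{-\ell}$ having $\ad Y$-weight $0$. If this does not by itself give $X_{-\ell}=0$, I would fall back on Deligne's original argument in \cite[Prop.~3.5]{Deligne-decompositions}, since the lemma is cited from there.
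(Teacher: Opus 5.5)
Your overall strategy is the right one, and it is what the paper's one-line sketch (and Deligne's argument) does: parametrize the splittings of $W$ commuting with $H$ as $e^{X}H_0'e^{-X}$ with $[X,H]=0$ and $X$ strictly lowering $W$, then remove the non-primitive components of $N$ degree by degree. But the weight bookkeeping that drives your argument is wrong, and this is exactly what creates your ``main obstacle''.

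Since $\ad H(N_{-\ell}) = -2N_{-\ell}$ and $\ad H'(N_{-\ell}) = -\ell N_{-\ell}$, the component $N_{-\ell}$ has $\ad Y$-weight $\ell-2$, not $-2$. This is the paper's remark that $N_\ell$ has weight $-(\ell+2)$. Similarly, since $\ad H(X_{-\ell}) = 0$ and $\ad H'(X_{-\ell}) = -\ell X_{-\ell}$, the correction $X_{-\ell}$ has $\ad Y$-weight $\ell$, not $0$. An $X_{-\ell}$ of weight $0$ for both $\ad Y$ and $\ad H$ would satisfy $\ad H'(X_{-\ell})=0$, so your ``residual freedom'' is empty for trivial reasons.

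The space you work in has a related problem. You take $\ad H'$-degree $-\ell$ and $\ad H$-eigenvalue $0$, but that space is not stable under $\ad N_0$, which shifts $\ad H$-eigenvalues by $-2$, and it does not contain $N_{-\ell}$. The correct $\sltwo$-module is the full $(-\ell)$-eigenspace $U$ of $\ad H'$ on $\End(V)$, with $\ad Y$-weight spaces $U_w$.

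With your weights, the ``key fact'' would actually prove something false. There are no nonzero primitive vectors of weight $-2$, and $\ad N_0 \colon U_0 \to U_{-2}$ is onto, so you could remove all of $N-N_0$. This already fails for $V=\langle a,b\rangle$, $W_{-2}=W_{-1}=\langle a\rangle$, $W_0=V$, $Nb=a$, $Na=0$. There $M=W$, $H'=H$ is forced, $Y=0$, $N_0=0$, and $N=N_{-2}\neq 0$ has weight $0$ and is primitive.

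Once the weights are corrected, both halves follow from one standard fact. For $\ell\geq 1$, the map $\ad N_0\colon U_\ell\to U_{\ell-2}$ is injective, since lowering is injective in positive weight. Its image is a complement of the primitive subspace $P_{\ell-2}=U_{\ell-2}\cap\ker(\ad N_0)^{\ell-1}$, where $P_{-1}=0$.

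For existence: at step $\ell$ there is exactly one $X_{-\ell}\in U_\ell$ that makes the new degree $-\ell$ component primitive. Such an element automatically commutes with $H$.

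For uniqueness: write a second admissible splitting as $e^{X}H'e^{-X}$. Conjugation by $e^{X}$ carries $H-H'$ and $N_0$ to their new counterparts. So the conditions on the new splitting say that the $H'$-components of $e^{-X}Ne^{X}$ are primitive. If $X_{-\ell}$ is the lowest nonzero component of $X$, the degree $-\ell$ component of $e^{-X}Ne^{X}$ is $N_{-\ell}+\ad N_0(X_{-\ell})$. Hence $\ad N_0(X_{-\ell})\in P_{\ell-2}\cap\im(\ad N_0)=0$, and injectivity gives $X_{-\ell}=0$. No centralizer argument is needed.

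As written, though, your proposal does not establish uniqueness, since you defer it to Deligne. The existence step also rests on the incorrect weights.

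A smaller point: $H$ need not preserve $W$ just because it splits $M(N,W)$ and satisfies $[H,N]=-2N$. Take $V=\langle a,a',a'',d\rangle$ with $W_0=\langle a,a',a''\rangle\subset W_1=V$, $Na=a'$, $Na'=a''$, $Na''=Nd=0$. Let $H$ have eigenvectors $a+d,\,d,\,a',\,a''$ with eigenvalues $2,1,0,-2$. Then $H$ splits $M(N,W)$ and $[H,N]=-2N$, yet $Ha=2a+d\notin W_0$. The existence of some splitting of $W$ commuting with $H$ is a standing hypothesis just before the lemma. You should start from that splitting instead of constructing one by averaging.
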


Note that the component $N_{\ell}$ has weight $-(\ell+2)$ in the
$\sltwo$-representation on $\End(V)$ induced by $\ad(H-H')$ and $\ad N_0$, because
\[
	\ad(H-H')(N_{\ell}) = -(2+\ell) N_{\ell}.
\]
The condition that $N_{\ell}$ is primitive therefore means that $(\ad
N_0)^{-(\ell+1)} N_{\ell} = 0$; in particular, $N_{-1} = 0$. The splitting is
``minimal'', in the sense that the Lefschetz decomposition of $N \in \End(V)$ is as
small as possible. The construction of $H'$ is elementary: choose an arbitrary
splitting that commutes with $H$, and adjust it in finitely many steps in order to
remove all non-primitive components of $N$.

We also need the following fact about Deligne's splitting.
\begin{lemma} \label{lem:kerH'}
	Suppose that an endomorphism $f \in \End(V)$ preserves the
	filtration $W$ and satisfies $\ad H(f) = f$ and $(\ad N)^2 f = 0$. Then $\ad
	H'(f) = 0$, which means that $f$ commutes with Deligne's splitting.
\end{lemma}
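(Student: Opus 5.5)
The plan is to decompose $f$ into eigencomponents for $\ad H'$ and to show that every component of nonzero $H'$-weight vanishes, using the $\sltwo$-representation on $\End(V)$ from \Cref{lem:Deligne}.

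First, since $[H,H']=0$, both $\ad H$ and $\ad H'$ act semisimply and commute on $\End(V)$. So I will write
\[
	f = \sum_{\ell} f_{\ell}, \qquad \ad H'(f_{\ell}) = \ell f_{\ell}, \qquad \ad H(f_{\ell}) = f_{\ell}.
\]
Because $f$ preserves $W$ and $H'$ splits $W$, only $\ell \leq 0$ occur. Let $Y = \ad(H-H')$ and $N_0$ be as in \Cref{lem:Deligne}, so that $(Y,\ad N_0)$ is an $\sltwo$-representation on $\End(V)$. Then $f_{\ell}$ has $Y$-weight $1-\ell \geq 1$. The claim $\ad H'(f)=0$ is exactly the statement that $f_{\ell}=0$ for all $\ell \leq -1$.

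Next, I expand the hypothesis $(\ad N)^2 f = 0$ using $N = N_0 + N_{-2} + N_{-3} + \dotsb$ (recall $N_{-1}=0$ by primitivity). Taking the $\ad H'$-eigencomponent of degree $\ell$ gives
\[
	\sum_{a+b+c=\ell} (\ad N_a)(\ad N_b) f_c = 0, \qquad a,b \in \{0,-2,-3,\dotsc\}.
\]
I will show $f_{\ell}=0$ by descending induction on $\ell$, starting from $\ell=-1$.

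\begin{enumerate}
\item In degree $\ell$, the leading term is $(\ad N_0)^2 f_{\ell}$. The remaining terms involve only $f_c$ with $c>\ell$, paired with at least one $N_a$ with $a \leq -2$.
\item By induction, all $f_c$ with $\ell < c \leq -1$ are already zero. So the only surviving cross terms involve $f_0$ together with the primitive components $N_a$.
\item For these cross terms I will use the degree-$0$ equation $(\ad N_0)^2 f_0 = 0$, which says that $f_0$, of $Y$-weight $1$, lies in a sum of irreducible $\sltwo$-pieces of highest weight $1$. I combine this with the primitivity $(\ad N_0)^{-(a+1)} N_a = 0$ and the Jacobi identity $[\ad N_0, \ad N_a] = \ad [N_0,N_a]$. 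The goal is to show that the cross terms have a $Y$-weight/primitivity type that cannot cancel $(\ad N_0)^2 f_{\ell}$. More precisely, I will try to show they lie in the image of $\ad N_0$ applied to elements of too small highest weight, whereas $(\ad N_0)^2 f_{\ell}$ comes from a vector of weight $1-\ell$.
\end{enumerate}

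Once the cross terms are separated, the conclusion follows from $\sltwo$-theory. The lowering operator $\ad N_0$ is injective on $Y$-weights $\geq 1$. So for $\ell \leq -2$ (weight $1-\ell \geq 3$), $(\ad N_0)^2$ is injective on $f_{\ell}$, and we get $f_{\ell}=0$.

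The case $\ell=-1$ (weight $2$) is the main obstacle, since $(\ad N_0)^2$ need not be injective there. For it I will instead use the degree $-1$ component of the relation before the final application of $\ad N_0$. Concretely, I compare $(\ad N_0) f_{-1}$, which has weight $0$ and is therefore in the image of $\ad N_0$ from weight $2$, with the contributions of the degree $-1$ part of $(\ad N)f$. I will then use the uniqueness statement in \Cref{lem:Deligne}: if $f_{-1}\neq 0$, then conjugating $H'$ by $\exp(\ad g)$ for a suitable $g$ built from $f_{-1}$ (commuting with $H$) would produce a second admissible splitting. I expect the bookkeeping of the cross terms involving $f_0$ and the $N_a$ to be the delicate part of the whole argument.
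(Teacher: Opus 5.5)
There is a genuine gap. Your skeleton is the same as the paper's: decompose $f$ by $\ad H'$-eigenvalue, run a descending induction, note that only $f_0$ survives in the cross terms, and use $(\ad N_0)^2 f_0 = 0$. However, the decisive step is only announced (``I will try to show''), and the case you single out as the main obstacle rests on a false claim.

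\textbf{The case $\ell=-1$ is not an obstacle.} In degree $\ell=-1$ there are no cross terms at all: $a+b=-1$ has no solution with $a,b\in\{0,-2,-3,\dotsc\}$, because $N_{-1}=0$. So the equation reads simply $(\ad N_0)^2 f_{-1}=0$. Moreover, $(\ad N_0)^2$ \emph{is} injective on $Y$-weight $2$. In any finite-dimensional $\sltwo$-representation, $(\ad N_0)^k$ maps weight $k$ isomorphically onto weight $-k$, so $(\ad N_0)^j$ is injective on weight $w$ whenever $j\le w$. Your reasoning (``$\ad N_0$ is injective on weights $\geq 1$, apply it twice'') loses exactly this case. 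Hence $f_{-1}=0$ immediately. The proposed conjugation of $H'$ by $\exp(\ad g)$ with an appeal to uniqueness in \Cref{lem:Deligne} is unnecessary. As stated it is also unjustified: you would have to verify that the new splitting still commutes with $H$ and still makes the components of $N$ primitive.

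\textbf{The missing step is step 3.} The paper's mechanism is as follows. In degree $\ell\le -2$, apply $(\ad N_0)^{-1-\ell}$ to the degree-$\ell$ equation.
\begin{itemize}
\item The leading term becomes $(\ad N_0)^{1-\ell} f_\ell$. This vanishes only if $f_\ell=0$, since $(\ad N_0)^{1-\ell}$ maps weight $1-\ell$ isomorphically to weight $\ell-1$.
\item Each cross term is killed by the Leibniz rule for the derivation $\ad N_0$. For $[N_a,[N_b,f_0]]$ with $a,b\le -2$, a nonzero summand $[(\ad N_0)^p N_a,[(\ad N_0)^q N_b,(\ad N_0)^r f_0]]$ would need $p\le -2-a$, $q\le -2-b$ and $r\le 1$. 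That gives $p+q+r\le -3-\ell$, which is less than the required total $-1-\ell$.
\item The terms containing $N_0$ itself are handled the same way. Write them as $[N_\ell,(\ad N_0)f_0]$ or $(\ad N_0)[N_\ell,f_0]$, and use $(\ad N_0)^2 f_0=0$; the exponent count again falls short by at least one.
\end{itemize}

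Alternatively, your ``too small highest weight'' idea can be made precise. The bracket on $\End(V)$ is $\sltwo$-equivariant, and $N_a$ (for $a\le -2$) and $f_0$ are highest-weight vectors of weights $-2-a$ and $1$. Hence the cross terms lie in isotypic components of highest weight $\le -1-\ell$. On the other hand, $(\ad N_0)^2 f_\ell$ lies in isotypic components of highest weight $\ge 1-\ell$. So each part must vanish separately, giving $(\ad N_0)^2 f_\ell=0$ and then $f_\ell=0$. Either argument has to be supplied; as written, the proposal does not prove the lemma.
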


\begin{proof}
	The decomposition of $N$ into eigenvectors for $\ad H'$ in \eqref{eq:components-N}
	is of course also the decomposition into eigenvectors for $\ad(H-H')$. To make the
	proof easier to follow, let's index things by their weights in the
	$\sltwo$-representation, so
	\[
		N = N_{-2} + N_0 + N_1 + \dotsb,
	\]
	where now $\ad (H-H') N_{\ell} = \ell N_{\ell}$. With this notation, the
	$\sltwo$-representation is determined by $N_{-2}$ and $H-H'$, and we have
	$(\ad N_{-2})^{\ell+1} N_{\ell} = 0$. Since $\ad H(f) = f$, the decomposition of
	$f$ into eigenvectors for $\ad H'$ is also the decomposition into eigenvectors for
	$\ad (H-H')$. The eigenvalues with respect to $\ad H'$ are all $\leq 0$, because
	$f$ preserves the filtration $W$. In terms of $\ad(H-H')$, this gives
	\[
		f = f_1 + f_2 + f_3 + \dotsb,
	\]
	where again $\ad(H-H') f_{\ell} = \ell f_{\ell}$.

	We are going to prove by
	induction that $f_{\ell} = 0$ for $\ell \geq 2$. Since $\ad H'(f_1) = 0$, this
	will give the desired result. Consider the identity
	\[
		(\ad N_{-2} + \ad N_0 + \dotsb)^2 (f_1 + f_2 + f_3 + \dotsb) = (\ad N)^2 f = 0.
	\]
	The smallest weight that occurs is $-3$, and as there is only one term with that
	weight, we get $(\ad N_{-2})^2 f_1 = 0$. This means that $f_1$ is primitive of
	weight $1$. From the terms of weight $-2$, we deduce that
	\[
		(\ad N_{-2})^2 f_2 = 0,
	\]
	and because $f_2$ has weight $2$, this forces $f_2 = 0$. Let's now assume by
	induction that $f_2 = \dotsb = f_{\ell} = 0$ for some $\ell \geq 2$. Then we
	get
	\[
		(\ad N_{-2} + \ad N_0 + \dotsb)^2 (f_1 + f_{\ell+1} + f_{\ell+2} + \dotsb) = 0,
	\]
	and the terms of weight $\ell-3$ are
	\[
		(\ad N_{-2})^2 f_{\ell+1} + \sum_{i+j=\ell-4} (\ad N_i \circ \ad N_j)f_1 = 0.
	\]
	If we apply $(\ad N_{-2})^{\ell-1}$ to this, it becomes
	\[
		(\ad N_{-2})^{\ell+1} f_{\ell+1} = - \sum_{i+j=\ell-4} (\ad N_{-2})^{\ell-1}
		\circ (\ad N_i \circ \ad N_j)f_1.
	\]
	The right-hand side is zero because $\ad N_{-2}$ is a derivation and $(\ad
	N_{-2})^{i+1} N_i = 0$ and $(\ad N_{-2})^2 f_1 = 0$. This gives $(\ad
	N_{-2})^{\ell+1} f_{\ell+1} = 0$, from which we again deduce that $f_{\ell+1} =
	0$. Therefore $f = f_1$ lies in the kernel of $\ad H'$.
\end{proof}

\subsection{Cohomology of the fibers}

In this section, we look at what the decomposition theorem says about the cohomology
of the fibers. Consider a holomorphic mapping $f \colon X \to Y$ from a K\"ahler
manifold $X$ to a complex space $Y$. We assume that $f$ is proper and
surjective, and that the fibers $X_y = f^{-1}(y)$ are connected; each fiber is
then a (possibly nonreduced) connected compact K\"ahler space. The decomposition theorem for
the constant sheaf $\RR_X$ takes the form
\begin{equation} \label{eq:decomposition-general}
	\derR \fl \RR_X[\dim X] \cong \bigoplus_{j \in \ZZ} P_j[-j],
\end{equation}
where each $P_j$ is a perverse sheaf with coefficients in $\RR$.
Once we fix a K\"ahler class $\omega \in H^2(X)$, there is
an especially nice decomposition -- Deligne \cite[Rmk.~1.8]{Deligne-Lefschetz}
calls it ``more beautiful than
the others'' -- that is constructed using the representation theory of $\sltwo$. Since
it is important in what follows, we briefly summarize the construction. We
can think of $\omega$ as a morphism from $\RR_X$ to $\RR_X[2]$ in the derived
category $\Dbc(X, \RR)$. We get an induced morphism
\[
	\omega \colon \derR \fl \RR_X[\dim X] \to \derR \fl \RR_X[\dim X+2],
\]
and with respect to any decomposition as in \eqref{eq:decomposition-general}, this morphism
decomposes as
\[
	\omega = \omega_2 + \omega_1 + \omega_0 + \dotsb,
\]
where $\omega_k$ maps each summand $P_j$ into the summand $P_{j+k}[2-k]$. According to
the relative Hard Lefschetz theorem,
\[
	\omega_2^j \colon P_{-j} \to P_j
\]
is an isomorphism for every $j \geq 0$, and so we have a representation of the Lie
algebra $\sltwo$ on the direct sum of all the $P_j$. Deligne
\cite[Prop.~3.5]{Deligne-decompositions} proves that there is a \emph{unique} choice
of decomposition (in the derived category) such that $\omega_1 = 0$
and $\omega_k$ is primitive for $k \leq 0$. Here ``primitive'' means concretely that
$(\ad \omega_2)^{-k+1} \omega_k = 0$. This decomposition is ``minimal'' in the sense
that the Lefschetz decompositions of the other components $\omega_k$ are as small as
they can possibly be. (The proof is again elementary: start from an arbitrary
decomposition, and adjust it in finitely many steps to remove any non-primitive
components in the Lefschetz decomposition.) This will be our choice for the decomposition in
the decomposition theorem throughout the paper.

Let's see what happens when we restrict the decomposition in
\eqref{eq:decomposition-general} to a point $y \in Y$. Let $i \colon \pt \into Y$ be
the inclusion of the point. By proper base change,
\begin{equation} \label{eq:Deligne-fiber}
	H^{\dim X+k}(X_y) \cong H^k \iu \bigl( \derR \fl \RR_X[\dim X] \bigr)
	\cong \bigoplus_{j \in \ZZ} H^{k-j} \iu P_j,
\end{equation}
and so the cohomology of the fiber $X_y$ inherits a canonical decomposition.
Can we describe this decomposition in a way that
uses, as much as possible, only objects on the fiber?

It turns out that we can, and that all we need is the perverse filtration on the
cohomology of $X_y$, and the image of the K\"ahler class in $H^2(X_y)$. The crucial
point, which is similar to \cite[Prop.~5.2.4]{deCataldo+Migliorini}, is that one can
interpret the existence of the decomposition in terms of (relative) weight
filtrations for certain nilpotent operators. Before we describe this, let's quickly review
the definition of the perverse filtration. Let $\tau_{\leq \ell}$ denote the
truncation functors with respect to the perverse $t$-structure on the derived
category $\Dbc(Y, \RR)$.

\begin{definition} \label{def:P}
From the morphism $\tau_{\leq \ell} \derR \fl \RR_X[\dim X] \to \derR \fl \RR_X[\dim
X]$, we get
\[
	H^k \iu \bigl( \tau_{\leq \ell} \derR \fl \RR_X[\dim X] \bigr)
	\to H^k \iu \bigl( \derR \fl \RR_X[\dim X] \bigr)
	\cong H^{\dim X+k}(X_y).
\]
This is injective (by the decomposition theorem), and therefore defines a subspace
\[
	P_{\ell} H^{\dim X+k}(X_y) \subseteq H^{\dim X + k}(X_y).
\]
The resulting filtration $P_{\bullet} H^{\dim X+k}(X_y)$ is called the
\emph{perverse filtration}.
\end{definition}

The perverse filtration is completely intrinsic to the morphism $f \colon X
\to Y$, whereas the decomposition of $H^{\ast}(X_y)$ depends on the specific choice
we made in \eqref{eq:decomposition-general}. By construction, we have
\[
	\gr_{\ell}^P H^{\dim X+k}(X_y) \cong H^{k-\ell} \iu P_{\ell}.
\]
The restriction of $\omega \in H^2(X)$ gives us a K\"ahler class
$\omega_y \in H^2(X_y)$. Let's denote by $N \colon H^{\ast}(X_y) \to H^{\ast+2}(X_y)$ the
nilpotent operator obtained by multiplying with this class; for example, if we apply $N$
to the element $1 \in H^0(X_y)$, we get $\omega_y = N(1) \in H^2(X_y)$.
The perverse filtration is not preserved by $N$, so let's adjust the
degrees and define the \emph{shifted perverse filtration}
\[
	\SP_{\ell} H^k(X_y) := P_{\ell+k} H^k(X_y).
\]
This is an increasing filtration of $H^{\ast}(X_y)$ that is now preserved by $N$.
We usually denote it by the symbol $\SP$ alone. Let's also define the filtration
\begin{equation}\label{eq:MM}
	M_k H^{\ast}(X_y) = \bigoplus_{j \geq -k} H^j(X_y),
\end{equation}
which is basically the filtration by cohomological degree, but changed so
that $N$ maps $M_k$ into $M_{k-2}$. The reason for this definition is the
following lemma.

\begin{lemma}
	$M$ is the relative weight filtration of $N$ with respect to $\SP$.
\end{lemma}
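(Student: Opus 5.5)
The plan is to verify the defining properties of the relative weight filtration directly, using the decomposition \eqref{eq:Deligne-fiber} and the relative Hard Lefschetz theorem. The condition $N(M_k) \subseteq M_{k-2}$ is immediate: $N$ is cup product with $\omega_y \in H^2(X_y)$, so it raises cohomological degree by $2$, and $M_k$ is the sum of the $H^j$ with $j \geq -k$. Since $M$ is split by cohomological degree, we have $\gr_m^M H^{\ast}(X_y) = H^{-m}(X_y)$. It therefore suffices to show two things: that $N$ preserves $\SP$, and that
\[
	N^k \colon \gr^{\SP}_{\ell} H^{-\ell-k}(X_y) \to \gr^{\SP}_{\ell} H^{-\ell+k}(X_y)
\]
is an isomorphism for all $k \geq 1$ and $\ell \in \ZZ$. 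Uniqueness of the relative weight filtration then identifies $M$ with $M(N,\SP)$.

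First I would compute the graded pieces in terms of the perverse sheaves $P_j$. Write $d = \dim X + k'$. By definition $\SP_\ell H^d(X_y) = P_{\ell+d} H^d(X_y)$, and by the formula $\gr_{\ell'}^P H^{\dim X+k'}(X_y) \cong H^{k'-\ell'} \iu P_{\ell'}$ this gives
\[
	\gr^{\SP}_\ell H^d(X_y) \cong H^{-\dim X-\ell} \iu P_{\ell+d}.
\]
The key observation is that the cohomological index $c = -\dim X - \ell$ depends only on $\ell$ and not on $d$. Thus, for fixed $\ell$, the graded piece $\gr^{\SP}_\ell$ summed over all degrees $d$ is $\bigoplus_d H^{c} \iu P_{\ell+d}$, with the degree-$d$ part coming from $P_{\ell+d}$.

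Second, I would analyse how $N$ interacts with the filtration. The morphism $\omega \colon \derR\fl\RR_X[\dim X] \to \derR\fl\RR_X[\dim X+2]$ decomposes as $\omega_2 + \omega_1 + \omega_0 + \dotsb$, where $\omega_k$ maps $P_j$ into $P_{j+k}[2-k]$. On stalk cohomology, $\omega$ induces $N$ by proper base change, since restricting $\omega$ to $X_y$ gives $\omega_y$. Using the splitting, $P_{\ell'}H^{\ast}$ corresponds to the summands $P_j$ with $j \leq \ell'$. Each component $\omega_k$ with $k \leq 2$ sends the summand from $P_j$ into the summand from $P_{j+k}$ in degree $+2$. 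So $N$ maps $P_{\ell'} H^d$ into $P_{\ell'+2} H^{d+2}$; in shifted terms this reads $N(\SP_\ell) \subseteq \SP_\ell$. Moreover, on $\gr^{\SP}_\ell$ only $\omega_2$ survives, because the components with $k<2$ land in strictly lower perverse degree. Hence the induced map on graded pieces is $H^c \iu(\omega_2) \colon H^c \iu P_{j} \to H^c \iu P_{j+2}$.

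Finally, I would read off the Lefschetz property. The source $\gr^{\SP}_\ell H^{-\ell-k}$ corresponds to $d = -\ell-k$, i.e. to $H^c \iu P_{-k}$, and the target $\gr^{\SP}_\ell H^{-\ell+k}$ corresponds to $H^c\iu P_k$. Under these identifications $N^k$ becomes $H^c \iu(\omega_2^k)$. By relative Hard Lefschetz, $\omega_2^k \colon P_{-k} \to P_k$ is an isomorphism, and applying $H^c\iu$ keeps it an isomorphism.

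The main obstacle is the bookkeeping in the second step. One must check carefully that the perverse filtration defined via $\tau_{\leq \ell}$ matches the "summands $j \leq \ell'$" description under the chosen splitting, and that the shifts in $\omega_k \colon P_j \to P_{j+k}[2-k]$ really produce the claimed degree and filtration behaviour on stalks. I would do this first at the level of complexes, using that $\omega$ maps $\tau_{\leq \ell'}$ into $\tau_{\leq \ell'+2}$ after the shift by $2$. Only then would I pass to stalks, where everything is a direct sum by the decomposition theorem.
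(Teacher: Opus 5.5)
Your proposal is correct and is the same argument as the paper's: identify $\gr^M_{\ell\pm k}\gr^{\SP}_\ell$ with $H^{-(\dim X+\ell)}\iu P_{\mp k}$ and conclude by relative Hard Lefschetz for $\omega_2^k \colon P_{-k} \to P_k$. The extra bookkeeping you supply is exactly what the paper's one-line proof leaves implicit: that $N$ preserves $\SP$, and that on the graded pieces $N$ acts through $\omega_2$ alone, because the components $\omega_k$ with $k<2$ drop perverse degree.
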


\begin{proof}
	The relative weight filtration, when it exists, is uniquely characterized by two
	properties: $N(M_k) \subseteq M_{k-2}$, and
	\[
		N^k \colon \gr_{\ell+k}^M \gr_{\ell}^{\SP} \to \gr_{\ell-k}^M \gr_{\ell}^{\SP}
	\]
	is an isomorphism for $k \geq 1$. The first property is obvious in our case. As
	for the second one, we have
	\[
		\gr_{\ell+k}^M \gr_{\ell}^{\SP}
		\cong \gr_{\ell}^{\SP} \! H^{-(k+\ell)}(X_y)
		\cong \gr_{-k}^P H^{-(k+\ell)}(X_y)
		\cong H^{-(\dim X + \ell)} \iu P_{-k}.
	\]
	and so the statement we want follows from the relative Hard Lefschetz theorem.
\end{proof}

The relationship with the decomposition theorem is that the decomposition
\begin{equation} \label{eq:decomposition-general-fiber}
	H^{\ast}(X_y) \cong \bigoplus_{j,k \in \ZZ} H^k \iu P_j
\end{equation}
induced by \eqref{eq:decomposition-general} is exactly the one corresponding to
Deligne's splitting $H'$ from \Cref{lem:Deligne}, with $V = H^{\ast}(X_y)$, $W = P^{+}$, and
$N$ multiplication by $\omega_y$. To see why, let's go back to the definition of the shifted
perverse filtration $\SP$. It gives
\[
	\gr_{\ell}^{\SP} \! H^k(X_y) = \gr_{\ell+k}^P H^k(X_y)
	\cong H^{-(\dim X+\ell)} \iu P_{k+\ell}.
\]
We can therefore define a splitting $H'$ for the filtration $\SP$ by letting $H'$ act
on the summand $H^k \iu P_j$ in the decomposition \eqref{eq:decomposition-general-fiber} as
multiplication by $-(\dim X+k)$. The difference $H-H'$ then acts on it
as multiplication by $-j$.

Let's consider the decomposition $\omega = \omega_2 + \omega_0 + \omega_{-1} +
\dotsb$ for the action of the K\"ahler form on \eqref{eq:decomposition-general}. Here
$\omega_{\ell}$ maps the summand $P_j$ in the decomposition into the summand
$P_{j+\ell}[2-\ell]$, and is primitive in the sense that $(\ad \omega_2)^{-\ell+1}
\omega_{\ell} = 0$. On stalks, $\omega_{\ell}$ therefore induces morphisms
\[
	H^k \iu P_j \to H^{k+2-\ell} \iu P_{j+\ell}.
\]
Because $H'$ acts on the source as $-(\dim X+k)$, and on the target as $-(\dim X+k+2-\ell)$, we
see that $\ad H'(\omega_{\ell}) = (\ell-2) \omega_{\ell}$. So if we set $N_{\ell} =
\omega_{\ell+2}$, then the decomposition
\[
	\omega = \omega_2 + \omega_0 + \omega_{-1} + \dotsb
\]
turns into a decomposition of the nilpotent operator $N$ of the form
\[
	N = N_0 + N_{-2} + N_{-3} + \dotsb,
\]
in which the terms $N_{\ell}$ with $\ell \leq -2$ are primitive because
\[
	(\ad N_0)^{-(\ell+1)} N_{\ell} = (\ad \omega_2)^{-(\ell+1)} \omega_{\ell+2} = 0.
\]
This shows that the splitting $H'$ is exactly the one in \Cref{lem:Deligne}. What this
means is that the decomposition in \eqref{eq:decomposition-general-fiber} is the common
eigenspace decomposition \eqref{eq: simult e} of the two commuting semisimple operators $H$ and $H'$. The
uniqueness of Deligne's splitting therefore tells us that this decomposition is
uniquely determined by the shifted perverse filtration $\SP$ and the
K\"ahler class $\omega_y \in H^2(X_y)$.

\begin{remark}
Throughout the paper, we use the decomposition in \eqref{eq:Deligne-fiber}
to identify each $\RR$-vector space $H^{k-j} \iu P_j$ with a subspace of the
cohomology $H^{\dim X +k}(X_y)$ of the fiber. As we explained above, this identification
is uniquely determined by the K\"ahler class $\omega_y \in H^2(X_y)$ and by the shifted
perverse filtration on $H^{\ast}(X_y)$.
\end{remark}

\subsection{Proof of the Support Theorem}
\label{par:support-theorem-proof}

We prove the Support Theorem in the precise form  given by \Cref{thm:support}.
Let's now consider a holomorphic mapping $f \colon X \to B$, with the assumptions
made at the beginning of this chapter. We fix a K\"ahler class $\omega \in H^2(X)$
and use Deligne's decomposition as our choice of decomposition in the decomposition theorem
\eqref{eq:decomposition}. As explained in \eqref{eq:Deligne-fiber}, on each fiber $X_b$, we
have an induced decomposition
\begin{equation} \label{eq:decomposition-fiber}
	H^{\ast}(X_b) \cong \bigoplus_{j,k \in \ZZ} H^k \iu P_j,
\end{equation}
where $i \colon \pt \into B$ is the inclusion of the point $b \in B$. This decomposition
is uniquely determined by the K\"ahler class $\omega_b \in H^2(X_b)$ and by the shifted perverse filtration
on $H^{\ast}(X_b)$, via Deligne's splitting construction in \Cref{lem:Deligne}. We may therefore
consider each $H^{k} \iu P_j$ as a subspace of the cohomology $H^{2n+k+j}(X_b)$.

As the proof of the Support Theorem involves only $G^\circ$, we can assume without
loss of generality that each group $G_b$ is connected (by replacing $p \colon G \to B$ by the family $G^\circ$ of
neutral components as in \Cref{prop:neutral-component}).

We showed in the previous chapter that the cohomology $H^{\ast}(X_b)$ of each fiber
is a free comodule over the Hopf algebra $H^{\ast}(T_b)$, where $T_b$ is the maximal
compact quotient of the commutative connected meromorphic group $G_b$. We are going to use the
special properties of Deligne's splitting to prove that for every support $Z \subseteq B$
and every $k \in \ZZ$, the direct sum
\[
	\bigoplus_{j \in \ZZ} H^k \iu P_{j,Z}
\]
is itself a free comodule over $H^{\ast}(T_b)$. Together with the $\delta$-regularity
inequality, this will then easily imply the support theorem.

\begin{remark}
In \S7.6 of his paper, Ng\^{o} writes (to paraphrase) that it is tempting to look for a
decomposition \eqref{eq:decomposition} that is compatible with the comodule
structure, but that such a decomposition does not seem to exist. Fortunately,  Deligne's decomposition
is compatible with the comodule structure. This observation
makes our proof quite a bit simpler than Ng\^{o}'s.
\end{remark}

The group $G_b$ is abelian, and so $H^{\ast}(G_b)$ is the wedge algebra on $H^1(G_b)$
(by \Cref{ex:abelian}). Recall from the discussion in \Cref{sec:comodules} that
the comodule structure
\[
	\delta \colon H^{\ast}(X_b) \to H^{\ast}(G_b) \tensor H^{\ast}(X_b)
\]
is therefore determined by the $d$ anticommuting linear operators
\[
	\delta_1, \dotsc, \delta_d \colon H^{\ast}(X_b) \to H^{\ast-1}(X_b),
\]
where $e_1, \dotsc, e_d \in H^1(G_b)$ is a basis.
Let's try to understand how the individual $\delta_i$ interact
with Deligne's splitting $H'$. Recall that in the decomposition
\[
	H^{\ast}(X_b) \cong \bigoplus_{j,k \in \ZZ} H^k \iu P_{j,Z},
\]
the splitting $H$ acts on the term $H^k \iu P_{j,Z}$ as multiplication by $-(\dim X + j +k)$,
whereas Deligne's splitting $H'$ is multiplication by $-(\dim X + k)$.
In particular, $H$ acts on $H^k(X_b)$ as multiplication by $-k$,
and so $\ad H(\delta_i) = \delta_i$.

\begin{lemma}
	We have $(\ad N)^2 \delta_i = 0$.
\end{lemma}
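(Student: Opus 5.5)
The plan is to exploit the fact that the comodule structure $\delta = a^{\ast} \colon H^{\ast}(X_b) \to H^{\ast}(G_b) \tensor H^{\ast}(X_b)$ is pullback along the action map $a \colon G_b \times X_b \to X_b$. It is therefore a homomorphism of graded $\RR$-algebras, with the sign convention on the tensor product recalled in the definition of a Hopf algebra. The operator $N$ is cup product with the even-degree class $\omega_b \in H^2(X_b)$. The idea is to show that $\ad N(\delta_i)$ is multiplication by a class of degree $1$, and such a multiplication operator commutes with $N$ by graded commutativity.

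\emph{Step 1: $\delta_i$ is a derivation with respect to multiplication by $\omega_b$.} Write, as in \eqref{eq:delta},
\[
	\delta(\omega_b) = 1 \tensor \omega_b + \sum_{i} e_i \tensor \delta_i(\omega_b) + \sum_{\abs{I}=2} e_I \tensor \delta_I(\omega_b),
\]
and expand $\delta(\beta)$ similarly for an arbitrary class $\beta \in H^{\ast}(X_b)$. Using multiplicativity,
\[
	\delta(\omega_b \cup \beta) = \delta(\omega_b) \cup \delta(\beta).
\]
Compare the coefficients of $e_i \tensor (\cdot)$ on both sides. The only contributions come from two products. First, $(e_i \tensor \delta_i \omega_b)(1 \tensor \beta) = e_i \tensor \delta_i(\omega_b)\beta$. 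Second, $(1 \tensor \omega_b)(e_i \tensor \delta_i \beta) = (-1)^{2 \cdot 1} e_i \tensor \omega_b \delta_i(\beta)$, and here the sign is $+1$ because $\deg \omega_b = 2$. This gives
\[
	\delta_i(\omega_b \cup \beta) = \delta_i(\omega_b) \cup \beta + \omega_b \cup \delta_i(\beta),
\]
that is, $[\delta_i, N] = c_i$, where $c_i \colon H^{\ast}(X_b) \to H^{\ast+1}(X_b)$ denotes cup product with the class $\delta_i(\omega_b) \in H^1(X_b)$.

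\emph{Step 2: conclude.} Then $(\ad N)^2 \delta_i = [N,[N,\delta_i]] = -[N, c_i]$. The bracket $[N, c_i]$ is the ordinary commutator of cup product with $\omega_b$ and cup product with $\delta_i(\omega_b)$. It vanishes because $H^{\ast}(X_b)$ is graded-commutative and $\omega_b$ has even degree. Note that $\ad$ here is the ordinary commutator: $N$ is even, so no super-sign enters even though $\delta_i$ and $c_i$ are odd.

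The only point needing care, and the one I expect to be the main (if mild) obstacle, is justifying that $\delta$ is multiplicative with exactly this Koszul sign convention. This amounts to checking that the Künneth isomorphism $H^{\ast}(G_b \times X_b) \cong H^{\ast}(G_b) \tensor H^{\ast}(X_b)$, given by the cross product, intertwines cup product on the product space with the signed product on the tensor product. This is the standard identity $(\alpha_1 \times \beta_1) \cup (\alpha_2 \times \beta_2) = (-1)^{\deg \beta_1 \deg \alpha_2} (\alpha_1 \alpha_2) \times (\beta_1 \beta_2)$, combined with the fact that pullback preserves cup products. Since only the degree-$2$ class $\omega_b$ gets moved past $e_i$, the sign is harmless in any case. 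No Hodge theory is needed for this lemma.
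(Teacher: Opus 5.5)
Your proof is correct and follows the paper's argument. The paper likewise compares the $H^1(G_b)\otimes H^{k+1}(X_b)$ components of $\delta(\omega_b\cup\alpha)=\delta(\omega_b)\cup\delta(\alpha)$ to identify $\ad N(\delta_i)$ with cup product by $-\delta_i(\omega_b)\in H^1(X_b)$, and then notes that this operator commutes with $N$. Your explicit check of the Koszul sign, which is harmless since $\deg\omega_b=2$, is a detail the paper leaves implicit.
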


\begin{proof}
	Recall that the nilpotent operator $N$ is multiplication by the K\"ahler class $\omega_b \in H^2(X_b)$.
	For any cohomology class $\alpha \in H^k(X_b)$, we have
	\[
		\delta(N \alpha) = \delta(\omega_b \cup \alpha)
		= \delta(\omega_b) \cup \delta(\alpha)
	\]
	because $\delta$ is compatible with cup product. By looking at the component in
	$H^1(G_b) \tensor H^{k+1}(X_b)$, we obtain the identity
	\[
		\sum_{i=1}^{d} e_i \tensor \delta_i(N \alpha) =
		\sum_{i=1}^{d} e_i \tensor \omega_b \cup \delta_i(\alpha)
		+ \sum_{i=1}^{d} e_i \tensor \delta_i(\omega_b) \cup \alpha,
	\]
	which tells us that $\delta_i(N \alpha) - N \delta_i(\alpha) = \delta_i(\omega_b)
	\cup \alpha$. In other words, the operator $\ad N(\delta_i) = [N, \delta_i]$ is
	cup product with the cohomology class $-\delta_i(\omega_b) \in H^1(X_b)$.
	Because this commutes with $N$, we get the desired result.
\end{proof}

We also need the following somewhat technical result; roughly speaking, it says that
the comodule structure on $H^{\ast}(X_b)$ can be ``spread out'' to nearby fibers.

\begin{proposition} \label{prop:shifted-perverse}
	At each point $b \in B$, the shifted perverse filtration on $H^{\ast}(X_b)$ is a
	filtration by comodules over $H^{\ast}(G_b)$. Moreover, the comodule structure on each
	subquotient $\gr_{\ell}^{\SP} \!  H^{\ast}(X_b)$ is compatible with the decomposition
	by strict support.
\end{proposition}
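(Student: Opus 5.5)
The plan is to realize each operator $\delta_i$ on $H^{\ast}(X_b)$ as the stalk at $b$ of a morphism in the derived category $\Dbc(U,\RR)$, defined over a small neighborhood $U$ of $b$. Perverse $t$-structure considerations then give both assertions. The key fact is that the comodule structure over $H^{\ast}(G_b)=\bigwedge H^1(G_b)$ is completely determined by the $d$ anticommuting operators $\delta_1,\dotsc,\delta_d$, because $\delta_I=\delta_{i_k}\circ\dotsb\circ\delta_{i_1}$ (see \Cref{sec:comodules}). It therefore suffices to show that each $\delta_i$ maps $\SP_{\ell}H^k(X_b)$ into $\SP_{\ell}H^{k-1}(X_b)$, and that on $\gr^{\SP}_{\ell}$ it preserves the summands $H^{\ast}\iu P_{j,Z}$.

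\textbf{Step 1 (spreading out loops).} Dually to $e_1,\dotsc,e_d$, choose loops $\gamma_1,\dotsc,\gamma_d$ in $G_b$ based at the identity whose classes form the dual basis of $H_1(G_b,\RR)$. Since $p\colon G\to B$ is a smooth morphism with a section $e$, after shrinking to a small contractible ball $U\ni b$ we can trivialize $p^{-1}(U)\to U$ near the loop as a $C^\infty$ submersion. This lets us extend each $\gamma_i$ to a continuous family of loops $\gamma_{i,t}\subseteq G_t$, $t\in U$, based at $e(t)$. Using the holomorphic action $a$, we obtain continuous maps
\[
	\Gamma_i\colon S^1\times X_U\to X_U,\quad (\theta,x)\mapsto \gamma_{i,f(x)}(\theta)\cdot x,
\]
which commute with the projection to $U$. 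Pulling back along $\Gamma_i$ and applying relative K\"unneth for the trivial factor $S^1$ gives a morphism $\derR\fl\RR_X\to \derR\fl\RR_X\oplus \derR\fl\RR_X[-1]$. The first component is the identity, and the second is a morphism
\[
	D_i\colon \derR\fl\RR_X[\dim X]\to\derR\fl\RR_X[\dim X][-1]
\]
in $\Dbc(U,\RR)$. By proper base change, the stalk $H^{\ast}\iu D_i$ at $b$ is exactly the component of $a^{\ast}$ along $e_i$, namely $\delta_i$. I expect this step to be the main technical point: one has to justify the continuous extension of the loops and the relative K\"unneth decomposition. Both are unproblematic because $U$ is contractible and the circle factor is constant.

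\textbf{Step 2 (filtration).} Write $\derR\fl\RR_X[\dim X]\cong\bigoplus_j P_j[-j]$. The component of $D_i$ from $P_j[-j]$ to $P_{j'}[-j'-1]$ lies in $\Hom(P_j,P_{j'}[j-j'-1])$. This group vanishes when $j-j'-1<0$, since negative Ext groups between perverse sheaves vanish. Hence $D_i$ maps $\tau_{\le\ell}$ into $\tau_{\le\ell-1}[-1]$, so on stalks $\delta_i(P_{\ell+k}H^k)\subseteq P_{\ell+k-1}H^{k-1}$. This says precisely that $\delta_i$ preserves $\SP_\ell$, so each $\SP_\ell H^{\ast}(X_b)$ is a subcomodule, and the subquotients inherit comodule structures.

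\textbf{Step 3 (strict support).} On $\gr^{\SP}_\ell$, the only surviving part of $\delta_i$ comes from the components with $j'=j-1$, that is, from honest morphisms of perverse sheaves $P_j\to P_{j-1}$. Each $P_j$ decomposes by strict support into semisimple pieces, and there are no nonzero morphisms between simple perverse sheaves with different supports. Therefore these morphisms are diagonal with respect to $P_j=\bigoplus_Z P_{j,Z}$. Taking stalks at $b$, and using the identification of $H^{\ast}\iu P_{j,Z}$ with subspaces of $H^{\ast}(X_b)$ from \eqref{eq:decomposition-fiber}, the induced comodule structure on $\gr^{\SP}_\ell H^{\ast}(X_b)$ preserves each summand $\bigoplus_j H^{\ast}\iu P_{j,Z}$.
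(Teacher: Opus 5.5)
Your proof is correct, but Step 1 takes a different route from the paper. Steps 2 and 3 are essentially the paper's argument.

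\textbf{Steps 2 and 3.} The paper applies $\tau_{\leq\ell}$ directly to a morphism $\derR\fl\RR_X[\dim X]\vert_U\to H^1(T_b)[-1]\tensor\derR\fl\RR_X[\dim X]\vert_U$; your vanishing of $\Hom(P_j,P_{j'}[j-j'-1])$ for $j'\geq j$ is the same fact. The paper then notes that the top component $\gamma_{-1}$ is a morphism of perverse sheaves, hence respects strict supports, exactly as you do.

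\textbf{How the paper does Step 1.} It builds the sheaf-level lift of the coaction from the global complex $K_G=\derR p_!\RR_G[2n]$. Trace maps, proper base change and the projection formula give $K_G\tensor\derR\fl\RR_X\to\derR\fl\RR_X$ on all of $B$. This is restricted to a ball using constructibility of $K_G$, so that $H^{\ast}(K_G\vert_U)\cong H_{\ast}(G_b)$. Identifying the stalk map with $\delta$ is not immediate: it is deferred to \Cref{lem:comodule-stalks}, which passes through Verdier duality in \Cref{cor:comodules}.

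\textbf{What your construction buys.} Your loop construction needs only that $p$ is a submersion and that $a$ is continuous. It uses neither the meromorphic structure (needed for constructibility of $K_G$) nor \Cref{lem:K}. The stalk identification becomes a one-line K\"unneth computation, $\gamma_i^{\ast}e_j=\delta_{ij}\theta$. It adapts readily to a weakly equivariant $K$ by pulling back $\phi$ along $(\theta,x)\mapsto(\gamma_{i,f(x)}(\theta),x)$. It also covers every direction of $H^1(G_b)$, whereas the paper's written proof projects to $H^1(T_b)$; that projection is harmless, since the same truncation argument works without it.

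\textbf{What the paper's construction buys.} It gives a canonical, choice-free morphism over all of $B$. This is reused in the proof of \Cref{thm:support} to build the local system $\locT_Z$ from $\shH^{-1}K_G$. Your $D_i$ depend on the ball and the chosen loops, so they do not directly provide that.

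\textbf{One small correction.} Homology classes of loops are integral, so you cannot take them dual to an arbitrary real basis $e_1,\dotsc,e_d$. Either choose the $e_i$ dual to a $\ZZ$-basis of the free abelian group $H_1(G_b,\ZZ)$, or note that the degree-one part of $\delta$ depends linearly on the class in $H_1(G_b,\RR)$ and integral classes span this space.
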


The comodule structure is determined by the operators $\delta_1, \dotsc,
\delta_d$, and so the content of the proposition is that
\[
	\delta_i \bigl( \SP_{\ell} H^k(X_b) \bigr) \subseteq \SP_{\ell} H^{k-1}(X_b).
\]
In other words, the shifted perverse filtration is preserved by each $\delta_i$.
The proof takes a bit of work, so let's postpone it until \Cref{subs:pfprop comodP} and first
deduce the support theorem from the results we have so far.

Let's start by proving that each $\delta_i$ commutes with Deligne's splitting $H'$.
Because $(\ad N)^2 \delta_i = 0$ and $\ad H(\delta_i) = \delta_i$, we can apply
\Cref{lem:kerH'} to the individual operators $\delta_i$. The conclusion is
that $[H', \delta_i] = 0$, which means that $\delta_i$ preserves
the eigenspaces of $H'$ in the decomposition \eqref{eq:decomposition-fiber}. This
says concretely that, for each fixed value of $k \in \ZZ$, the graded subspace
\[
	\bigoplus_{j=-n}^n H^k \iu P_j \subseteq H^{\ast}(X_b)
\]
is a direct summand as a comodule over the Hopf algebra $H^{\ast}(G_b)$; here $H^k \iu P_j$
is a subspace of $H^{\dim X + j + k}(X_b)$. This gives us an
isomorphism of comodules
\[
	H^{\ast}(X_b) \cong \bigoplus_{\ell \in \ZZ} \gr_{\ell}^{\SP} \! H^{\ast}(X_b).
\]
According to the second half of \Cref{prop:shifted-perverse}, the comodule
structure is therefore also compatible with the decomposition by strict support. It
follows that for each possible support $Z \subseteq B$, the subspace
\[
	\bigoplus_{j=-n}^n H^k \iu P_{j,Z} \subseteq H^{\ast}(X_b)
\]
is a direct summand as a comodule over the $H^{\ast}(G_b)$. Because the entire
comodule is free over the smaller Hopf algebra $H^{\ast}(T_b)$ (by \Cref{thm:freeness}),
each of these summands is free as well. We summarize the conclusion in the following lemma.

\begin{lemma} \label{lem:comodule-decomposition}
For every $k \in \ZZ$ and every support $Z \subseteq B$, the graded vector space
\[
	\bigoplus_{j=-n}^n H^k \iu P_{j, Z}
\]
is a comodule over the Hopf algebra $H^{\ast}(G_b)$. It is free as a
comodule over the quotient Hopf algebra $H^{\ast}(T_b)$.
\end{lemma}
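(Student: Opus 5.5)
The lemma collects what the preceding discussion has shown, so the plan is to assemble those pieces in order.

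\textbf{Step 1: the operators $\delta_i$ commute with $H'$.} The comodule structure over $H^{\ast}(G_b)$ is determined by the anticommuting operators $\delta_1, \dotsc, \delta_d$. We already know $\ad H(\delta_i) = \delta_i$ and $(\ad N)^2 \delta_i = 0$. By \Cref{prop:shifted-perverse}, each $\delta_i$ preserves the shifted perverse filtration $\SP$. So \Cref{lem:kerH'} applies with $V = H^{\ast}(X_b)$, $W = \SP$ and $f = \delta_i$, and gives $[H', \delta_i] = 0$. Since $H'$ acts on $H^k \iu P_j$ by $-(\dim X + k)$, each $\delta_i$ maps the $H'$-eigenspace $\bigoplus_j H^k \iu P_j$ to itself. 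Hence this eigenspace is a subcomodule, and $H^{\ast}(X_b)$ is the direct sum of these subcomodules over $k$. Under the identification of $\bigoplus_j H^k\iu P_j$ with $\gr^{\SP}_{\ell}$ for the matching $\ell$, the induced comodule structure is the one on the graded piece.

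\textbf{Step 2: compatibility with strict supports.} The second half of \Cref{prop:shifted-perverse} says the comodule structure on each $\gr^{\SP}_{\ell}$ respects the decomposition by strict support. Transporting this through the isomorphism of Step~1, the operators $\delta_i$ preserve each $\bigoplus_j H^k \iu P_{j,Z}$. So these spaces are comodules over $H^{\ast}(G_b)$, and $H^{\ast}(X_b)$ is their direct sum over $(k, Z)$. The one point needing care is that the Deligne splitting used to embed $H^k\iu P_{j,Z}$ into $H^{\ast}(X_b)$ is the same as the one used in Step~1. This holds because the decomposition in \eqref{eq:decomposition-fiber} is the simultaneous eigenspace decomposition of $H$ and $H'$, refined by supports.

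\textbf{Step 3: freeness.} Restrict along the surjection of Hopf algebras $H^{\ast}(G_b) \to H^{\ast}(T_b)$. By \Cref{thm:freeness}, $H^{\ast}(X_b)$ is a free comodule over $H^{\ast}(T_b)$. It remains to show that a direct summand of a free comodule over $H = H^{\ast}(T_b)$ is free. In the proof of \Cref{thm:freeness}, freeness came from a Hopf module structure given by cup product with $s(H^1(T_b))$. A summand need not be stable under this cup product, so that argument does not restrict directly. Instead I would use a linear-algebra criterion. For $H = \bigwedge H^1$ with dual operators $\delta_1,\dotsc,\delta_{2r}$, a comodule is free exactly when it is free as a module over the exterior algebra generated by the $\delta_i$. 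Over an exterior algebra on finitely many generators, which is a Frobenius (self-injective local) ring, projective modules are free. So a direct summand of a free module is projective, hence free. This step needs checking: the identification of comodules with modules over the dual exterior algebra is sound, but I have not verified that it is exactly what \Cref{lem:comodule-free} encodes. Alternatively, for each $i$ one can check that $\ker\delta_i = \im\delta_i$ on the summand, since exactness passes to summands, and combine this with a graded Nakayama argument.

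I expect Step~2 to be the main obstacle, since it rests entirely on \Cref{prop:shifted-perverse}. That proposition requires spreading the $\delta_i$ to nearby fibers and using the meromorphic action on $\derR\fl\RR_X$. The rest is formal.
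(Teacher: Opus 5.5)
Your proposal is correct and follows the paper's argument. You apply \Cref{lem:kerH'} to each $\delta_i$, rightly noting that the filtration hypothesis comes from the first half of \Cref{prop:shifted-perverse}, which the paper uses tacitly; you then pass to strict supports via the second half of that proposition and invoke \Cref{thm:freeness}. The paper simply asserts that a comodule summand of a free $H^{\ast}(T_b)$-comodule is free, and your justification correctly fills that in: comodules over $\bigwedge H^1(T_b)$ are graded modules over the dual exterior algebra, which is local, so a direct summand of a free module is projective and hence free. Your fallback of checking only $\ker\delta_i = \im\delta_i$ for the basis operators would not suffice by itself: over $\bigwedge(x,y)$, the module $\bigwedge(x,y)/(x-y)$ has $x$ and $y$ both acting exactly but is not free.
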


We are now ready to prove the support theorem, under the assumptions made at
the beginning of this chapter.

\begin{proof}[Proof of \Cref{thm:support}]
Let $Z \subseteq B$ be one of the supports in the decomposition theorem for
$f \colon X \to B$. Define the integer $r = n - \codim_B Z$, and let $b \in Z$ be a
general point. The $\delta$-regularity condition tells us that the compact complex
torus $T_b$ has dimension at least
\[
	r = n - \dim B + \dim Z = 2n - \dim X + \dim Z.
\]
On a dense open subset $U \subseteq Z$, each $P_{j,Z}$ has the form $\locL_{j,Z}[\dim Z]$,
where $\locL_{j,Z}$ is a local system.  If $i \colon \pt \into B$  denotes the inclusion
of the point, we therefore get
\[
	\bigoplus_{j=-n}^n H^{-\dim Z} \iu P_{j, Z}
	\cong \bigoplus_{j=-n}^n (\locL_{j,Z})_b.
\]
The $j$-th summand contributes to $H^{j + \dim X-\dim Z}(X_b)$, and can therefore be
nonzero only when $j + \dim X - \dim Z \leq 2n$, which translates into $j \leq r$.
Combining this fact with the relative Hard Lefschetz theorem, we can rewrite the
sum as
\[
	\bigoplus_{j=-r}^r H^{-\dim Z} \iu P_{j, Z}
	\cong \bigoplus_{j=-r}^r (\locL_{j,Z})_b.
\]
But we know that this is a free comodule over the Hopf algebra $H^{\ast}(T_b)$, and
because $\dim T_b \geq r$, this is only possible if $\locL_{r,Z} \neq 0$ and $\dim
T_b = r$. This proves (a).

It only takes a little bit of book-keeping to deduce the other three assertions in the
support theorem from the freeness theorem. We already know that
\[
	\bigoplus_{j=-r}^r P_{j,Z}[-j]
\]
is a direct summand in $\derR \fl \RR_X[\dim X]$. On cohomology sheaves, $\shH^i P_{r,Z}$
is therefore a summand of $R^{i+r+\dim X} \fl \RR_X$, and since $f$ is equidimensional,
this vanishes once
\[
	2n < i + r + \dim X = i + 2n + \dim Z
\]
or $-\dim Z < i$. But $P_{r,Z}$ is a perverse sheaf supported on $Z$, and so
$\shH^i P_{r,Z} = 0$ except when $-\dim Z \leq i \leq 0$; the conclusion is that
\[
	P_{r,Z}[-\dim Z] \cong \shH^{-\dim Z} P_{r,Z}
\]
is a single constructible sheaf that is isomorphic to a direct summand of $R^{2n} \fl
\RR_X$. By proper base change, the stalk
$(\locL_{r,Z})_b$ is a summand of $H^{2n}(X_b)$ for every $b \in U$, and so the monodromy group of
the local system is a subgroup of the group of permutations of the irreducible components
of the fiber, and is therefore finite. We now define $\locC_Z = \locL_{-r,Z}$.
Since $\locL_{r,Z} \cong \locL_{-r,Z}$ by the
relative Hard Lefschetz theorem, $\locC_Z$ is a local system with
finite monodromy, and so (d) is proved.

Now let $b \in U$ be an arbitrary point. For dimension reasons, the subspace of
coinvariants in the free $H^{\ast}(T_b)$-comodule
\[
	M_{Z, b} = \bigoplus_{j=-r}^r (\locL_{j,Z})_b
\]
must be exactly $(\locC_Z)_b = (\locL_{-r,Z})_b$, and therefore
\[
	(\locL_{j,Z})_b \cong    (\locC_Z)_b \tensor H^{r+j}(T_b)
	\cong    (\locC_Z)_b   \tensor \bigwedge^{r+j} H^1(T_b) .
\]
Now we observe that the comodule structure
\[
	M_{Z,b} \to H^{\ast}(G_b) \tensor M_{Z,b}
\]
actually factors through the Hopf subalgebra $H^{\ast}(T_b) \subseteq H^{\ast}(G_b)$.
The reason is that $M_{Z,b}^j \subseteq H^{2n+r-j}(X_b)$ has a pure Hodge structure
of weight $2n+r-j$ by \Cref{lem:MHS} below; for weight reasons, the morphisms
\[
	M_{Z,b}^j \to H^1(G_b) \tensor M_{Z,b}^{j-1}
\]
therefore factors through $H^1(T_b)$. In particular, the image of the induced morphism
\[
	M_{Z,b}^j \tensor \Hom_{\RR} \bigl( M_{Z,b}^{j-1}, \RR \bigr) \to H^1(G_b)
\]
is exactly the subspace $H^1(T_b) \subseteq H^1(G_b)$.

To construct the local system $\locT_Z$, we appeal to two results from the next section.
Let $K_G = \derR p_! \RR_G[2n]$, which is a constructible complex on $B$.
According to \eqref{eq:K-action} below, we have a morphism
\[
	K_G \tensor \derR \fl \RR_X[\dim X] \to \derR \fl \RR_X[\dim X]
\]
in the derived category $\Dbc(B, \RR)$; by combining it with the splitting in
\Cref{lem:K} and the decomposition by strict support, we obtain morphisms
\[
	P_{j,Z} \tensor \bigwedge^{r+j} \shH^{-1} K_G \to P_{-r,Z}
\]
for $-r \leq j \leq r$. Upon restricting to $U \subseteq B$, we get
\[
	\locL_{j,Z} \tensor \bigwedge^{r+j} \shH^{-1} K_G \vert_U \to \locL_{-r,Z}
\]
where everything is now a local system. According to the discussion above,
the image of the morphism
\[
	\shH^{-1} K_G \vert_U \to \shHom_{\RR} \bigl( \locL_{-r+1,Z}, \locL_{-r,Z} \bigr)
\]
is then a local system whose fiber at a point $b \in U$ is isomorphic to $H_1(T_b)$.
Let $\locT_Z$ be the dual local system, with fibers $H^1(T_b)$; then from the above,
we get
\[
	\locL_{j,Z} \cong \locC_Z \tensor \bigwedge^{r+j} \locT_Z,
\]
and so (b) and (c) hold. This finishes the proof of the support theorem, modulo the
material in the next section (especially the proof of  \Cref{prop:shifted-perverse}).
\end{proof}

We used the following technical lemma during the proof.

\begin{lemma} \label{lem:MHS}
At each point $b \in U$, the subspace $(\locL_{j,Z})_b \subseteq H^{2n+j-r}(X_b)$
is a mixed Hodge substructure that is pure of weight $2n+j-r$.
\end{lemma}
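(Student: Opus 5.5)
We want to show that at a general point $b \in U$ the stalk $(\locL_{j,Z})_b$, viewed inside $H^{2n+j-r}(X_b)$ via Deligne's decomposition \eqref{eq:decomposition-fiber}, is a mixed Hodge substructure that is pure of weight $2n+j-r$. The plan is to use Saito's theory. By the remark after \eqref{eq:support-decomposition}, each $P_{j,Z}$ underlies a polarizable real Hodge module of weight $j+\dim X$. On $U$ its restriction is a polarizable real variation of Hodge structure $\locL_{j,Z}$ of weight $j+\dim X-\dim Z = 2n+j-r$, using $\dim X - \dim Z = n + (n-r)$.

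The first step is to compare mixed Hodge structures. The isomorphism \eqref{eq:Deligne-fiber}, $H^{\dim X+k}(X_b) \cong \bigoplus_j H^{k-j}\iu P_j$, should be compatible with the mixed Hodge structures in the following sense. The left side carries Deligne's/Fujiki's mixed Hodge structure on the compact K\"ahler space $X_b$. The right side carries the mixed Hodge structures coming from $H^\ast \iu$ of Hodge modules, which agree by Saito's base change compatibility for $\iu \derR\fl$. Since the decomposition in Saito's theory is a priori only a decomposition in the derived category, the truncations $\tau_{\leq \ell}$ are what is canonically compatible. So I would first show that the perverse filtration $P_\bullet$, hence $\SP$, is a filtration by mixed Hodge substructures: it is the image of $H^k\iu \tau_{\le\ell}$, a morphism of mixed Hodge modules. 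Next, $N$ (cup with $\omega_b$) is a morphism of mixed Hodge structures of type $(1,1)$. I then want to argue that Deligne's splitting $H'$ from \Cref{lem:Deligne} is defined over $\RR$ and respects the Hodge structures. This holds because it is uniquely determined by $\SP$, $M$ and $N$, all of which are compatible with mixed Hodge structures (up to a Tate twist for $N$). The uniqueness and naturality of the construction then imply that the eigenspaces $H^k\iu P_j$ are mixed Hodge substructures. Uniqueness passes to the Hodge setting: the construction adjusts an arbitrary splitting in finitely many canonical steps, and one can start from a splitting compatible with mixed Hodge structures because the category is abelian and $\SP$ is strict.

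The second step treats the strict support pieces. The decomposition by strict support of a polarizable Hodge module is canonical, so $H^{-\dim Z}\iu P_{j,Z}$ is a mixed Hodge substructure of $H^{-\dim Z}\iu P_j$.

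The third step is purity. For $b\in U$ we have $H^{-\dim Z}\iu P_{j,Z} = (\locL_{j,Z})_b$, and since $U$ is smooth and $P_{j,Z}|_U$ is a smooth Hodge module, this restriction carries exactly the pure Hodge structure of the variation at $b$, of weight $2n+j-r$. Equivalently, $\iu$ of a smooth pure Hodge module of weight $w$ on a $\dim Z$-dimensional manifold, shifted by $-\dim Z$, is pure of weight $w-\dim Z$.

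The main obstacle is the first step: checking that Saito's mixed Hodge structure on $H^\ast\iu\derR\fl\RR_X$ in the K\"ahler, analytic setting agrees with Fujiki's mixed Hodge structure on $H^\ast(X_b)$. One also has to check that Deligne's splitting is Hodge-compatible, and not merely $\RR$-linear. For the comparison I would try to reduce to the local invariant cycle situation: the composition $H^\ast(X)\to H^\ast(X_b)$ is compatible with mixed Hodge structures on both sides, and one can compare via a good neighborhood $X_V\simeq X_b$ (homotopy retraction). If that comparison is unavailable, the fallback is to show purity directly. The subspace $(\locL_{j,Z})_b$ lies in the lowest weight part $W_{2n+j-r}$ because it comes from the weight filtration on $\iu$ of a pure module, namely $\iu$ is of weight $\le$. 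It has no weight below that because it injects into the pure $\gr^P$ graded piece.
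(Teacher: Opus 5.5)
There is a genuine gap, and it is the comparison you yourself flag as the ``main obstacle''. Your Steps 1--3 all take place in Saito's theory: the structure you put on $(\locL_{j,Z})_b = H^{-\dim Z}\iu P_{j,Z}$ is the one induced by the polarizable Hodge module $P_{j,Z}$. To say anything about the subspace of $H^{2n+j-r}(X_b)$, you need Saito's mixed Hodge structure on $H^{\ast}\iu \derR\fl\RR_X[\dim X]$ to agree with the Deligne--Fujiki one on $H^{\ast}(X_b)$. This agreement is Saito's theorem in the algebraic case, but it is not known for proper K\"ahler morphisms; the paper states this explicitly at the end of \Cref{sec:rational}.

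It is the Deligne--Fujiki structure that the lemma must address. The lemma is used in the proof of \Cref{thm:support} for a weight argument with $\delta$, and $\delta$ is a morphism of mixed Hodge structures (because the action is meromorphic) only for that structure. Your proposed repairs do not close the gap. First, $X$ is in general non-compact, so neither $H^{\ast}(X)$ nor $H^{\ast}(f^{-1}(V))$ carries a mixed Hodge structure to compare with, and $f^{-1}(V)\simeq X_b$ is only a homotopy equivalence. Second, the fallback again uses Saito's weight bounds for $\iu$, which is the same comparison. Its last claim is also false: $\gr^P_\ell H^{\ast}(X_b)\cong H^{\ast}\iu P_\ell$ is usually mixed, not pure. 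Separately, abelianness of the category and strictness of $\SP$ do not produce a splitting of $\SP$ by mixed Hodge substructures, since extensions of mixed Hodge structures need not split.

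The missing idea is to stay entirely with the Deligne--Fujiki structure and propagate purity down from the top degree using the comodule structure. The argument runs as follows.
\begin{enumerate}
\item Since $H^{2n}(X_b)$ is pure of type $(n,n)$, every real subspace of it is a Hodge substructure. In particular $(\locL_{r,Z})_b$ is one.
\item By \Cref{lem:HG} and the meromorphy of the action, $\delta_T$ on $H^{\ast}(X_b)$ is a morphism of mixed Hodge structures.
\item By \Cref{lem:comodule-decomposition}, $\bigoplus_j(\locL_{j,Z})_b$ is a sub-comodule that is free over $H^{\ast}(T_b)$.
\item \Cref{lem:comodule-free} then shows that the contraction $\Hom_{\RR}(H^{r-j}(T_b),\RR)\tensor(\locL_{r,Z})_b\to(\locL_{j,Z})_b$ is an isomorphism.
\item This contraction is the restriction of the morphism of mixed Hodge structures $\Hom_{\RR}(H^{r-j}(T_b),\RR)\tensor H^{2n}(X_b)\to H^{2n+j-r}(X_b)$ built from $\delta_T$.
\end{enumerate}
Hence $(\locL_{j,Z})_b$ is the image of a pure Hodge structure of weight $(j-r)+2n$ under a morphism of mixed Hodge structures, which is exactly the lemma. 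This route needs neither Saito's structure on $\iu P_{j,Z}$ nor any Hodge-compatibility of Deligne's splitting.
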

\begin{proof}
By \Cref{lem:HG}, we have an isomorphism of Hopf algebras
\[
	H^{\ast}(G_b) \cong H^{\ast}(L_b) \tensor H^{\ast}(T_b)
\]
that respects the mixed Hodge structures on both sides. This makes the cohomology $H^{\ast}(X_b)$
into a comodule over $H^{\ast}(T_b)$, and the comodule structure
\[
	\delta_T \colon H^{\ast}(X_b) \to H^{\ast}(T_b) \tensor H^{\ast}(X_b)
\]
is again a morphism of mixed Hodge structures. We use this fact and the freeness theorem
to put a Hodge structure on each subspace $(\locL_{j,Z})_b \subseteq H^{2n+j-r}(X_b)$.

Since $\dim X_b = n$, the mixed Hodge structure on the top cohomology group
$H^{2n}(X_b)$ is pure of type $(n,n)$; its dimension is equal to the number of
irreducible components of $X_b$. Consequently, the subspace
\[
	(\locL_{r,Z})_b \subseteq  H^{2n}(X_b)
\]
inherits a pure Hodge structure of type $(n,n)$. Recall that $U \subseteq Z$ is the Zariski-open
subset over which $P_{j,Z}$ is a local system, and that $i \colon \pt \to B$
denotes the inclusion of the point $b \in U$.  By \Cref{lem:comodule-decomposition}, the
graded subspace
\[
	\bigoplus_{j=-r}^r (\locL_{j,Z})_b
	= \bigoplus_{j=-r}^r H^{-\dim Z} \iu P_{j,Z} \subseteq H^{\ast}(X_b)
\]
is a comodule over $H^{\ast}(G_b)$ that is free over $H^{\ast}(T_b)$. Therefore
the diagram
\begin{equation} \label{eq:MHS-1}
\begin{tikzcd}
	(\locL_{r,Z})_b \rar{\delta_T} \dar[hook] & H^{r-j}(T_b) \tensor
	(\locL_{j,Z})_b \dar[hook] \\
	H^{2n}(X_b) \rar{\delta_T} & H^{r-j}(T_b) \tensor H^{2n+j-r}(X_b)
\end{tikzcd}
\end{equation}
is commutative; the bottom arrow is a morphism of mixed Hodge structures. We now
apply the freeness theorem and \Cref{lem:comodule-free}; the conclusion is that if we
move the factor $H^{r-j}(T_b)$ to the left-hand side, then in the diagram
\begin{equation} \label{eq:MHS-2}
\begin{tikzcd}
	\Hom_{\RR} \bigl( H^{r-j}(T_b), \RR \bigr) \tensor (\locL_{r,Z})_b
	\rar{\cong} \dar[hook] & (\locL_{j,Z})_b \dar[hook] \\
	\Hom_{\RR} \bigl( H^{r-j}(T_b), \RR \bigr) \tensor
	H^{2n}(X_b) \rar & H^{2n+j-r}(X_b)
\end{tikzcd}
\end{equation}
the top arrow becomes an isomorphism, and the bottom arrow stays a morphism of mixed Hodge
structures. It follows that $(\locL_{j,Z})_b$ has a unique Hodge structure of weight $2n+j-r$
that makes the top arrow in \eqref{eq:MHS-2} into an isomorphism of Hodge structures,
and thus the top arrow in \eqref{eq:MHS-1} into a morphism of Hodge structures; because of the shape of
the two commutative diagrams, $(\locL_{j,Z})_b$ is automatically a mixed Hodge substructure of $H^{2n+j-r}(X_b)$.
\end{proof}

\subsection{Proof of Proposition~\ref*{prop:shifted-perverse}}\label{subs:pfprop comodP}

Now let's return to \Cref{prop:shifted-perverse}. It is not a pointwise statement -- because of
the presence of the perverse filtration, which is constructed locally on $B$ -- and so
we need the family of meromorphic groups $p \colon G \to B$. Recall the notation in
\eqref{eq:action} for the fiberwise action of $G$ on $X$. At any point $b \in B$, we
are going to argue that the comodule structure
\[
	\delta \colon H^{\ast}(X_b) \to H^{\ast}(G_b) \tensor H^{\ast}(X_b)
\]
comes from a morphism in the derived category
\[
	\derR \fl \RR_X \big\vert_U \to H^{\ast}(G_b) \tensor \derR \fl
	\RR_X \big\vert_U,
\]
where $U \subseteq B$ is a small open ball around the point $b \in B$. It takes a
certain amount of sheaf theory to get this morphism, but fortunately, everything we
need exists for holomorphic mappings between complex spaces. We use the comprehensive book
by Kashiwara and Schapira \cite{Kashiwara+Schapira} as our reference, with only
one change in notation: if $f \colon X \to Y$ is a morphism, Kashiwara and Schapira denote
the  sheaf-theoretic pullback by $f^{-1}$, but we prefer the symbol $\fu$.

Let's first try to understand what structure we have on the fiberwise cohomology of
$p \colon G \to B$. Here it is better to work with compactly supported cohomology,
because we can then apply proper base change and the projection formula. We therefore
consider the constructible complex
\[
	K_G = \derR p_! \RR_G[2n] \in \Dbc(B, \RR).
\]
Constructibility follows from the fact that $p \colon G \to B$ is a family of
meromorphic groups (which means that $p$ is the composition of a Zariski-open
embedding and a proper holomorphic mapping). By proper base change
\cite[Prop.~2.6.7]{Kashiwara+Schapira}, we have
\[
	\shH^{-i} K_{G, b}  \cong H_c^{2n-i}(G_b)
\]
for any point $b \in B$. Because $G_b$ is a complex manifold, Poincar\'e duality gives
\[
	H_c^{2n-i}(G_b) \cong \Hom \bigl( H^i(G_b), \RR \bigr) \cong H_i(G_b),
\]
and so we have an isomorphism
\[
	K_{G,b} \cong \bigoplus_{i \in \NN} H_i(G_b)[i]
\]
in the derived category of complexes of $\RR$-vector spaces. Because $G_b$ is
commutative and connected, we know from Hopf's theorem that $H_{\ast}(G_b)$ is
the wedge algebra on $H_1(G_b)$. In terms of the stalks of $K_G = \derR p_! \RR_G[2n]$,
this says that
\[
	K_{G, b} \cong \bigoplus_{i \in \NN} \left( \bigwedge\nolimits^i \shH^{-1} K_{G, b} \right)[i].
\]
The next step is to prove a sheaf version of this result, and deduce from
it that the complex $K_G$ splits in the derived category.

\begin{lemma} \label{lem:K}
	In the derived category $\Dbc(B, \RR)$, we have an isomorphism
	\[
		K_G \cong \bigoplus_{i \in \NN} \left( \bigwedge\nolimits^i \shH^{-1} K_G \right)[i].
	\]
	In particular, $K_G$ is isomorphic to a direct sum of shifts of constructible sheaves.
\end{lemma}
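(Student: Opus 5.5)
Set $\locH = \shH^{-1} K_G$, a constructible sheaf with stalks $\locH_b \cong H_1(G_b)$. The plan is to build morphisms $\bigl(\bigwedge^i \locH\bigr)[i] \to K_G$ in $\Dbc(B,\RR)$ for every $i \geq 0$, and then check that their sum is an isomorphism on stalks. On stalks this reduces to Hopf's theorem, in the form $H_{\ast}(G_b) \cong \bigwedge^{\ast} H_1(G_b)$ (\Cref{ex:abelian}).

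The multiplicative structure comes from the group law. The multiplication $m \colon G \times_B G \to G$ is a morphism over $B$, but it is not proper, so I will use integration along the fibers instead of pushforward. Since $p$ is smooth of relative complex dimension $n$, we have $p^! \RR_B \cong \RR_G[2n]$, so $K_G = \derR p_! p^! \RR_B$. Let $q \colon G \times_B G \to B$ and $\mathrm{pr}_1, \mathrm{pr}_2$ denote the projections. The relative K\"unneth formula (proper base change together with the projection formula, \cite[Prop.~2.6.7]{Kashiwara+Schapira}) gives
\[
K_G \otimes K_G \cong \derR q_! \RR_{G\times_B G}[4n] \cong \derR q_! q^! \RR_B.
\]
Because $m$ is a smooth morphism between smooth $B$-spaces, it has a trace (integration along fibers) $\derR m_! m^! \to \id$. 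Using $m^! p^! = q^!$, this produces a product morphism
\[
\mu \colon K_G \otimes K_G \to K_G ,
\]
which on stalks is the Pontryagin product $H_{\ast}(G_b)^{\otimes 2} \to H_{\ast}(G_b)$. Associativity and graded commutativity of $\mu$ hold on stalks; this is all we need, since we only use $\mu$ to construct maps.

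Next I produce the degree $-1$ piece. The perverse-type truncations give $\tau_{\geq -1} K_G$. The stalks of $K_G$ are concentrated in degrees $\leq 0$, and $\shH^0 K_G = \RR_B$, coming from $H_0(G_b) = \RR$ because the fibers are connected. The identity section $e$ splits off this top term: the composition
\[
\RR_B \xrightarrow{e_!} K_G \xrightarrow{\text{trace}} \RR_B
\]
is the identity, so $K_G \cong \RR_B \oplus K'$ with $\shH^0 K' = 0$. Here $e_!$ is the pushforward $\RR_B = \derR p_! e_! \RR_B \to \derR p_! \RR_G[2n]$ along the closed embedding, using the fundamental class of $e(B)$, and the trace is given by $\tau_{\geq 0}$. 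Then
\[
\tau_{\geq -1} K' = \locH[1],
\]
so there is a natural morphism $\tau_{\leq -1} K' \to \tau_{\geq -1}\tau_{\leq -1} K' = \locH[1]$. What I actually need is a map $\locH[1] \to K_G$, i.e.\ a splitting of $\tau_{\leq -2} K' \to \tau_{\leq -1} K' \to \locH[1]$. This is the \textbf{main obstacle}: the obstruction lies in $\Hom(\locH[1], \tau_{\leq -2}K'[1])$, which need not vanish for general constructible complexes.

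The approach I would try first uses the operation ``multiplication by an integer $N$'' on the commutative groups $G_b$, i.e.\ the morphism $[N] \colon G \to G$ over $B$. It is holomorphic, since it is obtained by iterating $m$. It acts on $H_i(G_b) = \bigwedge^i H_1$ by $N^i$, because $H_1$ consists of primitive elements and the $i$-th homology is generated by $i$-fold products. Its action on $K_G$ via the trace (equivalently via $\mu$ and the diagonal) is a morphism $\psi_N \colon K_G \to K_G$ that acts on $\shH^{-i}$ by $N^i$. Taking the generalized eigenspace decomposition of $\psi_N$ for $N = 2$, inside the finite-dimensional algebra $\End(K_G)$ where $\psi_2$ satisfies $\prod_i(\psi_2 - 2^i)^{\text{large}} = 0$ by the standard Beilinson–Deligne argument, splits $K_G$ canonically as
\[
K_G \cong \bigoplus_i \shH^{-i}K_G[i].
\]
This is the Deligne / Lieberman weight argument (as for abelian schemes). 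Then $\shH^{-i} K_G \cong \bigwedge^i \locH$ via $\mu$ restricted to $\locH[1]^{\otimes i}$, antisymmetrized. This is an isomorphism because it is one on stalks, by Hopf's theorem. If $\End(K_G)$ is not finite-dimensional globally, I would work locally on $B$ and note that the idempotents are canonical (they are polynomials in $\psi_2$), so they glue.
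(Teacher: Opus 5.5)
Your proof is correct, but it obtains the splitting by a different mechanism than the paper. You already wrote down the paper's key morphism: the truncation map $K_G \to K_G' \to \tau_{\geq -1}K_G' = \shH^{-1}K_G[1]$. It exists with no obstruction, because $\shH^{-1}$ is the top cohomology of $K_G'$.

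The paper pairs this map with a \emph{comultiplication} $K_G \to K_G \tensor K_G$ rather than your product. The comultiplication comes from the Gysin map $\derR\Delta_!\RR_G \to \RR_{G\times_B G}[2n]$ of the diagonal, followed by base change and the projection formula. Iterating it and composing with the truncation map gives $K_G \to (\shH^{-1}K_G[1])^{\tensor i}$. By co-commutativity this factors through $\bigl(\bigwedge\nolimits^i \shH^{-1}K_G\bigr)[i]$, and the sum over $i$ is an isomorphism on stalks by Hopf's theorem. So the obstruction you identified disappears once the arrows are reversed.

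Your route uses the product only to identify $\shH^{-i}K_G \cong \bigwedge\nolimits^i\shH^{-1}K_G$, which can be done on cohomology sheaves alone. The splitting itself comes from the eigenvalue argument, and that argument is sound:
\begin{enumerate}
\item $[N]$ is a local biholomorphism, since its vertical differential is multiplication by $N$. So the counit $\derR[N]_![N]^{\ast}\to\id$ defines $\psi_N$ even though $[N]$ is not proper; the $N$-torsion drops where $\Lambda_b$ drops rank.
\item On stalks $\psi_N$ is $[N]_\ast$ on $H_\ast(G_b)$, hence it acts by $N^i$ on $\shH^{-i}K_G$.
\item Since $K_G$ has amplitude $[-2n,0]$, the nilpotence lemma for endomorphisms acting trivially on cohomology sheaves gives $\prod_{i=0}^{2n}(\psi_2-2^i)^{2n+1}=0$ globally.
\item The Chinese-remainder idempotents, which are polynomials in $\psi_2$, then split $K_G$ in the idempotent-complete category $\Dbc(B,\RR)$.
\end{enumerate}
You therefore need neither finite-dimensionality of $\End(K_G)$ (which can fail) nor any gluing, which would be problematic since morphisms in $\Dbc$ do not glue. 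You can drop both that fallback and the detour through the obstruction class.

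As for the trade-off: your argument yields an intrinsically characterized decomposition, namely the generalized eigenspaces of $[N]$, as for abelian schemes. It costs extra inputs: \'etaleness of $[N]$ and base change for its trace, the nilpotence lemma, and idempotent completeness. The paper's argument is more economical, producing a single global morphism whose only verification is on stalks.
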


\begin{proof}
In the construction, we shall often use the following fact: if $f \colon X \to Y$ is a
holomorphic mapping between complex manifolds, then there is a natural morphism
\[
	\derR f_! \RR_X \to \RR_Y[2c],
\]
where $c = \dim Y - \dim X$; this is sometimes called the trace map.
Let's first consider the following commutative diagram:
\begin{tcd}
		G \rar{\Delta} \drar[bend right=20]{\id} & G \times_B G \dar{p_1}
			\rar{p_2} & G \dar{p} \\
							& G \rar{p} & B
\end{tcd}
From the diagonal embedding $\Delta$, we get a morphism $\derR \Delta_! \RR_G \to \RR_{G \times_B G}[2n]$, and
after applying the functor $\derR (p_1)_!$, this becomes a morphism
\[
	\RR_G \to \derR (p_1)_! \RR_{G \times_B G}[2n].
\]
By proper base change \cite[Prop.~2.6.7]{Kashiwara+Schapira}, we have
\[
	\derR (p_1)_! \RR_{G \times_B G}[2n] \cong \derR (p_1)_! \pu_2 \RR_G[2n]
	\cong \pu \derR p_! \RR_G[2n] = \pu K_G,
\]
and so we can rewrite our morphism as $\RR_G \to \pu K_G$. We can now apply the
functor $\derR p_!$ and use the projection formula to get a morphism
\begin{equation} \label{eq:comultiplication}
	K_G = \derR p_! \RR_G[2n] \to K_G \tensor \derR p_! \RR_G[2n] = K_G \tensor K_G.
\end{equation}
Presumably, this makes $K_G$ into a coalgebra object in the derived category
(after some compatibility checking), but we don't need much
information about it. The only thing
is that the comultiplication is co-commutative (because $\Delta$ is symmetric).

Now let's prove that $K_G$ splits. From the diagram
\begin{tcd}
		B \rar{e} \drar[bend right=25]{\id} & G \dar{p} \\
														& B
\end{tcd}
we get a morphism $\derR e_! \RR_B \to \RR_G[2n]$, and therefore two morphisms
\[
	\RR_B \to K_G = \derR p_! \RR_G[2n] \to \RR_B
\]
whose composition is the identity. Therefore $K_G$ splits as
\[
	K_G \cong \shH^0 K_G \oplus K_G' \cong \RR_B \oplus K_G',
\]
where $K_G'$ lives in cohomological degrees $\leq -1$. This gives us a morphism
\[
	K_G \to K_G' \to \shH^{-1} K_G'[1] \cong \shH^{-1} K_G[1]
\]
in the derived category, where $\shH^i K_G$ means the $i$-th cohomology sheaf of the
constructible complex $K_G$. If we apply the comultiplication in
\eqref{eq:comultiplication} to this morphism $i$ times, it produces for each $i \geq
1$ a morphism
\[
	K_G \to \shH^{-1} K_G[1] \tensor \dotsb \tensor \shH^{-1} K_G[1],
\]
and because \eqref{eq:comultiplication} is co-commutative, this factors through a
morphism
\[
	K_G \to \left( \bigwedge\nolimits^i \shH^{-1} K_G \right)[i].
\]
Let's combine all of them into one big morphism
\[
	K_G \to \bigoplus_{i \in \NN} \left( \bigwedge\nolimits^i \shH^{-1} K_G \right)[i].
\]
We know from Hopf's theorem that this induces an isomorphism on the stalk
at every point $b \in B$; therefore it must be an isomorphism on cohomology sheaves,
hence an isomorphism in the derived category.
\end{proof}

The (meromorphic) action $a \colon G \times_B X \to X$ induces a morphism
\begin{equation} \label{eq:K-action}
	K_G \tensor \derR \fl \RR_X \to \derR \fl \RR_X,
\end{equation}
which presumably turns the complex $\derR \fl \RR_X$ into a $K_G$-module object
in the derived category (after more compatibility checking). Again, we only need the
existence of the morphism. This follows by looking at the commutative diagram
\begin{tcd}
		X \dar{f} & G \times_B X \rar{a} \lar[swap]{p_2} \dar{p_1} & X \dar{f} \\
		B & G \rar{p} \lar[swap]{p} & B.
\end{tcd}
We start from the morphism $\derR a_! \RR_{G \times_B X}[2n] \to \RR_X$. Apply
$\derR f_!$ to get
\[
	\derR p_! \derR (p_1)_! \RR_{G \times_B X}[2n] \cong \derR f_! \derR a_!
	\RR_{G \times_B X}[2n] \to \derR f_! \RR_X.
\]
Using proper base change \cite[Prop.~2.6.7]{Kashiwara+Schapira}, we have
\[
	\derR (p_1)_! \RR_{G \times_B X} \cong \derR (p_1)_! \pu_2 \RR_X
	\cong \pu \derR f_! \RR_X,
\]
and the projection formula \cite[Prop.~2.6.6]{Kashiwara+Schapira} therefore implies that
\[
	\derR p_! \derR (p_1)_! \RR_{G \times_B X} \cong \derR p_! \RR_G \tensor \derR f_! \RR_X.
\]
Putting everything together and remembering that $K_G = \derR p_! \RR_G[2n]$ and
$\derR f_! \RR_X = \derR \fl \RR_X$ because $f$ is proper, we finally arrive at
the morphism in \eqref{eq:K-action}.

Now let $U$ be a sufficiently small open ball containing a given point $b \in B$.
Because the complex $K_G$ is constructible, we have \cite[Prop.~8.1.4]{Kashiwara+Schapira}
\[
	H^{\ast}(K_G \vert_U) \cong K_{G, b} \cong H_{\ast}(G_b),
\]
and this gives us a morphism
\[
	H_{\ast}(G_b) \tensor \RR_U \to K_G \big\vert_U
\]
in the derived category. After composing this with the morphism in
\eqref{eq:K-action}, we obtain
\[
	H_{\ast}(G_b) \tensor \derR \fl \RR_X \big\vert_U \to \derR \fl \RR_X \big\vert_U.
\]
Since $H_{\ast}(G_b)$ is dual to the cohomology $H^{\ast}(G_b)$, we can move the
graded $\RR$-vector space to the other side and obtain the desired morphism
\begin{equation} \label{eq:delta-U}
	\derR \fl \RR_X \big\vert_U \to H^{\ast}(G_b) \tensor \derR \fl \RR_X \big\vert_U.
\end{equation}
One then has to check that on stalks, meaning after applying the functor $\iu$, this
agrees with the morphism
\[
	\delta \colon H^{\ast}(X_b) \to H^{\ast}(G_b) \tensor H^{\ast}(X_b)
\]
coming from the action $a \colon G_b \times X_b \to X_b$. This follows (with some work)
from the functoriality of the isomorphisms that we used during the construction;
we give the proof in more generality in \Cref{lem:comodule-stalks}.

\begin{proof}[Proof of \Cref{prop:shifted-perverse}]
Let's apply some cohomological shifts and put the morphism from \eqref{eq:delta-U} into the form
\[
	\derR \fl \RR_X[\dim X] \big\vert_U
	\to H^{\ast}(G_b) \tensor \derR \fl \RR_X[\dim X] \big\vert_U.
\]
By composing with the projection $H^{\ast}(G_b) \to H^{\ast}(T_b)$ given by
\Cref{lem:HG}, and then with the projection from $H^{\ast}(T_b)$ to the summand
$H^1(T_b)$, we finally obtain
\begin{equation} \label{eq:morphism}
	\gamma \colon \derR \fl \RR_X[\dim X] \big\vert_U \to H^1(T_b)[-1] \tensor
	\derR \fl \RR_X[\dim X] \big\vert_U.
\end{equation}
If we now apply the truncation functor $\tau_{\leq \ell}$ for the perverse $t$-structure,
we get a commutative diagram
\begin{tcd}
		\tau_{\leq \ell} \derR \fl \RR_X[\dim X] \big\vert_U \dar \rar&
		H^1(T_b)[-1] \tensor \tau_{\leq \ell-1} \derR \fl \RR_X[\dim X]
			\big\vert_U \dar \\
		\derR \fl \RR_X[\dim X] \big\vert_U \rar&
			H^1(T_b)[-1]  \tensor \derR \fl \RR_X[\dim X] \big\vert_U.
\end{tcd}
On stalks, meaning after applying the functor $\iu$, the truncation functor
$\tau_{\leq \ell}$ produces the perverse filtration; so we get a commutative diagram
of $\RR$-vector spaces
\begin{tcd}
		P_{\ell} H^{\dim X+\ast}(X_b) \dar[hook] \rar&
		H^1(T_b) \tensor P_{\ell-1} H^{\dim X-1+\ast}(X_b) \dar[hook] \\
		H^{\dim X+\ast}(X_b) \rar& H^1(T_b) \tensor H^{\dim X-1+\ast}(X_b).
\end{tcd}
The bottom row is the degree $1$ component of the comultiplication $\delta$ that we are
interested in. The conclusion is that
\[
	\delta_i \bigl( P_{\ell} H^k(X_b) \bigr) \subseteq P_{\ell-1} H^{k-1}(X_b)
\]
for each $i = 1, \dotsc, 2 \dim T_b$. Because the shifted perverse filtration is
defined as $\SP_{\ell} H^k(X_b) = P_{\ell+k} H^k(X_b)$, we obtain the first assertion in
\Cref{prop:shifted-perverse}.

We need to extract one extra piece of information. Using Deligne's decomposition in
\eqref{eq:decomposition}, we can break up the morphism in \eqref{eq:morphism} into a
finite sum of the form
\[
	\gamma = \gamma_{-1} + \gamma_{-2} + \dotsb,
\]
where $\gamma_{\ell}$ maps each summand $P_j$ in the decomposition
of $\derR \fl \RR_X[\dim X]$ into the summand $H^1(T_b) \tensor P_{j+\ell}[-\ell-1]$. In
particular, the topmost component $\gamma_{-1}$ is a morphism of perverse sheaves
\[
	\gamma_{-1} \colon P_j \vert_U \to H^1(T_b) \tensor P_{j-1} \vert_U.
\]
But $P_j$ also has a decomposition by strict support, of the form
\[
	P_j \cong \bigoplus_{Z \subseteq B} P_{j,Z},
\]
where $P_{j,Z}$ is the intersection complex of a local system on a dense open subset
of $Z$. Like any morphism of perverse sheaves, $\gamma_{-1}$ is automatically compatible with this
decomposition, and so it further breaks up into a sum of morphisms
\[
	\gamma_{-1, Z} \colon P_{j,Z} \vert_U \to H^1(T_b) \tensor P_{j-1,Z} \vert_U
\]
between the individual summands. Because the action by $\delta_i$ on the subquotient
$\gr_{\ell}^{\SP} \! H^{\ast}(X_b)$ comes from $\gamma_{-1}$, this proves the second
half of \Cref{prop:shifted-perverse}.
\end{proof}

\subsection{Rational structure and Hodge modules}
\label{sec:rational}

Throughout the paper, we have been consistently working with real coefficients, for two reasons:
\begin{enumerate}
	\item The K\"ahler class $\omega \in H^2(X, \RR)$ is not rational in general.
	Consequently, Deligne's decomposition in the decomposition theorem typically exists
	only over $\RR$.
	\item When $G$ is a meromorphic group, we have
	\[
		H^{\ast}(G, \RR) \cong H^{\ast}(L, \RR) \tensor H^{\ast}(T, \RR)
	\]
	as mixed Hodge structures over $\RR$, but not over $\QQ$. Every comodule over
	the Hopf algebra $H^{\ast}(G, \RR)$ is therefore naturally a comodule over
	$H^{\ast}(T, \RR)$, but this is not true over $\QQ$.
	\end{enumerate}

That said, most of the results are actually true with rational coefficients, but the
resulting isomorphisms are no longer canonical. The decomposition theorem
\[
	\derR \fl \QQ_X[\dim X] \cong \bigoplus_{j=-n}^n P_j[-j]
\]
is of course true over $\QQ$; now each $P_j$ is a perverse sheaf that is part of a rational
Hodge module of weight $\dim X + j$ (such that $P_j \tensor \RR$ is polarizable). The
same is true for the decomposition by strict support
\[
	P_j \cong \bigoplus_{Z \subseteq B} P_{j,Z}.
\]
The proof of the support theorem shows that the local systems $\locC_Z$ and $\locT_Z$
are actually defined over $\QQ$; we then get
\[
	P_{j,Z} \cong \IC_Z \left( \locC_Z \tensor \bigwedge^{r+j} \locT_Z \right),
\]
as perverse sheaves with coefficients in $\QQ$, because this holds after tensoring by $\RR$.

\begin{remark}\label{rmk: Q1}
	When $X$ is a smooth quasi-projective algebraic variety, we can take
	$\omega \in H^2(X, \QQ)$ as the first Chern class of an ample line bundle.
	Deligne's decomposition is then defined over $\QQ$, and the proof of the
	support theorem can be carried out over $\QQ$. To see why, suppose that $Z \subseteq B$
	is a support and $b \in Z$ is a general point. The comodule structure
	$H^{\ast}(X_b, \QQ) \to H^{\ast}(G_b, \QQ) \tensor H^{\ast}(X_b, \QQ)$ over the $\QQ$-Hopf algebra
	$H^{\ast}(G_b, \QQ)$ induces a comodule structure on each summand
	\[
		\bigoplus_{j=-n}^n H^k \iu P_{j,Z},
	\]
	and this factors through $H^{\ast}(T_b, \QQ)$ for weight reasons. Therefore
	the freeness theorem, which holds over $\RR$, implies the freeness of each
	summand over $\QQ$.
\end{remark}

\begin{remark}\label{rmk: Q2}
There is also a version of the freeness theorem over $\QQ$. Suppose that $X$ is a
compact K\"ahler space with a meromorphic action by a meromorphic group $G$, and let $T$
be the maximal compact quotient of $G$. The extension
\[
	0 \to H^1(T, \QQ) \to H^1(G, \QQ) \to H^1(L, \QQ) \to 0
\]
is typically not split as mixed Hodge structures over $\QQ$. But we can simply
choose a splitting in the category of $\QQ$-vector spaces, and together with Hopf's theorem,
this still gives us a (non-canonical) surjective morphism of Hopf algebras
\[
	H^{\ast}(G, \QQ) \to H^{\ast}(T, \QQ).
\]
This allows us to view $H^{\ast}(X, \QQ)$ as a comodule over $H^{\ast}(T, \QQ)$,
and because it becomes free after tensoring by $\RR$, it must be a free comodule.
The drawback is that the resulting isomorphism
\[
	H^{\ast}(X, \QQ) \cong H^{\ast}(T, \QQ) \tensor H^{\ast}(X, \QQ)_{\coinv}
\]
is no longer compatible with mixed Hodge structures; \Cref{ex:freeness-Q} shows
that this is unavoidable.
\end{remark}

Finally, we comment on the relation with Saito's theory. We already mentioned
that the decomposition theorem holds in the category of polarizable Hodge modules
(with coefficients in $\QQ$ or $\RR$). For each support $Z \subseteq B$, the local
systems $\locC_Z$ and $\locT_Z$ are actually polarizable variations of Hodge structure of
type $(n-r,n-r)$ respectively $(1,0) + (0,1)$, and the isomorphism
\[
	P_{j,Z} \cong \IC_Z \left( \locC_Z \tensor \bigwedge^{r+j} \locT_Z \right)
\]
is an isomorphism of Hodge modules of weight
\[
	\dim X + j = \dim Z + 2(n-r) + (j+r).
\]
The polarization on $\locT_Z$ is induced by the construction that we used during the proof
of the support theorem in \Cref{par:support-theorem-proof}. We showed there that
the local system dual to $\locT_Z$ embeds into
\[
	\shHom_{\RR} \bigl( \locL_{-r+1,Z}, \locL_{-r,Z} \bigr),
\]
and so it inherits a natural polarization.

\begin{remark}
At each $b \in B$, the decomposition theorem gives an isomorphism
\[
	H^k(X_b, \RR) \cong \bigoplus_{j=-n}^n H^{k-j} \iu P_j.
\]
Because each $P_j$ is a polarizable real Hodge module, Saito's theory puts a mixed
Hodge structure on the $\RR$-vector space on the right-hand side (by computing the functor
$\iu$ in terms of nearby and vanishing cycles). The left-hand side
also has a mixed Hodge structure, constructed by Deligne and Fujiki. In the algebraic
case, Saito \cite{Saito-complexes} proved that the two mixed Hodge structures agree;
but this seems to be not known in the analytic case. We carefully avoided this
issue during the proof of the support theorem, by establishing the special case in \Cref{lem:MHS}
by hand.
\end{remark}

\section{Results about Lagrangian fibrations}

In this chapter, we prove a few results specifically about Lagrangian fibrations,
and we deduce the support theorem for Lagrangian fibrations from the more general result in the
previous chapter. We assume that $X$ and $B$ are nonsingular; the case of singular $X$
is addressed in \Cref{sec: sing lf}.

\subsection{The \texorpdfstring{$\boldsymbol\delta$-regularity}{delta-regularity} condition}\label{sec: deltarz}

Let $f \colon X \to B$ be a Lagrangian fibration with $\dim X = 2n$ and $\dim B = n$,
and let $p \colon G \to B$ be the family of $n$-dimensional meromorphic groups that we
constructed in \Cref{sec:Liouville-Arnold} and \Cref{sec:meromorphic}.

\begin{proposition}\label{pr: deltareg}
The family of meromorphic groups $p \colon G \to B$ is $\delta$-regular. Concretely, this
means that for every $0 \leq r \leq n$, one has
\[
	\dim \menge{b \in B}{\dim T_b \leq r} \leq r.
\]
\end{proposition}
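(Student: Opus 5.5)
Fix $r$ and set $S_r = \menge{b \in B}{\dim T_b \leq r}$. The plan is Ngô's standard argument for $\delta$-regularity of Lagrangian fibrations: use the symplectic form to show that the linear part of the stabilizers has small dimension along any stratum. First I would stratify: $S_r$ is a countable (locally finite) union of locally closed irreducible analytic subsets, for instance via the Whitney stratification of $\pi \colon D \to B$ from \Cref{prop:neutral-component}. It then suffices to take an irreducible locally closed submanifold $S \subseteq S_r$ along which the Chevalley sequence \eqref{eq:Chevalley} $1 \to L_b \to G_b \to T_b \to 1$ has constant dimensions, and to show $\dim S \leq r$. Put $s = \dim S$ and $\ell = \dim L_b = n - \dim T_b \geq n-r$. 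Since $\dim L_b + \dim S \geq (n-r) + s$, the goal reduces to showing $\dim L_b + \dim S \leq n$ for general $b \in S$.

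The key step is an isotropy argument. Let $\mathfrak{l}_b \subseteq \Lie G_b = T_b^{\ast} B$ be the Lie algebra of $L_b$; it is a subspace of the cotangent space of dimension $\ell$. I want to show that $\mathfrak{l}_b$ annihilates $T_b S$ for general $b \in S$. Then $\ell \leq n - s$, and this gives the result. To prove the annihilation, I would use \Cref{rmk:stabilizer}: a meromorphic action of the linear group $L_b$ on a compact Kähler space, restricted to a closed orbit, has a fixed point, by a Borel fixed point argument applied to the action of a maximal solvable subgroup on a minimal closed orbit. (For commutative $L_b$ this follows from Fujiki's structure theory.) So, for general $b \in S$ and near a point, I would choose $x \in X_b$ fixed by $L_b$, and moreover a local section $S \ni b' \mapsto x(b')$ of fixed points of $L_{b'}$. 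For $v \in \mathfrak{l}_b$, the associated vector field $\xi_v$ (with $\xi_v \contr \sigma = \fu v$) vanishes at $x$. Therefore for any tangent vector $w$ at $x$ we get $\fu v(w) = \sigma(\xi_v, w) = 0$. This says $v$ kills $df_x(T_x X)$, which contains $T_b S$ once the fixed-point section lifts the stratum. Hence $\mathfrak{l}_b \perp T_b S$.

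The main obstacle is making the fixed-point section work in families. Fixed points may exist only on nonreduced or singular parts of the fiber, and they have to vary holomorphically over $S$ so that $df$ hits all of $T_bS$. I would handle this by considering the relative fixed locus $X^{L} \subseteq X|_S$ of the family of linear parts, which is analytic because $L$ is defined via a kernel of meromorphic maps. I would show that it surjects onto $S$ using the pointwise Borel argument. Then generic smoothness of $X^L_{\red} \to S$ gives a local section through a general point, as required. A secondary subtlety is that the action is only meromorphic and $L_b$ need not be solvable, but it is commutative here since $G_b$ is, so Borel's theorem applies directly.
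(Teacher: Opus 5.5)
Your core step is the same as the paper's. A Borel fixed point $x\in X_b$ of $L_b$ forces $\xi_v(x)=0$ for every $v\in\Lie L_b$, so $v$ annihilates $df_x(T_xX)$; this is the inclusion $\Lie L_b\subseteq (Z_f)_b$ in \Cref{lem:delta-regularity}. Where you diverge is the dimension count, and there your two auxiliary claims are asserted rather than proved.
\begin{itemize}
\item Stratifying $S_r$ so that $\dim T_b$ is constant on pieces needs $b\mapsto\dim T_b$ to be constructible. Thom's isotopy lemma for $\pi\colon D\to B$ only gives topological local triviality of $(D_b,G_b)$. But $\dim T_b=\tfrac12\dim W_1H^1(G_b)$ is a Hodge-theoretic invariant, not even a homotopy invariant of $G_b$: $\CC^2/(\ZZ(1,0)+\ZZ(i,b))$ is $(\CC^{\ast})^2$ for $b\neq 0$ and $E\times\CC$ for $b=0$, with constant topology.
\item The claim that the relative fixed locus $X^L$ is analytic ``because $L$ is a kernel of meromorphic maps'' has no support. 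Neither $L_b$ nor $T_b$ has been constructed as an analytic family over $B$; both are defined fiber by fiber.
\end{itemize}

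Both detours, including the fixed-point section you call the main obstacle, are unnecessary. Choose Whitney stratifications of $X$ and $B$ for which $f$ is a stratified map, so that each stratum of $X$ maps submersively onto a stratum of $B$. Then for $b$ in a stratum $S'$ and \emph{every} $x\in X_b$ one has $df_x(T_xX)\supseteq T_bS'$. Your isotropy argument applied to an arbitrary $L_b$-fixed point then gives $\Lie L_b\subseteq (T_bS')^{\perp}$, i.e.\ $\dim T_b\geq\dim S'$ for all $b\in S'$. Hence $\{b:\dim T_b\leq r\}$ lies in the union of strata of dimension $\leq r$, which is a closed analytic set of dimension $\leq r$. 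No fixed-point sections and no a priori constructibility are needed.

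This is exactly the paper's proof written pointwise. The stratification of $f$ gives $Z_f\cap p^{-1}(S')\subseteq T^{\ast}_{S'}B$, hence $\dim Z_f\leq n$, and \Cref{lem:delta-regularity} places $\bigcup_b\Lie L_b$ inside $Z_f$. Bounding $\{b:\dim T_b\leq r\}$ by $\{b:\dim (Z_f)_b\geq n-r\}$ also supplies the analyticity you were trying to obtain by hand.

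If you want to keep your route, $X^L$ is indeed analytic, but the proof uses the linearity of stabilizers from \Cref{rmk:stabilizer}. That gives $(\im df_x)^{\perp}\subseteq\Lie L_b$. So over a locus where $\dim L_b=\ell$, $X^L$ is the determinantal set $\{x:\operatorname{rank} df_x\leq n-\ell\}$.
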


In the algebraic context, this is well-known, see for example \cite[Prop.~8.9]{Arinkin+Fedorov}.
We give a somewhat different proof, by relating $\delta$-regularity directly to the
Lagrangian fibration via the set of \define{singular cotangent vectors}
\[
	Z_f = \menge{(b, \beta) \in T^{\ast} B}%
	{\text{$\im(T_x X \to T_b B) \subseteq \ker \beta$ for some $x \in X_b$}}.
\]
This makes it clear where the bound on the dimension comes from.

It is relatively easy to show that $\dim Z_f \leq n$. Indeed, $Z_f$ is conical, and
so the image under $p \colon T^{\ast} B \to B$ of any irreducible component $W
\subseteq Z_f$ is a closed analytic subset of
$B$. One then proves, for example by stratifying the map $f$, that $W$ is contained in the conormal variety
$T_{p(W)}^{\ast} B$. This gives $\dim W \leq n$, with equality exactly when
$W$ is the conormal variety of $p(W) \subseteq B$.

\begin{remark}
The set $Z_f$ is also related to the decomposition theorem in
\eqref{eq:decomposition}. By Kashiwara's estimate \cite[Prop.~9.4.2]{Kashiwara+Schapira}
for the characteristic variety of
direct images, the characteristic variety of $\derR \fl \RR_X[2n]$ is contained in
the set $Z_f$. The conclusion is that the $n$-dimensional irreducible components of $Z_f$ are
conormal varieties; and that the conormal varieties $T_Z^{\ast} B$ to the supports in the
decomposition theorem are among them.
\end{remark}

Now let's try to understand what $Z_f$ has to do with $\delta$-regularity. The construction
in \Cref{sec:Liouville-Arnold} gives us, for every $b \in B$, an isomorphism
\[
	T_b^{\ast} B \cong \Lie G_b.
\]
What happens is that the fiber of $Z_f$ over a point $b \in B$ is exactly the Lie
algebra of the linear part $L_b$ of the meromorphic group $G_b$.
This is a straightforward consequence of the Borel fixed point theorem.

\begin{lemma} \label{lem:delta-regularity}
	We have $Z_f = \menge{(b, \beta) \in T^{\ast} B}{\beta \in \Lie L_b}$.
\end{lemma}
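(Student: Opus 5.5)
We prove the two inclusions fiber by fiber, for a fixed $b \in B$. We identify $T_b^{\ast} B$ with $\Lie G_b$ via the Liouville--Arnold construction of \Cref{sec:Liouville-Arnold}: a covector $\beta = \sum c_j\, dt_j$ corresponds to the vector field $\xi_\beta = \sum c_j \xi_j$, where $\sigma(\xi_\beta,\argbl) = \fu \beta$. The key pointwise observation is this. For $x \in X_b$, the condition $\im(T_x X \to T_b B) \subseteq \ker \beta$ says exactly that $(\fu\beta)_x = 0$. By nondegeneracy of $\sigma$ this is equivalent to $\xi_\beta(x) = 0$, that is, to $x$ being a zero of the vector field on $X$. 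Since the flow of $\xi_\beta$ preserves $X_b$ and is the one-parameter subgroup $\exp(t\beta) \subseteq G_b$, it follows that $(b,\beta) \in Z_f$ if and only if the one-parameter subgroup $\exp(\CC\beta) \subseteq G_b$ fixes some point $x \in X_b$. Here we use that the stabilizer of a point in $X$ under the flow on $X$ agrees with its stabilizer as a point of $X_b$, because the action on $X_b$ is the restriction.

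For the inclusion $Z_f \subseteq \Lie L_b$, suppose $\exp(\CC\beta)$ fixes $x$. Then $\exp(\CC\beta)$ lies in the stabilizer $\menge{g \in G_b}{g\cdot x = x}$. By \Cref{rmk:stabilizer}, this stabilizer is meromorphically a linear algebraic group, so its image in the compact torus $T_b$ is trivial, by the rigidity statement \cite[Lem.~3.8]{fujiki-auto} that meromorphic maps from linear algebraic groups to tori are constant. Hence $\exp(\CC\beta) \subseteq L_b$, and therefore $\beta \in \Lie L_b$.

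For the converse, let $\beta \in \Lie L_b$, and let $\overline{C}$ be the closure in $L_b$ of the one-parameter subgroup $\exp(\CC\beta)$. Then $\overline{C}$ is a connected commutative linear algebraic group. Write $\overline{C} = C_s \times C_u$ as a product of a torus and a unipotent group; it is solvable. It acts meromorphically on the compact space $X_{b,\red}$, whose components are in class $\fuj$. We apply the Borel fixed point theorem in Fujiki's meromorphic setting: take a minimal-dimensional orbit closure, which is a compact orbit. By Fujiki's Borel--Remmert structure theorem, such a compact orbit is a homogeneous space $A \times F$ with $F$ rational homogeneous. A connected solvable linear group acting with a compact orbit must have a fixed point, since the orbit maps to no torus nontrivially and the image in $\Aut F$ lies in a Borel subgroup, which fixes a point of $F$. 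This gives a fixed point $x \in X_{b,\red}$. The flow of $\xi_\beta$ then fixes $x$ as a point of $X$, so $\xi_\beta(x) = 0$, and the first paragraph gives $(b,\beta) \in Z_f$.

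We expect the main obstacle to be this Borel fixed point step in the analytic, meromorphic setting. A cleaner route, which we would try first, is to restrict to the closure of the orbit $\overline{C}\cdot y$ of any point $y$. This closure is a compact analytic subset (the action is meromorphic) with an action of a connected solvable linear algebraic group. We would then argue by induction on $\dim \overline{C}$, using a one-dimensional normal subgroup $\mathbb{G}_m$ or $\mathbb{G}_a$. Orbit closures of such a one-dimensional group are rational curves compactifying it, so its fixed locus in any invariant compact set is nonempty and closed analytic, and the quotient group acts on that fixed locus.
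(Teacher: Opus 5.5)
Your proposal is correct and follows essentially the same route as the paper. Both arguments rest on three steps: the pointwise equivalence between zeros of the vector field attached to $\beta$ and the vanishing of $(\fu\beta)_x$; the inclusion $Z_f \subseteq \Lie L_b$ via the linearity of stabilizers (\Cref{rmk:stabilizer}); and the reverse inclusion via a Borel fixed point theorem. The only difference is that the paper simply cites Fujiki's Borel fixed point theorem \cite[Prop.~6.9]{fujiki-auto} for the whole connected commutative linear group $L_b$, so your detour through the (Zariski, not Euclidean) closure of $\exp(\CC\beta)$ and your sketch of the fixed point theorem are not needed.
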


\begin{proof}
	The main point is that if we take a holomorphic $1$-form $\beta \in H^0(U, \Omega_B^1)$
	and convert it into a holomorphic vector field $\xi \in H^0(X_U, \shT_X)$ on $X_U = f^{-1}(U)$
	by the rule $\fu \beta = \xi \contr \sigma$, then $\xi$ has a zero at a point $x \in X_U$
	if and only if $\fu \beta$ annihilates $T_x X$ if and only if $\beta$ annihilates
	the image of $T_x X \to T_{f(x)} B$. The first equivalence holds because the symplectic
	form $\sigma$ is by definition non-degenerate.

	Let's prove the more important inclusion ``$\supseteq$''. Take any point $b \in B$.
	The linear part $L_b$ is a commutative linear
	algebraic group that acts meromorphically on $X_b$. By the Borel fixed point theorem
	\cite[Prop~6.9]{fujiki-auto}, it has a fixed point $x \in X_b$. This means that if we take
	a cotangent vector $\beta \in T_b^{\ast} B$ whose image under the isomorphism $\Lie G_b \cong T_b^{\ast} B$
	belongs to $\Lie L_b$, and if we extend $\beta$ to a holomorphic $1$-form on a small
	neighborhood $U$ of the point $b$, then the corresponding holomorphic vector field on $X_U$
	has a zero at the point $x$. As explained above, it follows that $\beta$ annihilates
	the image of $T_x X \to T_b B$, which says exactly that $(b, \beta) \in Z_f$.

	Now let's prove the other inclusion. By definition, $\beta \in T_b^{\ast} B \cap Z_f$ means that
	there is a point $x \in X_b$ such that $f^{\ast}(\beta)$ vanishes on the tangent
	space $T_x X$. As before, extend $\beta$ to a holomorphic $1$-form on a small neighborhood
	$U$ of the point $b$. The associated vector field, constructed using the symplectic form,
	then has a zero at the point $x$, which means that $x$ is a fixed point for
	the flow of the vector field. Under the isomorphism $\Lie G_b \cong T_b^{\ast} B$,
	our cotangent vector $\beta$ therefore maps into the Lie algebra of
	the stabilizer of $x \in X_b$. Because the stabilizer is a linear group by
	\Cref{rmk:stabilizer}, it has finite image in $T_b$; this says that $\beta \in \Lie L_b$.
\end{proof}

This actually gives something slightly stronger than the $\delta$-regularity condition.
Take any closed analytic
subset  $Z \subseteq B$ that is the image of an irreducible component of $Z_f$; for
example, $Z$ could be one of the supports in the decomposition
in \eqref{eq:decomposition}. Over a general point $b \in Z$, the intersection $Z_f \cap
T_b^{\ast} B$ is contained in the conormal bundle to $Z$, and therefore has dimension
$\leq n - \dim Z$. According to \Cref{lem:delta-regularity}, we get $\dim L_b
\leq n - \dim Z$, and therefore $\dim T_b \geq \dim Z$. (In the case where $Z$ is a
support, the support theorem improves this to $\dim T_b = \dim Z$.)

\subsection{The Support Theorem for Lagrangian fibrations with $X$ smooth}

The work in the previous chapters, together with \Cref{pr: deltareg}, now gives us the support theorem for Lagrangian
fibrations. Assume that $X$ is a holomorphic symplectic K\"ahler manifold of dimension $2n$,
and that $f \colon X \to B$ is a Lagrangian fibration over an $n$-dimensional complex
manifold $B$. Let $p \colon G \to B$ be the family of meromorphic groups
constructed in \Cref{sec:meromorphic}.

\begin{theorem} \label{thm:LF-support}
	Let $Z \subseteq B$ be one of the supports in the decomposition theorem for
	the Lagrangian fibration $f \colon X \to B$. Then \Cref{thm:support} is true with
	$r = \dim Z$.
\end{theorem}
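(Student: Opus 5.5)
The plan is to check that a Lagrangian fibration $f \colon X \to B$, together with the family $p \colon G \to B$ from \Cref{sec:meromorphic}, satisfies the three standing hypotheses of \Cref{thm:support}. Once that is done, I read off $r = \dim Z$ from the codimension formula in part (a).

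\emph{Hypothesis (1).} Since $X$ is K\"ahler and $f$ is proper and surjective with connected fibers, Matsushita's theorem gives that every fiber is Lagrangian, hence of pure dimension $n$. So $f$ is equidimensional, and it is flat because $X$ and $B$ are smooth (miracle flatness).

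\emph{Hypothesis (2).} First I verify the assumptions preceding \Cref{thm:meromorphic} with $E = T^{\ast}B$:
\begin{enumerate}
\item $X$ is smooth, so it is normal and Cohen--Macaulay.
\item By Liouville--Arnold, the smooth fibers are compact tori of dimension $n$ on which $T_b^{\ast}B$ acts transitively. This gives surjectivity onto $\Lie \Aut^{\circ}(X_b)$ for smooth $X_b$.
\item Injectivity for every $b$ is \Cref{lem:faithful-action}.
\end{enumerate}
Then \Cref{thm:meromorphic} and \Cref{prop:family-groups} produce a family $p\colon G\to B$ of $n$-dimensional meromorphic groups acting meromorphically on $X$ over $B$. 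The fibers $G_b$ are commutative by the lemma following \Cref{rmk:Lambda}. The action is faithful on each (possibly nonreduced) fiber $X_b$, because $G_b$ is by construction a subgroup of $\Aut(X_b)$ embedded via graphs in the Douady space. As in the proof of \Cref{thm:support}, I would pass to $G^{\circ}$ using \Cref{prop:neutral-component}.

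\emph{Hypothesis (3).} The $\delta$-regularity inequality $\codim \{\dim T_b\le r\}\ge n-r$ is exactly \Cref{pr: deltareg}, since $\dim B = n$.

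Applying \Cref{thm:support} then gives (a)--(d) with $\codim_B Z = n - r$. Because $\dim B = n$, this means $r = \dim Z$, which is the claim. As a consistency check, the remark after \Cref{lem:delta-regularity} already gives $\dim T_b \ge \dim Z$ at a general point of any support, and (c) upgrades this to equality.

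The step I expect to be the main obstacle is \Cref{pr: deltareg}, whose proof is only sketched so far. Its key ingredient is the bound $\dim Z_f \le n$, where $Z_f$ is the set of singular cotangent vectors. To prove it I would stratify $f$ so that over each stratum $S \subseteq B$ the map is a submersion onto $S$ from each stratum of $X$. A singular covector $\beta$ at $b \in S$ must kill the image of $T_xX$, which contains $T_bS$, so $\beta$ lies in the conormal space $T^{\ast}_S B$. This gives $Z_f \subseteq \bigcup_S T_S^{\ast}B$, which has dimension $n$. Combining this with \Cref{lem:delta-regularity}, which identifies the fiber of $Z_f$ with $\Lie L_b$ and so gives $\dim L_b = n - \dim T_b$, the locus $\{\dim T_b \le r\}$ has fibers in $Z_f$ of dimension at least $n-r$. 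Hence that locus has dimension at most $n - (n-r) = r$.
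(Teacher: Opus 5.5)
Your proposal is correct and takes essentially the paper's route. The paper deduces \Cref{thm:LF-support} in one line: it checks that the family $G \to B$ from \Cref{sec:meromorphic} satisfies hypotheses (1)--(3) of \Cref{thm:support}, with $\delta$-regularity supplied by \Cref{pr: deltareg}, and then reads off $r = n - \codim_B Z = \dim Z$. Your stratification argument for $\dim Z_f \leq n$, together with the fiber-dimension count via \Cref{lem:delta-regularity}, fills in the step the paper only sketches.
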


\subsection{The neutral component of the family of meromorphic groups}\label{subs: netr}
Here we refine the results in \Cref{subs:neutral-component}.
For a Lagrangian fibration,  the neutral component of the family
of meromorphic groups $p \colon G \to B$ is closely related to the first cohomology
of the fibers. We now explain the connection, which goes
through the ``complex-analytic N\'eron models'' constructed in \cite{sch12}, and which
has a few surprising consequences.

Let $B^{\sm} \subseteq B$ denote the maximal open subset over which the Lagrangian
fibration $f \colon X \to B$
is submersive, and write $f^{\sm} \colon X^{\sm} \to B^{\sm}$ for the restriction.
By construction, we have a holomorphic mapping
\[
	\eps \colon T^{\ast} B \to G
\]
whose image is the family of neutral components $G^{\circ}$ (by \Cref{prop:neutral-component});
we also showed in \Cref{rmk:Lambda} that the preimage of the zero section
$\Lambda = \eps^{-1}(B)$ is equal to the closure of its restriction to $B^{\sm}$.
The smooth fibers $X_b$ of the Lagrangian fibration are $n$-dimensional compact complex
tori, and so
\[
	G_b \cong \Aut^\sm(X_b) \cong \Alb(X_b) \cong
		H^n(X_b, \Omega_{X_b}^{n-1}) \big/ H^{2n-1}(X_b, \ZZ).
\]
Under the isomorphism $H^n(X_b, \Omega_{X_b}^{n-1}) \cong T_b^{\ast} B$, the lattice
$\Lambda_b$ therefore corresponds to the subset $H^{2n-1}(X_b, \ZZ)$, which means that
the restriction of $\Lambda$ to the open subset $B^{\sm}$ is isomorphic to the
\'etal\'e space of the local system $\locHZ^{2n-1} = R^{2n-1} \fl^{\sm} \ZZ_{X^{\sm}}$.

\begin{lemma}\label{lm: neutrzz}
The family of neutral components $p \colon G^{\circ} \to B$
is isomorphic, as a complex manifold, to the ``complex-analytic N\'eron model''
for the integral variation of Hodge structure
on $\locHZ^{2n-1}$, as constructed in \cite{sch12}.
\end{lemma}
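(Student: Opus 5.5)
Both sides will be written as quotients of the same vector bundle $T^{\ast}B$ by the same closed analytic subset, and I will identify them that way. By \Cref{prop:family-groups} and \Cref{prop:neutral-component}, the morphism $\eps \colon T^{\ast}B \to G^{\circ}$ is a surjective fiberwise group homomorphism that is locally biholomorphic, and its kernel is $\Lambda = \eps^{-1}(e(B))$. So $G^{\circ} \cong T^{\ast}B/\Lambda$ as complex manifolds, where $\Lambda$ acts by fiberwise translation; the quotient is Hausdorff because $\Lambda$ is closed and meets each fiber in a discrete subgroup. On the other side, the construction in \cite{sch12} presents the N\'eron model of the integral variation of Hodge structure $\locHZ^{2n-1}$ on $B^{\sm}$ as follows. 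Start with the vector bundle extension $\mathcal{E}$ of $\shH/F^{n}\shH$, where $\shH = \locHZ^{2n-1}\tensor\shO$, or rather of its appropriate Deligne-type extension, so that $T^{\ast}B$ is the candidate here. Take the closure $\overline{\Lambda}'$ of the étalé space of $\locHZ^{2n-1}$ inside $\mathcal{E}$; in \cite{sch12} this closure is shown to be analytic. The N\'eron model is then the quotient $\mathcal{E}/\overline{\Lambda}'$.

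The key steps, in order, are these. First, I identify the vector bundles. Over $B^{\sm}$, the isomorphism $T_b^{\ast}B \cong H^n(X_b,\Omega^{n-1}_{X_b}) = \shH_b/F^n\shH_b$ comes from contraction with $\sigma$ as in \Cref{sec:Liouville-Arnold}. I must check that this isomorphism extends holomorphically across $\disc(f)$ and identifies $T^{\ast}B$ with the extension used in \cite{sch12}. I would try to do this through the characterization in \cite{sch12} of the extension by sections of bounded norm and growth; alternatively, if \cite{sch12} allows any extension $\mathcal{E}$ into which the lattice closure is taken, it suffices that $T^{\ast}B$ is a vector bundle extending $\shH/F^n\shH|_{B^{\sm}}$. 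Second, I identify the lattices. The preceding discussion shows that $\Lambda|_{B^{\sm}}$ is the étalé space of $\locHZ^{2n-1}$, and \Cref{rmk:Lambda} shows that $\Lambda$ is the closure of its restriction to $B^{\sm}$. Hence $\Lambda = \overline{\Lambda}'$. Third, I conclude that the two quotients agree, and that the isomorphism is compatible with the group structures and with the projections to $B$.

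I expect the main obstacle to be the first step: matching $T^{\ast}B$ with the specific extension of $\shH/F^n\shH$ that \cite{sch12} uses. I would first try to use the universal property or characterization of the N\'eron model in \cite{sch12}. That paper characterizes the N\'eron model as the analytic space $\mathcal{E}/\overline{\Lambda}'$ for the canonical extension, and gives it an extension property for holomorphic sections. The plan is to show that the canonical extension of $\shH/F^n\shH$ is locally free and equals $T^{\ast}B$. For the comparison I would use that $(f_{\ast}\Omega^{n}_{X/B})^{\vee}$ extends through the Hodge bundle, together with Kollár--Saito type results that $\fl \omega_{X/B}$ is the canonical extension of the lowest Hodge piece. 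Combined with $\omega_{X/B}\cong f^{\ast}\omega_B^{-1}$ and duality, this gives $\fl\omega_{X/B} \cong \omega_B^{-1}$, and the relevant quotient should then be the dual of the Higgs-type extension, namely $T^{\ast}B$. If this comparison fails, the fallback is to show directly that both spaces are Hausdorff quotients containing the same dense open subset, with the same closure of the lattice.
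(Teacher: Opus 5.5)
Your architecture matches the paper's. Both sides are $T^{\ast}B$ modulo a closed subgroup, and \Cref{rmk:Lambda} identifies $\Lambda$ with the closure of the lattice over $B^{\sm}$; the paper does this right after the proof, using \cite[Ch.~3]{sch12} to know that Schnell's $T_{\ZZ}$ is that closure. But your first step, which is the real content of the lemma, has a genuine gap.

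The space in \cite{sch12} is not built from a canonical or Deligne-type extension of $\shH/F^n\shH$. Such extensions, and Schmid's extension of the Hodge bundles, need a normal crossing boundary. $\disc(f)$ is not assumed to be one, and avoiding these extensions is the point of \cite{sch12}. Nor does the construction accept an arbitrary vector bundle extension. A different extension of the same bundle on $B^{\sm}$ (for instance, a twist by a divisorial component of $\disc(f)$) gives a different closure of the \'etal\'e space and a different quotient. For the same reason, your fallback (same dense open subset, same lattice closure) proves nothing until the ambient spaces have been matched. What \cite{sch12} actually uses is the analytic space $T(F)$ attached to a coherent sheaf $F$. Here $F$ is the first nontrivial piece of the Hodge filtration of Saito's Hodge module extending the dual variation $\locHZ^1$, which in this setting is $P_{-n+1,B}$. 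The sheaf of sections of $T(F)$ is $\shHom(F, \OB)$, and a priori $F$ need not even be locally free.

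Your proposed computation also looks at the wrong Hodge piece. The isomorphism $\fl\omega_{X/B} \cong \omega_B^{-1}$ concerns the rank-one lowest piece for $R^n\fl$. It carries no information about the rank-$n$ object attached to $H^1$ or $H^{2n-1}$. The missing input is the chain
\[
	F_{-1}\mathscr{P}_{-n+1,B} \cong F_{-n}\mathscr{P}_{n-1,B} \cong R^{n-1}\fl\omega_{X/B} \cong \shT_B .
\]
The first isomorphism is relative Hard Lefschetz. The second is Saito's (Koll\'ar's) identification of the lowest Hodge piece of the module extending $R^{2n-1}\fl$ with $R^{n-1}\fl\omega_{X/B}$, not with $\fl\omega_{X/B}$. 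The third is Matsushita's theorem $R^1\fl\OX \cong \Omega_B^1$ combined with relative duality. This chain is what shows that $T(F)$ is a vector bundle and that it equals $T^{\ast}B$, so that $\bar{J}(\locHZ^{2n-1}) \cong T^{\ast}B/T_{\ZZ}$. With that in hand, your second and third steps go through as in the paper.
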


\begin{proof}
The dual variation of Hodge structure is $\locHZ^1 = R^1 \fl^{\sm} \ZZ_{X^{\sm}}$,
and its extension to a polarizable Hodge module on $B$ is exactly $P_{-n+1,B}$. By
Matsushita's theorem, the first nontrivial piece of the Hodge filtration is
\[
	F_{-1} \mathscr{P}_{-n+1,B} \cong F_{-n} \mathscr{P}_{n-1,B} \cong R^{n-1} \omega_{X/B}
	\cong \shT_B,
\]
and so the dual, which is used in the construction in \cite[Thm.~A]{sch12}, is the cotangent
bundle $T^{\ast} B$. The ``complex-analytic N\'eron model'' is then
\[
	\bar{J}(\locHZ^{2n-1}) \cong T^{\ast} B / T_{\ZZ},
\]
where $T_{\ZZ}$ is the closed subspace of the cotangent bundle corresponding to the
sheaf of abelian groups $j_{\ast} \locHZ^{2n-1}$; as usual, $j \colon B^{\sm} \to B$
stands for the embedding of the open subset $B^{\sm} = B \setminus \disc(f)$.
\end{proof}

In \cite[Ch.~3]{sch12}, it is proved that the closure of the \'etal\'e space of the local
system $\locHZ^{2n-1}$ inside the bundle $T^{\ast} B$ is the subspace $T_{\ZZ}$. Together with
\Cref{rmk:Lambda}, the conclusion is that $\Lambda \cong T_{\ZZ}$. What this means concretely is
that at any point $b \in B$, the discrete subgroup $\Lambda_b \subseteq T_b^{\ast} B$ is
\begin{equation} \label{eq:Lambda_b}
	\Lambda_b \cong \bigl( j_{\ast} \locHZ^{2n-1} \bigr)_b \cong
	H^0 \bigl(  U \cap B^{\sm}, \locHZ^{2n-1} \bigr);
\end{equation}
here $U \subseteq B$ is a sufficiently small open ball around the point $b \in B$.
So $\Lambda_b$ is a free $\ZZ$-module of rank $\leq 2n$.
Since $G_b^{\circ} = T_b^{\ast} B / \Lambda_b$, we get
\[
	H_1 \bigl( G_b^{\circ}, \ZZ \bigr) \cong \Lambda_b \cong
	H^0 \bigl(  U \cap B^{\sm}, \locHZ^{2n-1} \bigr).
\]
From the action $G_b^{\circ} \times X_b \to X_b$, we obtain as in \Cref{sec:freeness}
a homomorphism
\[
	p \colon H^1(X_b, \ZZ) \to H^1(G_b^{\circ}, \ZZ).
\]
We now give a concrete description of what $p$ does. By proper base change,
\[
	H^1(X_b, \ZZ) \cong (R^1 \fl \ZZ_X)_b
	\to \bigl( j_{\ast} \locHZ^1 \bigr)_b \cong H^0 \bigl(  U \cap B^{\sm}, \locHZ^1 \bigr),
\]
where $U \subseteq B$ is as above. The decomposition theorem shows that this becomes
an isomorphism after tensoring by $\QQ$; because $H^1(X_b, \ZZ)$ is torsion-free,
the map is therefore injective and the image has finite index. From the natural pairing between
the two local systems $\locHZ^1$ and $\locHZ^{2n-1}$, we get a pairing
\[
	H^1(X_b, \ZZ) \tensor_{\ZZ} H_1(G_b^{\sm}, \ZZ) \to
	H^0 \bigl(  U \cap B^{\sm}, \locHZ^{2n-1} \bigr) \tensor_{\ZZ}
	H^0 \bigl(  U \cap B^{\sm}, \locHZ^1 \bigr) \to \ZZ.
\]
This pairing may be viewed as a second homomorphism
\begin{equation} \label{eq:q-pairing}
	H^1(X_b, \ZZ) \to H^1(G_b^{\circ}, \ZZ).
\end{equation}
Not surprisingly, the two homomorphisms are the same; this may seem hard to prove at
first, but it turns out to be relatively easy.

\begin{lemma}
	The homomorphism in \eqref{eq:q-pairing} agrees with $p \colon H^1(X_b, \ZZ) \to
	H^1(G_b^{\circ}, \ZZ)$.
\end{lemma}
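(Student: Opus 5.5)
Both homomorphisms land in $H^1(G_b^{\circ},\ZZ)\cong \Hom(\Lambda_b,\ZZ)$, a torsion-free group, so it suffices to check that the two induced pairings $H^1(X_b,\ZZ)\tensor \Lambda_b \to \ZZ$ agree. Concretely, for $\alpha \in H^1(X_b,\ZZ)$ and $\lambda \in \Lambda_b = H_1(G_b^{\circ},\ZZ)$, I must show that $p(\alpha)(\lambda)$ equals the pairing of the restriction of $\alpha$ to $H^0(U\cap B^{\sm},\locHZ^1)$ with $\lambda$, viewed as a section of $\locHZ^{2n-1}$ over $U\cap B^{\sm}$. Since $\Lambda$ is the closure of its restriction to $B^{\sm}$ (\Cref{rmk:Lambda}), each $\lambda\in\Lambda_b$ is the value at $b$ of a continuous section $\lambda(t)$ of $\Lambda\to U$ defined along some path, or better, of the branch of $\Lambda$ through $\lambda$ over a small connected neighborhood. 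The key step is to make $p$ depend continuously on the base point: the loop $s\mapsto \eps(s\lambda)$, $s\in[0,1]$, in $G_b^{\circ}$ represents $\lambda$, and for $t\in U\cap B^{\sm}$ near $b$ the loops $s\mapsto \eps(s\lambda(t))$ represent the corresponding classes $\lambda(t)\in H_1(G_t^{\circ},\ZZ)$.

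Fix a point $x\in X_b$ and a continuous local section $x(t)$ of $f$ near $b$ (possible after restricting $U$ to a path from $b$ into $B^{\sm}$, or just choosing a point $t$ close to $b$ and a nearby point $x_t$). By definition, $p(\alpha)(\lambda)$ is the evaluation of $\alpha$ on the $1$-cycle $s\mapsto \eps(s\lambda)\cdot x$ in $X_b$, because the orbit map $G_b\to X_b$, $g\mapsto g\cdot x$, pulls $\alpha$ back to $p(\alpha)$ (this is the formula $\delta(\alpha)=1\tensor\alpha+p(\alpha)\tensor 1$ restricted to $G_b\times\{x\}$). Now deform: the family of loops $\gamma_t(s)=\eps(s\lambda(t))\cdot x_t$ is a continuous family of loops in $X$ over a small ball, since the action $T^*B\times_B X\to X$ is holomorphic. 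Choosing $U$ so that $X_b\into X_U$ is a homotopy equivalence, and taking the class $\tilde\alpha\in H^1(X_U,\ZZ)\cong H^1(X_b,\ZZ)$ extending $\alpha$, I get $\langle \alpha,\gamma_b\rangle = \langle\tilde\alpha,\gamma_b\rangle=\langle\tilde\alpha,\gamma_t\rangle = \langle \alpha|_{X_t},\gamma_t\rangle$ for $t\in U\cap B^{\sm}$, because $\gamma_b$ and $\gamma_t$ are homotopic in $X_U$.

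Finally, on a smooth fiber $X_t$, a compact torus with $G_t^{\circ}\cong X_t$ acting by translations, the loop $\gamma_t$ is exactly the $1$-cycle in $X_t$ whose class is $\lambda(t)\in H_1(X_t,\ZZ)\cong H^{2n-1}(X_t,\ZZ)$ under Poincar\'e duality; this is the identification $\Lambda_t=H^{2n-1}(X_t,\ZZ)$ used to define \eqref{eq:Lambda_b}. Hence $\langle\alpha|_{X_t},\gamma_t\rangle$ is the duality pairing of $\locHZ^1$ with $\locHZ^{2n-1}$ at $t$, which is precisely \eqref{eq:q-pairing}. The main obstacle is the compatibility in this last step: checking that the lattice identification $\Lambda_t\cong H^{2n-1}(X_t,\ZZ)$ via $H^n(\Omega^{n-1})\cong T_t^*B$ agrees, without sign or duality twist, with the class of the translation loop $\gamma_t$. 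I would verify this by the standard computation for a torus $V/\Gamma$: the period map of $\Omega^{n-1}$ classes identifies $\Gamma$ with $H_1$, and the loop $s\mapsto s\lambda$ represents $\lambda$. A secondary point is ensuring that $\lambda(t)$ exists, which follows from \Cref{rmk:Lambda}.
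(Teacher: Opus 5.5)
Your proposal is correct and follows essentially the same route as the paper: extend $\alpha$ to $f^{-1}(U)$, read off $p(\alpha)(\lambda)$ as $\alpha$ evaluated on the orbit loop $s\mapsto \eps(s\lambda)\cdot x$, and deform this loop to the translation loop $\lambda'$ on a nearby smooth torus fiber. Your free homotopy of loops is the paper's surface $\Sigma$, swept out by the flow of the $1$-form $\lambdat$ along a path from $x_0$ to $x_1$; as in the paper, take $\lambdat$ to be the section of $\Lambda$ over all of $U$ given by \eqref{eq:Lambda_b} (your ``better'' option), so that any path in $X$ works and you never need a continuous section of $f$, which can fail to exist near a nonreduced fiber.
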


\begin{proof}
Take a class $\alpha \in H^1(X_b, \ZZ)$. If $U \subseteq B$ is a small open ball around the
point $b \in B$, we can extend $\alpha$ to a class $\alphat
\in H^1 \bigl( f^{-1}(U), \ZZ \bigr)$, for example by the proper base change theorem
or because $f^{-1}(U)$ deformation retracts onto $X_b$. By construction, the
restriction of $\alphat$ to nearby smooth fibers $X_{b'}$ is
nontrivial. Let $\lambda \in \Lambda_b$ be an arbitrary element. By
\eqref{eq:Lambda_b}, we can extend $\lambda$ uniquely to a section $\lambdat$ of
the constructible sheaf $j_{\ast} \locHZ^{2n-1}$ over $U$; we can also view $\lambdat$ as a
section of $p \colon \Lambda \to B$ over the open set $U$. From the embedding of $\Lambda$
into $T^{\ast} B$, we get an associated holomorphic $1$-form on
$U$, and this in turn determines a holomorphic vector field on $f^{-1}(U)$ that is
tangent to the fibers of $f$. Let $\Phi_t \colon f^{-1}(U) \to f^{-1}(U)$ denote the
flow of this vector field. Because $\lambdat$ takes values in $\Lambda$, the construction
of $G^{\circ}$ shows that $\Phi_1$ is the identity. (The reason is that it is the
identity on the nonsingular fibers, and these are dense in $f^{-1}(U)$.)

We can use $\Phi_t$ to understand the pullback morphism $H^1(X_b, \ZZ) \to H^1(G_b^{\circ},
\ZZ)$ induced by the action $a_b \colon G_b^{\circ} \times X_b \to X_b$. The induced map on homology
\begin{equation} \label{eq:action-homology}
	a_{b,\ast} \colon \Lambda_b \cong H_1(G_b^{\circ}, \ZZ) \to H_1(X_b, \ZZ)
\end{equation}
is very easy to describe. Choose any point $x_0 \in X_b$, for example a smooth point of
the reduction $X_{b, \red}$. As $t \in [0,1]$ moves from $t=0$ to $t=1$, the image
$\Phi_t(x_0)$ of the point $x_0$ under the flow of the vector field traces out a
$1$-dimensional closed loop on $X_b$; its class is the image of $\lambda \in
\Lambda_b$ under \eqref{eq:action-homology}. To get the induced map on cohomology, we
only need to evaluate $\alpha \in H^1(X_b, \ZZ)$ on the homology class $a_{b, \ast}(\lambda)$. Let's denote this evaluation by
\[
	\bigl\langle \alpha, a_{b, \ast}(\lambda) \bigr\rangle \in \ZZ.
\]
As $X$ is a manifold at the point $x_0$, we can take any nearby smooth fiber $X_{b'}$
with $b' \in U \cap B^{\sm}$, and then choose a point $x_1 \in X_{b'}$ that is close to
$x_0$ in $X$. By the same construction, we get a homology class $a_{b',
\ast}(\lambda') \in H_1(X_{b'}, \ZZ)$, where $\lambda' = \lambdat(b') \in
\Lambda_{b'}$. On a smooth fiber, $\Lambda_{b'} \cong H_1(X_{b'}, \ZZ)$ is of course
just the first homology group of the compact complex torus $X_{b'}$.

We can evaluate $\alpha$ on the homology class $a_{b, \ast}(\lambda)$ using nearby
smooth fibers. Even when $X_b$ is nonreduced, we can still find a continuous path
$\gamma \colon [0,1] \to X$ with the property that $\gamma(0) = x_0$ and $\gamma(1) =
x_1$, and such that $f \circ \gamma$ is injective. (This means that $\gamma$
intersects every fiber only once; one cannot do this with a holomorphic section, but
for continuous paths, there is no problem.) We can then apply our flow $\Phi_t$ to this path. The result is a
continuous mapping
\[
	[0,1] \times [0,1] \to X, \quad (s, t) \mapsto \Phi_t \bigl( \gamma(s) \bigr),
\]
whose image is a surface $\Sigma$ with boundary; $\partial \Sigma$ is the loop traced
out by $\Phi_t(x_1)$ in $X_{b'}$, minus the loop traced out by $\Phi_t(x_0)$ in
$X_b$. This shows that the homology class $a_{b, \ast}(\lambda) \in H_1(X_b, \ZZ)$
and the homology class $a_{b', \ast}(\lambda') \in H_1(X_{b'}, \ZZ)$ have the same image in
$H_1 \bigl( f^{-1}(U), \ZZ \bigr)$. Consequently, the cohomology class $\alphat \in
H^1 \bigl( f^{-1}(U), \ZZ \bigr)$ has the same value on both homology classes.
But $\Phi_t$ is just translation in the direction of $\lambda'$ on the compact complex
torus $X_{b'}$, and so we have $a_{b', \ast}(\lambda') = \lambda'$. Taken together,
this says that
\[
	\bigl\langle \alpha, a_{b, \ast}(\lambda) \bigr\rangle
	= \bigl\langle \alpha', \lambda' \bigr\rangle
\]
where $\alpha' \in H^1(X_{b'}, \ZZ)$ denotes the restriction of $\alphat$ to
$X_{b'}$. The second pairing is just the pairing between the monodromy invariant
cohomology class $\alpha' \in H^1(X_{b'}, \ZZ)$ and the monodromy invariant
homology class $\lambda' \in H_1(X_{b'}, \ZZ)$ on the nearby smooth fiber $X_{b'}$,
and this is enough to conclude that the homomorphism
\[
	a_b^{\ast} \colon H^1(X_b, \ZZ) \to H^1(G_b^{\circ}, \ZZ)
\]
agrees with the one defined in \eqref{eq:q-pairing}.
\end{proof}

The relation between $G^{\circ}$ and the ``complex-analytic N\'eron model'' has
the following interesting consequence. Recall from \Cref{lem:delta-regularity} that
\[
	Z_f = \bigcup_{b \in B} \Lie L_b \subseteq T^{\ast} B,
\]
and that the conormal varieties $T_Z^{\ast} B$ to the supports $Z \subseteq B$ in the
decomposition theorem are among the $n$-dimensional irreducible components of $Z_f$.

\begin{proposition}
The set $Z_f$ can be reconstructed from the variation of Hodge structure $\locHZ^{2n-1}$
over the open subset $B^{\sm} \subseteq B$.
\end{proposition}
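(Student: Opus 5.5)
The plan is to combine \Cref{lem:delta-regularity} with the description of $G^{\circ}$ from \Cref{lm: neutrzz}. By \Cref{lem:delta-regularity}, $Z_f \cap T_b^{\ast} B = \Lie L_b$ under the isomorphism $T_b^{\ast} B \cong \Lie G_b$. So it is enough to show that, for every $b \in B$, the subspace $\Lie L_b \subseteq T_b^{\ast} B$ is determined by $\locHZ^{2n-1}$ on $B^{\sm}$. We know $G_b^{\circ} = T_b^{\ast} B/\Lambda_b$. By \eqref{eq:Lambda_b}, the discrete subgroup $\Lambda_b$ is the group of sections $H^0(U \cap B^{\sm}, \locHZ^{2n-1})$, embedded into $T_b^{\ast} B$ through the closure $T_{\ZZ}$ of the \'etal\'e space. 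This embedding is the one in \cite{sch12}, and it uses only the variation of Hodge structure: the bundle $T^{\ast} B$ is identified with the dual of the first Hodge piece $F_{-1}$ of the Hodge module extension, and this identification is intrinsic to $\locHZ^{2n-1}$. Hence the pair $(T_b^{\ast} B, \Lambda_b)$ is reconstructed from $\locHZ^{2n-1}$.

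The main step is to identify $\Lie L_b$ inside $T_b^{\ast} B$ from the pair $(V,\Lambda)=(T_b^{\ast} B,\Lambda_b)$ alone. For a connected commutative complex Lie group $V/\Lambda$ of regular type, with maximal compact quotient $T$ and linear part $L$, I would show the following. Let $\Lambda_\RR = \Lambda \tensor \RR \subseteq V$ be the real span. Then $\Lie L$ is the largest complex subspace $W \subseteq V$ such that $W \cap \Lambda_\RR$ has real dimension $\dim_\CC W$ and $W \cap \Lambda$ is a lattice in $W \cap \Lambda_\RR$; equivalently, the quotient $V/(W+\Lambda_\RR)$ is, after projection, a compact torus $\Lambda/(\Lambda\cap W)$ in $V/W$.

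I expect this to be the main obstacle, because a commutative complex Lie group can have several presentations (multiplicative versus additive parts, non-algebraic quotients such as Cousin groups), and a priori the meromorphic structure is extra data. To handle it I would use that $T_b = G_b^{\circ}/L_b$ is a compact torus. This forces $\Lambda_b$ to map to a full lattice in $V/\Lie L_b$, so $\operatorname{rank}\Lambda_b = 2\dim T_b + (\text{rank of } \Lambda_b \cap \Lie L_b)$. Since $L_b$ is a linear algebraic commutative group, hence $(\CC^{\ast})^a\times\CC^c$, we get that $\Lambda_b\cap\Lie L_b$ spans a totally real subspace of dimension $a$ with $\Lie L_b\cap\Lambda_\RR$ equal to that span.

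Rather than fight the general classification, I would take a cleaner route: use \Cref{lm: neutrzz} to say that $G^{\circ}$, as a complex manifold over $B$, is the N\'eron model $T^{\ast}B/T_\ZZ$ built from $\locHZ^{2n-1}$. Then I would argue that $L_b$ is intrinsic to the complex Lie group $G_b^{\circ}$. By \cite[Lem.~3.8]{fujiki-auto}, every holomorphic homomorphism from $L_b$ to a compact torus is trivial, so $L_b$ is the smallest connected closed subgroup with compact-torus quotient. Since this characterization uses only the group structure of $T_b^{\ast}B/\Lambda_b$, the subspace $\Lie L_b$, and hence $Z_f$ as the union over $b$, is reconstructed from $\locHZ^{2n-1}$. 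The remaining check is that the smallest such subgroup exists in the purely complex-Lie-group category; it does, because the intersection of two such subgroups again has a compact torus quotient, embedded in the product of the two tori.
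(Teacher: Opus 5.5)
Your reduction is fine and matches the paper's: by \Cref{lem:delta-regularity} it suffices to show that $\Lie L_b \subseteq T_b^{\ast}B$ is determined by $\locHZ^{2n-1}$, and the pair $(T_b^{\ast}B,\Lambda_b)$ is. The gap is your claim that $L_b$ is intrinsic to the complex Lie group $T_b^{\ast}B/\Lambda_b$.

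\cite[Lem.~3.8]{fujiki-auto} concerns \emph{meromorphic} maps from a compactification of a linear algebraic group. Holomorphic homomorphisms from a linear algebraic group to a compact torus can be nontrivial, e.g.\ $\CC^{\ast}\to\CC^{\ast}/q^{\ZZ}$ with $0<\abs{q}<1$. Consequently the linear part depends on the meromorphic structure, not only on the Lie group.

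For instance, let $G=\CC^2/2\pi i\ZZ^2\cong(\CC^{\ast})^2$ with its algebraic structure, so $L=G$ and $T=0$. This is $G_b^{\circ}$ over the origin for a product of two elliptic fibrations with $I_1$ fibers, and there $Z_f\cap T_b^{\ast}B$ is all of $T_b^{\ast}B$. For $\tau\notin\RR$, the linear map $(z_1,z_2)\mapsto z_1+\tau z_2$ sends $2\pi i\ZZ^2$ bijectively onto the lattice $2\pi i(\ZZ+\tau\ZZ)$. It therefore induces a surjective holomorphic homomorphism from $G$ onto an elliptic curve, with connected kernel isomorphic to $\CC$. (Relatedly, the universal vector extension of an elliptic curve is analytically isomorphic to $(\CC^{\ast})^2$, so both kinds of structure genuinely occur on the same Lie group.)

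So there are connected closed subgroups with compact-torus quotient strictly smaller than $L$. Two of them, for different $\tau$, meet in a discrete subgroup, so no smallest one exists. Your intersection argument fails because the intersection of two such subgroups need not be connected. Note also that \Cref{lm: neutrzz} is only an isomorphism of complex manifolds, so it does not transport the meromorphic structure of $G^{\circ}$ to the N\'eron model. Your first characterization via $\Lambda_b\tensor\RR$ has a separate defect: it ignores unipotent factors. For a cuspidal fiber one has $\Lambda_b=0$ but $\Lie L_b=T_b^{\ast}B$, whereas your recipe returns $W=0$.

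What is missing is an input that sees the mixed Hodge structure at $b$, not just the complex Lie group $T_b^{\ast}B/\Lambda_b$. The paper gets it from \cite[Lem.~2.25]{sch12}. That lemma identifies the maximal compact quotient of $T_b^{\ast}B/\Lambda_b$ with the maximal compact quotient of the generalized intermediate Jacobian of the mixed Hodge structure on $H^{-n}\iu P_{n-1,B}$. Since $P_{n-1,B}$ is the intersection complex of $\locHZ^{2n-1}\tensor_{\ZZ}\QQ$, this is determined by the variation of Hodge structure. Then $\Lie L_b=\ker(T_b^{\ast}B\to\Lie T_b)$ is determined, and \Cref{lem:delta-regularity} recovers $Z_f$.
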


This gives an a priori bound for the possible supports of all Lagrangian fibrations
with the same variation of Hodge structure on the first cohomology of their fibers. Indeed,
if $f \colon X \to B$ is any Lagrangian fibration that has nonsingular fibers over
$B^{\sm}$ and such that $R^1 \fl^{\sm} \ZZ_{X^{\sm}} \cong \locHZ^{2n-1}$, then
the proposition gives us a locally finite set of subvarieties of $B$, namely those whose
conormal variety appears in the set $Z_f$, that contains all the supports in the decomposition theorem
for $f \colon X \to B$.

\begin{proof}
This follows from \cite[Lem.~2.25]{sch12}, which identifies the maximal compact quotient
of $G_b^{\circ} \cong T_b^{\ast} B / \Lambda_b$ with the maximal compact quotient of
the generalized intermediate Jacobian associated to the mixed Hodge structure on
$H^{-n} \iu P_{n-1,B}$. This is uniquely determined by the variation of Hodge structure,
because the rational Hodge module $P_{n-1,B}$ is the intersection
complex of $\locHZ^{2n-1} \tensor_{\ZZ} \QQ$.
\end{proof}

The following example by Hwang and Oguiso \cite{hwang-ogu09} shows that the homomorphism
$H^1(X_b, \ZZ) \to (j_{\ast} \locHZ^1)_b$ is usually not an isomorphism over $\ZZ$.

\begin{example}
Let $E$ be the elliptic curve with Weierstrass equation $y^2 = x^3 +1$, and let $F$ be some other elliptic curve.
If $\zeta$ is a $6$-th root of unity, and if $a \in F$ is a point of order $6$, then
$\mu_6 \cong \ZZ/6\ZZ$ acts on the product
\[
	E \times F \times \CC \times \CC
\]
by mapping $(x,y,z,s,t)$ to $(\zeta^2 x, \zeta^3 y, z + a, s, \zeta t)$.
The quotient is a holomorphic symplectic fourfold with a Lagrangian fibration
over $\CC^2$ (with coordinates $s$ and $u=t^6$), and all the fibers over $u=0$ are
nonreduced. A computation shows that the first cohomology of the singular fibers is
isomorphic to $H^1(F / \langle a \rangle, \ZZ)$, whereas the monodromy-invariant part
of the first cohomology of a nearby fiber is isomorphic to $H^1(F, \ZZ)$. So in this
example, $H^1(X_b, \ZZ) \subseteq (j_{\ast} \locHZ^1)_b$ is a subgroup of index $6$.
\end{example}

\begin{remark}\label{rmk:hitchin}
We are grateful to Mirko Mauri for pointing out what follows.
Let $C$ be a smooth, connected, projective curve of genus at least $2$, let $G$ be a reductive group, let $d \in \pi_1(G)$, and let
$f\colon X \to B$ be the corresponding Hitchin fibration, where $X$ is the moduli space of semistable $G$-Higgs pairs $(E,\theta)$ with $E$ a $G$-torsor of topological type $d$ on $C$ and $\theta$ a section of $\mathrm{ad}(E)\otimes\omega_C$, and where $B$ is the Hitchin base, a vector space; see \cite{decataldo2025hitchinfibrationsngofibrations}. For $G=\mathrm{GL}_r$, the invariant $d$ is the degree of the associated vector bundles. The Hitchin morphism is a Lagrangian fibration with smooth base and, in general, singular total space. It comes equipped with the smooth commutative group scheme $P^\circ/B$ with connected fibers acting on the fibration (cf.~\cite[Prop.~4.17]{decataldo2025hitchinfibrationsngofibrations}).

Since $P^\circ$ acts on $X$, there is a homomorphism of group schemes
\[
P^\circ \longrightarrow \Aut(X/B).
\]
By construction, this factors through $G$, because $G$ is the largest subgroup of $\Aut(X/B)$ with irreducible total space. Since $P^\circ$ has connected fibers, it further factors through $G^\circ$, yielding a morphism of group schemes
\[
u\colon P^\circ \longrightarrow G^\circ .
\]

We claim that $u$ is an isomorphism. Let $B^{\mathrm{sm}}\subset B$ denote the locus over which the Hitchin fibration is smooth. Over $B^{\mathrm{sm}}$, the fibration is simultaneously a $P^{\circ,\mathrm{sm}}$-torsor and a $G^{\circ,\mathrm{sm}}$-torsor. Hence $u$ is birational.

Moreover, for both $P^\circ$ (cf.~\cite[Prop.~7.14]{decataldo2025hitchinfibrationsngofibrations}) and $G^\circ$ (by the construction via vertical vector fields), the differential of the action is identified, via the symplectic form, with the coderivative of the Hitchin fibration. Consequently, the induced morphism of Lie algebras is an isomorphism at every point $b\in B$ for which the fiber $X_b$ contains a smooth point of $f$. For the Hitchin fibration this locus is open, and its complement has codimension at least two. Since the Lie algebras of $P^\circ$ and $G^\circ$ are vector bundles on the smooth base $B$, the induced morphism of Lie algebras is therefore an isomorphism everywhere on $B$. As both $P^\circ$ and $G^\circ$ are smooth over $B$, it follows that $u$ is \'etale.

Since $u$ is separated and $G^\circ$ is nonsingular, hence normal, Zariski's Main Theorem implies that $u$ is an open immersion. Its image is a subgroup scheme of $G^\circ$. Therefore, fiberwise, it is an open subgroup of the connected group $G^\circ_b$, and hence coincides with $G^\circ_b$. Thus $u$ is surjective, and therefore an isomorphism.

We note that in particular, the action of $P^\circ$ is thus  faithful.
\end{remark}

\begin{remark}\label{rmk:via-neron}
Here we reach the same conclusion of \Cref{rmk:hitchin} for $G=\operatorname{GL}_r$, but via the use of N\'eron
models.
Let $\mathscr C \to B$ be the universal family of spectral curves of degree $r$ over $C$. Then the group scheme $P^\circ$ is the group scheme $\Pic^\circ(\mathscr C/B) \to B$ of fiberwise neutral Picard groups acting  naturally on $X$ via tensor product with Higgs bundles. Again, we claim that $u: P^\circ \to G^\circ$ is an isomorphism.
It is enough to show this over codimension $1$ points in $B$ (cf. \cite[Cor~IX.1.4]{ray70}). Over codimension $1$ points, we can use the algebraic N\'eron model theory. Since the total space $\mathscr C$ is a smooth variety, \cite[Thm~9.5.4]{neron} shows that there exists a N\'eron model $P'$ containing $P^\circ$ as a subgroup. Since $P'$ is a N\'eron model, we have a homomorphism $G^\circ \to P'$ such that the composition $P^\circ \to G^\circ \to P'$ is an open immersion. Hence $P^\circ \to G^\circ$ is an open immersion, and therefore an isomorphism.
\end{remark}

\begin{remark}
	In the above, we have discussed the subgroup $G^\circ \subset G$. We point out that it is sometimes possible to extend $G$ to a larger group $G'$, the (algebraic) N\'eron model of $G^{\sm} = G|_{B^{\sm}}$ in the sense of \cite{neron} and \cite[Def~6.1]{hol-mol-ore-poi23}. Note that $G'$ is the terminal object among the smooth groups $H \to B$ with $H|_{B^{\sm}} = G^{\sm}$. The construction of such $G'$ is presented in \cite[\S 5--6.1]{kim24} but under the condition \eqref{eq:non-multiple fiber condition} in the introduction. We point out that, with the result in this paper, the condition \eqref{eq:non-multiple fiber condition} can be relaxed with essentially the same proof there. Let us quickly summarize this. All the theorem numbers below refer to those in \cite{kim24}.

	Assume that $X$ is a smooth symplectic variety, $B$ is a smooth variety, and $f : X \to B$ is a projective Lagrangian fibration. Take the $\delta$-regular group scheme $G$ and consider the set
	\[ B^\sharp = \menge{b \in B}{\dim T_b \ge n-1} .\]
	Notice a sequence of open subsets $B^{\sm} \subseteq B^\sharp \subseteq B$ and that $B^\sharp$ is a big open subset of $B$ by the $\delta$-regularity of $G$. Let us consider a new condition
	\begin{equation} \label{eq:non-multiple fiber condition in codimension 1}
		f(X_{\mathrm{nc}/B}) \supset B^\sharp .
	\end{equation}
	In this setup, we show that the N\'eron model $G'$ of $G^{\sm}$ exists under the milder condition \eqref{eq:non-multiple fiber condition in codimension 1} than the condition \eqref{eq:non-multiple fiber condition} imposed in their original Theorem~6.1.

	The first step is to drop the condition \eqref{eq:non-multiple fiber condition} in Theorem~5.2(2): we show that $G^\sharp = G|_{B^\sharp} \to B^\sharp$ is a N\'eron model of $G^{\sm}$ (but only over $B^\sharp$). The idea of the original proof was as follows. Up to shrinking $B$, the statement is equivalent to showing that every birational automorphism $\phi : X \dashrightarrow X$ over $B$ is a regular automorphism over $B^\sharp$. The undefined locus $\operatorname{Undef}(\phi)$ is uniruled, $G^\circ$-invariant, and has dimension $\le 2n-2$. But by the $\delta$-regularity, any $G^\circ$-invariant subvariety of $X$ with dimension $\le 2n-2$ contains no rational curves over $B^\sharp$, so this means $\operatorname{Undef}(\phi)$ is empty over $B^\sharp$. This argument does not use the condition \eqref{eq:non-multiple fiber condition} at all, so it can be applied verbatim to show our claim.

	The second step is to consider the relative Picard algebraic space $\Pic(X/B) \to B$. Its connected component $\Pic^\bullet(X/B)$ is a smooth group space over $B$ as shown in Lemma~6.9. To proceed, we need to drop the condition \eqref{eq:non-multiple fiber condition} in Proposition~6.10: we show that $\Pic^\bullet(X/B)$ is a N\'eron model of its restriction over $B^\sharp$. Take two five-term exact sequences as below, where the vertical homomorphisms between them are restriction homomorphisms:
	\[\begin{tikzcd}
		0 \arrow[r] & \Pic(B) \arrow[r] \arrow[d] & \Pic(X) \arrow[r] \arrow[d] & \Pic(X/B) \arrow[r] \arrow[d] & \operatorname{Br}(B) \arrow[r] \arrow[d] & \operatorname{Br}(X) \arrow[d] \\
		0 \arrow[r] & \Pic(B^\sharp) \arrow[r] & \Pic(X^\sharp) \arrow[r] & \Pic(X^\sharp/B^\sharp) \arrow[r] & \operatorname{Br}(B^\sharp) \arrow[r] & \operatorname{Br}(X^\sharp)
	\end{tikzcd}.\]
	The four vertical arrows except for the middle one are isomorphisms because the Picard and Brauer groups of a smooth variety are invariant under taking codimension $2$ complements. The five-lemma concludes the middle vertical arrow is an isomorphism, which implies that $\Pic^\bullet(X/B)$ is a N\'eron model of its restriction over $B^\sharp$.

	The third step is to generalize Lemma~6.12 to construct the N\'eron model $P^\sharp \to B^\sharp$ of $\Pic^\bullet (X^{\sm}/B^{\sm})$. Only the last paragraph of its proof needs to be modified as follows. Take the same restriction diagram as above, but this time from $B^\sharp$ to $B^{\sm}$. Restriction homomorphisms of the Picard and Brauer groups of a smooth variety are always surjective and injective, respectively. Hence the second vertical arrow is surjective, and the fourth and fifth arrows are injective. Shrinking $B^\sharp$ \'etale locally, we can further assume that the fourth arrow is surjective \cite[Thm~3.7.1(ii)]{col-the-sko:brauer_group}. Hence the four-lemma concludes the surjectivity of the homomorphism $\Pic(X^\sharp/B^\sharp) \to \Pic(X^{\sm}/B^{\sm})$. Now letting $P^\sharp$ be the separated quotient group of $\Pic^\bullet (X^\sharp/B^\sharp)$, we can conclude that $P^\sharp(B^\sharp) \to P^\sharp (B^{\sm})$ is an isomorphism, showing $P^\sharp$ is a N\'eron model. This drops the condition \eqref{eq:non-multiple fiber condition} from Lemma~6.12.

	The final step is to generalize Theorem~6.1 to construct the N\'eron model $G'$ of $G^{\sm}$. The proof applies verbatim and we only recall its idea. Start with a polarization homomorphism $G^{\sm} \to \Pic^\bullet (X^{\sm}/B^{\sm})$ using an $f$-ample line bundle on $X$. Since $P^\sharp$ is a N\'eron model of $\Pic^\bullet (X^{\sm}/B^{\sm})$, this extends to a homomorphism $G^\sharp \to P^\sharp$, or a homomorphism $G^\sharp \to (P^\sharp)^\circ$ up to composing it with a sufficiently divisible multiplication map. At this point, the condition \eqref{eq:non-multiple fiber condition in codimension 1} guarantees $(P^\sharp)^\circ = (\Pic^\bullet (X^\sharp/B^\sharp))^\circ$, so we can extend the homomorphism again to $G \to \Pic^\bullet(X/B)$ because $\Pic^\bullet(X/B)$ is a N\'eron model. Now use Corollary~3.27 in the paper to construct the N\'eron model $G'$.
\end{remark}

\subsection{Lagrangian fibrations with section}

When $f \colon X \to B$ has a holomorphic section
$s \colon B \to X$ with $f \circ s = \id$, one can realize the family of meromorphic groups $G$ as a Zariski-open
subset of $X$. Indeed, the composition
\begin{tcd}
G \rar & G \times_B B \rar{\id \times s} & G \times_B X \rar{a} & X
\end{tcd}
defines a holomorphic mapping $i \colon G \to X$, and we claim that this is a Zariski-open
embedding. Because $G$ and $X$ are nonsingular and $\dim G = \dim X$, and because
$i$ is the restriction of a meromorphic mapping $D \mto X$,  it is enough to show
that $i$ is injective. If we let $K = i^{-1} \bigl( s(B) \bigr)$,
then injectivity of $i$ is equivalent to $K = e(B)$. The proof of this is similar to
\Cref{rmk:Lambda}. The existence of the section $s \colon B \to X$ means that $X$ is a complex
manifold of dimension $\dim B + n$ at all points of $s(B)$. Therefore $s(B)$ is locally
defined by $n$ holomorphic functions on $X$, and so every irreducible component
of $K$ has codimension $\leq n$, hence dimension $\geq \dim B$. But $K_b = G_b \cap K$
is the stabilizer of the point $s(b)$, and because $f \colon X \to B$ is smooth at the point $s(b)$,
\Cref{pr: deltareg}  shows that $\Lie L_b = \{0\}$, and hence that the stabilizer is a discrete subgroup of $G_b$. This gives
$\dim K = \dim B$, and for dimension reasons, $K$ must again
be the closure of its restriction to the smooth locus of $f$. When $X_b$ is a compact
complex torus and $G_b = \Aut^{\circ}(X_b)$, we have $K_b = \{e\}$, and so $K = e(B)$.

\section{Singular Lagrangian fibrations}\label{sec: sing lf}

In this chapter, we explain how to adapt the proof of the support theorem to Lagrangian
fibrations on singular holomorphic symplectic complex spaces in Beauville's sense
\cite{Beauville}. Beauville works with algebraic varieties, but his definition carries over
unchanged to complex spaces.

\begin{definition}
	A normal complex space $X$ is \define{holomorphic symplectic} if there is
	a closed nondegenerate holomorphic $2$-form $\sigma \in H^0(\Xreg, \Omega_{\Xreg}^2)$
	on the regular locus, such that $\sigma$ extends holomorphically to
	any resolution of singularities of $X$.
\end{definition}

The condition that $\sigma$ extends holomorphically to any resolution is
equivalent to $X$ having at worst \define{rational
singularities}, which means that $R^i \mu_{\ast} \shO_{\Xt} = 0$ for any
resolution of singularities $\mu \colon \Xt \to X$ and any $i > 0$. This is the content
of a theorem by Namikawa \cite[Thm.~4]{Namikawa}. As in the nonsingular case, the dimension of a holomorphic symplectic space
is always even; we usually set $\dim X = 2n$.

\begin{definition}
	Let $X$ be a holomorphic symplectic complex space of dimension $2n$.
	A \define{Lagrangian fibration} is a proper holomorphic mapping
	$f \colon X \to B$ to a normal complex space $B$ of dimension $n$, such that:
	\begin{enumerate}
		\item $f$ is surjective with connected fibers: $\fl \OX \cong \OB$
		\item A general fiber $X_b = f^{-1}(b)$ is Lagrangian: $\dim X_b = n$ and $\sigma \vert_{X_b} = 0$
	\end{enumerate}
\end{definition}

In the rest of the chapter, we describe how to modify the proofs from the earlier
chapters in order to extend the results to the singular case. We keep the assumption that $B$ is a complex manifold, which is needed
in several places.

\begin{theorem} \label{thm:singLF-support}
	Let $f \colon X \to B$ be a Lagrangian fibration on a holomorphic symplectic
	complex space, with $B$ a complex manifold.  Let $Z \subseteq B$ be one of the
	supports in the decomposition theorem for $\derR \fl \IC_X$, where $\IC_X$ is the (perverse)
	intersection complex on $X$.  Then \Cref{thm:support} holds with $r = \dim Z$.
\end{theorem}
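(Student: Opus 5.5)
We follow the proof of \Cref{thm:support} step by step, replacing the constant sheaf $\RR_X[\dim X]$ by $\IC_X$ throughout, and we check that each ingredient survives when $X$ is singular.

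\textbf{Step 1: the group action.} \Cref{sec:meromorphic} was written for a normal, Cohen--Macaulay, K\"ahler space with a flat, equidimensional morphism. Rational singularities are Cohen--Macaulay, so we take $X$ normal with rational singularities. Equidimensionality of $f$ over a smooth base $B$ follows from Matsushita's theorem, which in the singular case is available via the extension of $\sigma$ to a resolution. Flatness then follows from miracle flatness, since $X$ is CM and $B$ is smooth.

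For the action of $E = T^{\ast}B$, we integrate the Hamiltonian vector fields $\xi_j$ on $\Xreg$. Because $X$ is normal, these vector fields extend to all of $X$ as sections of the reflexive tangent sheaf. Their flows exist by Kaup's theorem together with properness, as before. Injectivity on Lie algebras (\Cref{lem:faithful-action}) only requires the argument at smooth points $x \in X_0$ of $X$: if $\xi_1$ acted trivially on the nonreduced fiber $X_0$, then $df_1$ would vanish on $X_0 \cap \Xreg$, and the curve argument gives a contradiction there. Since $X_0 \cap \Xreg$ is nonempty, this suffices. Surjectivity over smooth fibers is unchanged. \Cref{thm:meromorphic} and \Cref{pr: deltareg} therefore apply; in the proof of \Cref{lem:delta-regularity}, singular cotangent vectors are defined using $\Xreg$ or a Whitney stratification.

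\textbf{Step 2: freeness with coefficients.} We need \Cref{thm:freeness} for $H^{\ast}(X_b, \IC_X|_{X_b})$ rather than for $H^{\ast}(X_b)$. Since $\IC_X$ is intrinsic, it is weakly $G^\circ_b$-equivariant: the action map gives $a^{\ast}\IC_X \cong p_2^{\ast}\IC_X$ on $G^\circ \times_B X$ near each fiber. This yields a coaction $\delta$ on $H^{\ast}(X_b, \IC_X)$, which is a morphism of mixed Hodge structures (using Saito's theory, or by pulling back to a resolution $\mu \colon \Xt \to X$, of which $\IC_X$ is a summand in $\derR\mu_{\ast}\RR$). Its compatibility with cup product by $H^{\ast}(X_b)$ makes $H^{\ast}(X_b,\IC_X)$ a Hopf module over $H^{\ast}(T_b)$, using the section $s \colon H^1(T_b) \to H^1(X_b)$ from \Cref{lem:H1}, which holds because $X_b$ is compact K\"ahler. \Cref{prop:Hopf-module} then gives freeness. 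This is the generalization mentioned after \Cref{thm:intro-B}.

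\textbf{Step 3: the support argument.} The remaining steps of \Cref{par:support-theorem-proof} go through with $\IC_X$ in place of $\RR_X[\dim X]$:
\begin{itemize}
\item Deligne's splitting uses the relative Hard Lefschetz theorem for $\IC_X$ (Saito), with $\omega$ acting by cup product.
\item $(\ad N)^2\delta_i=0$ holds since $[N,\delta_i]$ is cup product with a class in $H^1(X_b)$.
\item \Cref{prop:shifted-perverse} uses the sheaf-level morphism $K_G \otimes \derR\fl\IC_X \to \derR\fl\IC_X$, built from the equivariance isomorphism and the trace map.
\item For the degree bound, $\shH^i\IC_X=0$ for $i>-2n$ and the fibers have dimension $n$, so $H^{k}(X_b,\IC_X)=0$ for $k>0$. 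This replaces $H^{2n}(X_b)$ by $H^0(X_b,\IC_X)$.
\end{itemize}

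For part~(d), we need the top piece $R^0\fl\IC_X$ to have finite monodromy on $U$. Its stalk $H^0(X_b,\IC_X)=H^{2n}(X_b,\IC_X[-2n])$ is spanned by classes of the $n$-dimensional components of $X_b$: using $\IC_X[-2n]\to \mathbb{D}\RR_X[-4n]$, it maps to $H_{2n}^{BM}(X_b)$. Monodromy therefore permutes a finite basis. Purity in \Cref{lem:MHS} follows in the same way, the top group being of type $(n,n)$.

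\textbf{Main obstacle.} The hardest part is making the weak equivariance of $\IC_X$ precise for a meromorphic action, including its Hodge-theoretic compatibility. We would reduce to $\Xreg$ and use that $\IC_X$ is the intermediate extension of $\RR_{\Xreg}[2n]$, which is preserved by any smooth pullback along $G^\circ\times_B X\to X$.
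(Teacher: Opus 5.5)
Your overall architecture matches the paper's: weak equivariance of $\IC_X$ via smooth pullback of an intermediate extension, Hopf-module freeness with coefficients, the sheaf-level $K_G$-action, and Deligne's splitting. Weak equivariance is actually the easy step, not the main obstacle.

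The genuine gap is your degree bound. The claim $\shH^i \IC_X = 0$ for $i > -2n$ holds only when $X$ is rationally smooth, and symplectic singularities need not be. The support condition only gives $\dim \Supp \shH^i \IC_X < -i$. Combined with $\dim X_b = n$ alone, the hypercohomology spectral sequence yields $H^k(X_b, \IC_X\vert_{X_b}) = 0$ merely for $k \geq n$, not for $k > 0$.

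The missing idea is that each fiber meets the closure of every stratum $S$ of Kaledin's symplectic stratification in dimension at most $\tfrac12 \dim S$. This comes from Matsushita's ``Theorem of Matreshka'' by induction. The paper imports it via Mauri--Migliorini to get two things:
\begin{itemize}
\item the amplitude bound $P_j = 0$ for $\abs{j} > n$, together with the vanishing of $H^k(X_b,\IC_X\vert_{X_b})$ for $k>0$ that parts (a) and (d) actually use;
\item the isomorphism $H^{2n}(X_b) \cong H^0(X_b, \IC_X\vert_{X_b})$ induced by $\RR_X[2n] \to \IC_X$.
\end{itemize}
That isomorphism is what gives finite monodromy and the summand of $R^{2n}\fl\RR_X$ in (d). It also gives the type $(n,n)$ Hodge structure that starts the purity argument of \Cref{lem:MHS} and its singular analogue. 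Your substitute via $\IC_X[-2n] \to \DD\RR_X[-4n]$ does not work as written. The restriction $i_b^{\ast}\DD_X\RR_X$ is not the dualizing complex of $X_b$, so the stalk does not land in $H^{\mathrm{BM}}_{2n}(X_b)$, and you would still need injectivity.

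The same input is missing in Step 1. The resolution argument (Koll\'ar--Takegoshi torsion-freeness) only shows $\sigma\vert_{Y \cap \Xreg} = 0$ for a fiber component $Y$. To conclude $\dim Y \leq n$, and to know $X_0 \cap \Xreg \neq \emptyset$ in the injectivity lemma, you must rule out $Y \subseteq X_{\sing}$. The paper does this inductively with Kaledin's strata and Matreshka (\Cref{prop:equidimensional}).

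Your $\delta$-regularity fix also fails in the $\Xreg$ version. The Borel fixed point of $L_b$ may lie in $X_{\sing}$, where $\xi \contr \sigma = \fu\beta$ carries no information. The paper instead lifts the Hamiltonian fields and the $G$-action to a functorial resolution $\Xt$. There $\xit \contr \sigmat = \ft^{\ast}\beta$ still holds by the identity theorem, even though $\sigmat$ is degenerate. It then applies the Borel fixed point theorem on $\Xt_b$ and uses $\dim Z_{\ft} \leq n$. A stratified version would need the strata to be Kaledin's symplectic leaves, not an arbitrary Whitney stratification.

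Finally, it is not automatic that the stalks of the sheaf-level morphism, built from $a_!$ and the trace map, recover the pullback coaction. The paper proves this by Verdier duality of comodules (\Cref{lem:dual-comodule}, \Cref{cor:comodules}, \Cref{lem:comodule-stalks}).
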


\subsection{Equidimensionality and flatness}

One can describe holomorphic symplectic spaces in the language of Poisson structures
\cite{Kaledin}, which is more convenient in the singular setting.
Recall that a \define{Poisson algebra}
(over $\CC$) is a commutative $\CC$-algebra $A$ with unit $1 \in A$, together with a
skew-symmetric bilinear (over $\CC)$ pairing $\{\argbl, \argbl\} \colon A \tensor A \to A$ (called the Poisson bracket) such that
\begin{align*}
	\{a, bc\} &= \{a, b\} c + b \{a, c\} \quad \text{and} \quad \{ a, 1\} = 0; \\
	0 &= \{a, \{b, c\}\} + \{b, \{c, a\}\} + \{c, \{a, b\}\}.
\end{align*}
The conditions on the first line are saying that $\{a, \argbl\}$ is a derivation of $A$;
the  condition on the second line is the Jacobi identity. A complex space $X$ is called a \define{Poisson space} if its
structure sheaf $\OX$ is a sheaf of Poisson algebras. If $f \in \Gamma(U, \OX)$ is a
holomorphic function, then $H_f = \{f, \argbl\}$ is a holomorphic vector field on $U$,
called the \define{Hamiltonian vector field} associated to $f$.

\begin{example}
A holomorphic symplectic space $X$ is automatically a Poisson space. Let
$\sigma \in H^0( \Xreg, \Omega_{\Xreg}^2)$ be the symplectic form on the regular locus
of $X$.  The Poisson bracket of two holomorphic functions $f,g \in \Gamma(U, \OX)$ is
defined as follows. Let $H_f \in \Gamma(U, \shT_X)$ be the unique section of the reflexive
sheaf $\shT_X = \shHom_{\OX}(\Omega_X^1, \OX)$ such that $df = H_f \contr \sigma$ on $\Ureg$, and define
\[
	\{f, g\} = H_f(g) \in \Gamma(U, \OX).
\]
Note that these holomorphic functions automatically extend over the singular locus
of $X$ because $X$ is normal. Since $d \sigma = 0$,
the bracket $\{\argbl, \argbl\}$ satisfies the Jacobi identity, and so $X$ is a Poisson space.
\end{example}

Matsushita \cite{mat00} showed that Lagrangian fibrations on holomorphic symplectic
K\"ahler manifolds are equidimensional. The same thing is
true for any Lagrangian fibration $f \colon X \to B$ on a holomorphic symplectic complex space,
as long as $X$ is K\"ahler.
Under the assumption that $B$ is a complex manifold, it follows that $f$ is flat
(because $X$ has rational singularities, hence is Cohen-Macaulay).

\begin{proposition} \label{prop:equidimensional}
Let $f \colon X \to B$ be a Lagrangian fibration on a holomorphic symplectic complex space.
If $X$ is K\"ahler, then $f$ is equidimensional.
\end{proposition}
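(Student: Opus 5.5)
We need to show every fiber $X_b$ has dimension $n$. Upper semicontinuity of fiber dimension gives $\dim X_b \geq n$ everywhere, so the task is to rule out a fiber component $F \subseteq X_b$ with $\dim F > n$. The plan is to reduce to Matsushita's argument for manifolds by passing to a resolution, and then to use the Poisson structure to bound the dimension of fibers, in the spirit of \cite{mat00}.

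\textbf{Step 1 (resolution).} Choose a resolution $\mu \colon \Xt \to X$ that is an isomorphism over $\Xreg$, with $\Xt$ K\"ahler; this is possible because $X$ is K\"ahler and the resolution is projective over $X$. Since $X$ has rational singularities, $\sigma$ extends to a holomorphic $2$-form $\sigmat$ on $\Xt$, which is closed because $\Xt$ is K\"ahler (or by continuity). Set $\ft = f \circ \mu$. The general fiber of $\ft$ is Lagrangian for $\sigmat$. It suffices to show that every irreducible component $\tilde F$ of a fiber of $\ft$ that maps onto a component $F$ of $X_b$ (generically finitely, after replacing $\tilde F$ by a suitable subvariety) has $\sigmat\vert_{\tilde F} = 0$. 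Granting this, on a dense open subset of $F$ contained in $\Xreg$ (if $F \not\subseteq X_{\sing}$), the $2$-form $\sigma$ is nondegenerate and vanishes on $F$, so $\dim F \leq n$.

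\textbf{Step 2 (isotropy of fibers).} Follow Matsushita's argument. Let $\omega$ be a K\"ahler class on $X$ and $\tilde\omega = \mu^*\omega + (\text{small K\"ahler correction})$. For a fiber component $\tilde F$ of dimension $m$, consider the integral $\int_{\tilde F} \sigmat \wedge \bar{\sigmat} \wedge \tilde\omega^{m-2}$. Since $\tilde F$ is homologous to a limit of cycles supported in fibers, and $\sigmat\vert$ vanishes on the general fiber, the class $[\sigmat]$ pulled back by $\ft^*$-type arguments shows $\sigmat \in \ft^* (\ldots)$ modulo terms killed on fibers. More concretely, Matsushita shows the class $[\sigma + \bar\sigma]$ restricted to fibers is zero using that $H^0(\tilde F,\Omega^2)$ injects into $H^2$ and that $[\sigmat]\vert_{\tilde F}$ comes from $H^2(\ft^{-1}(U))$ which retracts to the fiber; the restriction to a nearby general fiber is $0$, so by the positivity of $\int \sigmat\wedge\bar\sigmat\wedge\tilde\omega^{m-2}\ge 0$ with equality iff $\sigmat\vert_{\tilde F}=0$, we get isotropy. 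This step uses K\"ahlerness of $\Xt$ essentially.

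\textbf{Step 3 (components inside $X_{\sing}$).} The main obstacle is a component $F \subseteq X_{\sing}$, where isotropy of $\sigmat$ upstairs gives no direct bound. Here I would use the Poisson structure: the Hamiltonian vector fields $H_{f_i}$ of local coordinates $f_i$ of $B$ are tangent to fibers, and by Kaledin's stratification \cite{Kaledin} $X$ has finitely many symplectic leaves, each a holomorphic symplectic manifold on which the Poisson bracket is nondegenerate. Apply Step 2's conclusion to the restriction of $f$ to the closure of the leaf $S$ that contains a general point of $F$: since the bracket of pullbacks $\{f^*t_i, f^*t_j\}=0$ (it vanishes generically, hence everywhere), $f\vert_S$ has Poisson-commuting components, so the fibers of $f\vert_S$ are coisotropic-complement, i.e. $F \cap S$ is isotropic in the symplectic leaf $S$ of dimension $2k$, giving $\dim F \leq k \le n$. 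Actually isotropy of $F\cap S$ needs the same Hodge argument on $S$, which I'd obtain from Step 2 applied to the restriction of $\sigmat$ to the preimage of $\bar S$; this is the delicate point I expect to need the most care.
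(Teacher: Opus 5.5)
Your two-case skeleton matches the paper's: a resolution plus Matsushita's isotropy argument for fiber components meeting $X_{\mathrm{reg}}$, and Kaledin's stratification for components inside $X_{\mathrm{sing}}$. However, both load-bearing steps have gaps.

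\textbf{Step 2.} Isotropy does not follow from what you wrote. The map $H^2(\tilde{f}^{-1}(U)) \cong H^2(\tilde{X}_b) \to H^2(\tilde{X}_{b'})$ to a nearby general fiber is in general not injective: the first Chern class of a divisor contained in $\tilde{X}_b$ restricts to zero on $\tilde{X}_{b'}$. So the vanishing of $[\tilde{\sigma}]$ on general fibers says nothing about $[\tilde{\sigma}]\vert_{\tilde{F}}$. Nor is $\tilde{F}$ a limit of general fibers, since $\tilde{f}$ is not equidimensional. The positivity of $\int_{\tilde{F}} \tilde{\sigma} \wedge \overline{\tilde{\sigma}} \wedge \tilde{\omega}^{m-2}$ only helps once you already know the class vanishes on $\tilde{F}$, which is the point at issue.

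The missing ingredient, used by Matsushita and by the paper, is torsion-freeness. Because $X$ has rational singularities and $\omega_X \cong \mathcal{O}_X$, one has $R^2 \tilde{f}_{\ast} \mathcal{O}_{\tilde{X}} \cong R^2 f_{\ast} \mathcal{O}_X \cong R^2 \tilde{f}_{\ast} \omega_{\tilde{X}}$, which is torsion-free by Koll\'ar's theorem (Takegoshi in the K\"ahler setting). The $\bar{\partial}$-closed form $\overline{\tilde{\sigma}}$ defines a section of this sheaf. That section vanishes over the smooth locus of $f$, hence everywhere. So $\overline{\tilde{\sigma}}$ is $\bar{\partial}$-exact over a Stein neighborhood of $b$. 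Its restriction to a compact K\"ahler manifold $\tilde{Y}$ lying over a fiber component is then a $\bar{\partial}$-exact antiholomorphic form, hence zero by Hodge theory.

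\textbf{Step 3.} Poisson-commutativity of the $f^{\ast} t_i$ does not make $F \cap S$ isotropic. It only says that the span of their Hamiltonian vector fields is isotropic. The Zariski tangent spaces of the fibers of $f\vert_S$ are the symplectic orthogonals of that span, i.e.\ \emph{coisotropic}, and they can be large where $d(f\vert_S)$ drops rank. To rerun Step 2 on $\overline{S}$, you would first need the \emph{general} fiber of $f\vert_{\overline{S}}$ to be isotropic in $S$, i.e.\ $\dim S \le 2 \dim f(S)$. The Hamiltonian argument only yields $\dim S \ge 2 \dim f(S)$. The equality, together with the fact that the induced map between the normalizations of $\overline{S}$ and $\overline{f(S)}$ is again a Lagrangian fibration, is Matsushita's ``Theorem of Matreshka'', which you do not supply.

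The paper combines Matreshka with Kaledin's theorem (the normalization of $\overline{S}$ is holomorphic symplectic) and induction on dimension. This makes the restricted map equidimensional with fibers of dimension $\dim S/2 \le n-1$. Since every fiber component of $f$ has dimension $\ge n$, no fiber component lies in $X_{\mathrm{sing}}$ at all. So one never needs to bound $\dim F$ for $F \subseteq X_{\mathrm{sing}}$ directly. Note also that your own intended bound would give $k \le n-1$, which is a contradiction, not merely $\dim F \le n$.
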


\begin{proof}
This result is due to Schwald \cite[Thm.~17]{Schwald} for projective $X$. We briefly
describe how to adapt it to the case when $X$ is K\"ahler but not necessarily compact.

The most important input is a theorem by Kaledin \cite[Thm.~2.3]{Kaledin}, proved using
Poisson geometry. Recall that a closed subspace $Z$ of a Poisson space
$X$ is called a \define{Poisson subspace} if its ideal sheaf $\shI_Z$ is a Poisson ideal,
meaning that $\{f, g\} \in \shI_Z$ for every $f \in \OX$ and every $g \in \shI_Z$. Kaledin
proves that a holomorphic symplectic complex space has a canonical locally finite stratification,
in which each stratum $S$ is nonsingular and holomorphic symplectic; the Poisson subspaces
of $X$ are exactly the closures $\overline{S}$ of strata; and the normalization of each
$\overline{S}$ is again a holomorphic symplectic space. Moreover, $X$ is locally trivial along
each stratum, and the stratification is a Whitney stratification. (Kaledin actually proves the
local triviality only for a formal neighborhood, but a recent paper by Kaplan and Schedler \cite[Appendix]{Kaplan+Schedler}
shows that this ``holomorphic Weinstein splitting theorem'' holds locally in the analytic topology.)

The next step is a result by Matsushita \cite[Thm.~3.1]{mat15}, which he calls the
``Theorem of Matreshka'': for each stratum $S \subseteq X$,
one has $\dim S = 2 \dim f(S)$, and the morphism from the normalization of $\overline{S}$
to the normalization of $\overline{f(S)}$ is again a Lagrangian fibration. Matsushita's
proof uses algebraic language (such as function fields), but the argument easily
carries over
to complex spaces. Indeed, the key inequality $2 \dim f(S) \leq \dim S$ can be proved
analytically as follows. Choose a general point $x \in S$; then the
mapping $T_x S \to T_{f(x)} f(S)$ is surjective. Set $k = \dim f(S)$, and choose
local coordinates $t_1, \dotsc, t_n$ on $B$ in such a way that $t_1, \dotsc, t_k$
restrict to coordinates on $f(S)$. The Hamiltonian vector fields
$\{\fu t_1, \argbl\}, \dotsc, \{\fu t_k, \argbl\}$ are linearly independent at $x$
and tangent to the fibers of $f \vert_S \colon S \to f(S)$, and so the fiber dimension is
at least $k$; this gives $\dim S \geq k + \dim f(S) = 2k$.

By induction on the dimension,  the morphism from the normalization of $\overline{S}$
to the normalization of $\overline{f(S)}$ is equidimensional for every stratum $S
\subseteq X_{\sing}$. Since $\dim S < \dim X$, every irreducible component $Y$ of every
fiber of $f \colon X \to B$ therefore has to intersect $X_{\reg}$. To prove that $f$
is equidimensional, it is thus enough to show that $\sigma$ restricts to zero on
$Y \cap X_{\reg}$. The argument for this is the same as in \cite{mat00}.
Let $\mu \colon \Xt \to X$ be a resolution of singularities such
that $\Xt$ is K\"ahler manifold, $\mu$ is an isomorphism over $X_{\reg}$, and the
preimage $\tilde{Y} = \mu^{-1}(Y)$ is nonsingular. Because $X$ has rational singularities,
\[
	\derR \mu_{\ast} \shO_{\Xt} \cong \OX
	\cong \omega_X \cong \derR \mu_{\ast} \omega_{\Xt},
\]
and so Koll\'ar's theorem, extended to the K\"ahler setting by Takegoshi \cite{Takegoshi},
implies that the higher direct image sheaf
\[
		R^2 \fl \OX \cong R^2 (f \circ \mu)_{\ast} \shO_{\Xt}
\]
is torsion-free on $B$. Let $\tilde{\sigma} \in H^0(\Xt, \Omega_{\Xt}^2)$ be the
holomorphic extension of the pullback of $\sigma$; then $d \tilde{\sigma} = 0$.
The complex conjugate of $\tilde{\sigma}$ is a closed $(0,2)$-form, and so it
defines a global section of the sheaf $R^2 \fl \OX$. This section vanishes over the
smooth locus of $f$ (because $f$ is a Lagrangian fibration), hence is trivial.
Consequently, the conjugate of $\tilde{\sigma}$ restricted to $\tilde{Y}$ is
a $\bar{\partial}$-exact $(0,2)$-form, and because  $\tilde{Y}$ is a compact K\"ahler
manifold, it follows that $\tilde{\sigma} \vert_{\tilde{Y}} = 0$, hence
$\sigma \vert_{Y \cap X_{\reg}} = 0$.
\end{proof}

\subsection{The family of meromorphic groups}

Let $X$ be a holomorphic symplectic space of dimension $2n$, and consider a Lagrangian
fibration  $f \colon X \to B$ over an $n$-dimensional complex manifold $B$.
As in \Cref{sec:Liouville-Arnold}, there is a commutative diagram
\begin{tcd}
T^{\ast} B \times_B X \rar{\Phi} \dar{p_2} & X \dar{f} \\
X \rar{f} & B.
\end{tcd}
We briefly summarize the construction, in the
language of Poisson spaces. Choose local coordinates $t_1, \dotsc, t_n \in \Gamma(U, \OB)$
on $B$, and let $\xi_j = \{\fu t_j, \argbl\} \in \Gamma(X_U, \shT_X)$ be the
associated Hamiltonian vector fields on $X_U = f^{-1}(U)$; these commute and are tangent
to the regular locus of $X_{b, \red}$ for any $b \in U$. Since $X$ is reduced, the
results by Kaup \cite{Kaup} still apply, and so we can integrate the vector fields into a holomorphic
mapping $\Phi_U \colon \CC^n \times X_U \to X_U$. The rest of the argument is unchanged.
In particular, \Cref{lem:faithful-action} about the injectivity of the map
$T_b^{\ast} B \to \Lie \Aut^{\circ}(X_b)$ continues to hold: as we saw in the previous section,
the singular locus of $X$ intersects every irreducible component of
$X_b = f^{-1}(b)$ in a proper subset,
and so we can choose a point $x \in \Xreg \cap X_b$ and run the argument  there.

The conclusion of \Cref{sec:meromorphic} is that the action by the cotangent
bundle determines a meromorphic action
\[
	a \colon G \times_B X \to X
\]
by a family of meromorphic groups $p \colon G \to B$, whose family of neutral components
is again a quotient of $T^{\ast} B$. This only uses the nonsingularity of $B$ and the flatness
of $f \colon X \to B$, which follow from our assumptions on $X$ and $B$.

\begin{example}
Let $\tilde f : \tilde X \to B$ be a minimal elliptic fibration over an affine line $B = \CC$ with one singular fiber $\tilde X_b = C_0 \cup C_1 \cup C_2$ of Kodaira type $\mathrm I_3$. Let $\tilde X \to X$ be the contraction of $C_0$ to an ordinary double point. Both $\tilde f$ and $f$ are Lagrangian fibrations of symplectic complex spaces $\tilde X$ and $X$, one smooth and the other singular. Therefore, we obtain two family of meromorphic groups $\tilde G \to B$ and $G \to B$. Kodaira and N\'eron showed $\tilde G_b = \CC^* \times \ZZ/3$. Let us show $G_b = \CC^*$, demonstrating that the group $G$ depends on the choice of a birational model.

The group $\tilde G$ is a N\'eron model, so we obtain a unique homomorphism $G \to \tilde G$. This means that the $G$-action uniquely lifts to $\tilde X$. Note that $X_b$ is a union of two rational curves $C_1$ and $C_2$, and $G_b$ acts generically freely on $X_b$. Hence $G_b$ is either $\CC^*$ or $\CC^* \times \ZZ/2$. Assume on the contrary $G_b = \CC^* \times \ZZ/2$; this means $\ZZ/2$ swaps $C_1$ and $C_2$. The $G_b$-action lifts to $\tilde X_b$ by the above group homomorphism $G_b = \CC^* \times \ZZ/2 \to \tilde G_b = \CC^* \times \ZZ/3$, where $\ZZ/2$ is forced to be mapped into $\CC^*$. This means the $\ZZ/2$-action on $\tilde X_b$ preserves all three irreducible components $C_0$, $C_1$, and $C_2$. This contradicts the previous fact that it swaps $C_1$ and $C_2$.
\end{example}

\subsection{The \texorpdfstring{$\boldsymbol\delta$-regularity}{delta-regularity} condition}

Perhaps suprisingly, the proof of $\delta$-regularity is almost the same as
in the smooth case. Because each $G_b = p^{-1}(b)$ is a commutative meromorphic group, we have
a short exact sequence
\[
	1 \to L_b \to G_b \to T_b \to 1
\]
with $T_b$ a compact complex torus and $L_b$ (meromorphically isomorphic to)
a linear algebraic group. Our goal is to prove that
\begin{equation} \label{eq:delta-resolution}
	\dim \menge{b \in B}{\dim L_b \geq r} \leq n - r
		\quad \text{for every $0 \leq r \leq n$.}
\end{equation}
Let $\mu \colon \Xt \to X$ be a functorial resolution of singularities; this exists
for complex spaces by \cite{Wlodarczyk}. Denote by
\[
	Z_{\ft} = \menge{(b, \beta) \in T^{\ast} B}%
	{\text{$\im(T_x \Xt \to T_b B) \subseteq \ker \beta$ for some $x \in \Xt_b$}}
\]
the set of singular cotangent vectors for the morphism $\ft = f \circ \mu
\colon \Xt \to B$. As in the smooth case, the key point for $\delta$-regularity
is the following inclusion.

\begin{lemma}
	Inside the cotangent bundle $T^{\ast} B$, we have
	\[
		\bigcup_{b \in B} \Lie L_b \subseteq Z_{\ft}.
	\]
\end{lemma}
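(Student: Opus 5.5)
The idea is to run the inclusion ``$\supseteq$'' from the proof of \Cref{lem:delta-regularity} again, this time on the resolution $\Xt$. Three ingredients are needed: the Hamiltonian vector fields lift to $\Xt$, the identity ``$\fu\beta = \xi \contr \sigma$'' survives pullback, and the Borel fixed point theorem produces a zero of the lifted vector field on $\Xt_b$.

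\emph{Step 1: lifting the vector fields.} Fix $b \in B$ and a cotangent vector $\beta \in T_b^{\ast} B$ whose image under $\Lie G_b \cong T_b^{\ast} B$ lies in $\Lie L_b$. Extend $\beta$ to a holomorphic $1$-form on a small ball $U \ni b$. Let $\xi = \{\fu\beta, \argbl\}$ denote the corresponding Hamiltonian vector field on $X_U$, that is, the combination $\sum g_j \xi_j$ with $\beta = \sum g_j\, dt_j$. Its flow is a one-parameter group of automorphisms of $X_U$ over $U$. Since the resolution of \cite{Wlodarczyk} is functorial, it is equivariant under automorphisms and under smooth morphisms such as $\CC \times X_U \to X_U$. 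Hence the flow, and with it the whole $T^{\ast}B$-action $\Phi$, lifts uniquely to a holomorphic action on $\Xt_U$ over $U$. Differentiating the lifted flow gives a holomorphic vector field $\xit$ on $\Xt_U$ with $\mu_{\ast}\xit = \xi$.

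\emph{Step 2: transporting the identity.} Let $\sigmat$ be the holomorphic extension of $\mu^{\ast}\sigma$. On $\mu^{-1}(\Xreg)$ the identity $\ft^{\ast}\beta = \xit \contr \sigmat$ holds, because $\fu\beta = \xi \contr \sigma$ on $\Xreg$. Both sides are holomorphic $1$-forms, so the identity holds on all of $\Xt_U$. Consequently, if $\xit$ vanishes at some $x \in \Xt_b$, then $\ft^{\ast}\beta$ annihilates $T_x \Xt$, which says exactly that $(b,\beta) \in Z_{\ft}$. Note that this direction does not need $\sigmat$ to be nondegenerate.

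\emph{Step 3: finding a fixed point.} It remains to find a point of $\Xt_b$ fixed by the one-parameter subgroup $\exp(\CC\beta) \subseteq L_b$, or by all of $L_b$. We do this in two stages.

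First, as in the smooth case, the Borel fixed point theorem \cite[Prop~6.9]{fujiki-auto} applied to the meromorphic $L_b$-action on $X_b$ gives a point $x_0 \in X_b$ fixed by $L_b$.

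Next, the fiber $F = \mu^{-1}(x_0)$ is a compact subspace of $\Xt_b$, and it is invariant under the lifted $L_b$-action because $\mu$ is equivariant. Functorial resolution is a composition of blowups, so $\mu$ is locally projective over $X$. Therefore $F$ is a projective variety, or at least a compact K\"ahler space. Provided the $L_b$-action on $F$ is meromorphic, a second application of Borel's theorem to the connected commutative (hence solvable) linear group $L_b$ yields a fixed point $x \in F$. At such a point $\xit(x) = 0$, and Step~2 finishes the proof.

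\emph{Main obstacle.} The delicate point is the meromorphy of the lifted $L_b$-action on $\Xt_b$, or on $F$. What I would try first is to compare graphs. Let $L_b^{\ast}$ be the compactification giving the meromorphic structure. The closure of the graph of $L_b \times \Xt_b \to \Xt_b$ is contained in the preimage under $\id \times \mu \times \mu$ of the closure of the graph of the action on $X_b$, and that closure is analytic by meromorphy downstairs. I would then show that the closure upstairs is a union of irreducible components of this analytic preimage, using that $\mu$ is an isomorphism over the dense open subset $\Xreg$. This would make the closure upstairs analytic.

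An alternative route uses the relative construction of \Cref{sec:meromorphic} applied to $\ft \colon \Xt \to B$. Here $\ft$ is proper, but it need not be flat, so one would work over a curve through $b$, as in \Cref{lem:curves}, to recover a meromorphic structure. Either route should supply the hypothesis needed for the second application of Borel's theorem.
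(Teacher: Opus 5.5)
Your Steps 1 and 2 are the paper's argument: lift the Hamiltonian vector fields (equivalently, the action) by functoriality of the resolution, and extend $\tilde{\xi} \contr \sigmat = \ft^{\ast}\beta$ from $\mu^{-1}(\Xreg)$ by the identity theorem. The gap is in Step 3, namely the meromorphy of the lifted $L_b$-action. You leave it open, and it is not a formality: $\CC$ acting on an elliptic curve by translations is a holomorphic action with no fixed point.

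The fiberwise graph comparison you propose cannot supply this meromorphy. Bimeromorphy of $\mu$ is a property of the total spaces and does not survive restriction to fibers. Since $\ft$ is not equidimensional, $\Xt_b$ can have whole irreducible components disjoint from $\mu^{-1}(\Xreg)$, such as exceptional divisors over points of $X_b \cap X_{\mathrm{sing}}$. On those components there is no dense open set over which $\mu$ is an isomorphism, so your justification is unavailable. On $F = \mu^{-1}(x_0)$ the situation is worse. There $\mu$ is constant, and since $x_0$ is fixed, the preimage of the closure of the downstairs graph meets $L_b^{\ast} \times F \times F$ in all of $L_b^{\ast} \times F \times F$. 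It therefore carries no information about the graph upstairs.

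The fix is to prove meromorphy on the total space first and only then restrict. This is what the paper's one-line remark ``since $\mu$ is bimeromorphic'' (or Fujiki's Lem.~2.5) does.

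\begin{itemize}
\item The set $G \times_B \mu^{-1}(\Xreg)$ is Zariski-open and dense in $G \times_B \Xt$, because the projection to $\Xt$ is smooth.
\item The action preserves $\Xreg$, and on this set the lifted action $\tilde{a}$ equals $\mu^{-1} \circ a \circ (\id \times \mu)$.
\item Hence composing $\id \times \mu$, the meromorphic extension $D \times_B X \mto X$, and $\mu^{-1} \colon X \mto \Xt$ extends $\tilde{a}$ to a meromorphic map $D \times_B \Xt \mto \Xt$.
\end{itemize}

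Next, restrict to the fiber over $b$ exactly as in the paper's discussion after the definition of meromorphic actions. The graph of $G_b \times \Xt_b \to \Xt_b$ is the trace of the analytic closure upstairs on the Zariski-open set $G_b \times \Xt_b \times \Xt_b$. So its closure is a union of irreducible components of an analytic set, and $L_b$ acts meromorphically on $\Xt_b$.

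Borel's theorem then applies directly to $\Xt_b$, and your first application on $X_b$ becomes superfluous. Your two-stage version also works at that point, since restriction to the invariant closed subspace $F$ preserves meromorphy by the same trace argument. It even has a small bonus: $F$ is projective, so you need not know that $\Xt_b$ is K\"ahler. The detour through the Douady-space construction for $\ft$ over a curve is not needed.
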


\begin{proof}
The symplectic form $\sigma$ on $\Xreg$ extends to a holomorphic
$2$-form $\sigmat$ on the resolution; of course, $\sigmat$ will no longer be nondegenerate.
Take any point $b \in B$, and let $t_1, \dotsc, t_n$ be local holomorphic coordinates
on an open neighborhod $U \subseteq B$. Set $X_U = f^{-1}(U)$ and $\Xt_U = \ft^{-1}(U)$.
Let $\xi_j = \{\fu t_j, \argbl\}$ be the Hamiltonian vector field corresponding to the
function $\fu t_j$.
By functoriality of the resolution, $\xi_j$ lifts to a holomorphic vector field $\xit_j$
on $\Xt_U$. On $X_{U, \reg}$, we have $\xi_j \contr \sigma = \fu dt_j$; by the identity theorem,
it follows that $\xit_j \contr \sigmat = \ft^{\ast} dt_j$. The important property that
carries over from the smooth case is that if the vector field $\xit_j$ vanishes at some point
$x \in \Xt_U$,  then the $1$-form $\ft^{\ast} dt_j$ also vanishes there,
which means that $dt_j$ annihilates the image of $T_x \Xt \to T_b B$.

The action morphism $a \colon G \times_B X \to X$ is smooth, and because the resolution
algorithm is functorial for smooth morphisms, we get an induced morphism
$\tilde{a} \colon G \times_B \Xt \to \Xt$; one can also get this from \cite[Lem.~2.5]{fujiki-auto}.
Since $\mu \colon \Xt \to X$ is bimeromorphic,
this induced action on $\Xt$ is still meromorphic. Over any point $b \in B$, the
linear algebraic group $L_b$ acts meromorphically on the compact K\"ahler space $\Xt_b$,
and by the Borel fixed point theorem, there is a fixed point $x \in \Xt_b$.
At this point, the vector field corresponding to any $\beta \in \Lie L_b$ must therefore
have a zero. According to the computation above, it follows that the cotangent vector
$\ft^{\ast} \beta$ vanishes on $T_x \Xt$, which means exactly that $(b, \beta) \in Z_{\ft}$
is in the set of singular cotangent vectors.
\end{proof}

Now $\delta$-regularity is immediate: we have $\dim Z_{\ft} \leq n$, and so \eqref{eq:delta-resolution}
holds, for the same reason as in \Cref{sec: deltarz}.

\subsection{Equivariant constructible complexes and freeness}

In this section, we prove a few technical results about (weakly) equivariant constructible
complexes, as well as a more general version of the freeness theorem.
Let $X$ be a compact complex space, and let $a \colon G \times X \to X$
be a holomorphic action by a complex Lie group $G$. Denote by $\Dbc(X, \RR)$ the derived
category of constructible complexes of sheaves of $\RR$-vector spaces on $X$.

\begin{definition} \label{def:equivariant}
	A constructible complex $K \in \Dbc(X, \RR)$ is called \define{weakly equivariant}
	if there is an isomorphism $\phi \colon \au K \to \pu_2 K$ such that the diagram
	\[
		\begin{tikzcd}[column sep=small, row sep=large]
		& (m \times \id)^{\ast} \au K \dlar[bend right=20,swap]{(m \times \id)^{\ast} \phi} \rar[equal] &
			(\id \times a)^{\ast} \au K \drar[bend left=20]{(\id \times a)^{\ast} \phi} \\
		(m \times \id)^{\ast} \pu_2 K \drar[bend right=20,equal] &&& (\id \times a)^{\ast}
		\pu_2 K \dlar[bend left=20,equal] \\
		& \pu_{23} \pu_2 K & \pu_{23} \lar[swap]{\pu_{23} \phi} \au K
		\end{tikzcd}
	\]
	on $G \times G \times X$ commutes. The arrows marked ``equal'' are the usual isomorphisms
	coming from the identities $a \circ (\id \times a) = a \circ (m \times \id)$, etc.
\end{definition}

Let $K \in \Dbc(X, \RR)$ be a weakly equivariant constructible complex. Since $X$ is compact, the
cohomology $H^{\ast}(X, K)$ is a finite-dimensional graded $\RR$-vector space. From
$\phi \colon \au K \to \pu_2 K$ and the projection formula \cite[Prop.~2.6.6]{Kashiwara+Schapira}
(which applies here because $X$ is compact and $K$ is constructible),  we obtain
\[
	H^{\ast}(G \times X, \au K) \cong H^{\ast}(G \times X, \pu_2 K)
		\cong H^{\ast}(G) \tensor H^{\ast}(X, K).
\]
The compatibility condition in the definition guarantees that the resulting map
\[
	\delta_K \colon H^{\ast}(X, K) \to H^{\ast}(G) \tensor H^{\ast}(X, K)
\]
makes $H^{\ast}(X, K)$ into a comodule over the Hopf algebra $H^{\ast}(G)$. At the same
time, $H^{\ast}(X, K)$ is also a graded module over the cohomology algebra $H^{\ast}(X)$.
The reason is that pullback by the diagonal embedding $\Delta \colon X \to X \times X$ defines a map
\[
	H^{\ast}(X) \tensor H^{\ast}(X, K) \cong H^{\ast}(X \times X, \pu_2 K)
		\to H^{\ast}(X, K).
\]
Just as in the case of constant coefficients, the module and comodule structure are
compatible: for $h \in H^j(X)$ and $m \in H^k(X, K)$, one has
\[
	\delta_K(h \cdot m) = \delta(h) \cdot \delta_K(m),
\]
where $\delta \colon H^{\ast}(X) \to H^{\ast}(G) \tensor H^{\ast}(X)$ is the usual comodule
structure on the cohomology of the compact complex space $X$. The proof is the same as
in the smooth case, using the commutative diagram
\begin{tcd}
	G \times X \rar{a} \dar{\Delta_{G \times X}} & X \dar{\Delta_X} \\
	G \times X \times G \times X \rar{a \times a} & X \times X.
\end{tcd}

Now suppose that the compact complex space $X$ is connected and K\"ahler, that $G$ is
a meromorphic group, and that the action $a \colon G \times X \to X$ is meromorphic.
Let $q \colon G \to T$ be the maximal compact quotient, as in \eqref{eq:Chevalley}.
As in \Cref{sec:freeness}, we can choose a section $s \colon H^1(T) \to H^1(X)$,
which turns $H^{\ast}(X, K)$ into a Hopf module over the Hopf algebra $H^{\ast}(T)$.
The abstract freeness theorem in \Cref{thm:freeness} therefore implies the following
result.

\begin{proposition}\label{pr: K weq}
Under these assumptions,
\[
	H^{\ast}(X, K) \cong H^{\ast}(T) \tensor H^{\ast}(X, K)_{\coinv}
\]
is free as a comodule over $H^{\ast}(T)$.
\end{proposition}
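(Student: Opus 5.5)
The plan is to follow the proof of \Cref{thm:freeness} almost verbatim. The only new input is that the module structure now acts on $H^{\ast}(X,K)$ rather than on $H^{\ast}(X)$. The relevant data are the following:
\begin{itemize}
\item the comodule structure $\delta_K \colon H^{\ast}(X,K) \to H^{\ast}(G)\tensor H^{\ast}(X,K)$;
\item its composite $\delta_{K,T}$ with the Hopf algebra projection $H^{\ast}(G)\to H^{\ast}(T)$ from \Cref{lem:HG};
\item the $H^{\ast}(X)$-module structure given by cup product.
\end{itemize}
As in \Cref{sec:freeness}, I first reduce to the case where $X$ is connected and reduced. Passing to $X_{\red}$ changes neither $\Dbc(X,\RR)$ nor the cohomology. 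The image of $G$ in $\Aut^{\circ}(X_{\red})$ has maximal compact quotient isogenous to $T$, so its torus cohomology is canonically the same.

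\textbf{Step 1.} Take the section $s \colon H^1(T)\to H^1(X)$ constructed in the proof of \Cref{thm:freeness}, using \Cref{lem:H1}, so that $p\circ s=\id$. Extend it to an algebra map $s \colon H^{\ast}(T)\to H^{\ast}(X)$; this is possible because $H^{\ast}(T)=\bigwedge H^1(T)$ and odd classes in $H^{\ast}(X)$ square to zero. Define the $H^{\ast}(T)$-module structure on $H^{\ast}(X,K)$ by $\alpha\cdot m = s(\alpha)\cdot m$, using the $H^{\ast}(X)$-module structure. The action on $X$ is only used through $K$ being weakly equivariant; \Cref{lem:H1} itself concerns $H^{\ast}(X)$ and needs no change.

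\textbf{Step 2.} Verify the Hopf module identity. By the compatibility $\delta_K(h\cdot m)=\delta(h)\cdot\delta_K(m)$ established just before the statement, for $\alpha\in H^1(T)$ we get
\[
\delta_{K,T}(s(\alpha)\cdot m) = (1\tensor s(\alpha) + \alpha\tensor 1)\cdot \delta_{K,T}(m).
\]
This requires knowing that $\delta_T(s(\alpha)) = 1\tensor s(\alpha)+\alpha\tensor 1$. That holds because $\delta$ of a degree-one class on a connected $X$ has the form $1\tensor\beta + p(\beta)\tensor 1$, and $p(s(\alpha))=\alpha$. Since both sides of the identity are multiplicative in $\alpha$ and $H^{\ast}(T)$ is generated by $H^1(T)$, we obtain $\delta_{K,T}(h\cdot m)=\Delta(h)\cdot\delta_{K,T}(m)$ for all $h\in H^{\ast}(T)$. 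So $H^{\ast}(X,K)$ is a Hopf module over $H^{\ast}(T)$.

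\textbf{Step 3.} Apply \Cref{prop:Hopf-module}, which gives $H^{\ast}(X,K)\cong H^{\ast}(T)\tensor H^{\ast}(X,K)_{\coinv}$ as comodules.

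The main obstacle is bookkeeping rather than new ideas. One must check that $\delta_{K,T}$ really is a comodule structure over $H^{\ast}(T)$; this holds because $\delta_K$ is a comodule structure over $H^{\ast}(G)$ and the projection to $H^{\ast}(T)$ is a Hopf algebra map. One must also check that the module--comodule compatibility holds with the correct Koszul signs, which is the diagram with $\Delta_{G\times X}$ and $a\times a$. Some care is needed when $K$ is not bounded below in degree $0$: in \Cref{prop:Hopf-module} the induction on degree should start at the lowest nonzero degree of $H^{\ast}(X,K)$, which is finite since $X$ is compact. With that adjustment the proof goes through unchanged.
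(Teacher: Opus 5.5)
Your proposal is correct and is essentially the paper's own argument: the paper likewise takes the section $s\colon H^1(T)\to H^1(X)$ from the proof of \Cref{thm:freeness}, uses the compatibility $\delta_K(h\cdot m)=\delta(h)\cdot\delta_K(m)$ to make $H^{\ast}(X,K)$ a Hopf module over $H^{\ast}(T)$, and concludes by \Cref{prop:Hopf-module}. The only point worth making explicit is that the passage to $X_{\red}$ and to the image of $G$ should be used solely to obtain \Cref{lem:H1}, and $\delta_K$ should stay defined via $G$, because the weak equivariance of $K$ need not descend to that image; your remark about starting the degree induction at the lowest nonzero degree of $H^{\ast}(X,K)$ is correct bookkeeping that the paper leaves implicit.
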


During the proof of the support theorem, we are going to need to know how the
comodule structure interacts with duality.

\begin{remark} \label{rem:f!}
We use the fact that if $f \colon X \to Y$ is a smooth morphism between
complex spaces, hence a topological submersion, then the morphism of functors
\[
	\fu(\argbl) \tensor f^! \RR_Y \to f^!(\argbl)
\]
is an isomorphism \cite[Thm.~3.3.2]{Kashiwara+Schapira}. Note that $f^! \RR_Y \cong
\RR_X \decal{2c}$, because the fibers of $f$ are complex manifolds of dimension $c = \dim X
- \dim Y$, hence oriented.
\end{remark}

Let $K' = \DD_X K \in \Dbc(X, \RR)$ be
the Verdier dual of $K$. It is again weakly equivariant: the isomorphism $\phi' \colon \au K'
\to \pu_2 K'$ is obtained as the composition
\begin{tcd}
	\au \DD K \rar{\cong} & \DD a^! K \rar{\cong} &
	\DD \au K[2n] \rar{\DD\phi} &
	\DD \pu_2 K[2n] \rar{\cong} & \pu_2 \DD K.
\end{tcd}
Here we used the fact that duality exchanges the two functors $\au$ and $a^!$, and that
$a \colon G \times X \to X$ is smooth of relative dimension $n = \dim G$. The cohomology of
$K' = \DD K$ is therefore also a comodule
\[
	\delta_{K'} \colon H^{\ast}(X, K') \to H^{\ast}(G) \tensor H^{\ast}(X, K')
\]
over $H^{\ast}(G)$. Since $X$ is compact, Verdier duality gives
\begin{equation} \label{eq:Verdier}
	H^i(X, K') \cong \Hom_{\RR} \bigl( H^{-i}(X, K), \RR \bigr),
\end{equation}
and so $H^{\ast}(X, K')$ is isomorphic, as a graded $\RR$-vector space, to the dual of
$H^{\ast}(X, K)$. Not surprisingly, the two comodule structures are dual to each other.

\begin{lemma} \label{lem:dual-comodule}
Under \eqref{eq:Verdier}, the two comodules $H^{\ast}(X, K)$ and $H^{\ast}(X, K')$ are dual.
\end{lemma}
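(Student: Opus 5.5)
The duality will be checked at the level of sheaves on $G \times X$, by comparing the two ways of computing the Verdier pairing after pulling back along $a$ and along $p_2$. Concretely, the claim is that for $\alpha \in H^i(X,K)$ and $\beta \in H^{-i+j}(X,K')$, the pairing of $\delta_{K'}(\beta)$ with $\alpha$ equals the transpose of $\delta_K$. To make this precise, we dualize $H^{\ast}(G)$ to $H_{\ast}(G)\cong H_c^{2n-\ast}(G)$ (Poincar\'e duality on $G$). The statement to prove is then: for every $\gamma \in H_c^{\ast}(G)$,
\[
	\langle \delta_{K'}(\beta), \gamma \tensor \alpha \rangle = \pm \langle \beta, \delta_K(\alpha)\cdot\gamma \rangle .
\]
Here the contraction of $\delta_K(\alpha)$ against $\gamma$ is given by $\gamma \mapsto \int_G \gamma \cup (\argbl)$, and we identify $H_c^{\ast}(G\times X, p_2^{\ast}K)\cong H_c^{\ast}(G)\tensor H^{\ast}(X,K)$.

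First, we rewrite both comodule maps as pullbacks of cohomology classes. We have $\delta_K = \phi\circ a^{\ast}$, and $\delta_{K'}$ is $a^{\ast}$ followed by the composite isomorphism $\phi'$ defined before the lemma. Second, we express the Verdier pairing $H^{\ast}(X,K')\tensor H^{\ast}(X,K)\to \RR$ as the composite
\[
	K'\tensor K \to \omega_X \quad\text{followed by}\quad H^{\ast}(X,\omega_X)\to\RR.
\]
On $G\times X$ the corresponding pairing lands in $H_c^{\ast}(G\times X,\omega_{G\times X})$, where
\[
	\omega_{G\times X} = a^!\omega_X \cong \au\omega_X[2n] \cong p_2^!\omega_X
\]
by \Cref{rem:f!}.

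The key computation is compatibility of traces. For the smooth maps $a$ and $p_2$, the trace
\[
	\derR a_!\, a^! \omega_X \to \omega_X
\]
induces the integration $H_c^{\ast}(G\times X,\omega_{G\times X})\to H^{\ast}(X,\omega_X)\to\RR$, and the same holds for $p_2$. The argument then runs as follows:
\begin{enumerate}
	\item Pulling back the pairing $K'\tensor K\to\omega_X$ by $a$ gives the pairing $\au K'\tensor\au K \to \au\omega_X$.
	\item By the very construction of $\phi'$ as $\DD\phi$ conjugated by the canonical isomorphisms, $\phi'\tensor\phi$ transports this pairing to the $p_2^{\ast}$-pairing. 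This is a diagram chase with the natural isomorphisms $\au\DD\cong\DD a^!$ and $a^!\cong\au[2n]$.
	\item Integrating over $G\times X$ in two ways, via $a_!$ and via $p_2{}_!$, gives the same number, since both equal the global trace on $G\times X$.
	\item The $a_!$ side is the projection formula: $\int_{G\times X} a^{\ast}\beta\cup a^{\ast}\alpha\cup p_1^{\ast}\gamma$.
	\item Via $p_2{}_!$ and the K\"unneth isomorphism, the same quantity becomes the asserted expression in $\delta_{K'}(\beta)$ and $\delta_K(\alpha)$.
\end{enumerate}

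The main obstacle is the sign and normalization bookkeeping in step (2). One has to verify that the isomorphism $\au\DD K\cong\pu_2\DD K$ built from $\DD\phi$ is exactly the one compatible with the trace maps of $a$ and $p_2$. This requires knowing that the trace isomorphism
\[
	a^!\RR \cong \RR[2n]
\]
on $G\times X$ agrees for $a$ and for $p_2$. Both come from the orientation of the fibers, which are complex manifolds isomorphic to $G$ via $(g,x)\mapsto (g, g\cdot x)$ (\Cref{lem:automorphism}), so the orientations match. I would handle the remaining sign issues by reducing to the case $K=\RR_X$, where the lemma is classical Poincar\'e–Verdier duality for the action on a point. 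Since all maps are natural in $K$, the general case follows from the same diagrams.
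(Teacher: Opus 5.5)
\textbf{There is a genuine gap in steps (3)--(5).} They do not yield the identity you set out to prove.

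The integrand in (4), $p_1^{\ast}\gamma\cup a^{\ast}\beta\cup a^{\ast}\alpha$, pulls \emph{both} classes back along the same map $a$.
\begin{itemize}
\item Pushing forward along $a$, the projection formula turns it into $\int_X \beta\cup\alpha\cup a_!\,p_1^{\ast}\gamma$. For the relevant $\gamma\in H_c^{2n-j}(G)$, the class $a_!\,p_1^{\ast}\gamma$ lies in $H^{-j}(X)$. So this side equals $\bigl(\int_G\gamma\bigr)\langle\beta,\alpha\rangle$, which vanishes for every $j>0$. That is exactly the range in which the lemma has content.
\item By contrast, $\langle\beta,\delta_K(\alpha)\cdot\gamma\rangle$ does not vanish in general. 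Take a complex torus acting on itself by translations, $K=\RR_X$, $\alpha\in H^1(X)$, $\beta$ the top class, $j=1$.
\item Pushing forward along $p_2$ instead, and using (2) and K\"unneth, the same number becomes $\sum_{I,J}\pm\bigl(\int_G\gamma\cup e_I\cup e_J\bigr)\langle\beta_I,\alpha_J\rangle$. Here $\delta_{K'}(\beta)=\sum_I e_I\tensor\beta_I$ and $\delta_K(\alpha)=\sum_J e_J\tensor\alpha_J$, so this expression involves both coactions at once.
\end{itemize}
So your argument proves only the invariance of the Verdier pairing,
$(\id\tensor\langle\,,\rangle)\bigl(\delta_{K'}(\beta)\cdot\delta_K(\alpha)\bigr)=\langle\beta,\alpha\rangle\cdot 1$.
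Step (5) does not follow from it.

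\textbf{The missing ingredient is the cocycle condition in \Cref{def:equivariant}}, which your argument never uses. It cannot be dispensed with. Replace $\phi$ by $c\,\phi$ for a constant $c\neq\pm1$ (say for $K=\RR_X$). Then $\phi'$ is replaced by $c^{-1}\phi'$, and all of your steps go through verbatim. But $c\,\delta_K$ and $c^{-1}\delta_{K'}$ are not dual, even in degree $0$.

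\textbf{How to repair it via invariance.} Your invariance identity can still be turned into a proof.
\begin{itemize}
\item Write $\delta_{K'}=\sum_I e_I\tensor\delta'_I$. The coefficient of a primitive basis element $e_i$ in the invariance identity gives $\langle\delta'_i\beta,\alpha\rangle=\pm\langle\beta,\delta_i\alpha\rangle$.
\item The relation $\delta_I=\delta_{i_k}\circ\dotsb\circ\delta_{i_1}$ from \Cref{sec:comodules} holds for both comodules precisely because of the cocycle condition. It then gives the duality for all $I$, with the signs coming from the antipode $S=(-1)^k$.
\end{itemize}

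\textbf{How the paper argues instead.} The paper uses mixed pullbacks.
\begin{itemize}
\item It writes the Verdier pairing via $K\boxtimes K'$ on $X\times X$, restriction to the diagonal, and $K\tensor K'\to p^!\RR$.
\item It pulls back along $(g,x)\mapsto(g\cdot x,x)$, which moves only one factor. Then $a^{\ast}K\tensor p_2^{\ast}K'\cong p_2^{\ast}(K\tensor K')$ via $\phi$, and the resulting $H^{\ast}(G)$-valued pairing is by construction the one attached to $\delta_K$.
\item Doing the same with $K'$ moved gives the pairing attached to $\delta_{K'}$.
\end{itemize}
Identifying these two mixed pairings again needs the cocycle condition. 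One way is the involution $(g,x)\mapsto(g^{-1},g\cdot x)$ of $G\times X$, which exchanges $a$ and $p_2$ and carries $\phi$ to $\phi^{-1}$.
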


\begin{proof}
Let's briefly review duality for comodules. If $M$ is a finite-dimensional graded
$\RR$-vector space, the natural grading on the dual space $\Hom(M, \RR)$ is
\[
	\Hom(M, \RR)^k = \Hom(M^{-k}, \RR).
\]
Now suppose that $M$ is a comodule over a Hopf algebra $H$. From the comodule structure
$\delta \colon M \to H \tensor M$, we obtain a bilinear pairing
\[
	M \tensor \Hom(M, \RR) \to H, \quad
	m \tensor f \mapsto (\id \tensor f)(\delta(m)),
\]
by considering the composition
\begin{tcd}
	M \rar{\delta} & H \tensor M \rar{\id \tensor f} & H \tensor \RR \rar{\cong} & H,
\end{tcd}
as an $\RR$-linear mapping from $M$ to $H$. After swapping the role of $M$ and $\Hom(M, \RR)$
and reversing the construction, we obtain the desired comodule structure
\[
	\delta' \colon \Hom(M, \RR) \to H \tensor \Hom(M, \RR)
\]
on the dual space.

To prepare for the proof, we should express the duality in \eqref{eq:Verdier} in more categorical terms.
Consider the object $K \boxtimes K' = \pu_1 K \tensor \pu_2 \DD K$ on $X \times X$.
As $X$ is compact, the projection formula \cite[Prop.~2.6.6]{Kashiwara+Schapira} gives
\[
	H^{\ast}(X, K) \tensor H^{\ast}(X, K') \cong
	H^{\ast}(X \times X, K \boxtimes K').
\]
Let $\Delta \colon X \to X \times X$ be the diagonal embedding. From
\[
	K \boxtimes K' \to \derR \Delta_{\ast} \Delta^{\ast} (K \boxtimes K')
	\cong \derR \Delta_{\ast}(K \tensor K'),
\]
we obtain, upon taking cohomology, a homomorphism
\[
	\mu \colon H^{\ast}(X, K) \tensor H^{\ast}(X, K') \to H^{\ast}(X, K \tensor K').
\]
By \cite[Def.~3.1.16]{Kashiwara+Schapira}, we have $K' = \DD_X K
= \derR \shHom(K, p^! \RR)$, where $p \colon X \to \pt$ is the morphism to a point.
From \cite[Prop.~3.4.4]{Kashiwara+Schapira}, we get a natural morphism $K \tensor K'
\to p^! \RR$. Putting everything together, we have a homomorphism
\begin{equation} \label{eq:Verdier-pairing}
	H^{\ast}(X, K) \tensor H^{\ast}(X, K') \to H^{\ast}(X, p^! \RR)
	\cong \Hom \bigl( H^{\ast}(X), \RR \bigr) \to \RR,
\end{equation}
by composing with the dual of the inclusion $\RR \to H^0(X) \to H^{\ast}(X)$. Then
\eqref{eq:Verdier-pairing} is a perfect pairing, which corresponds to the duality
isomorphism in \eqref{eq:Verdier}.

Now we are ready to prove the assertion. Let $\sigma \colon X \times X \to X \times X$
be the automorphism $\sigma(x,y) = (y,x)$. Consider the commutative diagram
\begin{equation} \label{eq:diagram-annoying}
\begin{tikzcd}
	G \times X \rar{\id \times \Delta} \dar{\id \times \Delta}
		& G \times X \times X \dar{(a \times \id) \circ \sigma} \\
	G \times X \times X \rar{a \times \id} & X \times X.
\end{tikzcd}
\end{equation}
The comodule structure on $H^{\ast}(X, K)$ comes from $K \to \derR \al \au K$.
Since
\[
	(a \times \id)^{\ast}(K \boxtimes K')
	\cong \pu_{12} \au K \tensor \pu_3 K',
\]
the morphism on cohomology induced by $a \times \id$ is equal to
\[
	\delta_K \tensor \id \colon H^{\ast}(X, K) \tensor H^{\ast}(X, K') \to
	H^{\ast}(G) \tensor H^{\ast}(X, K) \tensor H^{\ast}(X, K').
\]
But since $\au K \cong \pu_2 K$, we also have
\[
	\pu_{12} \au K \tensor \pu_3 K'
	\cong \pu_2 K \tensor \pu_3 K'
	\cong \pu_{23}(K \boxtimes K'),
\]
and so the morphism on cohomology induced by $\id \times \Delta$ is
\[
	\id \tensor \mu \colon H^{\ast}(G) \tensor H^{\ast}(X, K) \tensor H^{\ast}(X, K')
	\to H^{\ast}(G) \tensor H^{\ast}(X, K \tensor K').
\]
We now compose this with the projection  $H^{\ast}(X, K \tensor K')\to \RR$ from
\eqref{eq:Verdier-pairing}. The result is that pulling back along
$(\id \times \Delta) \circ (a \times \id)$ gives us a bilinear pairing
\[
	H^{\ast}(X, K) \tensor H^{\ast}(X, K') \to H^{\ast}(G);
\]
and under the isomorphism in \eqref{eq:Verdier}, this is exactly the bilinear
pairing associated to the comodule structure on $H^{\ast}(X, K)$.
We can apply the same construction to the other two edges of the commutative diagram
in \eqref{eq:diagram-annoying}, to see that
\[
	H^{\ast}(X, K) \tensor H^{\ast}(X, K') \to H^{\ast}(G)
\]
is also the bilinear pairing associated to the comodule structure on $H^{\ast}(X, K')$.
This is enough to conclude that the two comodules are dual to each other.
\end{proof}

The lemma translates into another very useful compatibility condition among comodules. We
continue to assume that $K \in \Dbc(X, \RR)$ is a weakly equivariant constructible complex.
Recall that the comodule structure
\[
	\delta_K \colon H^{\ast}(X, K) \to H^{\ast}(G) \tensor H^{\ast}(X, K),
\]
is induced by the composition
\begin{tcd}
	K \rar & \derR \al \au K \rar{\derR \al \phi} & \derR \al \pu_2 K
\end{tcd}
upon taking cohomology. In a similar way, the composition
\begin{tcd}
	\derR a_! \pu_2 K \decal{2n} \rar{\derR a_! \phi^{-1}} &
	\derR a_! \au K \decal{2n} \rar{\cong} & \derR a_! a^{!} K \rar & K
\end{tcd}
induces, after passing to cohomology and using the projection formula, a morphism
\[
	H_c^{2n + \ast}(G) \tensor H^{\ast}(X, K) \to H^{\ast}(X, K).
\]
The claim is that these two morphisms are the same if we use
\begin{equation}  \label{eq:duality-cohomology}
	\Hom_{\RR} \bigl( H_c^{2n-\ast}(G), \RR \bigr) \cong H^{\ast}(G)
\end{equation}
to move the first factor from one side to the other.

\begin{corollary} \label{cor:comodules}
Under \eqref{eq:duality-cohomology}, the morphism
\[
	H_c^{2n + \ast}(G) \tensor H^{\ast}(X, K) \to H^{\ast}(X, K)
\]
turns into the comodule structure $\delta_K \colon H^{\ast}(X, K) \to
H^{\ast}(G) \tensor H^{\ast}(X, K)$.
\end{corollary}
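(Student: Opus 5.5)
The plan is to deduce \Cref{cor:comodules} from \Cref{lem:dual-comodule}, applied not to $K$ itself but to its Verdier dual $K' = \DD_X K$. We use $\DD_X K' \cong K$ and the standard fact that $\DD$ exchanges $\au$ with $a^!$ and $\derR \al$ with $\derR a_!$. The key observation is that the composition
\[
\derR a_! \pu_2 K\decal{2n} \to \derR a_! \au K\decal{2n} \cong \derR a_! a^! K \to K
\]
is the Verdier dual of the morphism $K' \to \derR \al \au K' \to \derR \al \pu_2 K'$ that defines $\delta_{K'}$. Here the duality runs through $\DD(\derR \al \au K') \cong \derR a_! a^! \DD K'$, and the adjunction unit $\id \to \derR \al \au$ dualizes to the counit $\derR a_! a^! \to \id$. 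In addition, $\phi'$ was defined precisely as the dual of $\phi$ (up to the shift $[2n]$ from \Cref{rem:f!}).

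First I will verify, using the definition of $\phi'$ given before \Cref{lem:dual-comodule}, that the morphism $\derR a_!\pu_2 K\decal{2n}\to \derR a_!\au K\decal{2n}$ induced by $\phi^{-1}$ is identified, under $\DD$, with $\derR\al\phi'$. The identification uses $\DD \derR \al \pu_2 K' \cong \derR a_! \pu_2^{!} K \cong \derR a_!\pu_2 K\decal{2n}$, again by \Cref{rem:f!} applied to the smooth projection $p_2$.

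Second, I will pass to cohomology. Since $X$ is compact and $\derR a_! \pu_2 K \cong \derR p_{2!}\pu_2 K$ after composing with the isomorphism $(p_2,a)$ of $G\times X$, the projection formula gives $H^\ast(X,\derR a_!\pu_2K\decal{2n}) \cong H_c^{2n+\ast}(G)\tensor H^\ast(X,K)$. Verdier duality on $X$ in the form \eqref{eq:Verdier}, together with Poincar\'e duality on $G$, which is \eqref{eq:duality-cohomology}, shows that the resulting action map $H_c^{2n+\ast}(G)\tensor H^\ast(X,K)\to H^\ast(X,K)$ is the transpose of $\delta_{K'}\colon H^\ast(X,K')\to H^\ast(G)\tensor H^\ast(X,K')$.

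Finally, \Cref{lem:dual-comodule} says that $\delta_{K'}$ is the comodule dual to $\delta_K$. Transposing $\delta_{K'}$ back through \eqref{eq:Verdier} therefore recovers $\delta_K$, with the $H^\ast(G)$ factor moved to the left via \eqref{eq:duality-cohomology}. That is exactly the claimed statement.

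The main obstacle is the second step, namely the sheaf-theoretic bookkeeping. One must check that the Künneth and projection-formula isomorphisms for $\derR a_!$ and $\derR \al$ are Verdier dual to each other, compatibly with the trace morphism $H_c^{2n}(G)\to\RR$. Signs and shifts have to be tracked carefully here. I would handle this by reducing to the case $X$ a point through base change along $X\to\pt$, where the statement is just the compatibility of Poincar\'e duality on $G$ with the trace. The general case then follows from the naturality of all the isomorphisms involved.
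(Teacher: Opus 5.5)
Your proposal is correct and is the paper's argument: Verdier duality exchanges $\derR a_! a^! K \to K$ with $\DD K \to \derR \al \au \DD K$, so the action map $H_c^{2n+\ast}(G) \tensor H^\ast(X,K) \to H^\ast(X,K)$ is the transpose of $\delta_{\DD K}$, and \Cref{lem:dual-comodule} turns this back into $\delta_K$. Your extra bookkeeping only spells out what the paper leaves implicit, with one small slip: the shear automorphism of $G \times X$ is $(g,x) \mapsto (g, g\cdot x)$, not $(p_2,a)$, and you do not need it anyway, since compactness of $X$ gives $H^\ast(X, \derR a_! F) \cong H_c^\ast(G \times X, F)$ directly for any $F$.
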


\begin{proof}
Let $\DD K \in \Dbc(X, \RR)$ be the Verdier dual; we saw above that this is
weakly equivariant. Verdier duality interchanges $\derR a_! a^{!} K \to K$
and $\DD K \to \derR \al \au \DD K$, and so an equivalent formulation is that
the induced comodule structure on
\[
	H^{\ast}(X, \DD K) \cong \Hom_{\RR} \bigl( H^{\ast}(X, K), \RR \bigr)
\]
is dual to the comodule structure on $H^{\ast}(X, K)$. This follows from \Cref{lem:dual-comodule}.
\end{proof}

\subsection{The Support Theorem for Lagrangian fibrations with $X$ singular}

Let $f \colon X \to B$ be a Lagrangian fibration on a holomorphic symplectic space
$X$ of dimension $2n$. The decomposition theorem
\[
	\derR \fl \IC_X \cong \bigoplus_{j \in \ZZ} P_j[-j]
\]
holds for the intersection complex (which is the unique extension of $\RR_{\Xreg}[2n]$
to a simple perverse sheaf on $X$). The reason is that $\IC_X$ is part of a polarized
Hodge module on $X$, and that the decomposition theorem is true for direct images of
polarized Hodge modules under proper morphisms from K\"ahler manifolds.
Surprisingly, the amplitude bound is exactly the same as in the smooth case.

\begin{lemma}
	One has $P_j = 0$ for $\abs{j} > n$.
\end{lemma}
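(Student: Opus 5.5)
We need to show $P_j=0$ for $|j|>n$, where $\derR \fl \IC_X \cong \bigoplus_j P_j[-j]$. The plan is to combine relative Hard Lefschetz with the amplitude bound coming from equidimensionality, as in the smooth case; the only new input is a replacement for the vanishing $H^k(X_b)=0$ for $k>2n$ when $\IC_X$ is not the constant sheaf.

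By relative Hard Lefschetz for the direct image of the polarized Hodge module $\IC_X$ under the proper Kähler morphism $f$, we have $P_{-j}\cong P_j$. It therefore suffices to show $P_j=0$ for $j>n$.

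For this we look at cohomology sheaves. $P_j[-j]$ is a direct summand of $\derR\fl\IC_X$, so $\shH^{i}P_j$ is a summand of $\shH^{i+j}\derR\fl\IC_X$. By proper base change, the stalk of the latter at $b$ is $H^{i+j}(X_b,\IC_X|_{X_b})$. Since $P_j$ is perverse on $B$ with $\dim B=n$, a nonzero $P_j$ has some nonzero $\shH^iP_j$ with $i\geq -n$. So it is enough to show
\[
H^{k}(X_b,\IC_X|_{X_b})=0 \quad\text{for } k>0,
\]
because then $i+j>0$ whenever $j>n$ and $i\geq -n$.

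To get this vanishing, we use that $\IC_X$ lies in perverse degrees $\le 0$. Concretely, $\shH^m\IC_X=0$ for $m>-\dim X+\dim\operatorname{supp}$ in the usual stratified sense. In fact, for $\IC_X$ the support condition is strict: $\dim\Supp\shH^m\IC_X<-m$ for $m>-2n$, and $\shH^{-2n}\IC_X$ is supported on all of $X$. Restricting to the fiber $X_b$, which has dimension $n$ by \Cref{prop:equidimensional}, we estimate $H^k(X_b,\IC_X|_{X_b})$ with the hypercohomology spectral sequence $H^p(X_b,\shH^m\IC_X|_{X_b})\Rightarrow H^{p+m}$. Each sheaf $\shH^m\IC_X|_{X_b}$ is constructible and supported on $\Supp\shH^m\IC_X\cap X_b$. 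That intersection has dimension at most $\min(n,\dim\Supp\shH^m\IC_X)$, so $H^p$ vanishes for $p>2\min(n,\dim\Supp\shH^m\IC_X)$. For $m=-2n$ this gives $p+m\le 2n-2n=0$. For $m>-2n$ we need a sharper bound: $\dim(\Supp\shH^m\IC_X\cap X_b)\le -m/2$, which gives $p+m\le 0$. Here we use the stratification from the proof of \Cref{prop:equidimensional}. By Matsushita's Theorem of Matreshka, every Kaledin stratum $S$ satisfies $\dim(S\cap X_b)\le \dim S-\dim f(S)=\dim S/2$. Moreover $\IC_X$ is constructible with respect to this Whitney stratification, since $X$ is locally trivial along strata, and $\shH^m\IC_X|_S\neq 0$ forces $\dim S<-m$ for $m>-2n$. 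Hence $\dim(S\cap X_b)\le \dim S/2<-m/2$, which is what we need.

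The main obstacle is this last step. It requires checking carefully that $\IC_X$ is constructible along Kaledin's strata and that the fiberwise dimension bound $\dim(S\cap X_b)\le\frac12\dim S$ holds stratum by stratum. The Matreshka theorem supplies the bound, since $\overline S\to\overline{f(S)}$ is, after normalization, an equidimensional Lagrangian fibration.
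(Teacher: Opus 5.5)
Your proof is correct and follows essentially the paper's route. The paper cites Mauri--Migliorini (Props.~3.3 and~3.6), whose argument rests on your two ingredients, Kaledin's stratification and the Matreshka theorem, to get the fiberwise vanishing $H^k(X_b,\IC_X\vert_{X_b})=0$ for $k>0$; it then deduces $P_j=0$ for $j>n$ and uses relative Hard Lefschetz for $j<-n$. One caveat: the bound $\dim(S\cap X_b)\le\frac12\dim S$ comes from the equidimensionality of $\overline{S}^{\nu}\to\overline{f(S)}^{\nu}$, which is proved by induction in the proof of \Cref{prop:equidimensional}, and not from the Matreshka theorem alone, as your closing paragraph already notes.
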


\begin{proof}
In the algebraic case, this is due to Mauri and Migliorini \cite[Prop.~3.3]{Mauri+Migliorini}.
The two key ingredients are Kaledin's stratification of $X$ by Poisson subspaces (which is
also a Whitney stratification), and
the theorem of Matreshka; we already explained during the proof of \Cref{prop:equidimensional}
that both are still true in the analytic setting.
With this in place, the proof of \cite[Prop.~3.6]{Mauri+Migliorini} goes through; together with \cite[Prop.~3.3]{Mauri+Migliorini},
the conclusion is that $P_j = 0$ for $j > n$. We then get $P_j = 0$ for $j < -n$ by the
relative Hard Lefschetz theorem.
\end{proof}

Denote by $p \colon G \to B$ the family of meromorphic groups constructed from the
action by the cotangent bundle $T^{\ast} B$, and let $a \colon G \times_B X \to X$ be the
meromorphic action. We observe that the intersection complex $\IC_X$ is weakly equivariant.

\begin{lemma}
	The intersection complex $\IC_X$ is weakly equivariant, in the sense that there is a natural
	isomorphism $\phi \colon \au \IC_X \to \pu_2 \IC_X$ on $G \times_B X$ that satisfies
	the compatibility condition in \Cref{def:equivariant}.
\end{lemma}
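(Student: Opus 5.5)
The plan is to exploit the fact that the action morphism $a \colon G \times_B X \to X$ and the projection $p_2 \colon G \times_B X \to X$ are both smooth of relative dimension $n$. Both are in fact isomorphic over $X$ via the biholomorphism $(g,x) \mapsto (g, a(g,x))$ of \Cref{lem:automorphism}. For a smooth morphism $h$ of relative dimension $n$, the functor $h^{\ast}[n]$ is t-exact for the perverse t-structure and preserves intermediate extensions. I will therefore first show that
\[
	\au \IC_X \cong \IC_{G \times_B X}[-n] \cong \pu_2 \IC_X.
\]
Concretely, I will use that $a^{-1}(\Xreg) = G \times_B \Xreg = p_2^{-1}(\Xreg)$: the regular locus is preserved by every automorphism of $X$, and each $g \in G_b$ acts by an automorphism of $X_b$. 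Since $G_b$ is connected (after replacing $G$ by $G^\circ$), it moves points within a connected family, and the action is the restriction of the global flows $\Phi$, which are automorphisms of $X_U$ and hence preserve $X_{U,\reg}$. On this open set both pullbacks equal $\RR_{G \times_B \Xreg}[2n]$. Each of $\au\IC_X[n]$ and $\pu_2 \IC_X[n]$ is then the intermediate extension of the constant sheaf from $G\times_B \Xreg$, so the identity on the open set extends uniquely to an isomorphism $\phi$. Uniqueness comes from $\Hom(\IC(\mathcal L), \IC(\mathcal L')) = \Hom(\mathcal L, \mathcal L')$ for intermediate extensions.

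Next I will check the cocycle condition of \Cref{def:equivariant} on $G \times_B G \times_B X$. All six objects in the diagram are pullbacks of $\IC_X$ under smooth maps of relative dimension $2n$, so each is a shifted intermediate extension from the open set $G \times_B G \times_B \Xreg$. Restricted to that open set, every arrow is the canonical identification of constant sheaves, so the diagram commutes there. Restriction to a dense open set is fully faithful on such intermediate extensions, so the diagram commutes everywhere. This full faithfulness follows from the same Hom identity, applied to the composite automorphism of a simple object with constant generic part.

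The main obstacle I expect is making the first step rigorous: showing that $a$ is genuinely smooth as a morphism of complex spaces and that $a^{-1}(\Xreg) = p_2^{-1}(\Xreg)$. Here $G$ is only known to act through biholomorphisms of the possibly nonreduced fibers $X_b$, not a priori by automorphisms of $X$. I would handle this by working on $G^\circ$ locally, using that $\eps \colon T^{\ast}B \to G^\circ$ is a local biholomorphism by \Cref{prop:family-groups}. Locally, $a$ then factors through $\Phi$, and for fixed local sections of $T^\ast B$ the map $\Phi$ is an automorphism of $X_U$ over $U$. This gives smoothness of $a$ via \Cref{lem:automorphism} and preservation of $\Xreg$. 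Since the isomorphisms are unique, the local constructions glue automatically.
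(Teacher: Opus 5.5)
Your proposal is correct and follows the same route as the paper. Both $\au \IC_X$ and $p_2^{\ast}\IC_X$ are identified with the shifted intersection complex of $G\times_B X$ (your shift $[-n]$ is the right normalization), pinned down by the canonical identification over $G\times_B \Xreg$, and the cocycle condition holds because morphisms between such objects are determined on the regular locus.

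The step you flag as the main obstacle does not need the detour through $G^\circ$ and the flows $\Phi$. Because $p_2$ is smooth, the regular locus of $G\times_B X$ is exactly $p_2^{-1}(\Xreg)$. The biholomorphism $\psi(g,x)=(g,a(g,x))$ of \Cref{lem:automorphism} preserves this regular locus. Hence $a = p_2\circ\psi$ is smooth with $a^{-1}(\Xreg) = p_2^{-1}(\Xreg)$, and this holds for the full family $G$, not only for $G^\circ$.
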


\begin{proof}
	The morphism $a \colon G \times_B X \to X$ is smooth of relative dimension $n$, and so
	\[
		\au \IC_X \cong \IC_{G \times_B X}[n];
	\]
	the isomorphism is uniquely determined by the condition that, over
	$\Xreg$, it should agree with the usual isomorphism between $\au \RR_{\Xreg}$ and
	$\RR_{G \times_B \Xreg}$. Similarly, $p_2 \colon G \times_B X \to X$ is also smooth of
	relative dimension $n$, and so
	\[
		\pu_2 \IC_X \cong \IC_{G \times_B X}[n],
	\]
	where the isomorphism is again the natural one. This gives $\phi \colon \au \IC_X
	\to \pu_2 \IC_X$. The compatibility condition in \Cref{def:equivariant} holds
	because everything is determined by what happens over $\Xreg$.
\end{proof}

Let $b \in B$ be any point, and write $G_b = p^{-1}(b)$ and $T_b$ for its maximal
compact quotient. After restriction to a fiber $X_b = f^{-1}(b)$, we find that
the constructible complex $\IC_X \vert_{X_b}$ is weakly equivariant on the fiber $X_b$.
The more general version of the freeness
theorem therefore applies, with the result that
\[
	H^{\ast} \bigl( X_b, \IC_X \vert_{X_b} \bigr)
\]
is free as a comodule over $H^{\ast}(T_b)$.

The other ingredient we need for the proof of the support theorem is the compatibility
of the comodule structure with the (shifted) perverse filtration. We define this exactly
as in \Cref{def:P}, using the truncation functors $\tau_{\leq \ell}$ for the perverse
$t$-structure on $\Dbc(B, \RR)$. By the decomposition theorem, the morphism
\[
	H^k \iu \bigl( \tau_{\leq \ell} \derR \fl \IC_X \bigr)
	\to H^k \iu \bigl( \derR \fl \IC_X \bigr)
	\cong H^k(X_b, \IC_X \vert_{X_b})
\]
is injective, and we denote its image by the symbol $P_{\ell} H^k(X_b, \IC_X \vert_{X_b})$.
We again define the \define{shifted perverse filtration} as
\[
	\SP_{\ell} H^k(X_b, \IC_X \vert_{X_b}) = P_{k+\ell-2n} H^k(X_b, \IC_X \vert_{X_b}).
\]
We need to know that the shifted perverse filtration is compatible with the
comodule structure on $H^{\ast}(X_b, \IC_X \vert_{X_b})$, and for that, we need a version
of \Cref{prop:shifted-perverse} for weakly equivariant constructible complexes.
Let $f \colon X \to B$ be a proper holomorphic mapping between two complex manifolds,
let $p \colon G \to B$ be a family of meromorphic groups, and suppose that we have a
meromorphic action
\[
	a \colon G \times_B X \to X.
\]
Let $K \in \Dbc(X, \RR)$ be a weakly equivariant constructible complex in \Cref{def:equivariant}. As in \Cref{prop:shifted-perverse},
we consider the constructible complex
\[
	K_G = \derR p_! \RR_G[2n] \in \Dbc(B, \RR),
\]
which is (by \Cref{lem:K}) isomorphic to a sum of shifts of constructible sheaves. We can
now imitate the construction in \Cref{prop:shifted-perverse}.

\begin{lemma} \label{lem:morphism-K}
	There is a morphism
	\[
		K_G \tensor \derR \fl K \to \derR \fl K
	\]
	in the derived category $\Dbc(B, \RR)$.
\end{lemma}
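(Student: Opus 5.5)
I will imitate the construction of \eqref{eq:K-action}, replacing the constant sheaf $\RR_X$ by $K$ and using the weak equivariance isomorphism $\phi$ wherever the earlier argument used $a^{\ast}\RR_X \cong \RR_{G\times_B X}\cong p_2^{\ast}\RR_X$. Consider the same commutative diagram as before, with $p_2, a \colon G\times_B X \to X$ and $p_1 \colon G \times_B X \to G$. Since $G$ is smooth over $B$ of relative dimension $n$, the action morphism $a$ is smooth of relative dimension $n$; it is a base change of $p$ up to the automorphism $(g,x)\mapsto(g,g\cdot x)$ of \Cref{lem:automorphism}. By \Cref{rem:f!} we therefore have $a^! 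K \cong a^{\ast} K[2n]$. Composing with the adjunction morphism $\derR a_! a^! K \to K$ gives
\[
	\derR a_! \, a^{\ast} K[2n] \to K,
\]
and precomposing with $\derR a_!(\phi^{-1}[2n])$ gives a morphism $\derR a_! \, p_2^{\ast}K[2n] \to K$. This is exactly the morphism appearing before \Cref{cor:comodules}, now in the relative setting.

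Next I apply $\derR f_!=\derR f_{\ast}$ ($f$ proper) and use $f\circ a = f\circ p_2 = p\circ p_1$. This gives
\[
	\derR p_! \, \derR(p_1)_! \, p_2^{\ast} K[2n] \to \derR f_{\ast} K.
\]
The square formed by $p_1$, $p_2$, $p$ and $f$ is Cartesian, so proper base change \cite[Prop.~2.6.7]{Kashiwara+Schapira} gives $\derR(p_1)_! p_2^{\ast} K \cong p^{\ast}\derR f_! K$. The projection formula \cite[Prop.~2.6.6]{Kashiwara+Schapira} then gives
\[
	\derR p_!\bigl(\RR_G \tensor p^{\ast}\derR f_! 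K\bigr)\cong \derR p_!\RR_G\tensor \derR f_! K.
\]
Putting these together produces $K_G \tensor \derR f_{\ast} K \to \derR f_{\ast} K$, with $K_G=\derR p_!\RR_G[2n]$.

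The main point to check is the identification $a^! \cong a^{\ast}[2n]$ for the smooth (topologically submersive) morphism $a$, together with the constructibility needed to stay in $\Dbc(B,\RR)$. The first is \Cref{rem:f!}, since the fibers are oriented complex manifolds. For the second, $K_G$ is constructible because $p$ factors as a Zariski-open embedding followed by a proper map, and $\derR f_{\ast}K$ is constructible because $f$ is proper. Only the existence of the morphism is claimed, so no associativity or compatibility checks are needed at this stage. Its agreement on stalks with the comodule structure $\delta_K$ will come afterward from \Cref{cor:comodules}.
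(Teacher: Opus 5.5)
Your proposal is correct and follows the paper's proof essentially step for step. You build $\derR a_! \pu_2 K[2n] \to K$ from $\phi^{-1}$ and $a^! \cong \au[2n]$, apply $\derR f_!$, and then use proper base change for the Cartesian square and the projection formula; your remark that $a$ is smooth because it is $p_2$ composed with the automorphism of \Cref{lem:automorphism} is a small detail the paper leaves implicit.
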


\begin{proof}
Recall from \Cref{rem:f!} that if $f \colon X \to Y$ is a smooth morphism between
complex spaces, then the exceptional pullback functor satisfies
$f^!(\argbl) \cong \fu(\argbl)[2c]$, where $c = \dim X - \dim Y$.
Consider again the commutative diagram
\begin{tcd}
		X \dar{f} & G \times_B X \rar{a} \lar[swap]{p_2} \dar{p_1} & X \dar{f} \\
		B & G \rar{p} \lar[swap]{p} & B.
\end{tcd}
The morphism $a$ is smooth of relative dimension $n$, and so it induces
\begin{tcd}
	\derR a_! \pu_2 K \decal{2n} \rar{\derR a_! \phi^{-1}} &
	\derR a_! \au K \decal{2n} \rar{\cong} & \derR a_! a^{!} K \rar & K.
\end{tcd}
Apply the functor $\derR f_!$ to turn this into a morphism
\[
	\derR p_! \derR (p_1)_! \, \pu_2 K[2n] \to \derR f_! K.
\]
Using proper base change \cite[Prop.~2.6.7]{Kashiwara+Schapira}, we have
\[
	\derR (p_1)_! \, \pu_2 K  \cong \pu \derR f_! K,
\]
and the projection formula \cite[Prop.~2.6.6]{Kashiwara+Schapira} therefore implies that
\[
	\derR p_! \derR (p_1)_! \, \pu_2 K \cong \derR p_! \RR_G \tensor \derR f_! K.
\]
Putting everything together and remembering that $K_G = \derR p_! \RR_G[2n]$ and
$\derR f_! K = \derR \fl K$ because $f$ is proper, we finally arrive at
the desired morphism.
\end{proof}

By the same reasoning as in \Cref{prop:shifted-perverse}, we obtain a morphism
\begin{equation} \label{eq:equivariant-morphism}
	\derR \fl K \vert_U \to H^{\ast}(G_b) \tensor \derR \fl K \vert_U
\end{equation}
on a small neighborhood $U \subseteq B$ of a given point $b \in B$. We need to prove
that on stalks, we get back the comodule structure that we defined above.
Denote by $K_b = K \vert_{X_b}$ the restriction of the weakly equivariant complex $K \in \Dbc(X, \RR)$
to the fiber $X_b = f^{-1}(b)$. By proper base change,
\[
	\derR \fl K \vert_b \cong H^{\ast}(X_b, K_b),
\]
and since $K_b$ is weakly equivariant on $X_b$, the construction in the previous section shows that
$H^{\ast}(X_b, K_b)$ is a comodule over the Hopf algebra $H^{\ast}(G_b)$.

\begin{lemma} \label{lem:comodule-stalks}
	On stalks, the morphism in \eqref{eq:equivariant-morphism} recovers the comodule
	structure
	\[
			H^{\ast}(X_b, K_b) \to H^{\ast}(G_b) \tensor H^{\ast}(X_b, K_b).
	\]
\end{lemma}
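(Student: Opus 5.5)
The plan is to apply base change along $i \colon \{b\} \into U$ to each step of the construction of \eqref{eq:equivariant-morphism}, and to compare the result with the fiberwise construction via \Cref{cor:comodules}.

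First I would restrict the morphism of \Cref{lem:morphism-K} to the point $b$. Every ingredient in its construction commutes with $\iu$:
\begin{itemize}
\item proper base change identifies $\iu \derR f_! K$ with $H^{\ast}(X_b, K_b)$, and $\iu K_G$ with $H_c^{2n+\ast}(G_b)$;
\item the projection formula and proper base change are functorial, so they are compatible with restriction to the fiber.
\end{itemize}
The key point is that the counit $\derR a_! a^! K \to K$ restricts correctly. Since $a$ is smooth, \Cref{rem:f!} gives $a^! \cong \au[2n]$, and the same holds for $a_b \colon G_b \times X_b \to X_b$. For a smooth morphism the trace $\derR a_! \au[2n] \to \id$ is compatible with base change along $X_b \into X$; this is the relative fundamental class of the oriented fibers. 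Restricting $\phi$ gives the weakly equivariant structure on $K_b$. It follows that $\iu$ of the morphism in \Cref{lem:morphism-K} is the fiberwise morphism
\[
	H_c^{2n+\ast}(G_b) \tensor H^{\ast}(X_b, K_b) \to H^{\ast}(X_b, K_b)
\]
built from $\derR a_{b,!}\, a_b^! K_b \to K_b$.

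Next I would account for the passage from $K_G\vert_U$ to $H_{\ast}(G_b) \tensor \RR_U$ used to define \eqref{eq:equivariant-morphism}. The morphism $H_{\ast}(G_b) \tensor \RR_U \to K_G\vert_U$ comes from the isomorphism $H^{\ast}(U, K_G) \cong K_{G,b}$. On stalks at $b$ it is the Poincar\'e duality identification $H_{\ast}(G_b) \cong H_c^{2n-\ast}(G_b)$. After we dualize to move $H^{\ast}(G_b)$ across, the stalk of \eqref{eq:equivariant-morphism} is therefore the transpose of the fiberwise action morphism under \eqref{eq:duality-cohomology}. \Cref{cor:comodules}, applied to $G_b$ acting on $X_b$ with the complex $K_b$, identifies this transpose with the comodule map $\delta_{K_b}$, which is the claim.

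The main obstacle is the base change compatibility of the trace/counit for the smooth map $a$. I would handle it by reducing to the local model: over a point, $a$ is locally a projection $\CC^n \times V \to V$, and there the trace is integration along fibers, visibly compatible with restriction. One also has to check that the identifications from Kashiwara--Schapira (base change and the projection formula) are compatible with one another. This I expect to be routine bookkeeping using their naturality.
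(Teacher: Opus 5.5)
Your proposal is correct and follows the same route as the paper. The paper's proof observes that the construction in \Cref{lem:morphism-K} is compatible with proper base change and that $H_{\ast}(G_b) \tensor \RR_U \to K_G\vert_U$ is an isomorphism at $b$; this reduces the claim to $G_b \times X_b \to X_b$ with the complex $K_b$, where \Cref{cor:comodules} applies. Your discussion of the base-change compatibility of the trace for the smooth map $a$ simply spells out what the paper leaves implicit.
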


\begin{proof}
The construction in \Cref{lem:morphism-K} is compatible with proper base change, and
the morphism $H_{\ast}(G_b) \tensor \RR_U \to K_G \vert_U$ is an isomorphism at the
point $b$. This reduces the problem to the case $G_b \times X_b \to X_b$ and the
weakly equivariant complex $K_b$, which is covered by \Cref{cor:comodules}.
\end{proof}

The remainder of the argument in \Cref{par:support-theorem-proof} then goes
through unchanged, and so we obtain the support theorem for the direct image
\[
	\derR \fl \IC_X \cong \bigoplus_{j=-n}^n P_j[-j]
\]
also in the singular case. We note the following technical result; the notation is
the same as in \Cref{par:support-theorem-proof}.

\begin{lemma}
At each point $b \in U$, the subspace $(\locL_{j,Z})_b \subseteq H^{j-r}(X_b,
\IC_X \vert_{X_b})$ is a mixed Hodge substructure that is pure of weight $2n+j-r$.
\end{lemma}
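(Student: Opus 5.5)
We follow the proof of \Cref{lem:MHS}, with the constant sheaf replaced by $\IC_X$; the grading is the only new point. The restriction $\IC_X\vert_{X_b}$ is weakly equivariant, so by \Cref{pr: K weq} the cohomology $H^{\ast}(X_b, \IC_X\vert_{X_b})$ is a free comodule over $H^{\ast}(T_b)$. The comodule map $\delta_T$ is a morphism of mixed Hodge structures: it comes from the meromorphic action $a$ together with the isomorphism $\phi$. Both $\au\IC_X$ and $\pu_2\IC_X$ are canonically $\IC_{G\times_B X}[n]$, which underlies a Hodge module, so $\phi$ is a morphism of Hodge modules. Projecting to $H^{\ast}(T_b)$ via \Cref{lem:HG} preserves this compatibility. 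The graded subspace $\bigoplus_{j}(\locL_{j,Z})_b$ is a comodule direct summand and is free over $H^{\ast}(T_b)$, by the analogue of \Cref{lem:comodule-decomposition}. Its coinvariants are $(\locC_Z)_b=(\locL_{-r,Z})_b$, and its top-degree piece is $(\locL_{r,Z})_b$.

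The key step is to show that $(\locL_{r,Z})_b$ is pure of the correct weight. Over $\RR$, $(\locL_{r,Z})_b$ sits in the top nonvanishing degree of $H^{\ast}(X_b,\IC_X\vert_{X_b})$, namely the degree corresponding to $H^{2n}$ of the fiber. Top-degree stalk cohomology of $\IC_X$ along the compact $n$-dimensional fiber is pure of the maximal weight. One way to see this uses Verdier duality: $\DD\IC_X\cong\IC_X$, so by \Cref{lem:dual-comodule} the top-degree piece is dual to the lowest-degree piece $H^{\ast}(X_b, i^{!}\IC_X)$ in the bottom degree. A cleaner route passes to a resolution $\mu\colon\Xt\to X$. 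By the decomposition theorem $\IC_X$ is a summand of $\derR\mu_\ast\RR_{\Xt}[2n]$, so the top cohomology is a summand of $H^{2n}(\Xt_b)$. That group is pure of type $(n,n)$ because $\Xt_b$ is a compact K\"ahler space of dimension $n$. Functoriality of the Deligne--Fujiki mixed Hodge structures under the splitting is the delicate point. Here we use the description of the summand through Deligne's splitting in \Cref{lem:Deligne}, which is canonical once $\omega$ is fixed.

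With purity of $(\locL_{r,Z})_b$ in hand, we copy diagrams \eqref{eq:MHS-1} and \eqref{eq:MHS-2}. \Cref{lem:comodule-free} shows that contraction $\Hom_{\RR}(H^{r-j}(T_b),\RR)\tensor(\locL_{r,Z})_b\to(\locL_{j,Z})_b$ is an isomorphism. The bottom arrow, on the full cohomology, is a morphism of mixed Hodge structures. Transporting the pure structure of weight $2n+j-r$ along the top isomorphism therefore makes $(\locL_{j,Z})_b$ a mixed Hodge substructure.

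The main obstacle is establishing the mixed Hodge structure on $H^{\ast}(X_b,\IC_X\vert_{X_b})$ compatibly with the comodule map, and proving purity in top degree, without Saito's comparison theorem in the analytic setting. I would first try the resolution approach, realizing everything inside $H^{\ast}(\Xt_b)$ via the lifted $G$-action from the $\delta$-regularity section.
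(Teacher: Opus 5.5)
\textbf{There is a genuine gap in your key step.} Your overall architecture is the paper's: work inside $H^{\ast}(\Xt_b)$ using the lifted action $G_b \times \Xt_b \to \Xt_b$, show that the top piece $(\locL_{r,Z})_b$ is Hodge-theoretic, then transport via \Cref{lem:comodule-free} and the diagrams \eqref{eq:MHS-1}, \eqref{eq:MHS-2}.

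Drop your first-paragraph claim that $\delta_T$ is a morphism of mixed Hodge structures on $H^{\ast}(X_b,\IC_X\vert_{X_b})$ via Hodge modules. In the analytic setting there is no known mixed Hodge structure there compatible with the Deligne--Fujiki structure on $H^{\ast}(G_b)$. Working in $H^{\ast}(\Xt_b)$, as you suggest at the end, is what replaces it.

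The gap is the assertion that $H^{2n}(\Xt_b)$ is pure of type $(n,n)$ because $\Xt_b$ is a compact K\"ahler space of dimension $n$. The morphism $f\circ\mu$ is not equidimensional. The resolution $\mu$ is not semismall, and its exceptional divisors can lie over points of $X_b$; a functorial resolution typically blows up the most singular points first. So $\Xt_b$ can have components of dimension up to $2n-1$, and $H^{2n}(\Xt_b)$ need not be pure of type $(n,n)$.

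Once that fails, knowing only that $H^0(X_b,\IC_X\vert_{X_b})$ is a direct summand of $H^{2n}(\Xt_b)$ via the decomposition theorem for $\mu$ gives no Hodge-theoretic information. Whether such summands, or the perverse filtration for $\mu$, are compatible with the Deligne--Fujiki structure is exactly the analytic comparison with Saito's theory that the paper says is not known. The canonicity of Deligne's splitting in \Cref{lem:Deligne} does not address this, and your Verdier-duality route produces no Hodge structure at all.

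\textbf{The missing ingredient is the Mauri--Migliorini isomorphism} $H^{2n}(X_b)\cong H^0(X_b,\IC_X\vert_{X_b})$, induced by $\RR_X[2n]\to\IC_X$. The composite $\RR_X[2n]\to\IC_X\to\derR\mu_\ast\RR_{\Xt}[2n]$ is the adjunction morphism, since both are the identity over $\Xreg$. Hence inside $H^{2n}(\Xt_b)$, the subspace $H^0(X_b,\IC_X\vert_{X_b})$ is the image of $\mu^{\ast}\colon H^{2n}(X_b)\to H^{2n}(\Xt_b)$. This is a morphism of mixed Hodge structures whose source is pure of type $(n,n)$, because $f$ is equidimensional (\Cref{prop:equidimensional}). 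So $H^0(X_b,\IC_X\vert_{X_b})$ is a substructure of type $(n,n)$.

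What you need is the type, not merely the weight: every real subspace of a Hodge structure of type $(n,n)$ is a substructure. So $(\locL_{r,Z})_b$ is one, even though the decomposition by strict support is not known to be Hodge-theoretic here. Purity of weight $2n$ alone would not give this.

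With this in hand, your third paragraph goes through. Use the three-row diagram whose bottom row is $\delta_T\colon H^{2n}(\Xt_b)\to H^{r-j}(T_b)\tensor H^{2n+j-r}(\Xt_b)$, which is a morphism of mixed Hodge structures by functoriality of the lifted meromorphic action.
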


\begin{proof}
By \cite[Prop.~3.3]{Mauri+Migliorini}, the natural morphism $\RR_X[2n] \to \IC_X$ in
the derived category $\Dbc(X, \RR)$ induces an isomorphism
\[
	H^{2n}(X_b) \cong H^0(X_b, \IC_X \vert_{X_b}).
\]
The vector space on the right therefore has a pure Hodge structure of type $(n,n)$,
whose dimension is again the number of irreducible components of the fiber $X_b$. As before,
it follows that the subspace
\[
	(\locL_{r,Z})_b \subseteq H^0(X_b, \IC_X \vert_{X_b})
\]
is a pure Hodge structure of type $(n,n)$. Let $\mu \colon \Xt \to X$ be a functorial
resolution of singularities by a K\"ahler manifold $\Xt$.
By the decomposition theorem applied to $\derR \mu_{\ast} \RR_{\Xt}[2n]$, the
$\RR$-vector space  $H^j(X_b, \IC_X \vert_{X_b})$ is a direct summand in
$H^{2n+j}(\Xt_b)$, where $\Xt_b = (f \circ \mu)^{-1}(b)$. By functoriality, the group action
lifts to $G_b \times \Xt_b \to \Xt_b$, and so the cohomology $H^{\ast}(\Xt_b)$ is also
a comodule over $H^{\ast}(T_b)$ in a compatible way. This gives us a commutative diagram
\begin{tcd}
	(\locL_{r,Z})_b \rar{\delta_T} \dar[hook] & H^{r-j}(T_b) \tensor
	(\locL_{j,Z})_b \dar[hook] \\
	H^0(X_b, \IC_X \vert_{X_b}) \rar{\delta_T} \dar[hook] &
	H^{r-j}(T_b) \tensor H^{j-r}(X_b, \IC_X \vert_{X_b}) \dar[hook] \\
	H^{2n}(\Xt_b) \rar{\delta_T} & H^{r-j}(T_b) \tensor H^{2n+j-r}(\Xt_b)
\end{tcd}
in which the bottom arrow is a morphism of mixed Hodge structures. We can now conclude
the proof in the same way as before.
\end{proof}

\renewbibmacro{in:}{%
  \ifentrytype{article}{}{\printtext{\bibstring{in}\intitlepunct}}}
\printbibliography
\end{document}